\numberwithin{equation}{subsection} 
\numberwithin{figure}{subsection} 
\let\c@equation\c@figure
\DeclareMathOperator{\Hom}{Hom}
\DeclareMathOperator{\Aut}{Aut}
\DeclareMathOperator{\im}{im}
\DeclareMathOperator{\Mod}{Mod}
\DeclareMathOperator{\Pic}{Pic}
\DeclareMathOperator{\Hur}{Hur}
\DeclareMathOperator{\Gal}{Gal}
\newtheorem{theorem}[equation]{Theorem}
\newtheorem{corollary}[equation]{Corollary}
\newtheorem{lemma}[equation]{Lemma}
\newtheorem{proposition}[equation]{Proposition}
\newtheorem{conjecture}[equation]{Conjecture}
\theoremstyle{definition}
\newtheorem{remark}[equation]{Remark}
\newtheorem{question}[equation]{Question}
\newtheorem{construction}[equation]{Construction}
\theoremstyle{definition}
\newtheorem{definition}[equation]{Definition}
\newcommand\nc{\newcommand}
\nc\on{\operatorname}
\title{Surface bundles and the section conjecture}
\author{Wanlin Li, Daniel Litt, Nick Salter, Padmavathi Srinivasan}
\date{\today}
\begin{document}
\begin{abstract}
    We formulate a tropical analogue of Grothendieck's section conjecture: that for every stable graph $\Gamma$ of genus $g>2$, and every field $k$, the generic curve with reduction type $\Gamma$ over $k$ satisfies the section conjecture. We prove many cases of this conjecture. In so doing we show the existence of many examples of curves with no rational points satisfying the section conjecture over fields of geometric interest, and then over $p$-adic fields and number fields via a Chebotarev argument.
    
    We construct two Galois cohomology classes $o_1$ and $\widetilde{o_2}$, which obstruct the existence of $\pi_1$-sections and hence of rational points. The first is an abelian obstruction, closely related to the period of a curve and to a cohomology class on the moduli space of curves $\mathscr{M}_g$ studied by Morita. The second is a $2$-nilpotent obstruction and appears to be new. We study the degeneration of these classes via topological techniques, and we produce examples of surface bundles over surfaces where these classes obstruct sections. We then use these constructions to show the existence of curves over $p$-adic fields and number fields where each class obstructs $\pi_1$-sections and hence rational points.
    
    Among our geometric results are a new proof of the section conjecture for the generic curve of genus $g\geq 3$, and a proof of the section conjecture for the generic curve of even genus with a rational divisor class of degree one (where the obstruction to the existence of a section is genuinely non-abelian).
\end{abstract}

\maketitle

\section{Introduction}

The goal of this paper is to give a systematic way to prove the existence of many examples of curves for which Grothendieck's section conjecture holds. We begin by formulating a geometric analogue of the section conjecture ``at the boundary of $\mathscr{M}_g$," which we refer to as the \emph{tropical section conjecture}. We prove many cases of this conjecture, giving \emph{geometric} examples where the section conjecture holds, by an analysis of the degeneration of certain cohomology classes on the moduli space of curves, $\mathscr{M}_g$, and on the moduli space of degree one divisor classes on the universal curve,  $\mathbf{Pic}^1_{\mathscr{C}_g/\mathscr{M}_g}$.  We then use a Chebotarev density argument to prove the existence of (many) examples of curves over $p$-adic fields for which the section conjecture holds, which yields by standard approximation techniques (which we omit) the existence of many examples over number fields. Our methods are inspired by ``arithmetic topology," and indeed, a key step in our construction is to pass from the existence of certain (topological) surface bundles over surfaces to curves over $p$-adic fields with analogous properties. In our view the main interest in this paper arises from its fusion of topological, geometric, and arithmetic techniques.

The main technical innovation of the paper is the study of the Gysin images of two cohomology classes $o_1, \widetilde{o_2}$ which obstruct $\pi_1$-sections and hence rational points, as well as the construction of $\widetilde{o_2}$ itself (which appears to be new, though it is inspired by work of Jordan Ellenberg \cite{ellenberg2nilpotent} and Wickelgren \cite{wickelgren2009lower}). 

\subsection{The section conjecture}
Let $k$ be a field and let $X$ be a smooth projective $k$-curve (that is, a smooth, projective, separated, geometrically connected $k$-scheme of dimension $1$). Let $\overline{k}$ be a separable closure of $k$ and let $\bar x\in X(\overline{k})$ be a geometric point of $X$. Then there is a short exact sequence \begin{equation}\label{fundamental-exact-sequence} 1\to \pi_1^{\text{\'et}}(X_{\overline k}, \bar x)\to \pi_1^{\text{\'et}}(X, \bar x)\to \on{Gal}(\overline k/k)\to 1,\end{equation} where $\pi_1^{\text{\'et}}$ denotes the \'etale fundamental group. To each rational point $x\in X(k)$ one may associate a canonical conjugacy class of splittings $[s_x]$ of this exact sequence. We call splittings of sequence (\ref{fundamental-exact-sequence}) $\pi_1$-sections. The starting point for this work is Grothendieck's section conjecture, which suggests that in many cases the sequence above encodes all of the arithmetic of $X$: 
\begin{conjecture}[The section conjecture \cite{grothendieck1997letter}]
Suppose $k$ is a finitely-generated field of characteristic $0$ and that the genus of $X$ is at least $2$. Then the map $$X(k)\to \{\text{splittings of sequence (\ref{fundamental-exact-sequence})}\}/\text{conjugacy}$$
$$x\mapsto [s_x]$$ is a bijection.
\end{conjecture}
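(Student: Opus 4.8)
I would attack the conjecture by separating it into its injectivity and surjectivity halves. Injectivity --- that distinct $k$-points yield non-conjugate $\pi_1$-sections --- I would deduce from the arithmetic of the Jacobian: any section pushes forward to a $k$-rational point of $\mathbf{Pic}^1_X$, hence fixes a degree-one $k$-divisor class and an Albanese embedding $X\hookrightarrow J=\Pic^0_X$, and then injectivity for $X$ reduces to injectivity of the profinite Kummer map $J(k)\hookrightarrow\varprojlim_n\mathrm H^1(k,J[n])$, which holds whenever $J(k)$ has no nonzero divisible subgroup --- and this is guaranteed over a finitely generated field of characteristic $0$ by the Mordell--Weil and Lang--Néron theorems. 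So the real content is surjectivity: starting from an arbitrary splitting $s$ of sequence (\ref{fundamental-exact-sequence}), I must produce $x\in X(k)$ with $[s_x]=[s]$.

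For surjectivity the plan is to run a limit argument over the cofiltered system of \emph{neighborhoods} of $s$: connected finite étale covers $Y\to X$, defined over $k$ and equipped with a geometric point above $\bar x$, through which $s$ factors. These have affine transition maps, and $[s]=[s_x]$ for some $x$ exactly when $\varprojlim_{Y}Y(k)\neq\varnothing$. Step one is to show $s$ survives to every finite level. The obstruction to lifting $s$ through the abelianization is the class $o_1$, and it vanishes automatically on a genuine section, since the pushforward of $s$ to $\mathbf{Pic}^1_X$ is already a rational point; the next obstruction is the $2$-nilpotent class $\widetilde{o_2}$, followed by the higher pro-nilpotent and ultimately the full pro-finite obstructions. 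I would prove the vanishing of each on $s$ by reading the degeneration analysis of $o_1$ and $\widetilde{o_2}$ in reverse: a $\pi_1$-section, unlike a point at the boundary of $\mathscr M_g$ where these classes are shown to be nonzero, cannot carry a nonzero value. Step two is to extract a point from the system $\mathcal N(s)$ of nonempty finite-type $k$-schemes; by a compactness argument this reduces to showing that a cofinal subsystem of neighborhoods has $k$-points compatibly.

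To establish nonemptiness I would pass to completions. The section $s$ restricts to a section $s_v$ over each completion $k_v$; over archimedean places this is handled by the section conjecture over $\mathbb R$ (Mochizuki), and over a non-archimedean place I would prove the $p$-adic section conjecture by degeneration: $X_{k_v}$ acquires a stable model with reduction graph $\Gamma$ of genus $>2$, the specialization map identifies $\pi_1$-sections of $X_{k_v}$ with sections attached to $\Gamma$, and one then proves the tropical section conjecture for $\Gamma$ --- via the topological degeneration of $o_1$ and $\widetilde{o_2}$ developed here --- together with a rigidity statement that the only tropical sections are those supported on a single component, hence coming from $k_v$-points. Granting the local statements and the vanishing of all obstructions, a Hasse-principle input --- that a $\pi_1$-section which is everywhere locally geometric is globally geometric, equivalently that the finite-descent obstruction is the only obstruction to rational points on curves --- forces each $Y\in\mathcal N(s)$ to have points everywhere locally and no finite-descent obstruction, hence a $k$-point, compatibly; the inverse limit is then the sought rational point. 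A finitely generated base field is reduced to the number-field case by specialization.

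The main obstacle, as I see it, is the interface between the local and global inputs. The $p$-adic section conjecture --- for which the degeneration/tropical reduction is the essential new idea, but where the combinatorial rigidity statement for stable graphs of genus $>2$ must be established in full --- and the Hasse principle for $\pi_1$-sections both lie deeper than the abelian and $2$-nilpotent obstruction computations that drive the geometric half of this paper. I expect the tropical section conjecture to be accessible for broad families of stable graphs through the degeneration of $o_1$ and $\widetilde{o_2}$, and that these cases already produce unconditional instances of the section conjecture over $p$-adic and number fields via the Chebotarev argument; what remains genuinely open for \emph{all} curves is a descent mechanism for sections, which the obstruction classes built here do not by themselves supply.
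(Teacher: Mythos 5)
The statement you are proving is stated in the paper as a \emph{conjecture} --- Grothendieck's section conjecture --- and the paper offers no proof of it; it only establishes special cases in which the conjecture holds \emph{trivially}, i.e.\ cases where sequence (\ref{fundamental-exact-sequence}) has no sections at all, so that the map $x\mapsto[s_x]$ is vacuously a bijection. Your proposal is therefore not comparable to a proof in the paper, and on its own terms it does not close the argument. The injectivity half is fine (and is indeed classical over finitely generated fields of characteristic $0$, via the Abel--Jacobi embedding, the Mordell--Weil/Lang--N\'eron theorems, and the injectivity of the profinite Kummer map), and the reduction of surjectivity to $\varprojlim_Y Y(k)\neq\varnothing$ over the neighborhoods of $s$ is the standard framework. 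But your ``step one'' is circular: the classes $o_1$ and $\widetilde{o_2}$ are by construction obstructions to the \emph{existence} of a section, so once a section $s$ is given they vanish tautologically; reading the degeneration analysis ``in reverse'' extracts no information about whether $s$ is geometric. These classes can only ever prove the trivial form of the conjecture, never the surjectivity statement for a curve that actually has sections.

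The remaining inputs you invoke are themselves open, and in one case the reduction is not even correctly set up. The $p$-adic section conjecture is open, and it does not follow from the tropical section conjecture (Conjecture \ref{conj:tropical-section-conjecture}): that conjecture concerns the \emph{generic} curve $\mathscr{C}_{\widehat{K_\Gamma}}$ with reduction type $\Gamma$ and asserts it has \emph{no} sections, which says nothing about a given curve over $k_v$ with the same reduction type --- in particular nothing about curves with $k_v$-points, which certainly admit sections; moreover the paper proves the tropical conjecture only for special families of graphs, and curves of good reduction have the trivial graph, for which only the generic-curve statement (Corollary \ref{cor:generic-point-o1-nonvanishing}) is available. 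The ``rigidity statement'' classifying all sections attached to $\Gamma$ is not established anywhere. Finally, the Hasse principle you need --- that a section which is everywhere locally geometric is globally geometric, equivalently that finite descent is the only obstruction to rational points on curves --- is a well-known open problem at least as deep as the section conjecture itself, and the passage from number fields to general finitely generated fields is likewise not a formal specialization argument. So the proposal correctly locates where the difficulty lives, but each of the load-bearing steps is either vacuous or an unproven conjecture.
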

Though Grothendieck originally made his conjecture only over finitely-generated fields of characteristic $0$, it is widely believed to hold true in more generality --- for example, over $p$-adic fields. 

Following Stix \cite{stix2010period}, we say that a smooth projective curve $X/k$ \emph{trivially} satisfies the section conjecture if sequence (\ref{fundamental-exact-sequence}) has no sections. As any map with target the empty set is a bijection, such a curve evidently does in fact satisfy the section conjecture. (Of course it is in general by no means trivial to show that a curve does indeed trivially satisfy the section conjecture.) While there is now a fair amount of evidence for the section conjecture (see e.g.~\cite{stix2012rational}), all known examples of curves $X/k$ satisfying the section conjecture do so trivially, at least to the authors' knowledge. Though this state of affairs is disappointing, it is perhaps worth noting that proving the section conjecture in the case of $k$-curves $X$ with $X(k)=\emptyset$ would in fact imply the section conjecture for all $X/k$ \cite[Appendix C]{stix2010period}.

\subsection{The tropical section conjecture}\label{subsec:tropical-section-conjecture}
Let $g\geq 2$ be an integer. Recall that the moduli space $\mathscr{M}_g$ of smooth projective curves of genus $g$ has a Deligne-Mumford compactification $\overline{\mathscr{M}_g}$, parametrizing stable curves of genus $g$. The boundary strata of $\overline{\mathscr{M}_g}$ are indexed by stable graphs (see Section \ref{sec:Mg-preliminaries} for more details). For a stable graph $\Gamma$, we denote the corresponding boundary stratum by $Z_\Gamma$.

Now suppose $g>2$, and let $k$ be a field. For each boundary stratum $Z_\Gamma$ of $\overline{\mathscr{M}_g}$, let $\widehat{K_\Gamma}$ be the fraction field of the complete local ring $\mathscr{O}_{\widehat{K_\Gamma}}:=\widehat{\mathscr{O}_{\overline{\mathscr{M}_{g, k}}, Z_\Gamma}}$ of $\overline{\mathscr{M}_{g, k}}$ at the generic point of $Z_\Gamma$, and let $\mathscr{C}_{\widehat{K_{\Gamma}}}$ be the fiber of the universal curve $\mathscr{C}_g$ over $\widehat{K_\Gamma}$. 
\begin{conjecture}[Tropical section conjecture]\label{conj:tropical-section-conjecture}
For every field $k$ and stable graph $\Gamma$, the curve $\mathscr{C}_{\widehat{K_\Gamma}}$ \emph{trivially} satisfies the section conjecture.
\end{conjecture}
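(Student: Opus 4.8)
I would attack the conjecture --- establishing as many cases as the underlying combinatorics permits --- as follows. A $\pi_1$-section of $\mathscr{C}_{\widehat{K_\Gamma}}/\widehat{K_\Gamma}$ induces a section of every characteristic quotient of the exact sequence~\eqref{fundamental-exact-sequence}; in particular it forces the vanishing of the abelian obstruction $o_1$ (a class in $H^2$ of $\widehat{K_\Gamma}$ with coefficients in the $H_1$ of the curve --- essentially the class of the $\mathbf{Pic}^0$-torsor $\mathbf{Pic}^1$, whence the link to the period of the curve) and of the $2$-nilpotent refinement $\widetilde{o_2}$. So it suffices, for each $\Gamma$, to show that at least one of $o_1$, $\widetilde{o_2}$ is nonzero for the generic curve with reduction type $\Gamma$.

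The first step is to understand the base. If $\Gamma$ has $e$ edges, then $Z_\Gamma$ has codimension $e$ in $\overline{\mathscr{M}_g}$, the complete local ring $\mathscr{O}_{\widehat{K_\Gamma}}$ is (up to the stack structure at the nodes) a power series ring $\kappa(Z_\Gamma)[[t_1,\dots,t_e]]$ in the $e$ node-smoothing parameters, and $\kappa(Z_\Gamma)$ is the function field of the boundary stratum $Z_\Gamma$, which is built from the moduli spaces $\mathscr{M}_{g_v,n_v}$ of the vertex curves. Consequently $\on{Gal}(\overline{\widehat{K_\Gamma}}/\widehat{K_\Gamma})$ surjects onto $\prod_{e}\widehat{\mathbb{Z}}(1)$ via tame inertia about the boundary divisors --- this is exactly the ``tropical'' data of $\Gamma$ --- and the cohomology of $\widehat{K_\Gamma}$ is computed by iterated Wang/residue sequences from that of $\kappa(Z_\Gamma)$. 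The plan is to track $o_1$ and $\widetilde{o_2}$ through this residue filtration: their leading terms are residues along the $t_i$, which by a purity argument should be expressible in terms of objects on the boundary stratum together with vanishing-cycle data.

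Computing these residues is the topological heart of the argument, and I expect it to be the main obstacle. Since $o_1$ and $\widetilde{o_2}$ are pulled back from \emph{universal} classes --- $o_1$ from the Morita-type class attached to $\mathbf{Pic}^1_{\mathscr{C}_g/\mathscr{M}_g}$ over $\mathscr{M}_g$, and $\widetilde{o_2}$ from an Ellenberg-style class on the $2$-nilpotent quotient of $\pi_1$ --- their residues at the boundary are governed by the topological monodromy of the universal curve on a polydisk transverse to $Z_\Gamma$, which is generated by the Dehn twists $T_{\gamma_1},\dots,T_{\gamma_e}$ about the vanishing cycles, one per edge. Using Morita's explicit formula for his crossed homomorphism on Dehn twists (morally, ``cap with $[\gamma_i]$'') and its $2$-nilpotent analogue (roughly, cup products $[\gamma_i]\cup[\gamma_j]$ together with Johnson-homomorphism data), one reduces the nonvanishing of each residue to a linear-algebra statement over $\widehat{\mathbb{Z}}$ about the configuration $\{[\gamma_e]\}_{e\in E(\Gamma)}\subset H_1(\Sigma_g)$ --- the image of the edge space of $\Gamma$ --- together with the genera $g_v$ of the vertex curves. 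Setting up a clean residue formalism for the \emph{non-abelian} class $\widetilde{o_2}$ over this higher-dimensional local field, and keeping the ``boundary stratum'' and ``vanishing cycle'' contributions separated, is the delicate point; I would work out the codimension-one case ($e=1$, a single node) in full detail first and then bootstrap.

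Finally one carries out the combinatorics. For the stable graph with a single vertex of genus $g$ and no edges we have $\widehat{K_\Gamma}=k(\mathscr{M}_g)$ and $\mathscr{C}_{\widehat{K_\Gamma}}$ is the generic curve of genus $g$, so nonvanishing of $o_1$ amounts to the global nonvanishing of the universal Morita class on $\mathscr{M}_g$ for $g\ge 3$, recovering the new proof of the section conjecture there. For a large family of graphs --- those whose vanishing cycles are ``spread out'' enough, or whose vertex curves carry enough positive genus --- the $o_1$-residue is visibly nonzero, the relevant counting inequality being where the hypothesis $g>2$ enters. The remaining graphs are precisely the ones for which $o_1$ must vanish, for instance because $\mathbf{Pic}^1$ acquires a rational point (as it does over a non-separating node); there one runs the same scheme with $\widetilde{o_2}$, and a parity computation is what forces the even-genus hypothesis in the ``rational degree-one divisor class'' example over $\mathbf{Pic}^1_{\mathscr{C}_g/\mathscr{M}_g}$, where the obstruction is genuinely non-abelian. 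The graphs for which neither obstruction can be shown to be nonzero are exactly what keeps the theorem at ``many cases'' rather than the full conjecture.
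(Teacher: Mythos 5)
Your overall strategy---degenerate to the boundary stratum, take residues of $o_1$ and $\widetilde{o_2}$, and reduce nonvanishing to a computation with the Dehn-twist monodromy about the vanishing cycles---is the same philosophy the paper follows, but two of your key steps would fail as stated. First, your reduction of the nonvanishing of the residue to ``a linear-algebra statement about the configuration $\{[\gamma_e]\}\subset H_1(\Sigma_g)$ together with the genera $g_v$'' only uses the local (inertia) monodromy, and that is not enough. After one residue, the class lives in cohomological degree one on (an open part of) the boundary, and a degree-one class is detected by pairing against \emph{loops in the stratum}, i.e.\ against monodromy of the boundary family itself. In the paper this extra input is the finite-order diffeomorphism $S$ realizing a graph automorphism (the rotation of $C_{g-1}$, the involution of the tree $T_g$), which together with the multitwist $T$ gives the map $\pi_1(\Sigma_1)\to\on{Mod}(g)$ of Theorem \ref{T:nontrivialMorita} and Proposition \ref{prop:gysin-image-o1}; Corollary \ref{cor:subsurf} shows the answer genuinely depends on this $S$ and not merely on the vanishing-cycle configuration (if $S$ stabilizes a complementary subsurface the pullback of $o_1$ dies). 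Without specifying and exploiting such stratum monodromy, your residue criterion has no mechanism to be nonzero, and correspondingly your sketch never identifies which graphs $\Gamma$ it actually handles.

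Second, your treatment of the one-vertex graph is a genuine gap: nonvanishing of the universal class $o_1\in H^2(\mathscr{M}_g,\mathbb{V}_1)$ (Morita, for $g\geq 9$) does \emph{not} imply nonvanishing of its restriction to $\on{Spec}k(\mathscr{M}_g)$, because degree-two cohomology does not inject into the cohomology of the generic point---only $H^1$ of a normal scheme (or Deligne--Mumford stack) does. This is precisely why the paper blows up $\overline{Z_\Gamma}$, pushes $o_1$ into degree one on the exceptional divisor via the Gysin map, detects nonvanishing there with the explicit torus bundles, and only then invokes $H^1$-injectivity at the generic point (Proposition \ref{prop:blowup-o1-generic-point}); the generic-curve statement is then deduced by specializing the one-vertex graph to $C_{g-1}$ (Corollaries \ref{cor:o1-tropical-section-conjecture} and \ref{cor:generic-point-o1-nonvanishing}), not from the universal class on $\mathscr{M}_g$ directly. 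Relatedly, for $\widetilde{o_2}$ you acknowledge but do not resolve the fact that it is only a coset in the quotient $M(-,\mathbb{V}_2)$, so a ``residue'' of it is not a well-defined cohomology class; one needs the compatible quotients $N(-,-)$ of Section \ref{subsubsec:defn-of-N} (and the Picard-side compactification $\overline{P_{1,g}}$) before any degeneration argument for the $2$-nilpotent obstruction makes sense.
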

That is, we conjecture that the sequence (\ref{fundamental-exact-sequence}) has no sections when we set $X=\mathscr{C}_{\widehat{K_\Gamma}}$. We think of $\mathscr{C}_{\widehat{K_\Gamma}}$ as ``the generic curve with reduction type $\Gamma$." In our view this conjecture is interesting because it aims to capture the \emph{local, geometric} reasons for the truth of the section conjecture. Indeed, the methods we use to prove special cases of this conjecture can also be used to show the existence of arithmetic examples of curves over local fields satisfying the section conjecture for geometric reasons, as we explain later on in this introduction.
\begin{remark}
One can verify that the curves $\mathscr{C}_{\widehat{K_\Gamma}}$ have no rational points, at least in characteristic $0$, consistent with Conjecture \ref{conj:tropical-section-conjecture}. Indeed, it follows from the main result of Hubbard's thesis \cite{hubbard1972non} that if $\Gamma$ is the trivial graph, consisting only of a single vertex of genus $g$, then $\mathscr{C}_{\widehat{K_\Gamma}}$ has no $\widehat{K_\Gamma}$-rational points; this is the case of the generic curve. For non-trivial $\Gamma$, the analysis of rational points of the generic $n$-pointed curve by Earle and Kra \cite{earle1976sections} implies that the only rational points on the special fiber of the canonical curve over $\mathscr{O}_{\widehat{K_\Gamma}}$ are nodes.  Now an analysis of the deformation theory of these nodes shows that none of them lift to $\widehat{K_\Gamma}$-rational points of $\mathscr{C}_{\widehat{K_\Gamma}}$.
\end{remark}
\begin{remark}
The assumption $g>2$ is necessary so that $\widehat{K_\Gamma}$ is a field, rather than a gerbe over the spectrum of a field. This condition can be relaxed to $g\geq 2$ if one allows such objects into the formulation of the section conjecture, but we felt doing so would introduce unnecessary clutter.
\end{remark}

One of the main purposes of this paper is to prove several special cases of this conjecture, and indeed to identify precise obstructions to the splitting of (\ref{fundamental-exact-sequence}) in these cases. All of the cases of this conjecture that we verify are of a combinatorial nature, as we now explain.

Let $\widetilde{\Gamma}$ be a connected \emph{tropical curve}, i.e.~a connected metric graph such that the underlying graph $\Gamma$ is stable in the sense of Section \ref{sec:Mg-preliminaries}.   One may associate (non-functorially) a compact orientable surface $\Sigma_\Gamma$ to $\Gamma$, with marked loops $\gamma_e$ on $\Sigma_\Gamma$ for each edge $e$ of $\Gamma$ by ``inflating $\Gamma$" (see Figure \ref{fig:graph-surface} for an illustration, and Section~\ref{sec:Mg-preliminaries} for a precise description). We denote the Dehn twist around $\gamma_e$ by $T_e$.
\begin{figure}
    \centering
    \includegraphics[scale=.3]{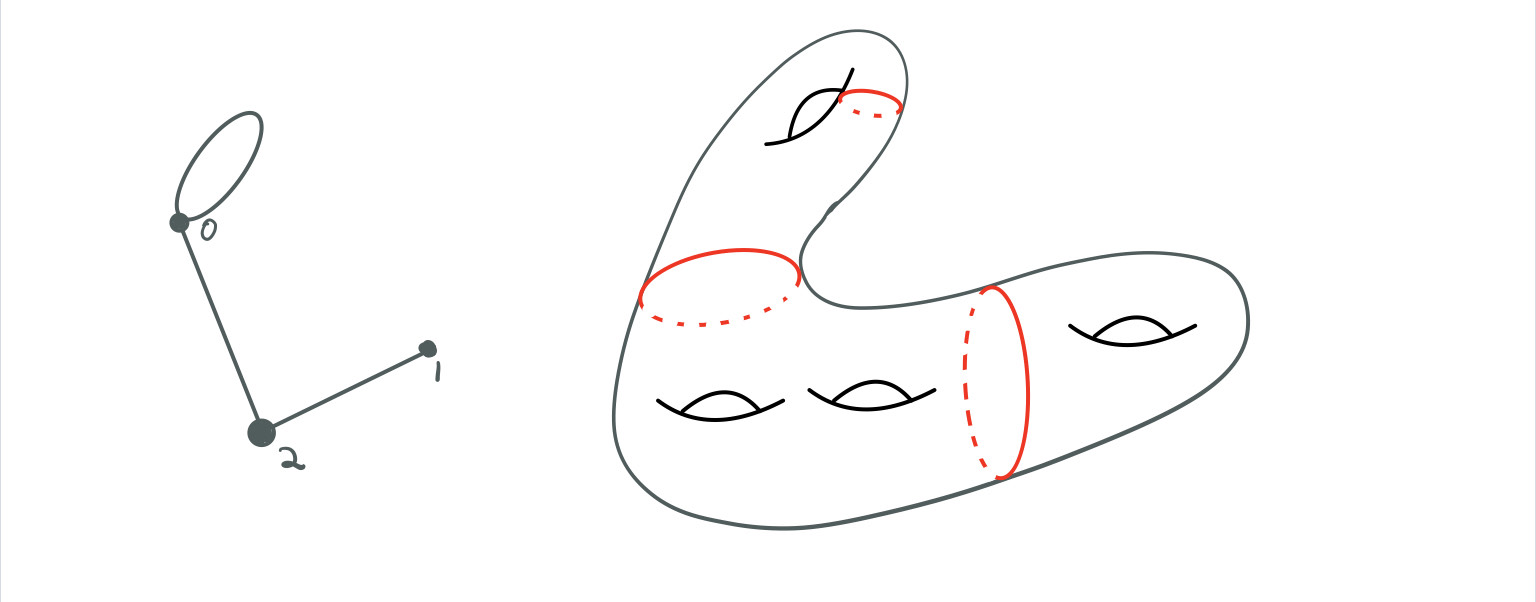}
    \caption{The surface $\Sigma_\Gamma$ associated to a stable graph $\Gamma$. Here each edge of $\Gamma$ corresponds to a marked loop in $\Sigma_\Gamma$, and a vertex of $\Gamma$ labeled by an integer $g$ corresponds to a component of the complement of the marked loops in $\Sigma_\Gamma$ of genus $g$.}
    \label{fig:graph-surface}
\end{figure}
Now assume that the edge lengths $\ell(e)$ of the metric graph $\widetilde{\Gamma}$ are positive integers. Let $T_\Gamma$ be the Dehn multitwist along the marked curves in $\Sigma_\Gamma$, that is, $$T_\Gamma=\prod_{e\in E} T_e^{\ell(e)}.$$ Let $G$ be a group of orientation-preserving mapping classes acting on $\Sigma_\Gamma$ and permuting the loops $\gamma_e$ up to isotopy, such that if $g(\gamma_e)=\gamma_{e'}$ for some $g\in G$, then $\ell(e)=\ell(e')$. Then $G$ commutes with $T_\Gamma$ up to isotopy and so we obtain an action (up to isotopy) of $\langle T_\Gamma \rangle\times G$ on $\Sigma_\Gamma$, where $\langle T_\Gamma \rangle$ is the subgroup of the mapping class group generated by $T_\Gamma$. Hence we have a fibration $$W=\Sigma_\Gamma \times_{(\langle T_\Gamma \rangle \times G)} E(\langle T_\Gamma \rangle \times G)\to B(\langle T_\Gamma \rangle\times G)$$ with fiber $\Sigma_\Gamma$, where here $E(\langle T_\Gamma \rangle \times G)$ is a contractible space with free $\langle T_\Gamma \rangle\times G$-action and $$B(\langle T_\Gamma \rangle \times G)=E(\langle T_\Gamma \rangle \times G)/(\langle T_\Gamma \rangle \times G)$$ denotes the classifying space. The long exact sequence in homotopy groups gives:
\begin{equation} \label{fundamental-tropical-sequence} 1 \to \pi_1(\Sigma_\Gamma)\to \pi_1(W)\to \langle T_\Gamma \rangle \times G\to 1.\end{equation}
\begin{question}\label{tropical-question}
For which $\Gamma, G$ does sequence (\ref{fundamental-tropical-sequence}) split?
\end{question}
One observation of this paper is that in some cases, answering this purely topological question for certain $G,\Gamma$, allows us to (non-trivially) deduce Conjecture \ref{conj:tropical-section-conjecture} for $\Gamma$ and certain $k$. 
\subsection{Main results}
\subsubsection{Geometric results}\label{subsubsec:geometric-results}
Let $\Gamma, \Gamma'$ be stable graphs. We say that $\Gamma$ \emph{specializes} to $\Gamma'$ if $\Gamma$ can be obtained from $\Gamma'$ by contracting edges (or equivalently, if $Z_{\Gamma'}$ is in the closure of $Z_\Gamma$). So, for example the graph consisting only of a single vertex of genus $g$ specializes to every stable graph of genus $g$. 

Our first result is a verification of Conjecture \ref{conj:tropical-section-conjecture} in many cases. A simple-to-state special case of this result is:

\begin{theorem}\label{thm:main-geometric-thm}
 Let $k$ be a field of characteristic $0$.
 \begin{enumerate}
\item Let $g> 2$ be an integer. Let $C_{g-1}$ be the stable graph consisting of a $(g-1)$-cycle all of whose vertices have genus $1$. Then, if $\Gamma$ is any graph which specializes to $C_{g-1}$, the section conjecture is trivially true for $\mathscr{C}_{\widehat{K_\Gamma}}$.
\item  Let $g>2$ be an even integer, and let $T_g$ be any stable tree of genus $g$ admitting an involution that fixes no vertices and stabilizes a unique edge.  Then, if $\Gamma$ is any graph which specializes to $T_g$, the section conjecture is trivially true for $\mathscr{C}_{\widehat{K_\Gamma}}$.
\end{enumerate}
\end{theorem}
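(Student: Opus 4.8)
The plan is to derive both statements from the non-vanishing of an obstruction class attached to $\mathscr{C}_{\widehat{K_\Gamma}}$: the abelian class $o_1$ for part (1) and the $2$-nilpotent class $\widetilde{o_2}$ for part (2). Before any computation I would make two reductions. \emph{Reduction to the extremal graphs.} Since $\Gamma$ specializes to $\Gamma_0\in\{C_{g-1},T_g\}$ exactly when $Z_{\Gamma_0}$ lies in the closure of $Z_\Gamma$, one can build a field $M$ admitting compatible embeddings $\widehat{K_\Gamma}\hookrightarrow M$ and $\widehat{K_{\Gamma_0}}\hookrightarrow M$ over the universal curve --- for instance the fraction field of the completion of $\widehat{\mathscr{O}_{\overline{\mathscr{M}_g},Z_{\Gamma_0}}}$ at a prime lying over the generic point of $Z_\Gamma$. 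As $o_1$ and $\widetilde{o_2}$ are pulled back from classes on $\mathscr{M}_g$ and on $\mathbf{Pic}^1_{\mathscr{C}_g/\mathscr{M}_g}$, the restrictions of the relevant class to $\widehat{K_\Gamma}$ and to $\widehat{K_{\Gamma_0}}$ restrict in turn to the \emph{same} class over $M$; it therefore suffices to prove non-vanishing over $M$, and since $M$ retains the whole degeneration present along $Z_{\Gamma_0}$, this reduces the theorem to $\Gamma=C_{g-1}$ and $\Gamma=T_g$. \emph{Reduction to topology.} For such $\Gamma_0$ the group $\Gal(\overline{\widehat{K_{\Gamma_0}}}/\widehat{K_{\Gamma_0}})$ contains the inertia $\widehat{\mathbb{Z}}(1)$ around each boundary divisor --- acting on the geometric $\pi_1$ by the Dehn twist $T_e$ --- and surjects onto the absolute Galois group of the residue field of $Z_{\Gamma_0}$, through which a subgroup $G\leq\Aut(\Gamma_0)$ acts by the marked-surface symmetries of $\Sigma_{\Gamma_0}$. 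Pulling (\ref{fundamental-exact-sequence}) back along the resulting homomorphism $\mathbb{Z}T_{\Gamma_0}\times G\to\Gal(\overline{\widehat{K_{\Gamma_0}}}/\widehat{K_{\Gamma_0}})$ turns a hypothetical $\pi_1$-section into a section of the profinite completion of (\ref{fundamental-tropical-sequence}), and identifies $o_1$ and $\widetilde{o_2}$, up to the cyclotomic twist, with topological characteristic classes of the surface bundle $W$; so it is enough to give a negative answer to Question \ref{tropical-question} in these cases.

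For part (1) I would compute $o_1$ from the combinatorics of the cycle. Oriented around $C_{g-1}$, the marked curves $\gamma_e$ are all homologous to one primitive class $\delta$ spanning the ``toric'' part of $H_1(\Sigma_g)$, so the multitwist $T_{C_{g-1}}$ acts on $H_1$ by the transvection $x\mapsto x+(g-1)\langle x,\delta\rangle\delta$, and the push-out of (\ref{fundamental-tropical-sequence}) along $\pi_1(\Sigma_g)\to H_1$ is governed by this pairing. Equivalently, $o_1$ is (up to twist) the restriction to $\widehat{K_{C_{g-1}}}$ of the class on $\mathscr{M}_g$ closely related to Morita's, and the presence of the $(g-1)$-cycle in $\Gamma$ forces that restriction to be non-zero, so (\ref{fundamental-exact-sequence}) has no section. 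Taking $\Gamma$ to be the one-vertex graph of genus $g$ then recovers the section conjecture for the generic curve of genus $g\geq 3$.

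For part (2) the abelian machinery is insufficient by design, and the hypotheses on $T_g$ are what force --- and enable --- a genuinely non-abelian argument. Because $T_g$ is a tree, every edge is a bridge, so every $\gamma_e$ on $\Sigma_{T_g}$ is \emph{separating} and hence nullhomologous; the geometric monodromy of the degeneration lies in the Johnson-kernel subgroup of $\Mod_g$, the Jacobian acquires good, compact-type reduction, and $o_1$ carries no information here --- indeed it vanishes outright for curves with a rational divisor class of degree one, which is the situation one ultimately wants to treat. The $2$-nilpotent class $\widetilde{o_2}$, by contrast, can still be non-zero, and here the involution $\iota$ is essential: taking $G=\langle\iota\rangle\cong\mathbb{Z}/2$ supplies exactly the extra symmetry that detects $\widetilde{o_2}$, while the unique $\iota$-stable edge $e_0$ --- exhibiting the stable curve of type $T_g$ as two halves glued at a single node and swapped by $\iota$ --- makes this symmetry compatible with the $T_{T_g}$-degeneration without trivializing it. I would then study the degeneration of $\widetilde{o_2}$ along the $T_{T_g}$-direction equivariantly for $\iota$, reducing to a non-vanishing statement in $H^2$ of $\mathbb{Z}T_{T_g}\times\langle\iota\rangle$ with coefficients in the second lower-central-series quotient of $\pi_1(\Sigma_g)$.

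The decisive step --- and the one requiring real work --- is this last non-vanishing. Unlike $o_1$, the class $\widetilde{o_2}$ is genuinely non-abelian, so there is no recourse to characteristic classes of the Jacobian or to Morita's abelian theory; one must instead understand the second graded piece of the lower central series of $\pi_1(\Sigma_g)$ as a module for the separating twists $T_e$ and for $\iota$ at once, and show the combined class survives even though each ingredient is individually ``small.'' A secondary but necessary technical point, pervading the reduction to topology, is the bookkeeping around the stacky structure of $\overline{\mathscr{M}_g}$ near $Z_{\Gamma_0}$ and the cyclotomic twists on the inertia, which is what actually converts a topological non-splitting into the arithmetic statement of Theorem \ref{thm:main-geometric-thm}.
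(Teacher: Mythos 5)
Your overall route is the paper's: specialize to the extremal graphs, realize the multitwist together with the graph symmetry (the order-$(g-1)$ rotation, resp.\ the involution) as a commuting pair acting on the nearby smooth fiber, and detect the obstruction by pulling $o_1$, resp.\ $\widetilde{o_2}$, back to the resulting rank-two group; your part (1) computation is essentially Theorem \ref{T:nontrivialMorita}, and your field $M$ plays the role of the paper's exceptional-divisor field $\widehat{L_\Gamma}$. But two steps you treat as routine are where the content lies. First, the passage to $\Gal(\overline{\widehat{K_\Gamma}}/\widehat{K_\Gamma})$: a negative answer to Question \ref{tropical-question} is not by itself enough, since non-splitting of a discrete extension need not survive profinite completion; what is needed is that the torsion obstruction class itself restricts nontrivially, and for that you must actually produce, inside the decomposition group, a commuting pair consisting of a generator of the multitwist inertia and an element acting on the geometric $\pi_1$ by the mapping class $S$. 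This requires identifying the inertia with the multitwist (Lemma \ref{lem:dehn-twist-inertia}; this is exactly why the paper blows up $\overline{Z_\Gamma}$ and works with the exceptional divisor, since only there is the multitwist a procyclic inertia), producing the commuting lift of $S$ (Lemmas \ref{lem:pi1-factoring} and \ref{lem:galois-factoring} --- and conjugation twists inertia by the cyclotomic character, so over a general characteristic-zero $k$ one must pass to $\bar k$ and restrict back), the topological--\'etale comparison (Proposition \ref{EtaleGysinCompleted}), and the degree-shifting Gysin step together with injectivity of $H^1$ of a normal stack into the Galois cohomology of its generic point (Propositions \ref{prop:gysin-image-o1}, \ref{prop:blowup-o1-generic-point}); without some version of this, non-vanishing of a class in $H^2(\mathscr{M}_g,\mathbb{V}_1)$ says nothing about its restriction to a local field.

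The more serious gap is in part (2), where you defer exactly the step that constitutes the proof, and where the target group you name is not the right one: $\widetilde{o_2}$ is not a class in $H^2(\mathbb{Z}T_{T_g}\times\langle\iota\rangle, L^2\pi/L^3\pi)$, because its construction depends on a choice of splitting of the abelianized sequence (equivalently a lift to $\pi_1(\mathbf{Pic}^1_{\mathscr{C}_g/\mathscr{M}_g})$), and it is only well defined in the quotient $M(G,L^2\pi/L^3\pi)$ of $H^2$ by the images of the commutator--cup-product map $m$ and the non-abelian boundary $\delta$ (Definitions \ref{def:SecondaryMoritaClass} and \ref{def:obs-independent-of-section}, Proposition \ref{prop:obs-independent-of-section}). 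Showing that a representative of $\gamma_2^*o_2$ is non-zero $2$-torsion in the coinvariants is the easy half (Lemma \ref{lemma:gamma_2o_2of-order2}); the hard half, which your outline does not engage, is showing it is not absorbed by $\on{im}(m)$ or $\on{im}(\delta)$ --- the explicit cocycle analysis of Theorems \ref{P:nontrivialsecondaryMorita} and \ref{thm:tree-o2-nonvanishing} (the equivariant functional $\rho$, the containment of $\on{im}(m)$ in classes represented by $H^G\wedge H$, the evaluation of $\delta$ on generators via the finite resolution). Without this the argument does not close: a priori $\gamma^*\widetilde{o_2}$ could vanish even though your chosen cocycle does not. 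Relatedly, since $\widetilde{o_2}$ only exists after choosing a degree-one divisor class, you still owe an explanation of why it obstructs sections over $\widehat{K_\Gamma}$ itself; the paper routes this through $P_{1,g}$ and its Caporaso compactification (hence Proposition \ref{prop:irreducible-fibers}, the auxiliary quotients $N$, and Corollary \ref{cor:tropical-section-on-pic}), and some such mechanism, together with the independence-of-splitting statement, is needed in your argument as well.
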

 See Figures \ref{Fig:Genus4Max}, \ref{Fig:C34} for other examples of graphs for which our method succeeds, including some graphs corresponding to boundary components of $\overline{\mathscr{M}_g}$ of maximal codimension. See Corollary \ref{cor:o1-tropical-section-conjecture} for a strenghtening of (1) and Corollary \ref{cor:tropical-section-on-pic} for a strengthening of (2).

In fact we prove a substantially stronger result---in case (1) of Theorem \ref{thm:main-geometric-thm}, we show that there is an obstruction to splitting sequence (\ref{fundamental-exact-sequence}) arising from the abelianization of the geometric étale fundamental group, and in case (2) we show that, while there is no such abelian obstruction, there is an obstruction arising from the second nilpotent quotient of the geometric étale fundamental group.

As a consequence of our argument in case (2), we find: 
\begin{theorem}\label{thm:generic-point-of-pic}
Let $k$ be a field of characteristic $0$. Let $g>2$ be even, and let $Q:=k(\mathbf{Pic}^1_{\mathscr{C}_g/\mathscr{M}_g})$ be the function field of the moduli space of degree $1$ divisors on the universal curve over $k$. Then the section conjecture is trivially true for the base change of the universal curve $\mathscr{C}_g$ to $Q$. 
\end{theorem}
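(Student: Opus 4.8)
The plan is to deduce this from the strengthened form of Theorem~\ref{thm:main-geometric-thm}(2) recorded in Corollary~\ref{cor:tropical-section-on-pic}, using that the obstruction class $\widetilde{o_2}$ is functorial under base change. The generic point of $\mathbf{Pic}^1_{\mathscr{C}_g/\mathscr{M}_g}$ classifies the pair $(\mathscr{C}_Q,L)$, where $\mathscr{C}_Q=\mathscr{C}_g\otimes_{\mathscr{M}_g} Q$ is the curve in the statement and $L$ is the tautological degree-one line bundle class. Because $\mathscr{C}_Q$ thus carries a $Q$-rational divisor class of degree one, the abelian obstruction $o_1$ for $\mathscr{C}_Q/Q$ vanishes, so the $2$-nilpotent obstruction $\widetilde{o_2}=\widetilde{o_2}(\mathscr{C}_Q,L)$ is defined; as it obstructs $\pi_1$-sections, it suffices to prove $\widetilde{o_2}\neq 0$, for then sequence~(\ref{fundamental-exact-sequence}) has no section when $X=\mathscr{C}_Q$, which is exactly the assertion.

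Next I would set up the relevant degeneration. Fix a stable tree $T_g$ of genus $g$ with an involution fixing no vertices and stabilizing a unique edge, as in Theorem~\ref{thm:main-geometric-thm}(2) (for instance the one-edge tree on two vertices of genus $g/2$), and work over $R_0:=\widehat{\mathscr{O}}_{\overline{\mathscr{M}_g},Z_{T_g}}\cong k(Z_{T_g})[[t]]$, with fraction field $\widehat{K_{T_g}}=k(Z_{T_g})((t))$. Over $R_0$ the canonical curve $\mathscr{X}$ has compact-type special fibre — the stable curve of type $T_g$ over $k(Z_{T_g})$ — so the relative Jacobian of $\mathscr{X}$ is an abelian scheme over $R_0$ and the component $P$ of $\mathbf{Pic}_{\mathscr{X}/R_0}$ of multidegree $(1,0)$ on the special fibre is a torsor under it, smooth and proper over $R_0$. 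Then $P$ is an integral regular scheme whose function field contains $Q$ (its generic fibre is $\mathbf{Pic}^1_{\mathscr{C}_g/\mathscr{M}_g}\otimes_{\mathscr{M}_g}\widehat{K_{T_g}}$ and $\widehat{K_{T_g}}$ dominates $k(\mathscr{M}_g)$), and its special fibre $P_0$, being a torsor under an abelian variety over $k(Z_{T_g})$, is an irreducible reduced divisor in $P$. Let $\widehat{K}:=\mathrm{Frac}(\widehat{\mathscr{O}}_{P,P_0})=\kappa(P_0)((t))$; then $Q\hookrightarrow\widehat{K}$ and the induced morphism $\operatorname{Spec}\widehat{K}\to\mathbf{Pic}^1_{\mathscr{C}_g/\mathscr{M}_g}$ is dominant, so that $\mathscr{C}_g\otimes_{\mathscr{M}_g}\widehat{K}=\mathscr{C}_Q\otimes_Q\widehat{K}$; on the other hand this curve is also the base change along $\widehat{K_{T_g}}\hookrightarrow\widehat{K}$ of $\mathscr{X}_{\widehat{K_{T_g}}}$, so it is the canonical smoothing of the type-$T_g$ stable curve over $\kappa(P_0)$, its degree-one class specializing along $t=0$ to the universal multidegree-$(1,0)$ line bundle on that stable curve.

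Then I would conclude as follows. The monodromy of the family $\mathscr{C}_g\otimes_{\mathscr{M}_g}\widehat{K}$ around $t=0$ is the Dehn multitwist along the vanishing cycle of the single stabilized node (so $T_{T_g}$ with edge length $1$), and the involution of $T_g$ supplies the extra symmetry, so the degeneration of $\widetilde{o_2}$ along $P_0$ is governed by exactly the topological situation of Question~\ref{tropical-question} for $\Gamma=T_g$, $G=\mathbb{Z}/2$ analyzed in case (2) — the only difference being that the base field $\kappa(P_0)$ is larger than $k(Z_{T_g})$, which is immaterial to the topological input. Hence the argument proving Theorem~\ref{thm:main-geometric-thm}(2), which exhibits $\widetilde{o_2}$ as nonzero by reducing its nonvanishing to the non-splitting of~(\ref{fundamental-tropical-sequence}), applies to show $\widetilde{o_2}\neq 0$ for this curve-with-class over $\widehat{K}$; this is precisely the assertion of Corollary~\ref{cor:tropical-section-on-pic}. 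Finally, by functoriality of $\widetilde{o_2}$ under the base change $Q\hookrightarrow\widehat{K}$, this class is the restriction of $\widetilde{o_2}(\mathscr{C}_Q,L)$ along $\Gal(\overline{\widehat{K}}/\widehat{K})\to\Gal(\overline{Q}/Q)$, so $\widetilde{o_2}(\mathscr{C}_Q,L)\neq 0$, and $\mathscr{C}_Q$ trivially satisfies the section conjecture.

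The step I expect to be the main obstacle is the construction of $P$ together with the identification of the degeneration: one must pick the right proper model of $\mathbf{Pic}^1_{\mathscr{C}_g/\mathscr{M}_g}$ over $R_0$ — here the multidegree-$(1,0)$ torsor under the abelian-scheme Jacobian, which exists exactly because the $T_g$-degeneration is of compact type — so that $P_0$ is an honest prime divisor with $\widehat{\mathscr{O}}_{P,P_0}$ a discrete valuation ring, which is what makes $\widehat{K}$ a field containing $Q$ with dominant associated map; and then verify that the limiting pair (stable curve of type $T_g$, multidegree-$(1,0)$ bundle) over $\kappa(P_0)$, with its monodromy and involution, is genuinely the degeneration studied in case (2), so that the topological computation of $\widetilde{o_2}$ there applies without change. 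All of the substantive content — the nonvanishing of $\widetilde{o_2}$ in the $T_g$-degenerate situation — is inherited from Theorem~\ref{thm:main-geometric-thm}(2)/Corollary~\ref{cor:tropical-section-on-pic}; the remainder is bookkeeping with local rings and the functoriality of the obstruction class.
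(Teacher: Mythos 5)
Your high-level reduction is the paper's: show $o_{1,\text{\'et}}$ vanishes over $Q$ (the tautological degree-one class) and that $\widetilde{o_{2,\text{\'et}}}|_Q\neq 0$ by restricting to a complete local field at a boundary stratum of type $T_g$, then invoke functoriality. In fact the paper disposes of this in one line: Corollary \ref{cor:tropical-section-on-pic} applied with $\Gamma'$ the single-vertex graph gives $\widehat{Q_{\Gamma'}}=Q$ on the nose (and for any other $\Gamma'$ one has $Q\hookrightarrow\widehat{Q_{\Gamma'}}$ plus functoriality), so the non-splitting of the $2$-nilpotent sequence over $Q$ is immediate. The genuine gap in your write-up is the bespoke degeneration model in the middle paragraph. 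Over $R_0=\widehat{\mathscr{O}}_{\overline{\mathscr{M}_g},Z_{T_g}}$ the special fibre is the generic curve of type $T_g$, and for your symmetric one-edge tree the stratum $Z_{T_g}$ is generically the quotient of $\mathscr{M}_{g/2,1}\times\mathscr{M}_{g/2,1}$ by the swap; hence the two components of the special fibre are interchanged by the Galois action of $k(Z_{T_g})$, and the multidegree-$(1,0)$ component of $\mathbf{Pic}_{\mathscr{X}/R_0}$ is simply not defined over $R_0$. This is not a fixable bookkeeping point by choosing a cleverer multidegree: since the involution pairs up all vertices and the total degree $1$ is odd, no multidegree is swap-invariant. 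Worse, the obstruction to rigidifying is exactly the ingredient you need: if you pass to the quadratic extension over which $(1,0)$ makes sense, the swap $S$ leaves the Galois image, and the nonvanishing input (Theorem \ref{thm:tree-o2-nonvanishing}, whose order-two statement is detected through the $S$-action in the coinvariants and the map $\rho$) no longer applies as stated. A correct model must identify the conjugate multidegrees by twisting along components of the special fibre (a N\'eron-type/separated model, or Caporaso's compactification) — which is precisely what the paper's $\overline{P_{1,g}}$, $P_\Gamma$ machinery is for.

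Relatedly, your assertion that the statement over your $\widehat{K}$ ``is precisely the assertion of Corollary \ref{cor:tropical-section-on-pic}'' is not accurate: that corollary is about the fraction field of the complete local ring of $\overline{P_{1,g}}$ along $\pi^{-1}(\overline{Z_{\Gamma'}})$, and its proof uses specific features of that compactification — irreducibility of the boundary fibres (Proposition \ref{prop:irreducible-fibers}), regularity of the relevant locus of the blowup (Lemma \ref{lem:cohen-macaulay}), the inertia/decomposition-group statements (Lemmas \ref{lem:dehn-twist-inertia} and \ref{lem:galois-factoring}), and the $M(\cdot)\to N(\cdot)$ Gysin bookkeeping forced by the fact that $\widetilde{o_2}$ lives only in a quotient of $H^2$. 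If you insist on a different model you must re-verify all of these for it, and none of that is done. The shortest correct argument is the paper's: $o_{1,\text{\'et}}|_Q=0$ because the universal degree-one class trivializes the torsor, and $\widetilde{o_{2,\text{\'et}}}|_Q\neq 0$ by Corollary \ref{cor:tropical-section-on-pic} with $\Gamma'$ the single-vertex graph, so sequence (\ref{fundamental-exact-sequence}) for $\mathscr{C}_{g,Q}$ has no sections; no new model is needed.
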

See Corollary \ref{cor:generic-point-of-pic} for a more precise statement---we show that while there is no abelian obstruction to sections, there is in fact a $2$-nilpotent obstruction. That is, we show the sequence $$1\to \pi_1^{\text{\'et}}(\mathscr{C}_{g,\overline{ Q}})/L^3\pi_1^{\text{\'et}}(\mathscr{C}_{g,\overline{Q}})\to \pi_1^{\text{\'et}}(\mathscr{C}_{g,Q})/L^3 \pi_1^{\text{\'et}}(\mathscr{C}_{g,\overline{Q}})\to \on{Gal}(\overline{Q}/ {Q})\to 1$$ does not split, where $L^i\pi_1^{\text{\'et}}(\mathscr{C}_{g,\overline{ Q}})$ denotes the lower central series of $\pi_1^{\text{\'et}}(\mathscr{C}_{g,\overline{ Q}})$. As far as we know even the following simple corollary is new:
\begin{corollary}
Let $k$ be a field of characteristic $0$ and let $g>2$ be even. The base change of the universal curve to $k(\mathbf{Pic}^1_{\mathscr{C}_g/\mathscr{M}_g})$ has no rational points.
\end{corollary}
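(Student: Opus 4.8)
The plan is to obtain the corollary as a formal consequence of Theorem~\ref{thm:generic-point-of-pic}, together with the standard construction of a $\pi_1$-section from a rational point. Write $Q = k(\mathbf{Pic}^1_{\mathscr{C}_g/\mathscr{M}_g})$ and let $X = \mathscr{C}_{g,Q}$ be the base change of the universal curve; this is a smooth projective geometrically connected $Q$-curve of genus $g > 2$. Applying the functor $\pi_1^{\text{\'et}}$ to a hypothetical $Q$-point $x\colon \operatorname{Spec} Q \to X$ (after fixing compatible geometric base points) yields a continuous homomorphism $\operatorname{Gal}(\overline{Q}/Q) \to \pi_1^{\text{\'et}}(X,\bar x)$ which splits the surjection in sequence~(\ref{fundamental-exact-sequence}), and whose conjugacy class $[s_x]$ under $\pi_1^{\text{\'et}}(X_{\overline Q},\bar x)$ does not depend on the auxiliary choices. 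Hence $X(Q)\neq\emptyset$ would imply that sequence~(\ref{fundamental-exact-sequence}) splits.

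On the other hand, Theorem~\ref{thm:generic-point-of-pic} says exactly that $X$ \emph{trivially} satisfies the section conjecture, which by definition (following Stix) means that sequence~(\ref{fundamental-exact-sequence}) admits no splitting whatsoever. Combining the two observations forces $X(Q) = \emptyset$, which is the assertion of the corollary.

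I expect no real obstacle in this deduction itself; all of the difficulty is upstream, in Theorem~\ref{thm:generic-point-of-pic} and the more precise Corollary~\ref{cor:generic-point-of-pic}, whose proof produces a nonzero $2$-nilpotent obstruction class $\widetilde{o_2}$ for $X$ by controlling its degeneration to the boundary of $\mathscr{M}_g$ through the topological surface-bundle model associated to a multitwist $T_\Gamma$ and a symmetry group $G$ as in Question~\ref{tropical-question}. It is essential that one works with a genuinely non-abelian obstruction: since $\mathbf{Pic}^1_{\mathscr{C}_g/\mathscr{M}_g}$ carries a tautological divisor class of degree one, the curve $X$ has period $1$, so the period obstruction of Stix and the abelian class $o_1$ (which, as recalled in the introduction, is governed by the period) vanish identically. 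This is also why a purely geometric argument seems hard to push through: a $Q$-point of $X$ amounts to a rational map $\mathbf{Pic}^1_{\mathscr{C}_g/\mathscr{M}_g}\dashrightarrow \mathscr{C}_g$ over $\mathscr{M}_g$, equivalently to a morphism from the Jacobian torsor of the generic curve of genus $g$ to that curve, and making such a configuration contradictory runs straight into the period-versus-rational-point phenomenon that $\widetilde{o_2}$ is designed to detect --- so the cohomological route via Theorem~\ref{thm:generic-point-of-pic} is the one to take.
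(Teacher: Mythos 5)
Your deduction is correct and is exactly the intended one: the paper states this corollary as an immediate consequence of Theorem~\ref{thm:generic-point-of-pic}, since a $Q$-rational point would furnish a conjugacy class of splittings of sequence~(\ref{fundamental-exact-sequence}), while trivially satisfying the section conjecture means no such splitting exists. The surrounding discussion in your last paragraph (vanishing of $o_1$ because of the tautological degree-one class, and the role of $\widetilde{o_2}$) matches the paper's Corollary~\ref{cor:generic-point-of-pic}, but is not needed for the corollary itself.
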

The following consequence of our methods partially strengthens a result of Hain \cite{hain2011rational}:
\begin{corollary}\label{cor:generic-point-of-Mg}
Let $g>2$ be an integer and $k$ a field of characteristic $0$. Let $L$ be an extension $L$ of the function field $k(\mathscr{M}_g)$ of $\mathscr{M}_{g, k}$ of degree not divisible by $g-1$. Then the base change $\mathscr{C}_{g,L}$ of the universal curve $\mathscr{C}_g$ of genus $g$ to  $L$ trivially satisfies the section conjecture. Indeed, the exact sequence $$1\to \pi_1^{\text{\'et}}(\mathscr{C}_{g,\overline{ k(\mathscr{M}_g)}})^\text{ab}\to \pi_1^{\text{\'et}}(\mathscr{C}_{g,{L}})/[\pi_1^{\text{\'et}}(\mathscr{C}_{g,\overline{ k(\mathscr{M}_g)}}), \pi_1^{\text{\'et}}(\mathscr{C}_{g,\overline{ k(\mathscr{M}_g)}})]\to \on{Gal}(\overline{ k(\mathscr{M}_g)}/ { L})\to 1$$ does not split.
\end{corollary}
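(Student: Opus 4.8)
The plan is to derive this from case~(1) of Theorem~\ref{thm:main-geometric-thm}, and to identify precisely which cohomology class does the obstructing, so as to extract the sharper statement about the abelian sequence. First, the trivial stable graph $\Gamma_0$ (a single vertex of genus $g$) specializes to $C_{g-1}$: contracting the edges of $C_{g-1}$ one at a time yields, at the last step, the contraction of a loop, which adds $1$ to the genus, leaving a single vertex of genus $(g-1)+1=g$. The associated stratum $Z_{\Gamma_0}$ is the open locus $\mathscr{M}_{g,k}\subset\overline{\mathscr{M}_{g,k}}$, so $\widehat{K_{\Gamma_0}}=k(\mathscr{M}_g)$ and $\mathscr{C}_{\widehat{K_{\Gamma_0}}}=\mathscr{C}_{g,k(\mathscr{M}_g)}$; hence Theorem~\ref{thm:main-geometric-thm}(1) applies and already gives that the section conjecture holds trivially here. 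To upgrade this to ``the abelian sequence in the statement does not split,'' I would work with the class $o_1\in H^2\big(\on{Gal}(\overline{k(\mathscr{M}_g)}/k(\mathscr{M}_g)),T\big)$, $T:=\pi_1^{\text{ét}}(\mathscr{C}_{g,\overline{k(\mathscr{M}_g)}})^{\mathrm{ab}}$, classifying the extension of $\on{Gal}(\overline{k(\mathscr{M}_g)}/k(\mathscr{M}_g))$ by $T$ obtained by pushing the sequence (\ref{fundamental-exact-sequence}) out along abelianization. By construction this extension---which is exactly the one in the corollary---splits if and only if $o_1=0$, and $o_1$ is precisely the abelian obstruction that case~(1) of Theorem~\ref{thm:main-geometric-thm} exhibits as nonzero. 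So everything reduces to $o_1\neq0$.

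To prove $o_1\neq0$ I would degenerate to the cycle stratum $Z_{C_{g-1}}$, which lies in the closure of $\mathscr{M}_{g,k}$. Its completed local ring $\mathscr{O}_{\widehat{K_{C_{g-1}}}}$ is a regular local ring, hence a domain, and the completion map is injective, so there is an embedding $k(\mathscr{M}_g)\hookrightarrow\widehat{K_{C_{g-1}}}$ along which $\mathscr{C}_{\widehat{K_{C_{g-1}}}}$ is the base change of $\mathscr{C}_{g,k(\mathscr{M}_g)}$. Since $o_1$ is functorial for base field extension, $o_1\neq0$ follows once we know the corresponding class $o_1^{C_{g-1}}$ over $\widehat{K_{C_{g-1}}}$ is nonzero. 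Now the absolute Galois group of $\widehat{K_{C_{g-1}}}$ is understood: it is an extension of the absolute Galois group of the residue field $k(Z_{C_{g-1}})$ by the inertia $\prod_{e}\widehat{\mathbb{Z}}(1)$ at the $g-1$ boundary divisors (nodes); the inertia generators act on $T$ through the Tate-twisted transvections attached to the multitwist curves $\gamma_e$, while the residue Galois group acts through the topological monodromy of the family of pointed genus-one curves parametrized by $Z_{C_{g-1}}$ together with the cyclic symmetry of the cycle. Restricting to a suitable quotient and pushing out to $T$, the class $o_1^{C_{g-1}}$ becomes exactly the obstruction to splitting a sequence of the shape (\ref{fundamental-tropical-sequence}) for $\Gamma=C_{g-1}$ and $G$ generated by the cyclic symmetry of the cycle (and, if needed, the monodromy of the genus-one pieces). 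So the corollary is reduced to the topological Question~\ref{tropical-question} for this pair $(C_{g-1},G)$, which is the heart of case~(1) of Theorem~\ref{thm:main-geometric-thm}. (Restricting $o_1$ instead to the geometric fundamental group $\pi_1^{\text{ét}}(\mathscr{M}_{g,\overline k})\cong\widehat{\on{Mod}_g}$ identifies it with the profinite completion of Morita's extension class in $H^2(\on{Mod}_g,H)$, $H=H_1(\Sigma_g;\mathbb{Z})$; one could alternatively quote Morita's nonvanishing of that class, but the degeneration argument is self-contained and adapts to the deeper strata of $\overline{\mathscr{M}_g}$.)

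The main obstacle is therefore the explicit non-splitting computation for $(C_{g-1},G)$. Here I would use the special homological structure of the inflated surface $\Sigma_{C_{g-1}}$: cutting along all the edge curves $\gamma_e$ disconnects it into $g-1$ copies of a genus-one surface with two boundary components, so, for a suitable choice of orientations, the $\gamma_e$ are all homologous to a single primitive class $\delta\in H_1(\Sigma_g)$, and the multitwist $T_{C_{g-1}}$ acts on $H_1$ by $x\mapsto x+(g-1)\langle x,\delta\rangle\delta$. One then has to show that the resulting extension of $\mathbb{Z}T_{C_{g-1}}\times G$ by $H_1(\Sigma_g)$ (equivalently, in the arithmetic incarnation, of the corresponding Galois group by $T$) is non-split; by obstruction theory this comes down to exhibiting an explicit $2$-cocycle built from the inertia generators, the transvection direction $\delta$, and the $G$-action whose class is nonzero in $H^2$ of the relevant group with coefficients in $H_1(\Sigma_g)$, and to checking that this nonvanishing survives profinite completion (e.g.\ by reducing modulo a suitable prime). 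The point is that the presence of the symmetry $G$---together with the fact that $\delta$ is nontrivial in homology---is what forces non-splitting, in contrast with the mapping torus of a single multitwist, where the sequence $1\to\pi_1(\Sigma_g)\to\pi_1\to\mathbb{Z}\to1$ always splits because $\mathbb{Z}$ is free. The remaining ingredients---functoriality of $o_1$ along $k(\mathscr{M}_g)\hookrightarrow\widehat{K_{C_{g-1}}}$, the equivalence of the splitting of the abelian sequence with $o_1=0$, and the translation between the étale and topological descriptions of the monodromy over $Z_{C_{g-1}}$---are formal. Finally, this recovers Hain's result and strengthens it in that the conclusion, non-splitting of the \emph{abelian} sequence, is obtained over an arbitrary field of characteristic $0$.
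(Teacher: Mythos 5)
Your outline follows essentially the same route as the paper: reduce the corollary to the nonvanishing of the abelian obstruction $o_{1,\text{\'et}}$ over $k(\mathscr{M}_g)$ (this equivalence is Proposition \ref{prop:o1-vanishing-tautological-consequence}(2)), then detect that nonvanishing by degenerating to the cycle stratum $C_{g-1}$ and exploiting the rank-two subgroup generated by the Dehn multitwist and the cyclic symmetry --- which is exactly the chain Theorem \ref{T:nontrivialMorita} $\Rightarrow$ Corollary \ref{cor:main-corollary-o1-pinwheel} $\Rightarrow$ Propositions \ref{prop:gysin-image-o1}, \ref{prop:blowup-o1-generic-point} $\Rightarrow$ Corollaries \ref{cor:o1-tropical-section-conjecture}, \ref{cor:generic-point-o1-nonvanishing}. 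Your homological bookkeeping is correct (all the $\gamma_e$ in the cycle are homologous to a primitive class $\delta$, and the multitwist acts on $H_1$ by $x\mapsto x+(g-1)\langle x,\delta\rangle\delta$), and the reduction ``$o_1\neq 0$ over $\widehat{K_{C_{g-1}}}$ implies $o_1\neq 0$ over $k(\mathscr{M}_g)$'' is the right use of functoriality. However, the two load-bearing steps are not actually carried out. First, the nonvanishing computation itself is only promised (``one then has to show\ldots exhibiting an explicit $2$-cocycle\ldots''); in the paper this is the content of Proposition \ref{P:MoritaSurfaceGroup}, Corollary \ref{cor:reppullback} and Theorem \ref{T:nontrivialMorita}, where the pulled-back class is identified with the image of $[l_1]$ in the coinvariants $H_1(\Sigma_g,\mathbb{Z})_{\langle S,T\rangle}$ and shown to have order exactly $g-1$ by an explicit equivariant map to $\mathbb{Z}/(g-1)\mathbb{Z}$ (torsion, so it survives profinite completion).

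Second, your bridge from topology to Galois cohomology over $\widehat{K_{C_{g-1}}}$ is imprecise in a way that matters. The absolute Galois group of $\widehat{K_{C_{g-1}}}$ is \emph{not} an extension of $\on{Gal}$ of the residue field by $\prod_e\widehat{\mathbb{Z}}(1)$; that description is only the (tame) fundamental group of the complement of the boundary divisors in the completed local scheme, a proper quotient of $G_{\widehat{K_{C_{g-1}}}}$. Moreover ``restricting to a suitable quotient'' cannot detect nonvanishing: the class over $\widehat{K_{C_{g-1}}}$ is \emph{inflated} from such a quotient, and inflation need not be injective on $H^2$. What is needed is restriction to a closed \emph{subgroup} generated by a commuting pair --- an inertia element realizing the multitwist and a lift of the cyclic symmetry acting on it correctly --- together with an identification of the restricted class with the topological one. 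Producing this pair and the identification is precisely where the paper invests real work: it blows up $\overline{Z_\Gamma}$ so the boundary becomes a single divisor whose inertia is procyclic and central in the decomposition group (Lemma \ref{lem:dehn-twist-inertia}, Lemmas \ref{lem:pi1-factoring} and \ref{lem:galois-factoring}), compares topological and \'etale Gysin maps (Proposition \ref{EtaleGysinCompleted}), shows the Gysin image of $o_1$ on $E_\Gamma^\circ$ is nonzero by the $H^2(\mathbb{Z},-)=H^3(\mathbb{Z},-)=0$ degree argument (Proposition \ref{prop:gysin-image-o1}), and then uses injectivity of $H^1$ of a normal stack into the Galois cohomology of its generic point (Proposition \ref{prop:blowup-o1-generic-point}). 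In your codimension-$(g-1)$ completed local ring you would additionally have to justify that a symmetry lift permutes the several inertia generators compatibly; passing to the blowup, as the paper does, sidesteps this. So the skeleton is right, but the cocycle/coinvariant computation and the degeneration--comparison apparatus that converts it into $o_{1,\text{\'et}}|_{\widehat{K_{C_{g-1}}}}\neq 0$ are missing or would not work as literally stated.
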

See Corollary \ref{cor:generic-point-o1-nonvanishing} for a stronger statement. Hain proves that the generic curve of genus $g$ over a field of characteristic zero satisfies the section conjecture if $g\geq 5$. Our proof strengthens his statement in some ways by showing that the obstruction to splitting is in some sense \emph{abelian}. Hain proves similar non-splitting results for the level covers of $\mathscr{M}_g$, about which we say nothing. While we state the results here in characteristic $0$ for simplicity, they in fact hold in any sufficiently large (in terms of $g$) finite characteristic as well, as explained in Corollary \ref{cor:generic-point-o1-nonvanishing}.

In both cases of Theorem \ref{thm:main-geometric-thm}, we proceed by first answering its topological variant, Question \ref{tropical-question}, for $(\Gamma, G)=(C_{g-1}, \mathbb{Z}/(g-1)\mathbb{Z})$ and $(\Gamma, G)=(T_g, \mathbb{Z}/2\mathbb{Z})$ in parts (1) and (2) respectively. 
\subsubsection{Topological results and the cohomology of $\mathscr{M}_g$}\label{subsubsec:topological-results}
All of these results follow from an analysis of certain torsion cohomology classes on $\mathscr{M}_g$ and $\mathbf{Pic}^1_{\mathscr{C}_g/\mathscr{M}_g}$ over a field $k$ of characteristic $0$. We describe the situation over $\mathbb{C}$ now. We study a class $$o_1\in H^2(\mathscr{M}_g, \mathbb{V}_1)$$ (which we call the Morita class, as it was previously studied by Morita \cite{Morita86}) and a class $$\widetilde{o_2}\in M(\mathbf{Pic}^1_{\mathscr{C}_g/\mathscr{M}_g},  \mathbb{V}_2),$$ where the $\mathbb{V}_i$ are certain local systems on $\mathscr{M}_g$ and $\mathbf{Pic}^1_{\mathscr{C}_g/\mathscr{M}_g}$, respectively, and  $M(\mathbf{Pic}^1_{\mathscr{C}_g/\mathscr{M}_g},  \mathbb{V}_2)$ is a certain functorial quotient of $H^2(\mathbf{Pic}^1_{\mathscr{C}_g/\mathscr{M}_g},  \mathbb{V}_2)$. These classes obstruct splittings of sequence (\ref{fundamental-exact-sequence}), in a sense which we now explain.

Let $\pi: E\to B$ be a surface bundle with fiber the orientable surface $\Sigma_g$ of genus $g$, and let $f: B\to \mathscr{M}_g$ the associated map. Let $L^i\pi_1(\Sigma_g)$ be the lower central series filtration on $\pi_1(\Sigma_g)$. Then $$f^*o_1\in H^2(B, f^*\mathbb{V}_1)$$ is zero if and only if the exact sequence $$0\to \pi_1(\Sigma_g)/L^2\pi_1(\Sigma_g)\to \pi_1(E)/L^2\pi_1(\Sigma_g)\to \pi_1(B)\to 1$$ splits. If the sequence does split, $f$ admits a lift $\tilde f: B\to \mathbf{Pic}^1_{\mathscr{C}_g/\mathscr{M}_g}$. And if the sequence $$1\to \pi_1(\Sigma_g)/L^3\pi_1(\Sigma_g)\to \pi_1(E)/L^3\pi_1(\Sigma_g)\to \pi_1(B)\to 1$$ splits, then if ${\tilde f}^*\widetilde{o_2}$ vanishes (though the converse need not hold).

Morita shows \cite[Corollary 3, Proposition 4]{Morita86} that $o_1$ is non-zero for $B=\mathscr{M}_g$ with $g \geq 9$. We are able to extend his non-vanishing result to all $g \geq 3$ by constructing certain surface bundles over the two-torus $T^2$, as we now explain.
We construct maps $f_g: T^2\to \mathscr{M}_g$ such that the pullbacks $f_g^*o_1$  are non-trivial. Hence in particular the associated surface bundles have no section. Similarly, for all $g\geq 2$, we construct maps $h_g: T^2\to \mathbf{Pic}^1_{\mathscr{C}_g/\mathscr{M}_g}$ such that $h_g^*\widetilde{o_2}$ has order exactly $2$. Hence again the associated surface bundles have no section. These constructions answer a question of Hillman \cite[end of Section 10]{hillman2015sections}, who asked if there are surface bundles over tori with (1) hyperbolic fiber and (2) no continuous section, for all $g\geq 2$. See Corollary \ref{cor:main-corollary-o1-pinwheel} and Theorem \ref{thm:tree-o2-nonvanishing} for these topological constructions.

The construction of these bundles in fact gives substantially finer information about the behavior of the classes $o_1, \widetilde{o_2}$, near the boundary of the Deligne-Mumford compactification of $\mathscr{M}_g$ and the Caporaso compactification of $\mathbf{Pic}^1_{\mathscr{C}_g/\mathscr{M}_g}$ \cite{caporaso1994compactification}. Let $\Gamma_1=C_{g-1}, \Gamma_2=T_g$ be the stable graphs described in Section \ref{subsubsec:geometric-results}, and let $E_{\Gamma_i}$ be the exceptional divisor of the blowup of $\overline{\mathscr{M}_g}$ at the stratum corresponding to $\Gamma_i$. We show the following (see Corollaries \ref{cor:o1-gysin-corollary} and \ref{cor:o2-gysin-nonvanishing} for the precise statements):
\begin{theorem}
The Gysin image of $o_1$ is non-zero in the cohomology of a Zariski-open subset $E_{\Gamma_1}^\circ$ of $E_{\Gamma_1}$, and the Gysin image of $\widetilde{o}_2$ is non-zero in (an appropriate quotient of) the cohomology of a Zariski-open subset of the preimage of $E_{\Gamma_2}$ in a blowup of the Caporaso compactification of $\mathbf{Pic}^1_{\mathscr{C}_g/\mathscr{M}_g}$. 
\end{theorem}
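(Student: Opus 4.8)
The plan is to derive this boundary non-vanishing statement from the two topological constructions summarized above --- the surface bundles over $T^2$ built in Corollary \ref{cor:main-corollary-o1-pinwheel} and Theorem \ref{thm:tree-o2-nonvanishing} from the multitwist-plus-symmetry packages $(\Gamma_1, G)=(C_{g-1}, \mathbb{Z}/(g-1)\mathbb{Z})$ and $(\Gamma_2, G)=(T_g, \mathbb{Z}/2\mathbb{Z})$ --- by identifying the classifying map of each such bundle with a map from $T^2$ into the $S^1$-bundle ``link'' of the exceptional divisor $E_{\Gamma_i}$ in the relevant blowup. Throughout I would work over $\mathbb{C}$, using the complex-analytic (Betti) picture, and treat the Deligne--Mumford and Caporaso compactifications as orbifolds where necessary.

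First I would pin down the local geometry near $Z_{\Gamma_i}\subset\overline{\mathscr{M}_g}$. Analytically, $\mathscr{M}_g$ near $Z_{\Gamma_i}$ is modelled by $(\Delta^*)^{E}\times(\text{nearby stratum})$, where $E$ is the edge set of $\Gamma_i$, the punctured-disk coordinate attached to an edge $e$ records the smoothing of the node $e$, and the corresponding local monodromy is the Dehn twist $T_e$. Because $Z_{\Gamma_i}$ has codimension $|E|>1$, its link is a high-dimensional sphere bundle on which the composite multitwist $T_{\Gamma_i}=\prod_e T_e^{\ell(e)}$ is not a fiber class of any circle bundle --- this is precisely why one blows up. Blowing up $Z_{\Gamma_i}$ replaces the polydisk neighborhood by (an open part of) the total space of the tautological bundle over the projectivized normal bundle, so that the preimage of $\mathscr{M}_g$ fibers in punctured disks over a Zariski-open $E_{\Gamma_i}^\circ\subset E_{\Gamma_i}$ (the locus where every edge is genuinely being smoothed), with an associated oriented $S^1$-bundle $L_{\Gamma_i}\to E_{\Gamma_i}^\circ$ whose fiber class is exactly $T_{\Gamma_i}$ when the edge lengths are the weights $\ell(e)$. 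The crucial point to verify is that the map $f_g\colon T^2\to\mathscr{M}_g$ (resp.\ $h_g\colon T^2\to\mathbf{Pic}^1_{\mathscr{C}_g/\mathscr{M}_g}$) attached to the bundle $W$ of Section \ref{subsec:tropical-section-conjecture} lifts, after blowing up, to a map $\tilde f_g\colon T^2\to L_{\Gamma_1}$ carrying the $\mathbb{Z}T_{\Gamma_1}$-factor of $\pi_1(T^2)$ isomorphically onto the $S^1$-fiber class and the $G$-factor into $\pi_1(E_{\Gamma_1}^\circ)$ via the $G$-action on the stratum. I expect this identification to be the main obstacle; it is essentially Picard--Lefschetz bookkeeping for the universal curve around $Z_{\Gamma_i}$, together with --- on the $\mathbf{Pic}^1$ side --- the local structure of the Caporaso compactification along the relevant stratum and the interaction of the lift $\tilde h_g$ with the node-smoothing coordinates. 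The delicate part is verifying that the normal winding number is exactly one, i.e.\ that $T_{\Gamma_i}$ really becomes a single fiber loop after the blowup rather than a multiple.

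Granting that identification, the cohomological conclusion is formal. The Gysin image of $o_1$ is, by definition, the image of $o_1|_{L_{\Gamma_1}}$ under the residue (fiber-integration) map attached to the circle bundle $p\colon L_{\Gamma_1}\to E_{\Gamma_1}^\circ$, i.e.\ the relevant edge homomorphism in the Leray spectral sequence, landing in $H^1(E_{\Gamma_1}^\circ,-)$ with the appropriately twisted coefficients (the $T_{\Gamma_1}$-coinvariants of $\mathbb{V}_1$, up to Tate twist). Since $\tilde f_g$ is a morphism of $S^1$-bundles covering $\bar f_g\colon T^2\to E_{\Gamma_1}^\circ$ and fiber integration commutes with base change, $\bar f_g^*$ of the Gysin image of $o_1$ is the fiber integral over the trivial bundle $T^2\to S^1$ of $\tilde f_g^*o_1=f_g^*o_1$; by Künneth this fiber integral is injective on $H^2(T^2, f_g^*\mathbb{V}_1)$ onto $H^1(S^1,-)$. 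Hence $f_g^*o_1\neq 0$ (Corollary \ref{cor:main-corollary-o1-pinwheel}) forces the Gysin image of $o_1$ to restrict non-trivially along $\bar f_g$, and in particular to be non-zero in $H^1(E_{\Gamma_1}^\circ,-)$. Running the identical argument on a suitable blowup of the Caporaso compactification of $\mathbf{Pic}^1_{\mathscr{C}_g/\mathscr{M}_g}$, with $h_g$ replacing $f_g$ and the preimage of $E_{\Gamma_2}$ replacing $E_{\Gamma_1}$, yields the statement for $\widetilde{o_2}$; here the one extra point is to check that the functorial quotient $M(-,\mathbb{V}_2)$ is compatible with the fiber-integration map of the relevant circle bundle, so that ``Gysin image in the quotient'' is well defined, and the conclusion is that this image has order exactly $2$, because $h_g^*\widetilde{o_2}$ does by Theorem \ref{thm:tree-o2-nonvanishing}.

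In summary, everything reduces to the geometric identification of the torus surface bundles with pullbacks from the link of the exceptional divisor; once the monodromy/winding-number bookkeeping in the second paragraph is in place, the non-vanishing (and the exact order $2$ in the nilpotent case) follows from the already-established non-vanishing on $T^2$ via the projection formula and Künneth. The part I view as genuinely requiring care --- and the step I would spend the most effort on --- is the local analysis of the Caporaso compactification along $Z_{\Gamma_2}$ and the compatibility of the blowup there with the node-smoothing coordinates on $\mathbf{Pic}^1$, since the structure there is less standard than the Deligne--Mumford case needed for $o_1$.
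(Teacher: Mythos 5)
Your proposal is essentially the paper's own argument, just phrased in circle-bundle/fiber-integration language rather than group cohomology: the paper factors $\gamma$ through $\pi_1$ of a deleted neighborhood of $E_\Gamma^\circ$ with $\gamma(a)$ generating the inertia (Lemmas \ref{lem:dehn-twist-inertia} and \ref{lem:pi1-factoring} are exactly the ``Picard--Lefschetz bookkeeping'' you flag as the main obstacle), and then concludes via naturality of the Gysin/Hochschild--Serre sequence together with $H^2(\mathbb{Z},-)=H^3(\mathbb{Z},-)=0$, which is your K\"unneth/fiber-integration injectivity on $T^2\to S^1$; on the $\mathbf{Pic}^1$ side the quotient compatibility you worry about is built into the definition of $N$ in Section \ref{subsubsec:defn-of-N} (and only non-vanishing, not exact order $2$, of the Gysin image is claimed). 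So the approach matches Propositions \ref{prop:gysin-image-o1} and \ref{prop:topological-o2-gysin} and Corollaries \ref{cor:o1-gysin-corollary}, \ref{cor:o2-gysin-nonvanishing}.
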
 
The non-vanishing of these Gysin images is crucial for our arithmetic applications, and for the proof of Theorem \ref{thm:main-geometric-thm}. In particular it more or less immediately shows that these classes do not vanish at the generic points of the respective moduli spaces on which they live.
\subsubsection{Arithmetic results}
Our main arithmetic results are arithmetic analogues of those described in Section \ref{subsubsec:topological-results}. We define classes $o_{1,\text{\'et}}, \widetilde{o_{2, \text{\'et}}}$ in the \'etale cohomology of $\mathscr{M}_g$, $\mathbf{Pic}^1_{\mathscr{C}_g/\mathscr{M}_g}$,  with coefficients in certain $\widehat{\mathbb{Z}}$-local systems over any field of characteristic $0$. These classes obstruct $\pi_1$-sections (and hence rational points!) in a sense which we now explain. Let $C/k$ be a smooth projective curve over a field $k$ of characteristic $0$, and let $[C]:\on{Spec}(k)\to\mathscr{M}_g$ be the associated map. Then the Galois cohomology class $[C]^*o_{1, \text{\'et}}$ is zero if and only if the exact sequence $$0\to \pi_1^{\text{\'et}}(C_{\bar k})/L^2\pi_1^{\text{\'et}}(C_{\bar k})\to \pi_1^{\text{\'et}}(C)/L^2\pi_1^{\text{\'et}}(C_{\bar k})\to \on{Gal}(\bar k/k)\to 1$$ splits. This invariant is closely related to the \emph{period} of $C$, as we describe in Section \ref{subsubsec:Pic-construction-o1} and Remark~\ref{R:periodindex}. Similarly if $[\tilde C]: \on{Spec}(k)\to \mathbf{Pic}^1_{\mathscr{C}_g/\mathscr{M}_g}$ is a morphism, the non-vanishing of $[\tilde C]^*\widetilde{o_{2, \text{\'et}}}$ implies that the sequence $$1\to \pi_1^{\text{\'et}}(C_{\bar k})/L^3\pi_1^{\text{\'et}}(C_{\bar k})\to \pi_1^{\text{\'et}}(C)/L^3\pi_1^{\text{\'et}}(C_{\bar k})\to \on{Gal}(\bar k/k)\to 1$$
does not split (though the converse need not hold).

We show that there are many examples of curves $C, C'$ for which $[C]^*o_{1,{\text{\'et}}}$, $[\tilde{C'}]^*\widetilde{o_{2, \text{\'et}}}$ are non-vanishing. For instance, the geometric examples discussed in Section \ref{subsubsec:geometric-results} (e.g.~the cases in which we prove the tropical section conjecture) have this property. But we also use a Chebotarev argument to show the existence of many curves over $p$-adic fields (and hence number fields) for which these classes do not vanish. In particular, these curves trivially satisfy the section conjecture. For example, we show:
\begin{theorem}
Let $\Gamma$ be a graph as in Theorem \ref{thm:main-geometric-thm}. Then there exists a Zariski-dense set $S$ of closed points of $Z_{\Gamma, \mathbb{Z}}$ such that for each $s\in S$, there exists a $\on{Frac}(W(\kappa(s)))$-point $s'$ of $\mathscr{M}_g$ specializing to $s$, such that the section conjecture is true for the curve $\mathscr{C}_{g, s'}$ (that is, the $\on{Frac}(W(\kappa(s)))$-curve corresponding to $s'$).
\end{theorem}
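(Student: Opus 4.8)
The plan is to leverage the non-vanishing of the Gysin images of $o_{1,\text{\'et}}$ and $\widetilde{o_{2,\text{\'et}}}$ near the boundary (the theorem on Gysin images) together with a Chebotarev density argument on the boundary strata over finite fields, and then to lift the resulting finite-field points to $p$-adic points by a deformation-theoretic construction. Throughout, ``the section conjecture is true for $\mathscr{C}_{g,s'}$'' will mean that it holds \emph{trivially}: I will show that for $\mathscr{C}_{g,s'}/K$ (with $K=\on{Frac}(W(\kappa(s)))$) the relevant truncation of the fundamental exact sequence $(\ref{fundamental-exact-sequence})$ does not split, hence $\mathscr{C}_{g,s'}$ has no $\pi_1$-section and in particular no rational point. \emph{Step 1 (spreading out).} Over $\mathbb{C}$, hence over $\overline{\mathbb{Q}}$ by comparison, hence over a number field, the Gysin image of $o_{1,\text{\'et}}$ is a non-zero torsion class in the \'etale cohomology $H^1(E_{\Gamma_1}^\circ,\mathbb{V}_{1,\text{\'et}})$ (Tate twist suppressed), and similarly the Gysin image of $\widetilde{o_{2,\text{\'et}}}$ is non-zero in the relevant quotient of $H^1$ of the open set $E_{\Gamma_2}^\circ$ inside the blowup of the Caporaso compactification. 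By a standard spreading-out argument these classes are defined and non-zero over $\mathbb{Z}[1/N]$ for a suitable $N$; after enlarging $N$ I may assume $E_{\Gamma_i}^\circ$ is smooth over $\mathbb{Z}[1/N]$ with locally constant cohomology in the relevant degrees, so the Gysin image remains non-zero after reduction modulo any $p\nmid N$.

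\emph{Step 2 (detecting the class at a closed point --- the Chebotarev step).} Fix $p\nmid N$ and set $X:=E_{\Gamma_i,\mathbb{F}_p}^\circ$. The reduction of the Gysin image is a non-zero class $\alpha\in H^1(X,\mathbb{F})$ for a suitable finite coefficient system $\mathbb{F}$ obtained from $\mathbb{V}_{i,\text{\'et}}$ by reduction mod $\ell$ (one may take $\ell=2$ for $\widetilde{o_2}$, since its Gysin image has order exactly $2$). Choosing a finite Galois cover $Y\to X$ that trivializes the monodromy of $\mathbb{F}$ and the torsor class underlying $\alpha$, the Chebotarev density theorem for $Y/X$ produces a Zariski-dense set of closed points $x$ of $X$ whose Frobenius conjugacy class is such that $\alpha|_x$ is non-zero in $H^1(\kappa(x),\mathbb{F}_x)$. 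Let $s$ be the closed point of the boundary stratum $Z_{\Gamma_i,\mathbb{Z}}$ underlying $x$, so $\kappa(s)=\kappa(x)=:\mathbb{F}_q$. Letting $p$ range over all primes not dividing $N$ and collecting these points yields a Zariski-dense set $S$ of closed points of $Z_{\Gamma_i,\mathbb{Z}}$ (density follows since for each such $p$ we obtain a Zariski-dense subset of $Z_{\Gamma_i,\mathbb{F}_p}$, and the union of the closed fibres is dense in the $\mathbb{Z}$-flat scheme $Z_{\Gamma_i,\mathbb{Z}}$).

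\emph{Step 3 (lifting to a $p$-adic point and computing the pullback).} Given $s\in S$ lying under $x$ as above, the complete local ring at $x$ of the appropriate blowup of $\overline{\mathscr{M}_g}$ (resp.\ of the Caporaso compactification) is formally smooth over $W(\mathbb{F}_q)$, with the exceptional divisor $E_{\Gamma_i}$ a coordinate hyperplane and the smoothing parameters $t_e$ ($e\in E(\Gamma_i)$) among the coordinates; the point $x$ records the relative valuations of the $t_e$. I choose a $W(\mathbb{F}_q)$-point of this formal neighborhood by taking the $t_e$ to be $p$-power multiples of lifts of the data of $x$ --- so the resulting arc meets the boundary transversally along $x$ --- and the remaining coordinates $0$; its generic fibre is a $K$-point $s'$ of $\mathscr{M}_g$ specializing to $s$, and in case (2) it lifts to a $K$-point of $\mathbf{Pic}^1_{\mathscr{C}_g/\mathscr{M}_g}$ using the degree-one divisor class supplied by the combinatorics of $T_g$. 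By compatibility of the \'etale Gysin map with base change along this arc, the Galois-cohomological residue (boundary map for the valuation on $K$) of ${s'}^{*}o_{1,\text{\'et}}$ equals $x^{*}$ of the Gysin image of $o_{1,\text{\'et}}$ in $H^1(\mathbb{F}_q,\mathbb{V}_{1,\text{\'et}})$, which is non-zero by Step 2; hence ${s'}^{*}o_{1,\text{\'et}}\neq 0$, the mod-$L^2$ exact sequence for $\mathscr{C}_{g,s'}/K$ does not split, and so $\mathscr{C}_{g,s'}$ trivially satisfies the section conjecture. In case (2) the same computation applies with $\widetilde{o_{2,\text{\'et}}}$ in place of $o_{1,\text{\'et}}$, the residue now landing in the appropriate functorial quotient $M(\cdot)$, which forces non-splitting of the mod-$L^3$ sequence. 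Since $S$ is Zariski-dense, this proves the theorem.

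\emph{The main obstacle.} The crux is the compatibility invoked in Step 3: one must identify the purely geometric Gysin/residue map of the previous theorem --- formed in the \'etale cohomology of a blowup of a compactified moduli space --- with the arithmetic residue map in the Galois cohomology of the local field $K$, compatibly with pullback along the chosen arc $s'$ and with reduction modulo $p$. This requires arranging the arc to be transverse to the boundary in precisely the right sense and carefully tracking Tate twists and the (non-constant) coefficient systems $\mathbb{V}_i$. A secondary difficulty, specific to case (2), is that $\widetilde{o_2}$ lives only in a functorial quotient of $H^2$ and only after lifting to $\mathbf{Pic}^1$, so one must simultaneously produce the $K$-point of the blown-up Caporaso compactification lifting $s'$ and check that the residue computation survives passage to the quotient $M(\cdot)$; here the combinatorial description of $T_g$, and in particular its distinguished involution and edge, is what makes the lift available.
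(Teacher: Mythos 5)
Your overall route is the paper's route: non-vanishing of the Gysin images on the boundary stratum, a Chebotarev argument over finite fields on (an open of) the exceptional divisor, and then a Witt-vector point chosen transverse to the exceptional divisor so that compatibility of Gysin/residue maps transfers the non-vanishing to the $p$-adic fibre. In particular the compatibility you single out as ``the main obstacle'' in Step 3 is handled exactly as you propose in the paper's proofs of Theorems \ref{thm:examples-o1-p-adic} and \ref{thm:example-o2-p-adic}: smoothness of the blown-up space gives a lift transverse to $E_\Gamma^\circ$ (resp.\ $F_\Gamma^{\circ,\text{ns}}$), and a commutative square of Gysin maps identifies the residue of ${s'}^*o_{1,\text{\'et}}$ with the restriction of $g_\Gamma(o_{1,\text{\'et}})$ to $\kappa(s)$; no delicate choice of the smoothing parameters beyond transversality is needed.

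There are, however, two genuine gaps. First, your Step 2 asserts that Chebotarev for a cover $Y\to X$ trivializing the coefficients ``produces'' closed points where the degree-one class is non-zero. For twisted finite coefficients this is not automatic: $\alpha|_x$ is (up to injective inflation) the restriction of a class on the finite monodromy group $G$ to the cyclic subgroup generated by $\on{Frob}_x$, so you must first prove that some cyclic subgroup of $G$ sees the class. That is precisely the content of the paper's modified Chebotarev theorem (Theorem \ref{modified-chebotarev}), whose proof requires the non-trivial Lemma \ref{h1-cyclic-subgroup} (reduction to a $p$-Sylow subgroup by restriction--corestriction, then induction through a central cyclic subgroup via inflation--restriction); without this your Step 2 only yields points whose Frobenius lies in a prescribed conjugacy class, not points where $\alpha|_x\neq 0$. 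Second, in case (2) you flag but do not resolve the passage to the quotient: non-vanishing of the residue in $H^1(\kappa(s),\cdot)$ does not yet give non-vanishing in $N(\kappa(s),\cdot)$, which is what is needed to conclude that $\widetilde{o_{2,\text{\'et}}}|_{s'}\neq 0$ in $M(\cdot)$. The paper closes this by showing that the restriction map $H^1(P_{1,g,\mathscr{O}_L},\widehat{\mathbb{V}_1}^{(p)})\to H^1(L,\widehat{\mathbb{V}_1}^{(p)}|_L)$ is surjective --- using the section coming from the structure morphism together with an inflation--restriction and weight argument (inertia acts trivially on $\widehat{\mathbb{V}_1}^{(p)}$ because $\Gamma$ is a tree, and $I^{\text{ab}}$ has weight $-2$ while $\widehat{\mathbb{V}_1}^{(p)}$ has weight $-1$) --- so that, by the definition of $N$ in Section \ref{subsubsec:defn-of-N}, nothing relevant is killed in the quotient. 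Also note that the paper does not need your proposed lift of $s'$ to $\mathbf{Pic}^1$ via a divisor class attached to $T_g$: it works directly with a point of $P_{1,g}$ deforming $s\in F_\Gamma^{\circ,\text{ns}}$, so that the degree-one data is built into the ambient space.
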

Here $\kappa(s)$ is the residue field of a closed point $s$ and $W(\kappa(s))$ is the ring of Witt vectors of $\kappa(s)$. See Theorems \ref{thm:examples-o1-p-adic} and \ref{thm:example-o2-p-adic} for a stronger and more general statement. In principle our method gives us quantitative control (in the sense of the Chebotarev density theorem) over the Dirichlet density of the set $S$ in the theorem.

We believe that our examples of curves $C$ exhibiting the non-vanishing of $\widetilde{o_{2, \text{\'et}}}$ are particularly interesting, as they show that this class is a genuine (non-abelian) obstruction to the existence of rational points. Unfortunately all of our examples are of a local nature. It would be interesting to find an example of a curve over a number field which has points everywhere locally, but which exhibits the non-vanishing of $\widetilde{o_{2, \text{\'et}}}$. As far as we are aware there is no known example of a curve over a number field which has points everywhere locally and is known to satisfy the section conjecture in a genuinely non-abelian way.
\subsection{Relation to previous work}
The main precursor to our geometric work is the paper \cite{hain2011rational}, which, as remarked earlier, proves a form of Corollary \ref{cor:generic-point-of-Mg} for $g\geq 5$ over fields of characteristic $0$.  See also related work of Watanabe in positive characteristic \cite{watanabe}.

The methods we use to prove this Corollary are also closely related to work on the Franchetta conjecture, in particular \cite{schroer2003strong}. Indeed, the class $o_{1, \text{\'et}}$ is the image of the class $[\mathbf{Pic}^1_{\mathscr{C}_g/\mathscr{M}_g}]\in H^1(\mathscr{M}_g, \mathbf{Pic}^0_{\mathscr{C}_g/\mathscr{M}_g})$ under the Kummer map (see section \ref{subsubsec:Pic-construction-o1} and Remark~\ref{R:periodindex}), and so it is closely related to the period of a (relative) curve. This class has been well-studied in the complex-analytic setting (see e.g.~\cite{Morita86, Morita89}). The class $\widetilde{o_2}$ has not, to our knowledge, been studied before, but it is related to unpublished work of Ellenberg \cite{ellenberg2nilpotent} and work of Wickelgren \cite{wickelgren2009lower}.

Our arithmetic examples of curves for which the section conjecture holds also seem related to those constructed by other authors, though we do not know how to make this precise. The examples constructed in \cite{stix2010period} and \cite[6.2]{stix2011brauer} have the same reduction type as the curves constructed in special cases of Theorems \ref{thm:examples-o1-p-adic} and \ref{thm:example-o2-p-adic}.  It would be interesting to understand how $o_{1, \text{\'et}}$ and especially $\widetilde{o_{2, \text{\'et}}}$ is related to the inequality for period and index for curves exploited by Stix. Harari and Szamuely \cite{harari2009galois} study curves for which the abelianized fundamental exact sequence does not split---implicitly, analyzing $o_{1,\text{\'et}}$---and construct examples where the obstruction to splitting is fundamentally global, rather than local.  It would be extremely interesting to construct such an example with $\widetilde{o_{2, \text{\'et}}}$.

\subsection{Structure of the paper}
In Section \ref{sec:moduli-preliminaries} we recall various preliminaries and notation for the moduli spaces we use, as well as their relevant compactifications, coarse spaces, and boundary strata. Primarily, we use the Deligne-Mumford compactification $\overline{\mathscr{M}_g}$ of the moduli space of curves $\mathscr{M}_g$, as well as the moduli space of degree one divisor classes on the universal curve, $\mathbf{Pic}^1_{\mathscr{C}_g/\mathscr{M}_g}$, and its Caporaso compactification \cite{caporaso1994compactification}. In Section \ref{sec:cohomological-preliminaries}, we recall the various versions of the Gysin maps we will use (in group-theoretic, topological, Galois-cohomological, and \'etale-cohomological contexts) and prove a useful variant of the Chebotarev density theorem (Theorem \ref{modified-chebotarev}) which may be of independent interest. In Section \ref{sec:primary-and-secondary-morita}, we define the classes $o_1, \widetilde{o_2}$, which are key to our analysis, and study their basic properties, in both topological and \'etale cohomological settings. Because the non-abelian group cohomology machinery required for the construction of $\widetilde{o_2}$ is quite involved, we have banished this construction and the ensuing calculations to the appendix, Section \ref{sec:Non-abelian-cohomology}, to which we refer frequently throughout the paper. 

In Section \ref{SMorSurfGp}, we begin proving the main results of the paper. We construct various surface bundles over surfaces (with no sections) exhibiting the non-vanishing of the classes $o_1, \widetilde{o_2}$. From these constructions, we deduce the non-vanishing of certain Gysin images of the classes $o_1, \widetilde{o_2}$ in the cohomology of boundary components of $\mathscr{M}_g$ and  $\mathbf{Pic}^1_{\mathscr{C}_g/\mathscr{M}_g}$ (over the complex numbers). Again, we banish certain involved cocycle computations with surface groups to the appendix, Section \ref{sec:Non-abelian-cohomology}. In Section \ref{SSecConjApp}, we use standard comparison results to pass from these topological computations to results in \'etale cohomology, and then in Galois cohomology. This is where we provide geometric examples of curves for which the section conjecture is trivially true and prove many special cases of Conjecture \ref{conj:tropical-section-conjecture}. We then use our modified Chebotarev density theorem (Theorem \ref{modified-chebotarev}) to show the existence of arithmetic examples over $p$-adic fields, and hence over number fields. 
\subsection{Acknowledgments}
Li is funded by the Simons Collaboration on Arithmetic Geometry, Number Theory and Computation.
Litt is supported by NSF Grant DMS-2001196. This paper benefited from private correspondence with Jordan Ellenberg, Richard Hain, and Qixiao Ma. We would also like to thank the referee for their extremely useful report. On behalf of all authors, the corresponding author states that there is no conflict of interest. Data sharing not applicable to this article as no datasets were generated or analysed during the current study.

We would like to thank Jordan Ellenberg and Alexander Smith for pointing out a mistake in an earlier version of Theorem 3.1.1 with a counterexample from the work of Wang on Grunwald--Wang theorem.
\section{The moduli space of curves and boundary strata}\label{sec:moduli-preliminaries}
We begin by indicating our conventions regarding the various moduli spaces we will use (primarily the moduli space of curves $\mathscr{M}_g$ and its Deligne-Mumford compactification $\overline{\mathscr{M}_g}$), and recalling the combinatorics of their boundary strata.
\subsection{$\mathscr{M}_g$ and its boundary}\label{sec:Mg-preliminaries}
Let $g>1$ be an integer and let $\mathscr{M}_g$ be the moduli space of algebraic curves; recall that $\mathscr{M}_g$ is a smooth Deligne-Mumford stack over $\on{Spec}(\mathbb{Z})$. Let $\overline{\mathscr{M}_{g}}$ be the Deligne-Mumford compactification of $\mathscr{M}_g$; recall that it is a smooth and proper Deligne-Mumford stack over $\on{Spec}(\mathbb{Z})$. When there is no chance of confusion, we will also use the notation $\mathscr{M}_g$ to denote the complex-analytic moduli stack of genus $g$ Riemann surfaces; otherwise we will denote it by $\mathscr{M}_{g, \mathbb{C}}^{\text{an}}$ (and similarly with the analytification of its Deligne-Mumford compactification).

We briefly recall the combinatorics of the boundary strata of $\overline{\mathscr{M}_g}$, and we interpret the inertia about boundary components group-theoretically, in terms of Dehn twists.

Let $\Gamma$ be a stable graph of genus $g$, i.e.,  ~a collection $V$ of vertices and $E$ of (undirected) edges between pairs of vertices and a labeling $h: V\to \mathbb{Z}_{\geq 0}$ such that:
\begin{enumerate}
    \item The Euler characteristic $$\chi(\Gamma)+\sum_{v\in V} (2-2h(v))=2-2g,$$
    \item If $h(v)=0$, then the degree of $v$ is at least 3, and
    \item If $h(v)=1$ then the degree of $v$ is at least $1$, and
    \item $\Gamma$ is connected.
\end{enumerate}
Given a vertex $v$ of $\Gamma$, we say that $h(v)$ is its \emph{genus}.

We let $$Z_\Gamma\subset \overline{\mathscr{M}_g}$$ be the locally closed substack parametrizing stable curves of \emph{type $\Gamma$}, i.e.,~stable curves whose dual graph is isomorphic to $\Gamma$. The codimension of $\overline{Z_\Gamma}$ in $\overline{\mathscr{M}_g}$ is equal to the number of edges of $\Gamma$. We say that $\Gamma$ \emph{specializes} to $\Gamma'$ if $Z_{\Gamma'}$ is contained the closure of $Z_{\Gamma}$. Combinatorially this means that $\Gamma$ can be obtained from $\Gamma'$ by contracting edges and redistributing weights accordingly.

We briefly discuss the topology of a neighborhood of $Z_\Gamma$ in terms of the graph $\Gamma$. We first (non-functorially) associate a surface $\Sigma_\Gamma$ to $\Gamma$, with a marked loop $\gamma_e$ for each edge of $\Gamma$, as illustrated in Figure \ref{fig:graph-surface}. Explicitly, to each vertex $v$ of genus $g$ and degree $d$, associate an oriented surface $\Sigma_v$ of genus $g$, with $\deg(v)$ distinguished disjoint closed discs $\iota_{v, e}:\Delta\to \Sigma_v$ for each edge $e$ adjacent to $v$. Then $\Sigma_\Gamma$ is obtained by gluing $\Sigma_{v_i}\setminus\bigcup_{e\in \text{nbd}(v_i)}\iota_{v_i, e}(\Delta^\circ)$ to $\Sigma_{v_j}\setminus \bigcup_{e\in \text{nbd}(v_j)}\iota_{v_j, e}(\Delta^\circ)$ along the circles $\iota_{v_i, e}(\delta\Delta), \iota_{v_j, e}(\delta\Delta)$ if $e$ is an edge between $v_i$ and $v_j$. That is, $$\Sigma_\Gamma=\on{colim}\left(\bigsqcup_{e\in E}\delta\Delta \rightrightarrows \bigsqcup_{v\in V} \Sigma_v\setminus\bigcup_{e\in \text{nbd}(v)}\iota_{v, e}(\Delta^\circ)\right),$$ where $V,E$ are the set of vertices and edges of $\Gamma$, respectively. The boundaries of the discs $\iota_{v,e}(\Delta)$ are the distinguished curves $\gamma_e$ in $\Sigma_\Gamma$. We say that $\Gamma$ is the \emph{dual graph} of the marked surface $\Sigma_\Gamma$.
\begin{lemma}\label{lem:dehn-twist-inertia}
Let $E_\Gamma\subset \on{Bl}_{\overline{Z_{\Gamma}}} \overline{\mathscr{M}_g}$ be the exceptional divisor. Then any inertia subgroup $I_\Gamma \subset \pi_1(\mathscr{M}_g)=\on{Mod}(g)$ 
corresponding to $E_\Gamma$ is conjugate to the group generated by the Dehn multitwist about the curves $\gamma_e$ (corresponding to the edges of $\Gamma$).
\end{lemma}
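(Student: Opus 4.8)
The plan is to identify the inertia group via the local monodromy of the universal family near the boundary stratum $Z_\Gamma$, and to match this with the Dehn multitwist group by a standard Picard--Lefschetz analysis. First I would work complex-analytically, using the identification $\pi_1(\mathscr{M}_{g,\mathbb{C}}^{\mathrm{an}}) = \on{Mod}(g)$ (the mapping class group of $\Sigma_g$), so that a loop in $\mathscr{M}_g$ corresponds to the monodromy it induces on the fiber surface. The key local model is the following: near the generic point of $\overline{Z_\Gamma}$, the compactified moduli stack $\overline{\mathscr{M}_g}$ looks \'etale-locally like $Z_\Gamma^\circ \times \Delta^{E}$, where $Z_\Gamma^\circ$ is an open in the stratum and $\Delta^E$ is a polydisc with one coordinate $t_e$ for each edge $e$, and the boundary divisor component attached to $e$ is cut out by $\{t_e = 0\}$. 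The universal curve over this polydisc is the standard simultaneous-smoothing family: in a neighborhood of the node indexed by $e$, the total space is $xy = t_e$, so that as $t_e$ traverses a small loop around $0$, the vanishing cycle $\gamma_e$ undergoes a Dehn twist $T_e$ (Picard--Lefschetz), while the other nodes are left unchanged. This is exactly the content of the description of $\Sigma_\Gamma$ given in the excerpt: smoothing the node $e$ replaces the two boundary circles $\iota_{v_i,e}(\delta\Delta)$ and $\iota_{v_j,e}(\delta\Delta)$ by an annular neck, and the monodromy of that smoothing is $T_e$.

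Next I would pin down what ``inertia subgroup $I_\Gamma$ of the exceptional divisor $E_\Gamma$'' means. The blowup $\on{Bl}_{\overline{Z_\Gamma}}\overline{\mathscr{M}_g}$ has exceptional divisor $E_\Gamma$ whose generic point lies over the generic point of $\overline{Z_\Gamma}$; a small loop around $E_\Gamma$ pushes forward to a loop that, in the local polydisc coordinates, is the ``diagonal'' loop $t_e \mapsto \varepsilon e^{2\pi i \theta}$ applied simultaneously to \emph{all} the edge-coordinates $t_e$ (since blowing up the codimension-$|E|$ stratum $\overline{Z_\Gamma} = \bigcap_e \{t_e = 0\}$ and encircling the exceptional divisor encircles all coordinate hyperplanes at once). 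By the local Picard--Lefschetz computation above, the monodromy of this loop is the product $\prod_{e \in E} T_e$ of the Dehn twists about the disjoint curves $\gamma_e$ --- i.e.\ the Dehn multitwist $T_\Gamma$ (with all multiplicities $1$, as is appropriate for the reduced exceptional divisor). Since $\pi_1(E_\Gamma^\circ)$ surjects onto $\mathbb{Z}$ generated by this meridian (the inertia is the image of $\pi_1$ of a punctured disc transverse to $E_\Gamma$), the inertia group $I_\Gamma$ is cyclic, generated by $T_\Gamma$. The conjugacy ambiguity is exactly the usual one: the identification of the fiber with $\Sigma_g$, hence of the vanishing cycles with a specific configuration of disjoint simple closed curves realizing the graph $\Gamma$, is only well-defined up to the action of $\on{Mod}(g)$, which accounts for the ``conjugate to'' in the statement.

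The main obstacle --- really the only nontrivial point --- is making the ``simultaneous encircling'' claim rigorous at the level of the blowup, i.e.\ checking that a meridian of the exceptional divisor $E_\Gamma$ maps to the diagonal loop $(t_e)_e \mapsto (\varepsilon e^{2\pi i\theta})_e$ rather than to some single coordinate loop or some other combination. For this I would use the explicit chart description of $\on{Bl}_{\overline{Z_\Gamma}}\overline{\mathscr{M}_g}$: over a chart where, say, $t_{e_0} \neq 0$, the blowup introduces coordinates $s_e = t_e/t_{e_0}$ for $e \neq e_0$ together with $t_{e_0}$ itself, and $E_\Gamma = \{t_{e_0} = 0\}$ in this chart; a transverse punctured disc is $t_{e_0} \in \Delta^*$ with all $s_e$ fixed, which pulls back downstairs to the loop $t_e = s_e t_{e_0}$, all moving proportionally as $t_{e_0}$ circles $0$ --- precisely the diagonal loop, whose monodromy is $\prod_e T_e$ by Picard--Lefschetz applied node-by-node (the nodes have disjoint supports on the fiber, so the twists are about disjoint curves and commute). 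One should also remark that working with the Deligne--Mumford \emph{stack} rather than a chosen orbifold chart causes no trouble here: the statement is about $\pi_1$ of the (analytic) stack, which is $\on{Mod}(g)$, and the inertia of a normal-crossings boundary divisor in this setting is computed by exactly the same local meridian analysis. I would close by noting that the argument is insensitive to the characteristic-$0$ field $k$: one first proves it over $\mathbb{C}$ as above, and then descends using that $\pi_1^{\mathrm{\acute et}}$ of $\mathscr{M}_{g,k}$ and the inertia of boundary divisors are computed by base change to $\mathbb{C}$ (or directly via the tame fundamental group and the explicit local model $xy = t_e$, which is defined over $\mathbb{Z}$).
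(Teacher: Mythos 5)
Your proposal is correct and follows essentially the same route as the paper: the paper's proof also reduces to (i) the fact that the monodromy around each boundary divisor containing $Z_\Gamma$ is the corresponding Dehn twist, and (ii) a local computation identifying the meridian of the exceptional divisor of the blowup of a normal-crossings intersection with the simultaneous (diagonal) loop around all the branches. The only difference is that the paper cites references for these two steps, whereas you carry out the Picard--Lefschetz analysis of the local model $xy=t_e$ and the blowup chart computation explicitly.
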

Here $\on{Mod}(g)$ is the mapping class group of a genus $g$ surface $\Sigma_g$.
\begin{proof}
The stratum $\overline{Z_{\Gamma}} \subset \overline{\mathcal{M}_g}$ is (locally) the intersection of the boundary divisors of $\overline{\mathscr{M}_g}$ which contain it, with normal crossings. The monodromy about these boundary divisors is worked out in \cite[Theorem 2.2]{AMO}. The lemma now follows from a local computation (of the monodromy around the blowup of an intersection of divisors with normal crossings), contained in, for example \cite{MO-speyer}. 
\end{proof}
We will at some points be forced to work with the coarse spaces $M_g$ of $\mathscr{M}_g$ and $\overline{M_g}$ of $\overline{\mathscr{M}_g}$, and the sublocus $M_g^0$ of $M_g$ and $\overline{M_g}^0$ of $\overline{M}_g$ consisting of curves with trivial automorphism group.
\subsection{$\mathbf{Pic}^1_{\mathscr{C}_g/\mathscr{M}_g}$ and its boundary} \label{subsec:pic-preliminaries} Let $\mathscr{C}_g$ be the universal curve over $\mathscr{M}_g$. We will denote by $\mathbf{Pic}^d_{\mathscr{C}_g/\mathscr{M}_g}$ the $\mathbb{G}_m$-rigidification of the moduli stack whose $T$-points are families of smooth proper genus $g$ curves over $T$ with a line bundle of relative degree $d$ (see e.g.,~\cite[Section 2]{melo2014picard} for a precise definition). We will also use the space $P_{d,g}$ which coarsely represents the Picard functor of degree $d$ line bundles over $M_g^0$.  Caporaso constructs \cite{caporaso1994compactification} a Cohen-Macaulay compactification $\overline{P_{d,g}}$ of $P_{d,g}$, equipped with a proper map $\overline{P_{d,g}}\to \overline{M}_g$ which will also be used. 

Ebert and Randal-Williams \cite{ebert2010stable} also consider analytic analogues of these moduli stacks. Melo and Viviani \cite{melo2014picard} construct a map from these analytic stacks to the analytifications of those described in the paragraph above. Both Ebert--Randal-Williams and Melo--Viviani speculate that this map is an equivalence, but do not check it explicitly. That said,  by considering the fibers over points of $\mathscr{M}_g$, it is easy to see that this map induces an isomorphism on fundamental groups. Ultimately all of the work we do here is group-theoretic in nature, so one might equally well work with the stacks considered by Ebert and Randal-Williams. For us, the fundamental relevant observation about these analytic stacks is that their fundamental groups are given by $$\pi_1(\mathbf{Pic}^1_{\mathscr{C}_g/\mathscr{M}_g})=\pi_1(\mathscr{C}_g)/L^2\pi_1(\Sigma_g).$$
See e.g.~\cite[Section 2, Proposition 2.1, and Theorem 4.6]{ebert2010stable} for a discussion of the homotopy type of these stacks.

At various points in the text we will phrase things in terms of the (somewhat complicated) stacks $\mathbf{Pic}^d_{\mathscr{C}_g/\mathscr{M}_g}$; for the reader uncomfortable with stacks, we indicate now that this usage only leads to cleaner statements. Indeed, all of our main results could be formulated entirely in terms of the schemes $M_g^0$ and $P_{d,g}$ and their (scheme-theoretic) compactifications.

We require the following fact from Caporaso \cite[Section 7.2 and footnote at the bottom of page 594]{caporaso1994compactification}:
\begin{proposition}\label{prop:irreducible-fibers}
Let $\Gamma$ be a stable tree and $Z_\Gamma$ the corresponding stratum of the boundary of $\overline{M_g}$ (the coarse space of $\overline{\mathscr{M}_g}$). Then the fibers of the canonical projection $\overline{P_{d,g}}\to \overline{M_g}$ over points of $Z_\Gamma$ are irreducible.
\end{proposition}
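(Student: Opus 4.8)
The plan is to reduce the statement to a concrete description of the compactified Jacobian of a nodal curve of compact type, following Caporaso's construction. First I would recall that a point of $Z_\Gamma$ for $\Gamma$ a stable tree corresponds to a curve $X$ of \emph{compact type}: since the dual graph is a tree, $X = \bigcup_v X_v$ is a union of smooth components $X_v$ of genus $h(v)$ glued at the nodes dictated by the edges, and the total space has no cycles, so $H^1$ of the dual graph vanishes. Consequently the generalized Jacobian $\mathbf{Pic}^0(X)$ is an abelian variety, namely $\prod_v \mathbf{Pic}^0(X_v) = \prod_v \operatorname{Jac}(X_v)$, with no toric part; in particular it is (geometrically) connected and irreducible. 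The point is that for compact-type curves there is no ambiguity coming from multidegrees: fixing the total degree $d$ and distributing it among the components in a given way gives a torsor under $\mathbf{Pic}^0(X)$, hence an irreducible variety.

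The second step is to match this up with the fiber of Caporaso's $\overline{P_{d,g}} \to \overline{M_g}$. The subtlety in Caporaso's compactification is that over a reducible stable curve the fiber is built from \emph{all} semistable models and all stable multidegrees, so a priori it could be reducible (this is exactly what happens over boundary points whose dual graph has cycles). However, for compact-type curves the semistability/balancedness condition (the "basic inequality" on multidegrees in \cite{caporaso1994compactification}) singles out a unique stable multidegree — because the Jacobian has no toric part and hence there is no lattice of multidegrees to quotient by — so no blowing-up or bubbling of the components is needed. I would therefore cite Caporaso's analysis directly: in \cite[Section 7.2]{caporaso1994compactification} (and the footnote on page 594), the fiber over a compact-type point is identified with a single torsor under $\mathbf{Pic}^0(X)$, which by the previous paragraph is irreducible. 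This is really just a bookkeeping step: the abelian-variety structure of the Jacobian over the compact-type locus is classical, and the content is that Caporaso's combinatorial stability condition collapses to a single chamber in that case.

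The main obstacle, such as it is, is making sure the reduction from the \emph{stack-theoretic} boundary stratum to the \emph{coarse} picture is done correctly, and that one is tracking fibers of $\overline{P_{d,g}} \to \overline{M_g}$ rather than of the larger stacky $\mathbf{Pic}$; but since the statement is explicitly phrased in terms of coarse spaces and $\overline{P_{d,g}}$, and since Caporaso works coarsely throughout, this is not a serious issue. I expect the honest way to present this is to state the compact-type description of the fiber, invoke Caporaso's explicit computation over trees for the uniqueness of the balanced multidegree, and conclude irreducibility from connectedness of $\mathbf{Pic}^0(X) = \prod_v \operatorname{Jac}(X_v)$. No delicate estimates or new arguments are needed — it is a citation plus a one-line structural remark about Jacobians of curves of compact type.
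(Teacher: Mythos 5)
Your proposal matches the paper: the paper gives no independent argument and simply quotes this statement from Caporaso, citing exactly the reference you invoke (Section 7.2 and the footnote at the bottom of page 594 of \cite{caporaso1994compactification}), with the tree/compact-type observation serving only as motivation. Be aware that your heuristic that balancedness "singles out a unique stable multidegree" is not literally correct (e.g.\ two elliptic curves glued at a point with $d=1$ admit the balanced multidegrees $(1,0)$ and $(0,1)$); irreducibility for compact type comes rather from the triviality of the degree class group, i.e.\ the identifications Caporaso makes, but since the conclusion is carried by the citation this does not affect the argument.
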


We will also make use of certain blowups of $\overline{P_{d,g}}$; we will require the fact that they are also Cohen-Macaulay. For this purpose we record the following:
\begin{lemma}\label{lem:cohen-macaulay}
Let $X$ be a Cohen-Macaulay scheme and $V\subset X$ an lci subscheme. Then $\on{Bl}_V X$ is Cohen-Macaulay.
\end{lemma}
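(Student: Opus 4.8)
The statement is local on $X$, so I may assume $X = \on{Spec}(A)$ is affine with $A$ Cohen-Macaulay, and that $V = \on{Spec}(A/I)$ is a local complete intersection subscheme cut out, locally, by a regular sequence $f_1, \dots, f_c$ of length $c = \on{codim}(V, X)$. The blowup $\on{Bl}_V X$ is covered by the affine charts $\on{Spec}(A[f_1/f_i, \dots, f_c/f_i])$ for $i = 1, \dots, c$, so it suffices to show each of these charts is Cohen-Macaulay. By symmetry I work with the chart $R := A[f_2/f_1, \dots, f_c/f_1] = A[y_2, \dots, y_c]/J$, where $y_j$ maps to $f_j/f_1$; the point is to identify the relation ideal $J$ and then exhibit $R$ as a quotient of a Cohen-Macaulay ring by a regular sequence.

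The key input is the explicit description of the Rees algebra (equivalently the blowup) of an ideal generated by a regular sequence. For $I = (f_1, \dots, f_c)$ with $f_1, \dots, f_c$ a regular sequence in $A$, the Rees algebra $\bigoplus_{n \geq 0} I^n t^n$ is isomorphic to $A[T_1, \dots, T_c]/K$ where $K$ is generated by the Koszul-type relations $f_i T_j - f_j T_i$ — this is a standard fact (the regular sequence hypothesis makes the associated graded ring a polynomial ring over $A/I$, and an induction on $c$ controls the defining ideal of the Rees algebra). On the chart where $T_1$ is inverted, setting $y_j = T_j/T_1$, the relations $f_1 T_j - f_j T_1$ become $f_j = f_1 y_j$, so the chart ring is $R = A[y_2, \dots, y_c]/(f_2 - f_1 y_2, \dots, f_c - f_1 y_c)$. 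Now I claim the $c-1$ elements $g_j := f_j - f_1 y_j$ form a regular sequence in the polynomial ring $A[y_2, \dots, y_c]$: modulo $(g_2, \dots, g_{j-1})$ one can use $g_j$ to show it is a nonzerodivisor, since after inverting nothing, $g_j$ reduces to an expression in which $f_j$ (a nonzerodivisor on $A/(f_2, \dots, f_{j-1})$ by the regular sequence property — after reordering so $f_1$ is last, $f_2, \dots, f_c, f_1$ is still a permutation of a regular sequence hence itself regular) controls the situation; concretely $A[y_2,\dots,y_c]$ is free over $A$, tensoring keeps things in order, and the leading behavior in the $y_j$ variable handles the rest. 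Since $A[y_2, \dots, y_c]$ is Cohen-Macaulay (polynomial extension of the Cohen-Macaulay ring $A$) and we are cutting by a regular sequence of length $c-1$, the quotient $R$ is Cohen-Macaulay. Running over all $c$ charts and all open subsets trivializing the lci structure of $V$ gives the result.

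The main obstacle I anticipate is pinning down the defining ideal of the blowup chart precisely — i.e. confirming that over a regular sequence the Rees algebra relations are exactly the Koszul relations $f_i T_j - f_j T_i$ with no extra generators — and then verifying cleanly that $g_2, \dots, g_c$ (or rather $g_2,\dots,g_c$ with $g_j = f_j - f_1 y_j$; there are $c-1$ of them) is a regular sequence in the polynomial ring. Both are classical but require the regular sequence hypothesis in an essential way; the cleanest route is probably to cite the standard structure theorem for Rees algebras of complete intersections (e.g. via the fact that the Koszul complex on $f_1,\dots,f_c$ is acyclic) rather than to reprove it, and then reduce the regularity of $g_2, \dots, g_c$ to the regularity of the permuted sequence $f_2, \dots, f_c, f_1$ by an explicit change of variables / grading argument in $A[y_2, \dots, y_c]$. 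An alternative, slicker approach worth mentioning: $\on{Bl}_V X \to X$ is a local complete intersection morphism over the complement of $V$ trivially, and near $V$ the blowup is, locally, cut out inside $\mathbb{A}^{c-1}_X$ by the $c-1$ equations above, so it is an lci scheme over $\on{Spec}(\mathbb{Z})$-relative to nothing — more usefully, it is lci over $X$, and an lci morphism to a Cohen-Macaulay scheme has Cohen-Macaulay source; this is perhaps the most economical phrasing if the requisite statement about lci morphisms preserving Cohen-Macaulayness is available.
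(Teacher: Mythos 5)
Your proposal and the paper diverge completely in form: the paper does not prove the lemma at all, it simply quotes \cite[Proposition 5.5(1)]{kovacs2017rational}, whereas you give a self-contained chart computation. Your route is the classical one: since the question is local, reduce to $X=\on{Spec}(A)$ with $V$ cut out by a regular sequence $f_1,\dots,f_c$; use that such an ideal is of linear type, so the Rees algebra is $A[T_1,\dots,T_c]/(f_iT_j-f_jT_i)$; identify the chart $\{T_1\neq 0\}$ with $A[y_2,\dots,y_c]/(f_2-f_1y_2,\dots,f_c-f_1y_c)$; and conclude because $A[y_2,\dots,y_c]$ is Cohen--Macaulay and you are dividing by a regular sequence of length $c-1$ (regular sequences localize, so the quotient is Cohen--Macaulay at every prime containing them). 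This buys a proof from first principles at the cost of the linear-type input; the paper's citation buys brevity and sidesteps all bookkeeping.

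The one soft spot is your justification that $g_j=f_j-f_1y_j$, $j=2,\dots,c$, is a regular sequence. The parenthetical appeal to re-ordering (``$f_2,\dots,f_c,f_1$ is still a permutation of a regular sequence hence itself regular'') is not valid in an arbitrary Noetherian ring --- permutation invariance of regular sequences needs local (or Jacobson-radical) hypotheses --- and, more to the point, it is not what the induction actually requires. If you expand $h\cdot(f_{j}-f_1y_{j})=0$ in powers of $y_{j}$ over $C=A[y_2,\dots,y_{j-1}]/(g_2,\dots,g_{j-1})$, the coefficient equations are $h_nf_1=0$ and $h_if_{j}=h_{i-1}f_1$, so the whole induction closes as soon as $f_1$ is a nonzerodivisor on $C$; the elements $f_2,\dots,f_c$ never need to be nonzerodivisors modulo one another in any permuted order. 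And the fact that $f_1$ is a nonzerodivisor on each intermediate chart ring is exactly what the linear-type presentation gives you for the truncated sequence $f_1,\dots,f_{j-1}$: the presented ring is the degree-zero part of the localized Rees algebra, hence embeds in $A_{f_1}$, where $f_1$ is a unit. Alternatively, localize $A$ at a prime at the outset (harmless, since Cohen--Macaulayness is checked on local rings and blowing up commutes with localization), after which permutation arguments are legitimate; or avoid charts altogether by noting that the exceptional divisor is a projective bundle over $V$ (which is Cohen--Macaulay, being cut out of $X$ by a regular sequence), is an effective Cartier divisor on $\on{Bl}_VX$, and that a Noetherian local ring possessing a nonzerodivisor with Cohen--Macaulay quotient is Cohen--Macaulay. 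Finally, be aware that your ``slicker'' closing remark is circular as stated: knowing that $\on{Bl}_VX\to X$ is lci is precisely the assertion that the charts are cut out by regular sequences, i.e.\ the claim under discussion, though the implication ``lci over a Cohen--Macaulay base implies Cohen--Macaulay total space'' is itself correct.
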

\begin{proof}
This is \cite[Proposition 5.5(1)]{kovacs2017rational}.
\end{proof}
Let $D$ be a smooth connected divisor in a smooth complex variety $X$. By a \emph{deleted neighborhood} of $D$ in $X$ we will mean the complement of $D$ in an $\epsilon$-neighborhood of $D$ in $X$ (in the Euclidean topology) of which $D$ is a deformation retract; this is a punctured disc bundle over $D$. 

We will also require the following in Section \ref{M_g-consequences}, during our analysis of Gysin images of certain cohomology classes on $\mathscr{M}_g$, $P_{1,g}$:
\begin{lemma}\label{lem:pi1-factoring}
Let $X$ be a smooth complex variety and let $D\subset X$ be smooth and connected of codimension one in $X$. Let $\widetilde{D}$ be a deleted neighborhood of $D$ in $X$ and let $$\pi: \widetilde{D}\to D$$ the associated punctured disc bundle. Let $x\in \widetilde{D}$ be a point and $y=\pi(x)$. Suppose we have $a\in \pi_1(D, y), b\in \pi_1(X, x)$ such that $b$ is a generator of the local inertia around $D$ (i.e.~it generates the kernel of the map $\pi_1(\widetilde{D}, x)\to \pi_1(D, y)$ induced by $\pi$).

Then for any Zariski-open $U\subset D$ containing $y$, there exist commuting elements $a', b'\in \pi_1(\pi^{-1}(U), x)$ such that
\begin{enumerate}
\item $\pi_*(a')\in \pi_1(U, y)$ maps to $a$ in $\pi_1(D, y)$, and 
\item $b'$ maps to $b$ in $\pi_1(\widetilde{D})$ 
\end{enumerate}
\end{lemma}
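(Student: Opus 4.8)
The plan is to work out the fundamental group of $\pi^{-1}(U)$ as a semidirect product and then lift $a$ and $b$ in a controlled way. First I would observe that $\pi\colon \widetilde D\to D$ restricts to a $\mathbb{C}^\times$-bundle $\pi^{-1}(U)\to U$, so that we have a short exact sequence
\begin{equation*}
1\to \langle b\rangle \to \pi_1(\pi^{-1}(U),x)\xrightarrow{\pi_*} \pi_1(U,y)\to 1,
\end{equation*}
where $\langle b\rangle\cong\mathbb{Z}$ is central (it is the local inertia, which lies in the center of $\pi_1$ of the punctured disc bundle since the monodromy of an $S^1$-bundle acts trivially on the fiber class). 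Centrality of $\langle b\rangle$ is the key structural input: it means that for \emph{any} lift $\tilde a\in \pi_1(\pi^{-1}(U),x)$ of a chosen lift $\bar a\in\pi_1(U,y)$ of $a$, the elements $\tilde a$ and $b$ automatically commute. So the real content is just to produce an element $a'$ of $\pi_1(\pi^{-1}(U),x)$ whose image in $\pi_1(U,y)$ maps to $a$ under $\pi_1(U,y)\to\pi_1(D,y)$, and then set $b'=b$.

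Next I would handle the lifting of $a$ from $\pi_1(D,y)$ to $\pi_1(U,y)$. Since $U\subset D$ is Zariski-open in a smooth variety, $D\setminus U$ has complex codimension $\ge 1$ in $D$, hence real codimension $\ge 2$, so the inclusion $U\hookrightarrow D$ induces a \emph{surjection} $\pi_1(U,y)\twoheadrightarrow\pi_1(D,y)$ (a loop in $D$ can be pushed off the positive-codimension locus $D\setminus U$). Thus we may choose $\bar a\in\pi_1(U,y)$ with image $a$ in $\pi_1(D,y)$. Then, using surjectivity of $\pi_*\colon\pi_1(\pi^{-1}(U),x)\to\pi_1(U,y)$ (which holds because $\pi^{-1}(U)\to U$ is a fiber bundle with connected fiber $\mathbb{C}^\times$, and in fact the sequence above is exact on the left too since the fiber is aspherical), pick $a'\in\pi_1(\pi^{-1}(U),x)$ with $\pi_*(a')=\bar a$. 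By construction $\pi_*(a')$ maps to $a$ in $\pi_1(D,y)$, giving (1); and $b'=b$ maps to the chosen generator $b$ of the inertia in $\pi_1(\widetilde D)$, giving (2), once one checks the inertia generator of $\pi^{-1}(U)\subset\widetilde D$ maps to that of $\widetilde D$ (clear, as the $\mathbb{C}^\times$-bundle $\pi^{-1}(U)$ is the restriction of $\widetilde D$ and the fiber class is preserved).

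Finally, the commutativity $[a',b']=1$: since $b=b'$ generates the kernel of $\pi_*$, which is central in $\pi_1(\pi^{-1}(U),x)$, it commutes with everything, in particular with $a'$. I expect the main obstacle — or at least the only point requiring genuine care rather than bookkeeping — to be the assertion that $\langle b\rangle$ is \emph{central} in $\pi_1(\pi^{-1}(U),x)$ rather than merely normal: this is where one uses that we have a \emph{complex} codimension-one divisor, so its deleted neighborhood is genuinely an $S^1$- (equivalently $\mathbb{C}^\times$-) bundle and the monodromy action of the base on $H_1$ of the punctured normal disc is trivial. I would justify this by choosing a tubular neighborhood compatible with the complex normal bundle structure and noting that the structure group $\mathbb{C}^\times$ acts trivially on the homotopy class of the fiber circle. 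One should also double-check that $b$ can be taken to be the \emph{same} element for $\widetilde D$ and for $\pi^{-1}(U)$ — this is automatic since $x\in\pi^{-1}(U)$ and the inclusion $\pi^{-1}(U)\hookrightarrow\widetilde D$ is compatible with the bundle projections, carrying fiber to fiber.
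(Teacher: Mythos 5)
Your argument is correct and is essentially the paper's own proof: lift $a$ through the surjection $\pi_1(U,y)\twoheadrightarrow\pi_1(D,y)$ (you argue via real codimension $\geq 2$ of $D\setminus U$; the paper cites normality of $D$), lift further along the bundle projection, take $b'$ to be the inertia generator of $\pi^{-1}(U)$, and conclude commutativity from centrality of the inertia in $\pi_1(\pi^{-1}(U),x)$ — a point the paper leaves implicit here but makes explicit in the Galois analogue, Lemma \ref{lem:galois-factoring}. One small caveat: your parenthetical claim that the sequence is also exact on the left ``since the fiber is aspherical'' is not justified (injectivity of the fiber class would require the boundary map $\pi_2(U)\to\pi_1(S^1)$ to vanish, which can fail, cf.\ the Hopf fibration), but nothing in your argument uses injectivity — centrality of the image of the fiber class and surjectivity onto $\pi_1(U,y)$ suffice.
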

\begin{proof} For any inclusion $U \subset D$ of a Zariski-open set containing $y$, the natural map $\pi_1(U) \rightarrow \pi_1(D)$ is surjective (as $D$ is normal). Hence we may may lift $a$ to $\pi_1(U, y)$ and then to $a'\in \pi_1(\pi^{-1}(U), x)$. Now choosing $b'$ to be any lift of $b$ contained in the local inertia group of $U$ in $\pi^{-1}(U)$ gives the result.
\end{proof}
An essentially identical proof gives:
\begin{lemma}\label{lem:galois-factoring}
Let $X$ be a normal variety over an algebraically closed field $k$ of characteristic zero and $D\subset X$ normal of pure codimension one in $X$. Let $\bar y:\on{Spec}(k)\to  D$ be a $k$-point such that $X,D$ are both non-singular at $\bar y$ (such a point exists by normality). Let $a\in  \pi_1^{\text{\'et}}(D, \bar y)$ be any element. Let $\Gamma_D\subset \on{Gal}(\overline{k(X)}/k(X))$ be a decomposition group associated to $D$ and let $b$ be a generator of the inertia of $\Gamma_D$.

Then there exists $a'\in \Gamma_D$ commuting with $b$.
\end{lemma}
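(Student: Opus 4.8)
The plan is to run the argument of Lemma~\ref{lem:pi1-factoring} with the complete local ring of $X$ along $D$ in place of the punctured‐disc bundle $\pi^{-1}(U)$. As in that lemma, the one substantive point is that the inertia, topologically generated by $b$, is \emph{central} in $\Gamma_D$, so that the only real task is to lift $a$ into $\Gamma_D$ at all. (I take the intended conclusion to be, as in Lemma~\ref{lem:pi1-factoring}, that there is an $a'\in\Gamma_D$ commuting with $b$ whose image in $\pi_1^{\text{\'et}}(D,\bar y)$ is $a$; if one only wants \emph{some} $a'$ commuting with $b$, that is immediate from centrality.) The topological reason for centrality there was that an oriented punctured‐disc bundle has trivial monodromy on $\pi_1$ of the fiber; the arithmetic avatar is that $k$ is algebraically closed, which kills a cyclotomic character.

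First I would set up $\Gamma_D$. We may take $D$ irreducible (pass to the component through $\bar y$, which is the one for which the decomposition group is defined). Normality of $X$ and $\on{codim}_X D=1$ make $\mathscr{O}_{X,\eta_D}$ a discrete valuation ring with fraction field $k(X)$ and residue field $\kappa(D)$; fixing a valuation of $\overline{k(X)}$ above it, $\Gamma_D$ is identified with the absolute Galois group of $\on{Frac}(\widehat{\mathscr{O}}_{X,\eta_D})$. Since $\on{char}(k)=0$ this complete discrete valuation ring is absolutely tamely ramified, so one has the standard exact sequence
\[
1 \to I_D \to \Gamma_D \xrightarrow{\ \rho\ } \on{Gal}(\overline{\kappa(D)}/\kappa(D)) \to 1,
\]
with $I_D\cong\widehat{\mathbb{Z}}(1)$ procyclic topologically generated by $b$, and with $\Gamma_D$ acting on $I_D$ by conjugation through the cyclotomic character of $\on{Gal}(\overline{\kappa(D)}/\kappa(D))$. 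The key step is then: since $k\subseteq\kappa(D)$ and $k$ is algebraically closed, $\kappa(D)$ contains all roots of unity, so this cyclotomic character is trivial and $I_D$ is central in $\Gamma_D$.

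Finally I would produce the lift. As $D$ is normal and connected, a connected finite étale cover of $D$ restricts over the generic point to the spectrum of a field, so the natural map $\on{Gal}(\overline{\kappa(D)}/\kappa(D))\to\pi_1^{\text{\'et}}(D,\bar y)$ is surjective (smoothness of $X$ and $D$ at $\bar y$ is used only to match up base points, and a different choice merely conjugates $a$, which is harmless). Composing with $\rho$ gives a surjection $\Gamma_D\to\pi_1^{\text{\'et}}(D,\bar y)$, so $a$ lifts to some $a'\in\Gamma_D$, and $a'$ commutes with $b$ because $I_D$ is central. I do not expect a genuine obstacle here: the only steps requiring care—and the only places the hypotheses are used—are the identification of $\Gamma_D$ and $I_D$ with the Galois group and ramification filtration of the completed local field (normality of $X$ and $\on{char}(k)=0$) and the surjectivity onto $\pi_1^{\text{\'et}}(D,\bar y)$ (normality of $D$), both of which are standard.
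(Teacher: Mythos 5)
Your proposal is correct and takes essentially the same route as the paper's proof: lift $a$ through the surjection $\Gamma_D\twoheadrightarrow\pi_1^{\text{\'et}}(D,\bar y)$ furnished by normality of $D$, and note that the chosen lift automatically commutes with $b$ because the inertia subgroup of $\Gamma_D$ is central. The only difference is that you spell out why inertia is central (tameness in characteristic zero plus triviality of the cyclotomic character over the algebraically closed $k$), which the paper simply asserts.
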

\begin{proof}
As $\Gamma_D$ surjects onto $\pi_1^{\text{\'et}}(D, \bar y)$ by normality, we may let $a'$ be any lift of $a$; it automatically commutes with $b$ as the inertia subgroup of $\Gamma_D$ is central.
\end{proof}
\section{Cohomological preliminaries}\label{sec:cohomological-preliminaries}
\subsection{A form of the Chebotarev density theorem}
One of the arithmetic goals of this paper is to provide an abundance of points of $\mathscr{M}_g$ where certain cohomology classes do not vanish. These classes are in cohomological degree $2$, where we do not know how to directly prove the existence of such points. However, the following variant of the Chebotarev density theorem gives such points for classes in cohomological degree $1$, and will be crucial for our applications:

\begin{theorem}\label{modified-chebotarev}
    Let $X$ be a finite-type, integral, normal $\mathbb{Z}$-scheme of dimension at least
one, and let $\mathscr{F}$ be a locally constant constructible sheaf of abelian groups on $X_{\text{\'et}}$. Let $\bar x \in X$ be a geometric point. For $\gamma\in\pi_1^{\text{\'et}}(X, \bar x)$, let $f_\gamma: \widehat{\mathbb{Z}}\to \pi_1^{\text{\'et}}(X, \bar x)$ be the map sending $1$ to $\gamma$. For  $\alpha \in H^1(X_{\text{\'et}}, \mathscr{F})$, suppose there exists $\gamma$ such that 
$$f_\gamma^*\alpha \in H^1(\widehat{\mathbb{Z}}, \mathscr{F})$$ 
is non-zero. Then the set of closed points $x$ in $X$ such that $\alpha|_x$ is nonzero is Zariski-dense.
\end{theorem}
\begin{proof}
    For ease of notation write $\Gamma=\pi_1^{\text{\'et}}(X, \bar x)$  and $A=\mathscr{F}_{\bar x}$. By the continuity of the $\Gamma$-action on $A$, there exists an open neighborhood $U$, $\gamma\in U\subset \Gamma$, such that for $\gamma'\in U$, the action of $\gamma'$ on $A$ is the same as the  action of $\gamma$ on $A$, giving a canonical identification of $f^*_\gamma A$ with $f^*_{\gamma'}A$ as $\widehat{\mathbb{Z}}$-modules. Let $$\Phi: U\to H^1(\widehat{\mathbb{Z}}, f_\gamma^*A)$$ be the map $$\gamma'\mapsto f_{\gamma'}(\alpha).$$ It is a matter of unwinding definitions to check that $\Phi$ is continuous. In particular there exists an open neighborhood $V\subset U$ such that for all $\gamma'\in V$, $\Phi(\gamma')$ is non-zero. Let $W$ be the orbit of $V$ under conjugation by $\Gamma$; we also have that for all $\eta\in W$, $f_\eta^*\alpha$ is nonzero.

    Now the Chebotarev density theorem \cite[Theorem 9.11]{serre_lectures_2016} yields that the conjugacy classes of Frobenii at a Zariski-closed subset of points of $X$ lie in $W$, completing the proof.
\end{proof}

\begin{remark}
Evidently there is no analogue of Theorem \ref{modified-chebotarev}  for classes $\alpha\in H^i(X_{\text{\'et}}, \mathscr{F})$, $i>1$, as the cohomological dimension of a finite field is $1$. One might ask if there is an analogous result for classes in higher cohomological degree on varieties over fields with higher cohomological dimension. We are unaware of any such result, with the exception of \cite[Theorem 2.5]{fein1992brauer} (which inspired in part the arithmetic results of this paper).
\end{remark}
\subsection{Gysin sequences and comparison results}\label{subsec:gysin}
Having provided in Theorem \ref{modified-chebotarev} a mechanism for finding non-zero specializations of a class in cohomological degree $1$, we now describe our mechanism for shifting the classes we will study---namely $o_1, \widetilde{o_2}$, and their \'etale-cohomological variants---from cohomological degree $2$ to cohomological degree $1$. We will use various versions of the Gysin map, in étale, singular, group, and Galois cohomology for this purpose; we recall these maps and the relationships between them now.
\subsubsection{Gysin sequences in topology}
Let $\pi: E\to B$ be a circle bundle, and let $\mathscr{F}$ be a locally constant sheaf of abelian groups on $E$.
\begin{proposition}[Thom-Gysin sequence]\label{gysin-topology}
There is a long exact sequence $$\cdots \to H^i(B, \pi_*\mathscr{F})\to H^i(E, \mathscr{F})\to H^{i-1}(B, R^1\pi_*\mathscr{F})\to H^{i+1}(B, \pi_*\mathscr{F})\to H^{i+1}(E, \mathscr{F})\to \cdots$$
\end{proposition}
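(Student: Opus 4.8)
The plan is to obtain this as the two-row collapse of the Leray spectral sequence of $\pi$. Consider
\[
E_2^{p,q}=H^p(B,R^q\pi_*\mathscr{F})\ \Longrightarrow\ H^{p+q}(E,\mathscr{F}).
\]
Since $\pi$ is a fiber bundle with fiber $S^1$, the sheaves $\pi_*\mathscr{F}$ and $R^1\pi_*\mathscr{F}$ are locally constant, with stalks $H^0(S^1,\mathscr{F}|_{S^1})$ and $H^1(S^1,\mathscr{F}|_{S^1})$ respectively, so the terms appearing on the right-hand side of the claimed sequence make sense. The essential point is that $R^q\pi_*\mathscr{F}=0$ for $q\geq 2$: the sheaf $R^q\pi_*\mathscr{F}$ is the sheafification of $U\mapsto H^q(\pi^{-1}(U),\mathscr{F})$, and its stalk at $b\in B$ is $H^q(\pi^{-1}(b),\mathscr{F}|_{\pi^{-1}(b)})$ --- either by proper base change, or directly by restricting to a trivializing neighborhood of $b$ --- which vanishes for $q\geq 2$ because $\pi^{-1}(b)\cong S^1$ has cohomological dimension $1$. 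Hence the spectral sequence is supported in the rows $q=0,1$.

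Next I would invoke the standard fact that a cohomological spectral sequence concentrated in two adjacent rows degenerates at $E_3$ and yields a long exact sequence. Concretely, the only nonzero differential is $d_2\colon E_2^{i-1,1}\to E_2^{i+1,0}$, so $E_\infty^{i,0}=\on{coker}(d_2\colon E_2^{i-2,1}\to E_2^{i,0})$ and $E_\infty^{i,1}=\ker(d_2\colon E_2^{i,1}\to E_2^{i+2,0})$. The Leray filtration on $H^i(E,\mathscr{F})$ has only the two graded pieces $E_\infty^{i,0}$ and $E_\infty^{i-1,1}$, giving short exact sequences $0\to E_\infty^{i,0}\to H^i(E,\mathscr{F})\to E_\infty^{i-1,1}\to 0$. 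Splicing these together with the maps $d_2$ produces exactly
\[
\cdots\to H^i(B,\pi_*\mathscr{F})\to H^i(E,\mathscr{F})\to H^{i-1}(B,R^1\pi_*\mathscr{F})\xrightarrow{\ d_2\ }H^{i+1}(B,\pi_*\mathscr{F})\to H^{i+1}(E,\mathscr{F})\to\cdots.
\]

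There is no serious obstacle here; this is a classical argument. The only two points demanding care are (i) the identification of the stalks of $R^q\pi_*\mathscr{F}$ with fiberwise cohomology, which is where local triviality of the circle bundle enters, and (ii) the bookkeeping in splicing the two-row spectral sequence into a long exact sequence --- equivalently, one may simply cite the standard ``Wang-type'' exact sequence attached to a two-row spectral sequence. One could optionally remark that the connecting map $d_2$ is cup product with a twisted Euler class of the circle bundle, but this refinement is not needed for the statement.
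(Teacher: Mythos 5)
Your argument is correct and is exactly the route the paper takes: the paper's proof simply cites the Leray spectral sequence of $\pi$, and your write-up supplies the standard details (vanishing of $R^q\pi_*\mathscr{F}$ for $q\geq 2$ since the fiber is $S^1$, then splicing the two-row spectral sequence into a long exact sequence).
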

\begin{proof}
This is immediate from the Leray spectral sequence associated to $\pi$.
\end{proof}
\begin{corollary}[Group-theoretic Thom-Gysin sequence]\label{gysin-group-theory}
Let $$G\to H$$ be a surjection of groups with kernel $K$ isomorphic to $\mathbb{Z}$. Then for any $G$-module $A$, there is a long exact sequence $$\cdots\to H^i(H, A^K) \to H^i(G, A)\to H^{i-1}(H, A_K)\to H^{i+1}(H, A^K)\to H^i(G, A)\to \cdots$$
\end{corollary}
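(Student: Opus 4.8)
The plan is to deduce this from the Lyndon--Hochschild--Serre spectral sequence of the extension $1 \to K \to G \to H \to 1$. Since $K \cong \mathbb{Z}$ has cohomological dimension one, for any $K$-module $A$ one has $H^0(K,A) = A^K$, $H^1(K,A) = A_K$ (the coinvariants, identified with $H^1(\mathbb{Z},A) = \operatorname{coker}(t-1)$ via a chosen generator $t$ of $\mathbb{Z}$), and $H^q(K,A) = 0$ for $q \geq 2$. First I would check that these identifications are $H$-equivariant for the residual $H$-action, so that the (first-quadrant, cohomological) $E_2$-page $E_2^{p,q} = H^p(H, H^q(K,A))$ is concentrated in the two rows $q = 0, 1$, equal to $H^p(H, A^K)$ and $H^p(H, A_K)$ respectively.

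The next step is the standard observation that a first-quadrant cohomological spectral sequence concentrated in two adjacent rows collapses at $E_3$ and its abutment fits into a long exact sequence. Concretely, only the differentials $d_2 \colon E_2^{p,1} = H^p(H, A_K) \to E_2^{p+2,0} = H^{p+2}(H, A^K)$ can be nonzero, and for each $i$ the filtration on $H^i(G,A)$ has just two graded pieces, so
$$0 \to \operatorname{coker}\!\big(d_2 \colon H^{i-2}(H, A_K) \to H^i(H, A^K)\big) \to H^i(G, A) \to \ker\!\big(d_2 \colon H^{i-1}(H, A_K) \to H^{i+1}(H, A^K)\big) \to 0.$$
Splicing these short exact sequences over all $i$, with the connecting maps taken to be $d_2$, yields exactly the asserted long exact sequence $\cdots \to H^i(H, A^K) \to H^i(G, A) \to H^{i-1}(H, A_K) \to H^{i+1}(H, A^K) \to \cdots$.

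Alternatively, and more directly in the spirit of Proposition \ref{gysin-topology}, one can realize the surjection $G \to H$ on classifying spaces: $BG \to BH$ is a fibration with fiber $BK \simeq S^1$, i.e.\ a circle bundle, and the local system $\mathscr{F}$ on $BG$ attached to the $G$-module $A$ has $\pi_*\mathscr{F}$ the local system attached to the $H$-module $A^K$ and $R^1\pi_*\mathscr{F}$ the one attached to $A_K$; Proposition \ref{gysin-topology} then specializes to the claim using $H^\bullet(BG, \mathscr{F}) = H^\bullet(G, A)$ and the analogous identification over $H$. In either route the only points needing care are the $H$-equivariance of $H^0(K,A) \cong A^K$ and $H^1(K,A) \cong A_K$ and the vanishing of $H^{\geq 2}(K,-)$; these are routine, so there is no substantive obstacle here---this corollary is a packaging statement whose content lies entirely in the collapse of a two-row spectral sequence.
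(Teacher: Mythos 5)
Your proposal is correct and matches the paper's argument: the paper proves this corollary in one line by applying Proposition \ref{gysin-topology} to the fibration $K(G,1)\to K(H,1)$, which is exactly your second route, and your primary route via the two-row Lyndon--Hochschild--Serre spectral sequence is the same spectral-sequence computation carried out group-theoretically instead of quoting the topological statement.
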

\begin{proof}
Apply Proposition \ref{gysin-topology} to the fibration $K(G, 1)\to K(H,1)$.
\end{proof}
We will typically apply Proposition \ref{gysin-topology} in the following setting. Let $X$ be a smooth complex variety and $D\subset X$ a smooth irreducible divisor. Then a deleted neighborhood $\tilde D$ of $D$ is (homotopic to) a circle bundle over $D$ via a map $$\pi: \tilde D\to D.$$ Hence for any $i\geq 0$ and any locally constant sheaf of Abelian groups $\mathscr{F}$ on $U=X\setminus D$, there is a natural map $$g_D: H^i(U, \mathscr{F})\to H^i(\tilde D, \mathscr{F}|_{\tilde D})\to H^{i-1}(D, R^1\pi_*\mathscr{F}|_{\tilde D}),$$ which we refer to as a Gysin map.

One may alternately view the map above as follows. Let $\iota: U\to X$ be the natural inclusion, and $j: D\to X$ the inclusion of its complement. Then the Leray spectral sequence for $\iota$ yields $$E_2^{pq}=H^p(X, R^q\iota_*\mathscr{F})\implies H^{p+q}(U, \mathscr{F}).$$ A local computation (see e.g. \cite[Theorem 16.11]{milne1998lectures} and the surrounding references) yields a canonical isomorphism $$j^*R^1\iota_*\mathscr{F}\simeq R^1\pi_*\mathscr{F}|_{\tilde D},$$ and under this identification the map $g_D$ agrees with the map $$H^2(U, \mathscr{F})\to H^1(X, R^1\iota_*\mathscr{F})\simeq H^1(D, j^*R^1\iota_*\mathscr{F})\simeq H^1(D, R^1\pi_*\mathscr{F}|_{\tilde D}),$$ where the first map arises from the Leray spectral sequence for $\iota$.
\subsubsection{Gysin sequences in \'etale cohomology} There is an analogous story in \'etale cohomology. Let $R$ be a complete discrete valuation ring with residue field $k$ and fraction field $K$. Let $G_K$ be the absolute Galois group of $K$ and let $I\subset G_K$ be the inertia subgroup. Let $A$ be a finite discrete $G_K$-module of order prime to $\on{char}(k)$.
\begin{proposition}\label{gysin-etale}
There is a long exact sequence $$\cdots \to H^i(k, A^I)\to H^i(K, A)\to H^{i-1}(k, A(-1)_I)\to H^{i+1}(k, A^I)\to H^{i+1}(K, A)\to \cdots$$
\end{proposition}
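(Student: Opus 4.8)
The plan is to mimic the proof of the topological Thom--Gysin sequence (Proposition~\ref{gysin-topology}): replace the Leray spectral sequence of the circle bundle $\tilde D\to D$ by the Hochschild--Serre spectral sequence of an appropriate extension of Galois groups, the role of $\pi_1(\Sigma^1)\cong\mathbb{Z}$ being played by the inertia group $I$ (which behaves cohomologically like $\widehat{\mathbb{Z}}$ after the prime-to-$\on{char}(k)$ reduction, just as in Corollary~\ref{gysin-group-theory}).

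First I would record that, since $R$ is a complete (hence Henselian) discrete valuation ring, there is a short exact sequence of profinite groups $1\to I\to G_K\to G_k\to 1$ identifying $G_K/I$ with the absolute Galois group of the residue field. Feeding this into Hochschild--Serre gives $E_2^{pq}=H^p(G_k, H^q(I,A))\Rightarrow H^{p+q}(G_K,A)$. The crux of the argument --- and the only place the hypothesis $(|A|,\on{char}(k))=1$ enters --- is the computation that $H^q(I,A)=0$ for $q\geq 2$, that $H^0(I,A)=A^I$, and that $H^1(I,A)\cong A(-1)_I$ as a $G_k$-module. For the vanishing, one filters $I$ by its wild inertia subgroup $P$ (pro-$p$ with $p=\on{char}(k)$, or $P=1$ if $\on{char}(k)=0$): because $A$ has order prime to $p$ and $P$ acts through a finite $p$-group, $H^{>0}(P,A)=0$, so by Hochschild--Serre for $P\triangleleft I$ the cohomology of $I$ agrees with that of the tame quotient $I/P\cong\widehat{\mathbb{Z}}^{(p')}$, which has cohomological dimension one. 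For the identification of $H^1$, one uses the canonical isomorphism $I/P\cong\varprojlim_{(n,p)=1}\mu_n=\widehat{\mathbb{Z}}^{(p')}(1)$ of $G_k$-modules (inertia acting trivially on prime-to-$p$ roots of unity, since these lift from $k$ by Hensel's lemma, so the Tate twist is well defined and $A(-1)_I$ is unambiguous) together with the standard computation of $H^1$ of a procyclic group with finite coefficients as the module of coinvariants.

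Finally, a first-quadrant spectral sequence whose $E_2$-page is concentrated in the two rows $q=0,1$ collapses to a long exact sequence relating the abutment to the two rows, with connecting maps the differentials $d_2\colon E_2^{p,1}\to E_2^{p+2,0}$; this is purely homological. Substituting $E_2^{p,0}=H^p(k,A^I)$ and $E_2^{p,1}=H^p(k,A(-1)_I)$ produces exactly the asserted sequence. I expect no genuine obstacle beyond bookkeeping: the one delicate point is getting the Tate twist in $H^1(I,A)\cong A(-1)_I$ correct and checking that $I$ acts trivially on it, and all of the inertia computations are standard and may alternatively be quoted from, e.g., \cite{milne1998lectures}.
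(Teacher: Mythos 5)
Your argument is correct, and it is essentially the content behind the paper's proof, which consists of a single citation to \cite[Lemma 2.18]{Milne06}: the lemma quoted there is proved by exactly the Hochschild--Serre argument you outline (the extension $1\to I\to G_K\to G_k\to 1$ from Henselianness, the computation of $H^*(I,A)$ via wild/tame inertia, and the collapse of a two-row spectral sequence). One bookkeeping point you gloss over and should make explicit: when the wild inertia $P$ acts nontrivially on $A$, Hochschild--Serre for $P\lhd I$ gives $H^1(I,A)\cong H^1(I/P, A^P)\cong (A^P)(-1)_{I/P}$, so to land on the asserted group $A(-1)_I$ you need the natural map $A^P\to A_P$ to be an isomorphism; this holds because $P$ acts through a finite $p$-group while $|A|$ is prime to $p=\on{char}(k)$, so averaging over the image of $P$ identifies invariants with coinvariants. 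With that inserted, your identification $H^1(I,A)\cong A(-1)_I$ (and the vanishing of $H^{\geq 2}(I,A)$, since $\on{cd}_\ell(I/P)=1$ for $\ell\neq p$) is complete, and the rest is the standard degeneration of a spectral sequence concentrated in two rows.
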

\begin{proof}
This follows from \cite[Lemma 2.18]{Milne06}.
\end{proof}
The Gysin map $H^i(K, A)\to H^{i-1}(k, A(-1)_I)$ globalizes as follows. Let $X$ be a regular scheme  and let $D\subset X$ be a regular subscheme of codimension one. Let $U=X\setminus D$ and let $\mathscr{F}$ be an lcc sheaf of abelian groups on $U$, tame along $D$, whose order is invertible on $X$. Let $\iota: U\to X$ be the natural inclusion. Then a local computation shows that $R^i\iota_*\mathscr{F}=0$ for $i\not=0,1$, and that $R^1\iota_*\mathscr{F}$ is supported on $D$; hence the hypercohomology spectral sequence for $R\iota_*\mathscr{F}$ becomes a long exact sequence $$\cdots \to H^i(X, \iota_*\mathscr{F})\to H^i(U, \mathscr{F})\to H^{i-1}(D, R^1\iota_*\mathscr{F}|_D)\to H^{i+1}(X, \iota_*\mathscr{F})\to H^i(U, \mathscr{F})\to\cdots.$$

As before, let $X$ be a smooth complex variety and $D\subset X$ is a divisor, and let $$\pi: \tilde D\to D$$ be the projection from a deleted neighborhood of $D$ to $D$ (recall the definition of a deleted neighborhood above Lemma \ref{lem:pi1-factoring}). The complete local ring $\widehat{\mathscr{O}_{X, D}}$ has residue field $\mathbb{C}(D)$ and fraction field $\widehat{\mathbb{C}(X)}$. Let $I\subset G_{\widehat{\mathbb{C}(X)}}$ be the inertia subgroup, and let $U=X\setminus D$. Let $\iota: U\hookrightarrow X$ be the inclusion of $U$ into $X$ and $j: D\to X$ the inclusion of the complement. We record the evident compatibilities between the various Gysin maps described above in this setting in the following proposition; we only sketch the proof, which is a matter of unwinding the objects in question. Unfortunately we do not know a precise reference.
\begin{proposition}\label{EtaleGysinCompleted}
Let $\mathscr{F}$ be a locally constant sheaf of finite Abelian groups on $U(\mathbb{C})^{\text{an}}$, and let $\mathscr{F}^{\text{\'et}}$ be the associated sheaf on $U_{\text{\'et}}$. Then the diagram 
$$\xymatrix{
H^2(U(\mathbb{C})^{\text{an}}, \mathscr{F}) \ar[r]^-{g_D} \ar[d]^{\sim} & H^{1}(D(\mathbb{C})^{\text{an}}, R^1\pi_*(\mathscr{F}|_{\tilde D}))\ar[d]^{\sim} \\
H^2(U_{\text{\'et}}, \mathscr{F}^{\text{\'et}}) \ar[r]  \ar[d] & H^{1}(D_{\text{\'et}}, j^*R^1\iota_*\mathscr{F}^{\text{\'et}}) \ar[d]\\
H^2(\widehat{\mathbb{C}(X)}, \mathscr{F}^{\text{\'et}}|_{\widehat{\mathbb{C}(X)}}) \ar[r]  & H^{1}(\mathbb{C}(D), (\mathscr{F}^{\text{\'et}}|_{\widehat{\mathbb{C}(X)}}(-1))_I)
}$$
commutes.
\end{proposition}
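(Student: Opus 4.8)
The statement to prove is Proposition~\ref{EtaleGysinCompleted}, asserting that a certain diagram relating topological and étale Gysin maps (in three rows: analytic, global étale, completed-local étale) commutes. This is a compatibility statement, and as the authors themselves say, the proof is ``an exhausting matter of unwinding definitions.'' So the plan is not to discover anything new but to organize the comparison carefully.

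\medskip

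\textbf{Approach.} The plan is to verify the two squares separately. For the \emph{top square} (analytic vs.\ global étale), I would invoke Artin's comparison theorem: for a smooth complex variety $U$ and a locally constant sheaf $\mathscr{F}$ of finite abelian groups, there is a canonical isomorphism $H^i(U(\mathbb{C})^{\mathrm{an}},\mathscr{F})\xrightarrow{\sim} H^i(U_{\text{\'et}},\mathscr{F}^{\text{\'et}})$, compatible with the Leray spectral sequences for $\iota\colon U\hookrightarrow X$ on both sides (since $\iota$ is defined algebraically and analytification is a morphism of sites commuting with pushforward for this class of sheaves, by the comparison theorem applied fiberwise / on the nerve). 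The key local input is the identification $j^*R^1\iota_*\mathscr{F}^{\text{\'et}}\cong R^1\pi_*(\mathscr{F}|_{\tilde D})$ and its analytic counterpart $j^*R^1\iota^{\mathrm{an}}_*\mathscr{F}\cong R^1\pi_*(\mathscr{F}|_{\tilde D})$, which are both computed by the same punctured-disc local model (a small analytic/étale punctured disc has $H^1=\mathscr{F}(-1)$ in the tame case, trivial otherwise); one checks these two identifications are carried to one another under the comparison isomorphism. Then commutativity of the top square follows because, as noted in the discussion preceding the proposition, $g_D$ on each side is literally the edge map $H^2(U,-)\to H^1(X,R^1\iota_*-)\cong H^1(D,j^*R^1\iota_*-)$ of the respective Leray spectral sequences, and these edge maps are natural in the morphism of spectral sequences induced by analytification.

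\medskip

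\textbf{The bottom square.} For the \emph{bottom square} (global étale vs.\ completed local), I would use the map of sites $\mathrm{Spec}(\widehat{\mathbb{C}(X)})\to X$ (resp.\ $\mathrm{Spec}(\mathbb{C}(D))\to D$) given by the generic point of $\widehat{\mathscr{O}_{X,D}}$ (resp.\ of $D$). Restricting a class in $H^2(U_{\text{\'et}},\mathscr{F}^{\text{\'et}})$ to the generic point of the completed local ring is the vertical left arrow; the key is that the étale Gysin sequence of Proposition~\ref{gysin-etale}, applied to the DVR $\widehat{\mathscr{O}_{X,D}}$, is the localization at $D$ of the global hypercohomology long exact sequence for $R\iota_*\mathscr{F}^{\text{\'et}}$ described just before the proposition. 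Concretely: $R^1\iota_*\mathscr{F}^{\text{\'et}}$ is supported on $D$, so $H^1(D_{\text{\'et}},j^*R^1\iota_*\mathscr{F}^{\text{\'et}})$ maps to $H^1(\mathrm{Spec}\,\mathbb{C}(D),\text{(stalk at generic point)})$; one identifies the stalk of $R^1\iota_*\mathscr{F}^{\text{\'et}}$ at the generic point of $D$ with $(\mathscr{F}^{\text{\'et}}|_{\widehat{\mathbb{C}(X)}}(-1))_I$ via the purity/local computation (this is exactly \cite[Lemma 2.18]{Milne06} read locally), and checks the resulting square of long exact sequences commutes by functoriality of the Leray spectral sequence along the flat base change $\mathrm{Spec}\,\widehat{\mathscr{O}_{X,D}}\to X$. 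Composing the top and bottom squares gives the proposition.

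\medskip

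\textbf{Main obstacle.} The genuinely fiddly point---and the one I expect to consume the most care---is the \emph{coherence of the local identifications} of the $R^1$ sheaves across all three levels: verifying that the analytic punctured-disc identification $R^1\pi_*(\mathscr{F}|_{\tilde D})\cong \mathscr{F}|_D(-1)$, the étale one $j^*R^1\iota_*\mathscr{F}^{\text{\'et}}\cong \mathscr{F}^{\text{\'et}}|_D(-1)$, and the Galois-cohomological one $H^1(I,A)\cong A(-1)_I$ are all \emph{the same} identification, compatibly with the Tate twists and coinvariants, under Artin comparison and generic-point restriction. Each of these is standard in isolation (they all come from the same computation of $\pi_1$ of a punctured disc and the cup product with the canonical generator of $H^1$), but checking the signs/normalizations line up requires pinning down orientation conventions on the normal bundle of $D$ and the compatible choice of roots of unity. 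Once those normalizations are fixed once and for all, the rest is diagram-chasing functoriality of spectral sequences, with no further obstruction.
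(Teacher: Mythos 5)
Your proposal is correct and matches the paper's intent: the paper offers no written argument beyond the remark that the proof is ``an exhausting matter of unwinding definitions,'' and your outline (Artin comparison plus compatibility of Leray edge maps for the top square, restriction to the generic point of $\widehat{\mathscr{O}_{X,D}}$ and identification of the stalk of $R^1\iota_*\mathscr{F}^{\text{\'et}}$ with $(\mathscr{F}(-1))_I$ for the bottom square) is precisely that unwinding. Your flagged concern about matching the three local identifications of the $R^1$ sheaves, including Tate-twist and orientation normalizations, is the right place to spend the care.
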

\begin{proof}[Proof sketch]
The primary issue is to observe that there is a canonical isomorphism $$(R^1\pi_*(\mathscr{F}|_{\tilde D}))^{\text{\'et}}\overset{\sim}{\to}j^*R^1\iota_*\mathscr{F}^{\text{\'et}};$$ once this isomorphism is estabilished, the commutativity of the top square follows from the functoriality properties of the comparison between \'etale and singular cohomology; the commutativity of the bottom square follows by the compatibility between \'etale and Galois cohomology and explicit computation of the restriction of $j^*R^1\iota_*\mathscr{F}^{\text{\'et}}$ to the generic point of $D$. 

To construct the desired isomorphism of sheaves, it suffices to do so in the topological setting, i.e.~we wish to construct an isomorphism $$R^1\pi_*(\mathscr{F}|_{\tilde D})\overset{\sim}{\to}j^*R^1\iota_*\mathscr{F}.$$ Thus it suffices to construct a canonical such isomorphism locally. So we consider the case where $\Delta$ is the open unit ball in $\mathbb{C}^n$, $D\subset \Delta$ is the vanishing locus of $x_0$, and $\mathscr{F}$ is a locally constant sheaf on $\Delta\setminus D$. 

Let $1\in \Delta\setminus D$ be a point. As $\Delta\setminus D$ has fundamental group $\mathbb{Z}$, $\mathscr{F}$ is defined by an automorphism $\tau: \mathscr{F}_1\to \mathscr{F}_1$ (given by the monodromy about $D$), and direct computations shows that both $R^1\pi_*(\mathscr{F}|_{\tilde D}), j^*R^1\iota_*\mathscr{F}$ are canonically isomorphic to the constant sheaf with value $\on{coker}(\tau-\on{id})$, completing the proof.
\end{proof}

\section{The primary and secondary Morita classes}\label{sec:primary-and-secondary-morita}
We now construct the classes $o_1, o_{1, \text{\'et}}$ described in the introduction, which we refer to as the primary Morita classes. Recall that these classes will obstruct splittings of an \emph{abelianized} version of the fundamental exact sequence (\ref{fundamental-exact-sequence}). Explicitly, in the topological setting, $o_1$ will obstruct the splitting of sequence (\ref{seq:abelianized-Birman}) below, and $o_{1, \text{\'et}}$ will obstruct the splitting of its profinite analogue.
\subsection{The class $o_1$ in the topological setting} We will give two constructions of the Morita class in the topological setting.
\subsubsection{A group-theoretic construction} Let $g>1$ and let $\Sigma_g$ be a compact orientable surface of genus $g$,  and let $\text{Mod}(g)$ be the mapping class group of $\Sigma_g$. Let $\text{Mod}(g, 1)$ be the mapping class group of a pointed genus $g$ surface. The Birman exact sequence $$1\to \pi_1(\Sigma_g)\to \text{Mod}(g, 1)\to \text{Mod}(g)\to 1$$ is the  exact sequence of fundamental groups associated to the fibration $$\mathscr{C}_g\to \mathscr{M}_g,$$ where $\mathscr{M}_{g}$ is the complex-analytic moduli stack of $g$ curves, and $\mathscr{C}_g$ is the universal curve over $\mathscr{M}_g$.

Pushing out along the Hurewicz map $$\Hur: \pi_1(\Sigma_g)\to H_1(\Sigma_g, \mathbb{Z}),$$ we obtain a short exact sequence \begin{equation}\label{seq:abelianized-Birman} 0\to H_1(\Sigma_g, \mathbb{Z})\to \text{Mod}(g,1)/[\pi_1(\Sigma_g), \pi_1(\Sigma_g)]\to \text{Mod}(g)\to 1.\end{equation}
\begin{definition}[Topological Morita class]
Let $$o_1\in H^2(\text{Mod}(g), H_1(\Sigma_g,\mathbb{Z}))$$ be the cohomology class associated to this extension.
\end{definition}
We may equivalently view $o_1$ as an element of $H^2(\mathscr{M}_g, \mathbb{V}_1)$, where $\mathbb{V}_1$ is the local system on $\mathscr{M}_g$ associated to the $\text{Mod}(g)$-representation $H_1(\Sigma_g, \mathbb{Z})$, as $\mathscr{M}_g$ is a $K(\on{Mod}(g), 1)$.

Morita announced in \cite{Morita86} with proof in \cite{Morita89} the following theorem. For a geometric interpretation and proof, see \cite[Section 7]{HainReed01}.
\begin{theorem}[Morita, \cite{Morita86,Morita89}]
For $g\geq 9,$
$$H^2(\on{Mod}(g), H_1(\Sigma_g,\mathbb{Z}))=(\mathbb{Z}/(2g-2)\mathbb{Z})o_1.$$
\end{theorem}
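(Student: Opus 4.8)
The plan is to pin down $H^2(\on{Mod}(g);H)$, where I write $H:=H_1(\Sigma_g,\mathbb{Z})$, by proving an upper bound --- that $o_1$ is killed by $2g-2$ --- via an Euler-class argument, and a matching lower bound via homological stability. Recall that the $\on{Mod}(g)$-action on $H$ factors through the symplectic representation $\on{Mod}(g)\to\on{Sp}_{2g}(\mathbb{Z})$ with kernel the Torelli group $\mathcal{I}_g$, that the intersection pairing gives a $\on{Mod}(g)$-equivariant isomorphism $H^1(\Sigma_g;\mathbb{Z})\cong H$, and that $\mathscr{M}_g\simeq K(\on{Mod}(g),1)$. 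The first step is to realize $o_1$ inside the Leray spectral sequence $E_2^{p,q}=H^p(\mathscr{M}_g;R^q\pi_*\mathbb{Z})\Rightarrow H^{p+q}(\mathscr{C}_g;\mathbb{Z})$ of the universal curve $\pi\colon\mathscr{C}_g\to\mathscr{M}_g$: here $R^0\pi_*\mathbb{Z}=\underline{\mathbb{Z}}$, $R^1\pi_*\mathbb{Z}=\mathbb{V}_1$, $R^2\pi_*\mathbb{Z}=\underline{\mathbb{Z}}$, and the standard identification of the Morita class with the transgression of the fibre fundamental class shows that $d_2\colon\mathbb{Z}=E_2^{0,2}\to E_2^{2,1}=H^2(\on{Mod}(g);H)$ sends $1\mapsto\pm o_1$. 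Granting this, the upper bound is immediate: the Euler class $e\in H^2(\mathscr{C}_g;\mathbb{Z})$ of the vertical tangent bundle of $\pi$ is a global class whose restriction to a fibre is $\chi(\Sigma_g)=2-2g$ times the positive generator of $H^2(\Sigma_g;\mathbb{Z})$, so the image of $e$ under $H^2(\mathscr{C}_g;\mathbb{Z})\twoheadrightarrow E_\infty^{0,2}\hookrightarrow E_2^{0,2}=\mathbb{Z}$ is $2-2g$; in particular $2-2g$ is a permanent cycle, so $d_2(2-2g)=(2-2g)\,d_2(1)=0$, i.e.\ $(2g-2)\,o_1=0$. (This is the topological shadow of the canonical bundle providing a ``$(2g-2)$-th root trivialization'' of the Picard-theoretic incarnation of $o_1$.)

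For the lower bound one must show $H^2(\on{Mod}(g);H)\cong\mathbb{Z}/(2g-2)$ with $o_1$ a generator; this is where $g\ge 9$ enters. First, $H^2(\on{Mod}(g);H)$ is finite. By Borel's vanishing theorem $H^i(\on{Sp}_{2g}(\mathbb{Z});H\otimes\mathbb{Q})=0$ in the stable range; feeding this, together with Johnson's description of $H^1(\mathcal{I}_g;\mathbb{Q})=\wedge^3 H/H$ and Hain's of $H^2(\mathcal{I}_g;\mathbb{Q})$ (a quotient of $\wedge^2(\wedge^3H/H)$ in this range, and so --- by the parity with which $-I\in\on{Sp}_{2g}$ acts --- with no copy of the standard representation $H\otimes\mathbb{Q}$) into the Hochschild--Serre spectral sequence of $1\to\mathcal{I}_g\to\on{Mod}(g)\to\on{Sp}_{2g}(\mathbb{Z})\to 1$ gives $H^{\le 2}(\on{Mod}(g);H\otimes\mathbb{Q})=0$, hence also $H_{\le 2}(\on{Mod}(g);H\otimes\mathbb{Q})=0$. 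Granting finiteness, the universal-coefficients exact sequence
\[
0\to\on{Ext}^1_{\mathbb{Z}}\bigl(H_1(\on{Mod}(g);H),\mathbb{Z}\bigr)\to H^2(\on{Mod}(g);H)\to\on{Hom}_{\mathbb{Z}}\bigl(H_2(\on{Mod}(g);H),\mathbb{Z}\bigr)\to 0,
\]
valid because $H$ is a finitely generated, $\mathbb{Z}$-free, self-dual $\on{Mod}(g)$-module, identifies $H^2(\on{Mod}(g);H)$ with the finite group $H_1(\on{Mod}(g);H)$.

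It then remains to compute $H_1(\on{Mod}(g);H)$ and see it is cyclic of order $2g-2$. Here I would use the five-term exact sequence in homology of the Torelli extension: the image of $H_2(\on{Sp}_{2g}(\mathbb{Z});H)$ controls the kernel of
\[
\bigl(H_1(\mathcal{I}_g;\mathbb{Z})\otimes H\bigr)_{\on{Sp}_{2g}(\mathbb{Z})}\longrightarrow H_1(\on{Mod}(g);H)\longrightarrow H_1(\on{Sp}_{2g}(\mathbb{Z});H)\longrightarrow 0.
\]
Using Johnson's theorem --- which identifies $H_1(\mathcal{I}_g;\mathbb{Z})$, up to $2$-torsion, with $\wedge^3 H/H$ --- together with the known (finite) values of $H_1$ and $H_2$ of $\on{Sp}_{2g}(\mathbb{Z})$ with coefficients in $H$, this reduces the problem to an explicit computation of the $\on{Sp}_{2g}(\mathbb{Z})$-coinvariants $\bigl((\wedge^3 H/H)\otimes H\bigr)_{\on{Sp}_{2g}(\mathbb{Z})}$, which for $g$ in the stable range should come out cyclic of order $2g-2$ --- necessarily consistent with the Euler-class relation above. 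That $o_1$ is itself a generator, rather than a proper multiple, then follows by tracing the identifications, or more concretely by exhibiting a single surface bundle on which $o_1$ has order exactly $2g-2$ (for $g\ge 3$ such bundles over $T^2$ are in fact constructed later in the paper).

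The main obstacle is the last point: showing that no torsion beyond $\mathbb{Z}/(2g-2)$ survives into $H_1(\on{Mod}(g);H)$. Borel's theorem only governs rational cohomology, so one needs sharp \emph{integral} homological stability for $\on{Sp}_{2g}(\mathbb{Z})$ and for $\on{Mod}(g)$ with the coefficient systems $H$ and $\wedge^3 H/H$, careful control of the $2$-primary contributions arising from the $2$-torsion in $H_1(\mathcal{I}_g;\mathbb{Z})$, and the arithmetic computation of the coinvariants $\bigl((\wedge^3 H/H)\otimes H\bigr)_{\on{Sp}_{2g}(\mathbb{Z})}$, in which the factor $2g-2$ should emerge as a discriminant-type invariant of the symplectic lattice. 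The hypothesis $g\ge 9$ is precisely what places all of these pieces simultaneously in the stable range. An alternative, closer to Morita's original route and sidestepping the coinvariants bookkeeping, is to work throughout with explicit cocycle representatives for $o_1$ built from the twisted Morita--Mumford--Miller classes on $\mathscr{C}_g$ and its fibre powers over $\mathscr{M}_g$, trading conceptual input for a longer but elementary computation.
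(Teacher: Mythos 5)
The paper does not prove this statement: it is quoted from Morita \cite{Morita86,Morita89}, with \cite{HainReed01} cited for a geometric proof, so there is no internal argument to compare yours against and your proposal has to stand on its own. Its first half does stand: writing $H=H_1(\Sigma_g,\mathbb{Z})$, identifying $o_1$ (up to sign) with the transgression $d_2\colon E_2^{0,2}\to E_2^{2,1}$ in the Leray spectral sequence of $\mathscr{C}_g\to\mathscr{M}_g$ and using the vertical Euler class, whose fibrewise restriction is $2-2g$, to conclude $(2g-2)\,o_1=0$ is correct and standard; it is the topological counterpart of the fact, implicit in the paper's discussion of $[\mathbf{Pic}^1_{\mathscr{C}_g/\mathscr{M}_g}]$ and the relative canonical class, that the period divides $2g-2$.

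The second half has genuine gaps. (1) Your finiteness argument for $H^2(\Mod(g);H)$ runs the Hochschild--Serre spectral sequence of the Torelli extension and needs the term $(H^2(\mathcal{I}_g;\mathbb{Q})\otimes H_{\mathbb{Q}})^{\on{Sp}_{2g}(\mathbb{Z})}$ to vanish (or to die under a differential); but $H^2(\mathcal{I}_g;\mathbb{Q})$ is not known to be a quotient of $\wedge^2(\wedge^3H/H)$ --- Hain's presentation concerns the Malcev Lie algebra, not $H^2$ of the group, and even finite-dimensionality of $H^2(\mathcal{I}_g;\mathbb{Q})$ is not known --- while the $-I$ parity trick only controls the image of the cup product from $\wedge^2H^1(\mathcal{I}_g;\mathbb{Q})$. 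This is precisely why Morita's proof (and the hypothesis $g\geq 9$) proceeds via Harer's stability and the spectral sequences of the Birman and universal-Jacobian extensions, and Hain--Reed's via normal functions, rather than through the Torelli group. (2) The decisive arithmetic --- that the five-term sequence, with the $2$-torsion (Birman--Craggs--Johnson) part of $H_1(\mathcal{I}_g;\mathbb{Z})$, the groups $H_1$ and $H_2$ of $\on{Sp}_{2g}(\mathbb{Z})$ with coefficients in $H$, and the coinvariants $((\wedge^3H/H)\otimes H)_{\on{Sp}_{2g}(\mathbb{Z})}$ fed in, outputs exactly $\mathbb{Z}/(2g-2)$ --- is only asserted (``should come out''), and since $2g-2$ is even the $2$-primary bookkeeping is exactly where the content lies. (3) Your fallback for showing $o_1$ generates is wrong as stated: the surface bundles over $T^2$ constructed later in the paper (Theorem \ref{T:nontrivialMorita}, Corollary \ref{cor:main-corollary-o1-pinwheel}) exhibit $o_1$ of order exactly $g-1$, not $2g-2$, so they cannot certify that $o_1$ has order $2g-2$; as Remark \ref{R:periodindex} notes, the sharp $2g-2$ statement requires Schr\"oer's theorem, which the paper cites but does not prove. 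In sum, you have correctly established $(2g-2)\,o_1=0$ and a reasonable reduction to $H_1(\Mod(g);H)$ via universal coefficients, but not the computation $H^2(\Mod(g),H_1(\Sigma_g,\mathbb{Z}))=(\mathbb{Z}/(2g-2)\mathbb{Z})\,o_1$.
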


\subsubsection{An analytic construction}\label{Pic-construction-analytic}
We briefly give another description of the Morita class, in terms of the universal Picard variety. Let $\mathbf{Pic}^d_{\mathscr{C}_g/\mathscr{M}_g}$ be the (rigidified) moduli space of degree $d$ line bundles on the universal curve, as discussed in Section \ref{subsec:pic-preliminaries}. Then $\mathbf{Pic}^1_{\mathscr{C}_g/\mathscr{M}_g}$ is a torsor for $$J_{\mathscr{C}_g/\mathscr{M}_g}:=\mathbf{Pic}^0_{\mathscr{C}_g/\mathscr{M}_g},$$ the universal Jacobian, so we can think of it as an element $$[\mathbf{Pic}^1_{\mathscr{C}_g/\mathscr{M}_g}]\in H^1(\mathscr{M}_g, J_{\mathscr{C}_g/\mathscr{M}_g}).$$ Let $\omega:=\Omega^1_{\mathscr{C}_g/\mathscr{M}_g}$ be the relative differentials, and let $$\pi: \mathscr{C}_g\to \mathscr{M}_g$$ be the projection. Then the ``exponential" short exact sequence of sheaves on $\mathscr{M}_g$ $$0\to \mathbb{V}_1:=(R^1\pi_*\underline{\mathbb{Z}})^\vee  \to (R^0\pi_*\omega)^\vee\to J_{\mathscr{C}_g/\mathscr{M}_g}\to 0$$ induces a boundary map $$\delta: H^1(\mathscr{M}_g, J_{\mathscr{C}_g/\mathscr{M}_g})\to H^2(\mathscr{M}_g, \mathbb{V}_1).$$
The following proposition (which we will not use) explains how to interpret the construction in the previous section in terms of this data.
\begin{proposition}
Under the natural identification $$H^2(\on{Mod}(g), H_1(\Sigma_g, \mathbb{Z}))\simeq H^2(\mathscr{M}_g, \mathbb{V}_1),$$ the Morita class $o_{\text{univ}}$ maps to $\delta([\mathbf{Pic}^1_{\mathscr{C}_g/\mathscr{M}_g}])$. 
\end{proposition}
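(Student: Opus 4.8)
The plan is to identify both $o_{\text{univ}}$ and $\delta([\mathbf{Pic}^1_{\mathscr{C}_g/\mathscr{M}_g}])$ with the single cohomology class in $H^2(\mathscr{M}_g,\mathbb{V}_1)$ classifying the torus bundle $\mathbf{Pic}^1_{\mathscr{C}_g/\mathscr{M}_g}\to\mathscr{M}_g$. The substance of the proof is a comparison between the connecting map of the holomorphic exponential sequence and the extension class of a $\pi_1$-sequence, carried out after passing to a soft resolution.

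First I would reinterpret $o_{\text{univ}}$ topologically. Because $\mathscr{M}_g$ is a $K(\on{Mod}(g),1)$ it has no higher homotopy, so the homotopy long exact sequence of the fibration $\mathbf{Pic}^1_{\mathscr{C}_g/\mathscr{M}_g}\to\mathscr{M}_g$ collapses to a short exact sequence $1\to \pi_1(F)\to \pi_1(\mathbf{Pic}^1_{\mathscr{C}_g/\mathscr{M}_g})\to \on{Mod}(g)\to 1$, where the fiber $F$ is a real $2g$-torus with $\pi_1(F)\cong H_1(\Sigma_g,\mathbb{Z})$. Using the identification $\pi_1(\mathbf{Pic}^1_{\mathscr{C}_g/\mathscr{M}_g})=\pi_1(\mathscr{C}_g)/L^2\pi_1(\Sigma_g)=\on{Mod}(g,1)/[\pi_1(\Sigma_g),\pi_1(\Sigma_g)]$ recalled in Section~\ref{subsec:pic-preliminaries} — the relevant map $\mathscr{C}_g\to\mathbf{Pic}^1_{\mathscr{C}_g/\mathscr{M}_g}$ being the relative Abel--Jacobi map, whose effect on $\pi_1$ of the fibers is $\on{Hur}$ — this sequence is precisely the pushout (\ref{seq:abelianized-Birman}) of the Birman sequence along $\on{Hur}$. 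Hence by definition $o_{\text{univ}}=o_1$ is the extension class of (\ref{seq:abelianized-Birman}); since the total space is aspherical as well, this is exactly the class classifying the torus bundle $\mathbf{Pic}^1_{\mathscr{C}_g/\mathscr{M}_g}\to\mathscr{M}_g$ in $H^2(\on{Mod}(g),H_1(\Sigma_g,\mathbb{Z}))=H^2(\mathscr{M}_g,\mathbb{V}_1)$.

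Second I would compute $\delta$ by a soft resolution. Working complex-analytically, put $\mathcal{W}:=(R^0\pi_*\omega)^\vee$ and let $\mathcal{W}^{\mathrm{sm}}$ be the sheaf of $C^\infty$-sections of the underlying real vector bundle of $\mathcal{W}$; this is a fine sheaf, hence acyclic. The smooth analogue $0\to\mathbb{V}_1\to\mathcal{W}^{\mathrm{sm}}\to\mathcal{J}^{\mathrm{sm}}\to 0$ of the exponential sequence, with $\mathcal{J}^{\mathrm{sm}}$ the sheaf of smooth sections of $J_{\mathscr{C}_g/\mathscr{M}_g}$, therefore has connecting map $\delta^{\mathrm{sm}}\colon H^1(\mathscr{M}_g,\mathcal{J}^{\mathrm{sm}})\xrightarrow{\ \sim\ }H^2(\mathscr{M}_g,\mathbb{V}_1)$ an isomorphism. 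The evident morphism from the holomorphic exponential sequence to the smooth one gives $\delta=\delta^{\mathrm{sm}}\circ r$, where $r\colon H^1(\mathscr{M}_g,J_{\mathscr{C}_g/\mathscr{M}_g})\to H^1(\mathscr{M}_g,\mathcal{J}^{\mathrm{sm}})$ is the natural map and $r([\mathbf{Pic}^1_{\mathscr{C}_g/\mathscr{M}_g}])$ is the class of the underlying smooth $\mathcal{J}^{\mathrm{sm}}$-torsor, equivalently of the torus bundle $\mathbf{Pic}^1_{\mathscr{C}_g/\mathscr{M}_g}\to\mathscr{M}_g$. It therefore suffices to show that $\delta^{\mathrm{sm}}$ carries the class of a torus bundle to the cohomology class classifying it; combined with the first step this yields $\delta([\mathbf{Pic}^1_{\mathscr{C}_g/\mathscr{M}_g}])=o_{\text{univ}}$.

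This last identity is the standard obstruction-theoretic computation — the analogue, one cohomological degree up from the circle-bundle / line-bundle case $\delta([L])=c_1(L)$ of the exponential sequence $0\to\underline{\mathbb{Z}}\to\mathscr{O}\to\mathscr{O}^\times\to 0$ — and is verified by a \v{C}ech cocycle chase: over a good cover, represent the torsor by a $1$-cocycle $\{g_{ij}\}$ of smooth sections of the torus bundle, lift each $g_{ij}$ to $\widetilde g_{ij}\in\mathcal{W}^{\mathrm{sm}}$, and observe that $\{\widetilde g_{ij}+\widetilde g_{jk}-\widetilde g_{ik}\}$ simultaneously represents $\delta^{\mathrm{sm}}$ of the torsor and the $\pi_1$-extension class, once one invokes the dictionary identifying \v{C}ech cohomology of the aspherical $\mathscr{M}_g$ with group cohomology of $\on{Mod}(g)$. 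I expect this to be the main obstacle: making the cocycle-level matching precise (translating between sheaf-theoretic cocycles on $\mathscr{M}_g$ and group $2$-cocycles for $\on{Mod}(g)$), together with the minor nuisance that $\mathscr{M}_g$ is a Deligne--Mumford stack rather than a manifold — best dispatched by passing to a level cover or the coarse space, or by invoking softness of $C^\infty$-section sheaves on analytic Deligne--Mumford stacks. Everything else is formal bookkeeping.
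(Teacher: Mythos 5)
Your proposal is correct and follows essentially the same route as the paper's (very terse) proof sketch: both hinge on the Abel--Jacobi map inducing the Hurewicz map on fiber $\pi_1$ together with the fact that $\mathbf{Pic}^1_C$ is a $K(\pi_1(C)^{\mathrm{ab}},1)$, so that $o_1$ and $\delta([\mathbf{Pic}^1_{\mathscr{C}_g/\mathscr{M}_g}])$ are each identified with the class of the torus bundle $\mathbf{Pic}^1_{\mathscr{C}_g/\mathscr{M}_g}\to\mathscr{M}_g$. Your Steps 2--3 (fine resolution of the exponential sequence and the \v{C}ech-vs-group-cocycle matching) just make explicit what the paper dismisses as immediate, and they are the standard way to do so.
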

\begin{proof}[Proof sketch]
This is an immediate consequence of the fact that the Abel-Jacobi map $$\mathscr{C}_g\to \mathbf{Pic}^1_{\mathscr{C}_g/\mathscr{M}_g}$$ induces an isomorphism on $\pi_1^{\text{ab}}$ on the fiber over every point of $\mathscr{M}_g$, combined with the fact that for any Riemann surface $C$, $\mathbf{Pic}^1_C$ is canonically a $K(\pi_1(C)^{\text{ab}}, 1)$.
\end{proof}
See \cite[Section 2]{ebert2010stable} for a discussion of the $J_{\mathscr{C}_g/\mathscr{M}_g}$-torsors $[\mathbf{Pic}^d_{\mathscr{C}_g/\mathscr{M}_g}]$.
\subsection{The \'etale Morita class} There is some subtlety in defining the analogue of the Morita class in \'etale cohomology, because it is not known that the mapping class group $\on{Mod}(g)$ is a  ``good group" in the sense of Serre (see e.g.~\cite[3.4]{farb2006problems} for a brief discussion)---in particular, it is not immediately clear that the Birman exact sequence remains exact upon profinite completion (though this is in fact true, and has been used in existing literature, e.g. in \cite[Section 3.1]{HM}). Nonetheless we give two (equivalent) constructions of an \'etale-cohomological analogue of the Morita class. As before, we fix an integer $g>1$ throughout. 
\subsubsection{A group-theoretic construction}
\begin{proposition}\label{prop:profinite-birman-exact}
Let $p$ be a prime. The profinite (resp.~prime-to-$p$) completion of the Birman exact sequence is exact.
\end{proposition}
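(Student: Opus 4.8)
The plan is to deduce the exactness of the profinitely completed Birman sequence from the exactness of the homotopy exact sequence of the universal curve in \'etale cohomology, together with the Riemann existence theorem. To stay within the world of schemes (rather than invoking a comparison theorem for stacks), I would first pass to a level structure: fix an integer $n\geq 3$ (and, in the prime-to-$p$ case, coprime to $p$), let $\Gamma_g[n]=\ker(\on{Mod}(g)\to \on{Sp}_{2g}(\mathbb{Z}/n\mathbb{Z}))$, and let $\Gamma_{g,1}[n]$ be its preimage in $\on{Mod}(g,1)$. These are finite-index subgroups, and they are the topological fundamental groups of the smooth quasiprojective $\mathbb{C}$-schemes $\mathscr{M}_g[n]$ (the moduli space of genus $g$ curves with level $n$ structure) and $\mathscr{C}_g[n]$ (the universal curve over it). Restricting the Birman sequence gives an exact sequence $1\to \pi_1(\Sigma_g)\to \Gamma_{g,1}[n]\to \Gamma_g[n]\to 1$ of discrete groups.

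Next, $\mathscr{C}_g[n]\to \mathscr{M}_g[n]$ is a proper smooth family of curves of genus $g\geq 2$ with geometrically connected fibres, so the homotopy exact sequence for a relative curve applies: for a geometric point $\bar s$ of $\mathscr{M}_g[n]$, the sequence $1\to \pi_1^{\text{\'et}}(\mathscr{C}_g[n]_{\bar s})\to \pi_1^{\text{\'et}}(\mathscr{C}_g[n])\to \pi_1^{\text{\'et}}(\mathscr{M}_g[n])\to 1$ is exact. By the Riemann existence theorem each \'etale fundamental group is the profinite completion of the corresponding topological fundamental group, compatibly with the maps, so this reads $1\to \widehat{\pi_1(\Sigma_g)}\to \widehat{\Gamma_{g,1}[n]}\to \widehat{\Gamma_g[n]}\to 1$, and it is exact. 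In the prime-to-$p$ case one uses instead the pro-prime-to-$p$ (tame) form of the homotopy exact sequence for a relative curve.

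It remains to pass from the level-$n$ groups back to $\on{Mod}(g)$ and $\on{Mod}(g,1)$, which is formal. Profinite completion is right exact, and for any short exact sequence $1\to N\to G\to Q\to 1$ one has $\widehat{Q}=\widehat{G}/\overline{N}$, since every finite quotient of $Q$ factors through $\widehat{G}$ and kills the closure of the image of $N$; as $\widehat{N}$ is compact its image in $\widehat{G}$ is closed, so the completed sequence is automatically exact except possibly at $\widehat{N}$. Thus the only issue is injectivity of $\widehat{\pi_1(\Sigma_g)}\to \widehat{\on{Mod}(g,1)}$. Because $\Gamma_{g,1}[n]$ has finite index in $\on{Mod}(g,1)$, the profinite topology of $\on{Mod}(g,1)$ restricts to the profinite topology of $\Gamma_{g,1}[n]$ (the core in $\on{Mod}(g,1)$ of a finite-index subgroup of $\Gamma_{g,1}[n]$ is again of finite index in $\on{Mod}(g,1)$), and hence $\widehat{\Gamma_{g,1}[n]}\hookrightarrow \widehat{\on{Mod}(g,1)}$; composing with the injection $\widehat{\pi_1(\Sigma_g)}\hookrightarrow\widehat{\Gamma_{g,1}[n]}$ from the previous step gives the desired injectivity. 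The prime-to-$p$ case is identical, a subgroup of index prime to $p$ likewise inducing the prime-to-$p$ topology.

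The one step that is not a formality is the left-exactness of the homotopy exact sequence for the relative curve $\mathscr{C}_g[n]\to\mathscr{M}_g[n]$---equivalently, that profinite completion does not collapse the geometric surface group inside $\on{Mod}(g,1)$, i.e.\ that every connected finite \'etale cover of a fibre is dominated by the restriction to that fibre of a finite \'etale cover of the total space. This is the heart of the matter; it rests on proper base change for the relative curve (note that over $\mathbb{C}$ the base $\mathscr{M}_g[n]$ is aspherical, so the corresponding topological sequence is left-exact), and in the positive-characteristic/prime-to-$p$ setting on its tame analogue.
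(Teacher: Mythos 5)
Your reduction to level structures and back is fine (a finite-index subgroup induces the full profinite topology, and right-exactness of completion is formal), but the step you yourself call ``the heart of the matter'' is exactly where the proof has a genuine gap, and the justification you offer for it does not work. Proper base change, via the usual Stein-factorization argument, gives exactness of $\pi_1^{\text{\'et}}(\mathscr{C}_g[n]_{\bar s})\to \pi_1^{\text{\'et}}(\mathscr{C}_g[n])\to\pi_1^{\text{\'et}}(\mathscr{M}_g[n])\to 1$ at the middle and right terms only; it says nothing about injectivity on the left. Asphericity of $\mathscr{M}_g[n]$ over $\mathbb{C}$ gives exactness of the \emph{discrete} sequence $1\to\pi_1(\Sigma_g)\to\Gamma_{g,1}[n]\to\Gamma_g[n]\to 1$, which is the Birman sequence itself --- the input, not the conclusion: profinite completion of an exact sequence of residually finite groups can fail to be left exact (this failure is precisely what the proposition rules out, and is why the paper flags that ``goodness'' of $\Mod(g)$ is unknown). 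Moreover, by your own Riemann-existence identification, left-exactness of the \'etale homotopy sequence for $\mathscr{C}_g[n]\to\mathscr{M}_g[n]$ \emph{is} the statement that $\widehat{\pi_1(\Sigma_g)}\hookrightarrow\widehat{\Gamma_{g,1}[n]}$, i.e.\ the finite-index form of the proposition; the surrounding steps are the formal part. So as written the argument is circular: the one nontrivial input is the thing to be proved, and it is not available as a citable geometric theorem independent of the group theory (the proofs of this injectivity in the literature go through exactly the kind of argument below).

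The paper's proof is a one-liner of a different nature: it applies Anderson's exactness criterion \cite[Proposition 3]{anderson1974exactness} to the Birman sequence, the key hypothesis being that the profinite (resp.\ prime-to-$p$) completion of a surface group of genus $g>1$ has trivial center; the finite generation of $\pi_1(\Sigma_g)$ and this center-freeness let one shrink the centers of finite characteristic quotients and produce, for each finite-index $N\le\pi_1(\Sigma_g)$, a finite-index subgroup of $\Mod(g,1)$ meeting $\pi_1(\Sigma_g)$ inside $N$. If you want to salvage your approach, this is the ingredient you must supply in place of ``proper base change.'' A secondary point: in the prime-to-$p$ case you cannot in general choose $n$ with $[\Mod(g,1):\Gamma_{g,1}[n]]$ prime to $p$, and a subgroup of prime-to-$p$ index need not induce the full prime-to-$p$ topology (normal cores introduce factorial losses), so that reduction also needs repair; the group-theoretic route avoids this entirely, since Anderson's criterion applies directly to the completion with respect to any suitable class of finite groups.
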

\begin{proof}
This follows immediately from \cite[Proposition 3]{anderson1974exactness}, as the profinite (resp.~prime-to-$p$) completion of a surface group of genus $g>1$ has trivial center.
\end{proof}
Now let $k$ be a field and $X$ a Deligne-Mumford stack over $k$; let $\bar x$ be a geometric point of $X$. We denote by $\pi_1^{\text{\'et}}(X, \bar x)$ the \'etale fundamental group of $X$, and by $\pi_1^{(p)}(X, \bar x)$ the group obtained via the following pushout:
$$\xymatrix{
\pi_1^{\text{\'et}}(X_{\bar k}, \bar x) \ar[r] \ar[d] & \pi_1^{\text{\'et}}(X, \bar x)\ar[d]\\
\widehat{\pi_1^{\text{\'et}}(X_{\bar k}, \bar x)}^{(p)} \ar[r] & \pi_1^{(p)}(X, \bar x).
}$$
Here the object in the lower left is the prime-to-$p$ completion of $\pi_1^{\text{\'et}}(X_{\bar k}, \bar x)$.

Now for $\bar x$ a geometric point of $\mathscr{C}_{g, k}$, $[\overline C]$ the corresponding geometric point of $\mathscr{M}_{g,k}$, and $\overline C$ the corresponding curve over $\bar k$, let ($B$) (resp.~($B_p$)) be the following sequences of profinite groups:
\begin{equation}\tag{$B$}
 1\to  \pi_1^{\text{\'et}}(\overline C, \bar x)\to \pi_1^{\text{\'et}}(\mathscr{C}_{g,k}, \bar x)\to \pi_1^{\text{\'et}}(\mathscr{M}_{g, k}, [\overline C])\to 1
\end{equation}
\begin{equation}\tag{$B_p$}
 1\to  \pi_1^{(p)}(\overline C, \bar x)\to \pi_1^{(p)}(\mathscr{C}_{g, k}, \bar x)\to \pi_1^{(p)}(\mathscr{M}_{g, k}, [\overline C])\to 1
\end{equation}
By Proposition \ref{prop:profinite-birman-exact}, sequence ($B$) is exact if $k$ has characteristic $0$, and sequence ($B_p$) is exact if $k$ has characteristic $p$.

Now taking the quotient by the derived subgroup of the group on the left gives short exact sequences 
\begin{equation}\tag{$B^{\text{ab}}$}\label{seq:abelian-char-zero}
 1\to  \pi_1^{\text{\'et}}(\overline C, \bar x)^{\text{ab}}\to \pi_1^{\text{\'et}}(\mathscr{C}_{g,k}, \bar x)/[\pi_1^{\text{\'et}}(\overline C, \bar x),\pi_1^{\text{\'et}}(\overline C, \bar x)]\to \pi_1^{\text{\'et}}(\mathscr{M}_{g, k}, [\overline C])\to 1
\end{equation}
\begin{equation}\tag{$B_p^{\text{ab}}$}\label{seq:abelian-char-p}
 1\to  \pi_1^{(p)}(\overline C, \bar x)^{\text{ab}}\to \pi_1^{(p)}(\mathscr{C}_{g, k}, \bar x)/[\pi_1^{(p)}(\overline C, \bar x),\pi_1^{(p)}(\overline C, \bar x)]\to \pi_1^{(p)}(\mathscr{M}_{g, k}, [\overline C])\to 1
\end{equation}
If $k$ is a field of characteristic $0$, let $\widehat{\mathbb{V}_1}$ be the lisse $\widehat{\mathbb{Z}}$-sheaf on $\mathscr{M}_{g,k}$ associated to the $\pi_1^{\text{\'et}}$-representation $\pi_1^{\text{\'et}}(\overline C, \bar x)^{\text{ab}}$ (equivalently, $\widehat{\mathbb{V}_1}=(R^1\pi_*\widehat{\mathbb{Z}})^\vee$). If $k$ has characteristic $p$, let $\widehat{\mathbb{V}_1}^{(p)}$ be the lisse $\widehat{\mathbb{Z}}^{(p)}$-sheaf associated to $\pi_1^{(p)}(\overline C, \bar x)^{\text{ab}}$ (equivalently, $\widehat{\mathbb{V}_1}^{(p)}=(R^1\pi_*\widehat{\mathbb{Z}}^{(p)})^\vee$).

If $k$ is a field of characteristic $0$, sequence (\ref{seq:abelian-char-zero}) gives rise to a class in $H^2(\pi_1^{\text{\'et}}(\mathscr{M}_{g, k}, [\overline C]), \pi_1^{\text{\'et}}(\overline C, \bar x)^{\text{ab}})$.  We let $o_{1, \text{\'et}}$ be the image of this class in $H^2(\mathscr{M}_{g, k}, \mathbb{V}_1)$ under the natural map $$H^2(\pi_1^{\text{\'et}}(\mathscr{M}_{g, k}, [\overline C]), \pi_1^{\text{\'et}}(\overline C, \bar x)^{\text{ab}})\to H^2(\mathscr{M}_{g, k}, \widehat{\mathbb{V}_1}).$$ Similarly, if $k$ is a field of characteristic $p>0$, sequence (\ref{seq:abelian-char-p}) gives rise to a class in $H^2(\pi_1^{(p)}(\mathscr{M}_{g, k}, [\overline C]), \pi_1^{(p)}(\overline C, \bar x)^{\text{ab}})$; we let $o_{1, \text{\'et}}^{(p)}$ be its image in $H^2(\mathscr{M}_{g,k}, \widehat{\mathbb{V}_1}^{(p)})$.
\subsubsection{A construction using the Picard variety}\label{subsubsec:Pic-construction-o1} We can also imitate the construction in section \ref{Pic-construction-analytic}, giving a construction which works over an arbitrary base $S$. As before, we may consider the torsor $$[\mathbf{Pic}^1_{\mathscr{C}_g/\mathscr{M}_g}]\in H^1(\mathscr{M}_{g,S,\text{\'et}}, J_{\mathscr{C}_g/\mathscr{M}_g, S}).$$ 
\begin{definition}
Let $S$ be a scheme and let $P$ be the set of primes invertible in $S$, and define $$\widehat{\mathbb{Z}}_S:=\prod_{p\in P} \mathbb{Z}_p.$$ Let $$\kappa: H^1(\mathscr{M}_{g,S,\text{\'et}}, J_{\mathscr{C}_g/\mathscr{M}_g, S})\to H^2(\mathscr{M}_{g,S,\text{\'et}}, (R^1\pi_*\widehat{\mathbb{Z}}_S)^\vee)$$ be the Kummer map. Then $$o^S_{1,\text{\'et}}:=\kappa([\mathbf{Pic}^1_{\mathscr{C}_g/\mathscr{M}_g}]).$$
\end{definition}
One can check (using e.g.~geometric class field theory) that for $S=\text{Spec}(k)$, these classes agree with those defined in the previous section. We will not use this fact. 
\subsection{The secondary Morita classes}
We now describe the classes $\widetilde{o_2}, \widetilde{o_{2, \text{\'et}}}$, which obstruct splittings of a \emph{$2$-nilpotent} version of sequence (\ref{fundamental-exact-sequence}). For simplicity of presentation we have relegated the involved group cohomology computations required to define these classes to the Appendix (Section \ref{sec:Non-abelian-cohomology}), but we briefly summarize them here and discuss how they are applied to our situation.

\subsubsection{The topological setting} Fix an integer $g>1$ and let $\Sigma_g$ be a compact orientable surface of genus $g$. Let $L^k\pi_1(\Sigma_g)$ denote the lower central series of $\pi_1(\Sigma_g)$, i.e. $L^{k+1}\pi_1(\Sigma_g) \colonequals [\pi_1(\Sigma_g) ,L^k\pi_1(\Sigma_g)]$ with $L^1 \pi_1(\Sigma_g) = \pi_1(\Sigma_g)$. 
Recall that from the Birman exact sequence
\begin{equation}\label{equ:Birman}
    1 \to \pi_1(\Sigma_g) \to \text{Mod}(g,1) \to \text{Mod}(g) \to 1
\end{equation}
we obtained the sequence
\[ \xymatrix{1 \ar[r] & 
\pi_1(\Sigma_g)/L^2\pi_1(\Sigma_g) \ar[r] &
\text{Mod}(g,1)/L^2\pi_1(\Sigma_g) \ar[r]^-{p} &
\text{Mod}(g) \ar[r]&
1,} \]
which corresponded to the class $o_1 \in H^2(\Mod(g),\pi_1(\Sigma_g)/L^2\pi_1(\Sigma_g))$.

We now observe that $\Mod(g,1)/L^2\pi_1(\Sigma_g)$ is canonically isomorphic (via the Abel-Jacobi map) to $\pi_1(\mathbf{Pic}^1_{\mathscr{C}_g/\mathscr{M}_g})$, as for any Riemann surface $C$ the Abel-Jacobi map induces an isomorphism $\pi_1(C)^{\text{ab}}\simeq \pi_1(\on{Pic}^1_C)$. 
Now let the group 
$$\tilde{\pi}_g = \Mod(g,1) \times_{\Mod(g)} \pi_1(\mathbf{Pic}^1_{\mathscr{C}_g/\mathscr{M}_g})$$ be the fiber product via the natural map $\Mod(g,1)\to \Mod(g)$ and the map $p$. The group $\tilde\pi_g$ is the fundamental group of the base change of the universal curve $\mathscr{C}_g$ to $\mathbf{Pic}^1_{\mathscr{C}_g/\mathscr{M}_g}.$

We have the following surjection of short exact sequences, where the bottom row is the Birman sequence and the rightmost square is Cartesian:
$$\xymatrix{
1 \ar[r]  & 
\pi_1(\Sigma_g) \ar[r]\ar@{=}[d] &
\tilde{\pi}_g \ar[r]\ar[d] &
 \pi_1(\mathbf{Pic}^1_{\mathscr{C}_g/\mathscr{M}_g}) \ar[r]\ar[d]^{p}& 1 \\
1 \ar[r]  & 
\pi_1(\Sigma_g) \ar[r]&
\text{Mod}(g,1) \ar[r] &
\text{Mod}(g) \ar[r]& 1}.$$
We construct a cohomology class which obstructs the splitting of a \emph{$2$-nilpotent} version of the top sequence above.

First observe that the pullback $p^*o_1$ of the primary Morita class to $\pi_1(\mathbf{Pic}^1_{\mathscr{C}_g/\mathscr{M}_g})$ is trivial. To see this triviality, note that this class classifies the sequence
\begin{equation}\label{pi2toModsequence}
1 \to \pi_1(\Sigma_g)/L^2\pi_1(\Sigma_g) \to \tilde{\pi}_g/L^2\pi_1(\Sigma_g) \to  \pi_1(\mathbf{Pic}^1_{\mathscr{C}_g/\mathscr{M}_g}) \to 1,
\end{equation}
which we claim splits.
Indeed, the map $$\tilde{\pi}_g/L^2\pi_1(\Sigma_g) \simeq  \pi_1(\mathbf{Pic}^1_{\mathscr{C}_g/\mathscr{M}_g}) \times_{\Mod(g)} \pi_1(\mathbf{Pic}^1_{\mathscr{C}_g/\mathscr{M}_g}) \to  \pi_1(\mathbf{Pic}^1_{\mathscr{C}_g/\mathscr{M}_g})$$ has a natural section, given by the diagonal map $\Delta$. We may now apply the construction in the Appendix, Section~\ref{sec:Non-abelian-cohomology}, taking $\pi=\pi_1(\Sigma_g),$ $\tilde{\pi}=\tilde\pi_g$ and $G=\pi_1(\mathbf{Pic}^1_{\mathscr{C}_g/\mathscr{M}_g})$. We briefly introduce the content of this construction, leaving the proofs to Section \ref{sec:Non-abelian-cohomology}.

Consider the sequences 
\begin{equation}\label{pisequence}
0 \to L^2\pi_1(\Sigma_g)/L^3\pi_1(\Sigma_g) \to \pi_1(\Sigma_g)/L^3\pi_1(\Sigma_g) \to \pi_1(\Sigma_g)/L^2\pi_1(\Sigma_g) \to 0,
\end{equation}
\begin{equation}\label{pi2sequence}
0 \to L^2\pi_1(\Sigma_g)/L^3\pi_1(\Sigma_g) \to \tilde{\pi}_g/L^3\pi_1(\Sigma_g) \to \tilde{\pi}_g/L^2\pi_1(\Sigma_g) \to 1.
\end{equation}
Note that the sequences \ref{pi2toModsequence}, \ref{pisequence}, \ref{pi2sequence} correspond to Sequences \ref{seq:abelian_seq},  \ref{seq:nilpotent_seq}, \ref{seq:l2modl3extension}, respectively. Sequence (\ref{pi2sequence})  is classified by a class $b\in H^2(\tilde{\pi}_g/L^2\pi_1(\Sigma_g), L^2\pi_1(\Sigma_g)/L^3\pi_1(\Sigma_g))$. We define 
\begin{equation}\label{eq:o2}
o_2:=\Delta^*b\in H^2(\pi_1(\mathbf{Pic}^1_{\mathscr{C}_g/\mathscr{M}_g}), L^2\pi_1(\Sigma_g)/L^3\pi_1(\Sigma_g)).
\end{equation}
Now there are maps 
\begin{align*}m: H^1(\pi_1(\mathbf{Pic}^1_{\mathscr{C}_g/\mathscr{M}_g}), \pi_1(\Sigma_g)^{\text{ab}})^{\otimes 2} &\overset{\cup}{\longrightarrow}H^2(\pi_1(\mathbf{Pic}^1_{\mathscr{C}_g/\mathscr{M}_g}), (\pi_1(\Sigma_g)^{\text{ab}})^{\otimes 2})\\
&\overset{[-,-]}{\longrightarrow}H^2(\pi_1(\mathbf{Pic}^1_{\mathscr{C}_g/\mathscr{M}_g}), L^2\pi_1(\Sigma_g)/L^3\pi_1(\Sigma_g))
\end{align*}

(given by the composition of the commutator map with the cup product), and $$\delta_{\Delta}: H^1(\pi_1(\mathbf{Pic}^1_{\mathscr{C}_g/\mathscr{M}_g}), \pi_1(\Sigma_g)^{\text{ab}})\to H^2(\pi_1(\mathbf{Pic}^1_{\mathscr{C}_g/\mathscr{M}_g}), L^2\pi_1(\Sigma_g)/L^3\pi_1(\Sigma_g))$$
given by the long exact sequence in non-abelian cohomology arising from sequence \ref{pisequence} (and using the $\pi_1(\mathbf{Pic}^1_{\mathscr{C}_g/\mathscr{M}_g})$-action on $\pi_1(\Sigma_g)/L^3\pi_1(\Sigma_g)$ arising from $\Delta$). We define $$\overline{H^2(\pi_1(\mathbf{Pic}^1_{\mathscr{C}_g/\mathscr{M}_g}), L^2\pi_1(\Sigma_g)/L^3\pi_1(\Sigma_g))}:=\on{coker}(m).$$
By Proposition \ref{prop:non-linear-bdry-map}, the composite map 
\begin{align*}
\delta: H^1(\pi_1(\mathbf{Pic}^1_{\mathscr{C}_g/\mathscr{M}_g}), \pi_1(\Sigma_g)^{\text{ab}})&\overset{\delta_{\Delta}}{\longrightarrow} H^2(\pi_1(\mathbf{Pic}^1_{\mathscr{C}_g/\mathscr{M}_g}), L^2\pi_1(\Sigma_g)/L^3\pi_1(\Sigma_g))\\
&\to \overline{H^2(\pi_1(\mathbf{Pic}^1_{\mathscr{C}_g/\mathscr{M}_g}), L^2\pi_1(\Sigma_g)/L^3\pi_1(\Sigma_g))}
\end{align*}
is linear (and by Proposition \ref{prop: bdry-independence-of-section}, it is independent of the $\pi_1(\mathbf{Pic}^1_{\mathscr{C}_g/\mathscr{M}_g})$-action on $\pi_1(\Sigma_g)/L^3\pi_1(\Sigma_g)$). We define $$M(\pi_1(\mathbf{Pic}^1_{\mathscr{C}_g/\mathscr{M}_g}), L^2\pi_1(\Sigma_g)/L^3\pi_1(\Sigma_g)):=\on{coker}(\delta).$$
\begin{definition}\label{def:SecondaryMoritaClass}
We define the secondary Morita class $\tilde{o}_2 \in M(\pi_1(\mathbf{Pic}^1_{\mathscr{C}_g/\mathscr{M}_g}), L^2\pi_1(\Sigma_g)/L^3\pi_1(\Sigma_g))$ to be the image of $o_2$ in this quotient group. (Compare to Definition \ref{def:obs-independent-of-section}.) 
\end{definition}
For the functoriality properties of this class, and details of the claims above, see Section \ref{sec:Non-abelian-cohomology}.

As $\mathbf{Pic}^1_{\mathscr{C}_g/\mathscr{M}_g}$ is a $K(\pi_1(\mathbf{Pic}^1_{\mathscr{C}_g/\mathscr{M}_g}), 1)$, we may just as well think of the class $\widetilde{o_2}$ as living in a quotient of $H^2(\mathbf{Pic}^1_{\mathscr{C}_g/\mathscr{M}_g}, \mathbb{V}_2)$, denoted $M(\mathbf{Pic}^1_{\mathscr{C}_g/\mathscr{M}_g}, \mathbb{V}_2)$, where $\mathbb{V}_2$ is the local system corresponding to the $\pi_1$-representation $L^2\pi_1(\Sigma_g)/L^3\pi_1(\Sigma_g)$. We may describe the local system $\mathbb{V}_2$ more explicitly as follows. There is a natural map of local systems on $\mathscr{M}_g$ $$\mathbb{Z}(1)\to \bigwedge^2 \mathbb{V}_1,$$ given by the intersection pairing on $\mathbb{V}_1^\vee$. The local system $\mathbb{V}_2$ is the pullback of the cokernel of this map to $\mathbf{Pic}^1_{\mathscr{C}_g/\mathscr{M}_g}$.

\subsubsection{The \'etale setting}\label{subsubsec:etale-o2}
We only sketch the construction in the \'etale-cohomological setting, as it is essentially identical to the construction in the previous section. Again, by Proposition \ref{prop:profinite-birman-exact}, the profinite (resp.~prime-to-$p$) completion of the Birman sequence remains exact; hence, one may construct profinite (resp.~prime-to-$p$) analogues of all of the exact sequences above, as we now explain.  

Let $k$ be a field of characteristic $0$. As we have already shown in Proposition \ref{prop:profinite-birman-exact}, the profinite completion of the Birman sequence is exact. Let $\bar x$ be a geometric point of $\mathscr{C}_g$, $[\overline{C}]$ the corresponding point of $\mathscr{M}_g$, and $\overline{C}$ the corresponding curve. We have (by geometric class field theory) a canonical isomorphism $$\pi_1^{\text{\'et}}(\mathscr{C}_{g, k}, \bar x)/L^2\pi_1^{\text{\'et}}(\overline{C}, \bar x)\simeq \pi_1^{\text{\'et}}(\mathbf{Pic}^1_{\mathscr{C}_g/\mathscr{M}_g, k}, \bar x),$$ where on the right we view $\bar x$ as a point of $\mathbf{Pic}^1_{\mathscr{C}_g/\mathscr{M}_g, k}$ via the Abel-Jacobi map. If $k$ is a field of characteristic $p>0$, we have an analogous isomorphism on $\pi_1^{(p)}$. Thus we may construct profinite (resp.~prime-to-$p$) analogues of sequences (\ref{pi2toModsequence}), (\ref{pisequence}), (\ref{pi2sequence}), as well as all of the other diagrams above, giving rise to classes $$o_{2, \text{\'et}}\in H^2(\pi_1^{\text{\'et}}(\mathbf{Pic}^1_{\mathscr{C}_g/\mathscr{M}_g, k}, \bar x), L^2\pi_1^{\text{\'et}}(\overline{C}, \bar x)/L^3\pi_1^{\text{\'et}}(\overline{C}, \bar x)),$$ $$\widetilde{o_{2, \text{\'et}}}\in M(\pi_1^{\text{\'et}}(\mathbf{Pic}^1_{\mathscr{C}_g/\mathscr{M}_g, k}, \bar x), L^2\pi_1^{\text{\'et}}(\overline{C}, \bar x)/L^3\pi_1^{\text{\'et}}(\overline{C}, \bar x))$$ in characteristic $0$ and 
$$o_{2, \text{\'et}}^{(p)}\in H^2(\pi_1^{(p)}(\mathbf{Pic}^1_{\mathscr{C}_g/\mathscr{M}_g, k}, \bar x), L^2\pi_1^{(p)}(\overline{C}, \bar x)/L^3\pi_1^{(p)}(\overline{C}, \bar x)),$$ $$\widetilde{o_{2, \text{\'et}}}^{(p)}\in M(\pi_1^{(p)}(\mathbf{Pic}^1_{\mathscr{C}_g/\mathscr{M}_g, k}, \bar x), L^2\pi_1^{(p)}(\overline{C}, \bar x)/L^3\pi_1^{(p)}(\overline{C}, \bar x))$$
in characteristic $p>0$. We denote the lisse $\widehat{\mathbb{Z}}$-sheaf (resp.~$\widehat{\mathbb{Z}}^{(p)}$-sheaf) on $\mathbf{Pic}^1_{\mathscr{C}_g/\mathscr{M}_g, k, \text{\'et}}$ corresponding to the coefficients in the cohomology groups above via $\widehat{\mathbb{V}_2},$ (resp.~$\widehat{\mathbb{V}_2}^{(p)}$). As before we may explicitly describe $\widehat{\mathbb{V}_2}$ as the pullback to $\mathbf{Pic}^1_{\mathscr{C}_g/\mathscr{M}_g}$ of the cokernel of the map of lisse sheaves on $\mathscr{M}_g$ $$\widehat{\mathbb{Z}}(1)\to \bigwedge^2\widehat{\mathbb{V}_1}$$ arising from the intersection form, and similarly with $\widehat{\mathbb{V}_2}^{(p)}$.

Pulling back along the natural map $$H^2(\pi_1^{\text{\'et}}(\mathbf{Pic}^1_{\mathscr{C}_g/\mathscr{M}_g, k}, \bar x), L^2\pi_1^{\text{\'et}}(\overline{C}, \bar x)/L^3\pi_1^{\text{\'et}}(\overline{C}, \bar x))\to H^2(\mathbf{Pic}^1_{\mathscr{C}_g/\mathscr{M}_g, k, \text{\'et}}, \widehat{\mathbb{V}_2})$$
we obtain a class in $H^2(\mathbf{Pic}^1_{\mathscr{C}_g/\mathscr{M}_g, k, \text{\'et}}, \widehat{\mathbb{V}_2})$ which we also call $o_{2, \text{\'et}}$, in an abuse of notation, and similarly with $\widetilde{o_{2, \text{\'et}}}, o_{2,{\text{\'et}}}^{(p)}, \widetilde{o_{2,_{\text{\'et}}}}^{(p)}$. Note that we do not know if these pullback maps are isomorphisms, as it is not clear if $\mathbf{Pic}^1_{\mathscr{C}_g/\mathscr{M}_g, k}$ is an \'etale $K(\pi, 1)$.
\subsection{Vanishing of $o_1$}
We now discuss certain situations where the classes $o_1, o_1^{\text{\'et}}, o_{1, \text{\'et}}^{(p)}$ vanish for geometric reasons, and some consequences of this vanishing for $o_2$. We first record the following fundamental property of $o_1$:
\begin{proposition}\label{prop:o1-vanishing-tautological-consequence}
Let $g>2$ be an integer. 
\begin{enumerate}
    \item Let $E\to B$ be a fibration with fiber $\Sigma_g$, associated to a map $f: B\to \mathscr{M}_g$. Then $f^*o_1=0$ if and only if central extension obtained by pulling back sequence (\ref{seq:abelianized-Birman}) along the map $f_*: \pi_1(B)\to \text{Mod}(g)$ splits.
    \item Let $C$ be a smooth projective curve of genus $g$ over a field $k$ of characteristic $0$, associated to a map $\on{Spec}(k)\to \mathscr{M}_{g,k}$. Then the natural exact sequence $$0\to \pi_1^{\text{\'et}}(C_{\bar k})^{\text{ab}}\to \pi_1^{\text{\'et}}(C)/L^2\pi_1^{\text{\'et}}(C_{\bar k})\to \Gal(\bar k/k)\to 1$$ splits if and only if $o_{1, \text{\'et}}|_k$ vanishes.
    \item Let $C$ be a smooth projective curve of genus $g$ over a field $k$ of characteristic $p>0$, associated to a map $\on{Spec}(k)\to \mathscr{M}_{g,k}$. Then the natural exact sequence $$0\to \pi_1^{(p)}(C_{\bar k})^{\text{ab}}\to \pi_1^{(p)}(C)/L^2\pi_1^{(p)}(C_{\bar k})\to \Gal(\bar k/k)\to 1$$ splits if and only if $o^{(p)}_{1, \text{\'et}}|_k$ vanishes.
\end{enumerate} 
\end{proposition}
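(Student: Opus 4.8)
The statement to prove is Proposition \ref{prop:o1-vanishing-tautological-consequence}, which asserts that the (topological or étale) Morita class $o_1$ vanishes under pullback precisely when the corresponding abelianized extension splits. This is essentially a matter of recognizing that the class in question \emph{is}, by construction, the obstruction class to splitting a particular central extension, together with a naturality/functoriality check that pullback of the class matches pullback of the extension.

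\textbf{Plan of proof.} The unifying principle is the standard bijection between $H^2(G, A)$ (for $A$ an abelian group with $G$-action) and equivalence classes of extensions $1 \to A \to \widetilde{G} \to G \to 1$ inducing the given action, under which the zero class corresponds exactly to the split extensions. For part (1): the class $o_1 \in H^2(\on{Mod}(g), H_1(\Sigma_g,\mathbb{Z}))$ is \emph{defined} as the class of sequence (\ref{seq:abelianized-Birman}). Pulling back a cohomology class along a group homomorphism $f_*\colon \pi_1(B) \to \on{Mod}(g)$ corresponds, under the above bijection, to pulling back (i.e. forming the fiber product of) the extension along $f_*$; this is a standard fact about the functoriality of $H^2$ and extensions, and I would cite it or spell it out in one line. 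Hence $f^*o_1 = 0$ in $H^2(\pi_1(B), f_*^*H_1(\Sigma_g,\mathbb{Z}))$ if and only if the pulled-back extension of $\pi_1(B)$ by $H_1(\Sigma_g,\mathbb{Z})$ splits, which is the assertion. One small point to address: the statement concerns the fibration $E \to B$ and its associated map $f$, so I should note that $f^*o_1$, viewed as a class on $B$ with coefficients in the local system $f^*\mathbb{V}_1$, is identified with the group-cohomology class $f_*^*o_1$ via the isomorphism $H^2(B, f^*\mathbb{V}_1) = H^2(\pi_1(B), f_*^*H_1(\Sigma_g,\mathbb{Z}))$ coming from $B$ (and $\mathscr{M}_g$) being a $K(\pi,1)$ — or, if $B$ is not aspherical, via the edge map, which suffices since we only need the ``only if'' direction to go through the $K(\pi,1)$ model $\mathscr{M}_g$ and the ``if'' direction is immediate from pulling back a splitting.

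\textbf{Parts (2) and (3).} These are the étale-cohomological analogues and the argument is formally identical. By Proposition \ref{prop:profinite-birman-exact}, the profinite (resp.\ prime-to-$p$) completion of the Birman sequence is exact, so the sequences (\ref{seq:abelian-char-zero}) (resp.\ (\ref{seq:abelian-char-p})) are genuine short exact sequences of profinite groups, and $o_{1,\text{\'et}}$ (resp.\ $o_{1,\text{\'et}}^{(p)}$) is defined as the image in $H^2(\mathscr{M}_{g,k}, \widehat{\mathbb{V}_1})$ (resp.\ $H^2(\mathscr{M}_{g,k}, \widehat{\mathbb{V}_1}^{(p)})$) of the class of that extension under the natural map from group cohomology of $\pi_1^{\text{\'et}}(\mathscr{M}_{g,k})$ (resp.\ $\pi_1^{(p)}$). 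A $k$-point $\on{Spec}(k) \to \mathscr{M}_{g,k}$ induces, on étale fundamental groups, a homomorphism $\Gal(\bar k/k) = \pi_1^{\text{\'et}}(\on{Spec} k) \to \pi_1^{\text{\'et}}(\mathscr{M}_{g,k})$, and pulling back sequence (\ref{seq:abelian-char-zero}) along this homomorphism gives exactly the sequence
\[
0 \to \pi_1^{\text{\'et}}(C_{\bar k})^{\text{ab}} \to \pi_1^{\text{\'et}}(C)/L^2\pi_1^{\text{\'et}}(C_{\bar k}) \to \Gal(\bar k/k) \to 1
\]
appearing in the statement — this uses that the fiber of $\mathscr{C}_g \to \mathscr{M}_g$ over $[C]$ is $C$ and that $\pi_1^{\text{\'et}}(\mathscr{C}_{g,k}) \times_{\pi_1^{\text{\'et}}(\mathscr{M}_{g,k})} \Gal(\bar k/k) = \pi_1^{\text{\'et}}(C)$ (base change of fundamental groups, valid in characteristic zero, or after prime-to-$p$ completion in characteristic $p$). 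Since the restriction map $H^2(\mathscr{M}_{g,k}, \widehat{\mathbb{V}_1}) \to H^2(\Gal(\bar k/k), \pi_1^{\text{\'et}}(C_{\bar k})^{\text{ab}})$ factors through $H^2$ of the fundamental groups and sends $o_{1,\text{\'et}}$ to the class of this pulled-back extension, the class $o_{1,\text{\'et}}|_k$ vanishes if and only if the displayed sequence splits. Part (3) is the same with $\pi_1^{(p)}$ throughout, using the prime-to-$p$ completion of the Birman sequence (exact by Proposition \ref{prop:profinite-birman-exact} since the relevant completion of a genus $\geq 2$ surface group has trivial center).

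\textbf{Main obstacle.} The substance is not any single computation but rather the bookkeeping: one must check that ``pull back the cohomology class along the map induced on $\pi_1$'' really does agree with ``pull back the extension of groups,'' both in the topological setting and (the more delicate one) in the étale setting where $\mathbf{Pic}^1$ and $\mathscr{M}_g$ need not be honest $K(\pi,1)$'s, so the identification between sheaf cohomology on the stack and group cohomology of the fundamental group only goes one way via edge maps. The cleanest route is to note that for the ``only if'' implications we work entirely within group cohomology of $\pi_1^{\text{\'et}}(\mathscr{M}_{g,k})$ (where the extension class lives before pushing to $\widehat{\mathbb{V}_1}$-cohomology), apply the elementary functoriality of extensions there, and only at the end push forward; and for the ``if'' direction a splitting of the $\pi_1$-extension immediately yields vanishing of the pulled-back class. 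I expect this is where the proof in the paper spends its (modest) effort, and I would organize the write-up around that single functoriality lemma.
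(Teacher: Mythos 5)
Your proposal is correct and follows the same route as the paper, which disposes of this proposition in one line ("Immediate from the definition and functoriality of the sequences above"): since $o_1$ and its étale variants are defined as the classes of the abelianized Birman extensions, the splitting criterion is just the standard correspondence between $H^2$ and extensions together with compatibility of pullback of classes with pullback of extensions. Your extra care about the edge map and the exactness of the profinite Birman sequence (Proposition \ref{prop:profinite-birman-exact}) is exactly the bookkeeping the paper leaves implicit, so there is nothing to add.
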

\begin{proof}
Immediate from the definition and functoriality of the sequences above.
\end{proof}
We now deduce some vanishing properties of $o_1$.
\begin{proposition}\label{prop:o1-vanishing-cpct-type}
Let $R$ be a Noetherian complete local ring with maximal ideal $\mathfrak{m}$, residue field $R/\mathfrak{m}=k$ and fraction field $K$. Let $C/R$ be a projective curve with $C_K$ smooth of genus $g$ and $C_k$ semistable of compact type. Suppose $C_k(k)$ is non-empty. If the residue characteristic of $R$ is zero, then $o_{1, \text{\'et}}|_K$ is zero. 
\end{proposition}
\begin{proof}
We give the proof if the characteristic of $k$ is zero; the proof in positive residue characteristic is essentially identical, though more notationally involved. Let $\overline{K}$ be an algebraic closure of $K$ and $\overline{k}$ an algebraic closure of $k$. Let $\bar x$ be a $\overline{K}$-point of $C_K$ and $\bar\xi$ a specialization of $\bar x$. Then there is a commutative diagram of specialization maps
$$\xymatrix{
\pi_1^{\text{\'et}}(C_K, \bar x) \ar[r]^{\on{sp}}\ar[d]^{\pi_K} & \pi_1^{\text{\'et}}(C_k, \bar \xi)\ar[d]^{\pi_k} \\
\on{Gal}(\overline{K}/K)\ar[r] & \on{Gal}(\overline{k}/k).
}$$
Now consider the quotient diagram 
$$\xymatrix{
\pi_1^{\text{\'et}}(C_K, \bar x)/L^2\pi_1^{\text{\'et}}(C_{\overline{K}}, \bar x) \ar[rr]^{\on{sp}}\ar[d]^{\pi_K^{\text{ab}}}& & \pi_1^{\text{\'et}}(C_k, \bar \xi)/L^2\pi_1^{\text{\'et}}(C_{\overline{k}}, \bar \xi)\ar[d]^{\pi_k^{\text{ab}}} \\
\on{Gal}(\overline{K}/K)\ar[rr]& & \on{Gal}(\overline{k}/k).
}$$
As $C_{\bar k}$ has compact type, this quotient diagram is Cartesian. But $\pi_k$ has a section (as $C_k$ has a rational point), hence the same is true for $\pi_k^{\text{ab}}$. Thus $\pi_K^{\text{ab}}$ has a section, using the Cartesian-ness of the diagram above.
\end{proof}
\begin{corollary}\label{cor:o1-vanishing}
 Let $\Gamma$ be a stable graph of genus $g>2$ and let $k$ be a field. Suppose that
\begin{enumerate}
    \item The underlying graph of $\Gamma$ is a tree, and
    \item $\on{Aut}(\Gamma)$ stabilizes some edge of $\Gamma$.
\end{enumerate}
Then if $k$ has characteristic zero, $o_{1, \text{\'et}}|_{\widehat{K_\Gamma}}$ vanishes.
Here $\widehat{K_\Gamma}$ is defined as in Section \ref{subsec:tropical-section-conjecture}.
\end{corollary}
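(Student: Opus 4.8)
The plan is to realize $\mathscr{C}_{\widehat{K_\Gamma}}$ as the generic fiber of a stable curve over $\mathscr{O}_{\widehat{K_\Gamma}}$ whose special fiber is of compact type and carries a rational point, and then quote Proposition \ref{prop:o1-vanishing-cpct-type} verbatim. Concretely, put $R=\mathscr{O}_{\widehat{K_\Gamma}}$, $K=\widehat{K_\Gamma}$, and let $k'$ be the residue field of $R$, i.e.\ the function field of the generic point $\eta_\Gamma$ of $Z_\Gamma$ (a finitely generated extension of $k$). Since $\overline{\mathscr{M}_{g}}$ is smooth, $R$ is a complete regular local ring (with $\#E(\Gamma)$ boundary divisors meeting transversally through $\eta_\Gamma$), so $\on{Spec}(R)$ is integral, its generic point lies in $\mathscr{M}_g$, and pulling back the universal stable curve $\overline{\mathscr{C}_g}\to\overline{\mathscr{M}_g}$ along $\on{Spec}(R)\to\overline{\mathscr{M}_{g,k}}$ (legitimate once $g>2$, after passing to the coarse space or a rigidification if one wishes to dispose of any automorphism gerbe) produces a projective stable curve $C/R$ with $C_K\simeq\mathscr{C}_{\widehat{K_\Gamma}}$ smooth and $C_{k'}$ a stable curve whose geometric dual graph is exactly $\Gamma$ (as $Z_\Gamma$ parametrizes precisely curves of type $\Gamma$). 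It then remains to verify the two hypotheses of Proposition \ref{prop:o1-vanishing-cpct-type}.

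First, $C_{k'}$ is of compact type exactly because, by hypothesis (1), the underlying graph of $\Gamma$ is a tree: a semistable curve is of compact type if and only if its dual graph has no cycles. Second --- and this is the heart of the matter --- $C_{k'}$ has a $k'$-rational point. The group $\on{Gal}(\overline{k'}/k')$ acts on $C_{\overline{k'}}$ and hence on its dual graph by graph automorphisms, so this action factors through $\on{Aut}(\Gamma)$. By hypothesis (2) there is an edge $e_0$ fixed by $\on{Aut}(\Gamma)$, hence by the image of Galois; therefore the geometric node of $C_{\overline{k'}}$ attached to $e_0$ is Galois-stable, so the corresponding closed point $n_{e_0}$ of the singular locus of $C_{k'}$ has residue field purely inseparable over $k'$. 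In characteristic zero this forces $\kappa(n_{e_0})=k'$, giving the desired point. In characteristic $p>0$ one obtains a point over a purely inseparable finite extension $k''/k'$, which is equally good: $\on{Spec}(k'')\to\on{Spec}(k')$ is a universal homeomorphism, so $\pi_1^{(p)}(C_{k''})=\pi_1^{(p)}(C_{k'})$ and $\on{Gal}(\overline{k''}/k'')=\on{Gal}(\overline{k'}/k')$, and the proof of Proposition \ref{prop:o1-vanishing-cpct-type} only uses that $\pi_1^{(p)}(C_{k'})\to\on{Gal}(\overline{k'}/k')$ splits --- which a $k''$-point supplies. With both hypotheses in hand, Proposition \ref{prop:o1-vanishing-cpct-type} yields that $o_{1,\text{\'et}}|_{\widehat{K_\Gamma}}$ (resp.\ the image of $o_{1,\text{\'et}}^{(p)}$ under the displayed maps) vanishes, which is the assertion.

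I expect the only genuine friction to be the bookkeeping around stacks in producing the model $C/R$ --- checking that "the fiber of $\mathscr{C}_g$ over $\widehat{K_\Gamma}$" in the sense of Conjecture \ref{conj:tropical-section-conjecture} really extends to a stable curve over $\on{Spec}(R)$, and not merely over an \'etale cover. Everything else (compact type, the Galois-stable node, the passage to purely inseparable extensions, the reduction to Proposition \ref{prop:o1-vanishing-cpct-type}) is formal. One should also confirm that the identifications $H^2(\mathscr{M}_{g,\mathbb{F}_p},\widehat{\mathbb{V}_1}^{(p)})\xrightarrow{\sim}H^2(\mathscr{M}_{g,\mathbb{Z}_p},\widehat{\mathbb{V}_1}^{(p)})\to H^2(K,\widehat{\mathbb{V}_1}^{(p)}|_K)$ used in Proposition \ref{prop:o1-vanishing-cpct-type} match those implicit in the phrase "$o_{1,\text{\'et}}^{(p)}|_{\widehat{K_\Gamma}}$ vanishes."
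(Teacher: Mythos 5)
Your proposal is correct and follows essentially the same route as the paper: pull the universal stable curve back to $\mathscr{O}_{\widehat{K_\Gamma}}$, observe that the special fiber is of compact type (since $\Gamma$ is a tree) and has a rational point at the node of the $\on{Aut}(\Gamma)$-stabilized edge, and conclude by Proposition \ref{prop:o1-vanishing-cpct-type}. Your added care about the Galois action on the dual graph and possible inseparability of the node's residue field in characteristic $p$ just makes explicit what the paper leaves implicit (and is in fact unnecessary, since the non-smooth locus of a nodal curve is unramified over the base, so the stabilized node is rational in every characteristic).
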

\begin{proof}
Let $R_\Gamma$ be the complete local ring of $\overline{\mathscr{M}_g}$ at the generic point of $Z_\Gamma$, the boundary stratum of $\overline{\mathscr{M}_g}$ corresponding to stable curves with dual graph $\Gamma$. Let $\mathscr{C}_{\Gamma}$ be the pullback of the universal curve to $R_\Gamma$. The special fiber of $\mathscr{C}_\Gamma$ has compact type by (1), and has a rational point by (2) (namely, the node corresponding to the stabilized edge must be rational). Hence we may conclude by Proposition \ref{prop:o1-vanishing-cpct-type}.
\end{proof}
\begin{remark}\label{rem:o1-vanishing}
Let $\Gamma$ be a graph as in Corollary \ref{cor:o1-vanishing}, and let $W_\Gamma$ be the union of all the boundary divisors of $\overline{\mathscr{M}_{g, \mathbb{C}}}^{\text{an}}$ not containing the stratum $Z_\Gamma$. Let $U_\Gamma$ be a deleted neighborhood of $Z_\Gamma$ in $\overline{\mathscr{M}_{g, \mathbb{C}}}^{\text{an}}\setminus W_\Gamma$. Then an argument essentially identical to the proof of Corollary \ref{cor:o1-vanishing} shows that $o_1|_U$ vanishes. One may also make a rigid-analytic version of this statement, but doing so is beyond the scope of this paper.
\end{remark}
\begin{remark}\label{rem:o2-defn-remark}
In any setting where $o_1$ or $o_{1, \text{\'et}}$ vanishes (as in Corollary \ref{cor:o1-vanishing} or Remark \ref{rem:o1-vanishing}), one may define a version of $\widetilde{o_2}$ or $\widetilde{o_{2, \text{\'et}}}$. Indeed, the vanishing of these classes imply that the sequences arising in Proposition \ref{prop:o1-vanishing-tautological-consequence} split, which suffices to apply the construction in Section \ref{subsec:nonabelian-cohomology}. For example, there is an analogue of $\widetilde{o_{2, \text{\'et}}}$ defined in $M(\widehat{K_\Gamma}, L^2\Pi_\Gamma/L^3\Pi_\Gamma)$, where $\Gamma, \widehat{K}_\Gamma$ are as defined in Corollary \ref{cor:o1-vanishing} and $\Pi_\Gamma$ is the geometric \'etale fundamental group of $\mathscr{C}_\Gamma$. Likewise, there is an analogous class in $M(U_\Gamma, \mathbb{V}_2)$, where $U_\Gamma$ is as defined in \ref{rem:o1-vanishing}.
\end{remark}
\section{The Morita classes for surface groups}\label{SMorSurfGp}
Let $h>0$ be an integer and $\Sigma_h$ a compact orientable surface of genus $h$. In this section, we study the pull-back of the Morita classes along maps $$\pi_1(\Sigma_h)\to \text{Mod}(g)$$ or $$\pi_1(\Sigma_h)\to \pi_1(\mathbf{Pic}^1_{\mathscr{C}_g/\mathscr{M}_g}).$$ In other words, given a fibration $$\Sigma_g\to E\to \Sigma_h,$$ we study the obstruction to splitting the ``abelianized" and ``2-nilpotent" analogues of the exact sequence of fundamental groups: $$1\to \pi_1(\Sigma_g)/L^2\pi_1(\Sigma_g)\to \pi_1(E)/L^2\pi_1(\Sigma_g)\to \pi_1(\Sigma_h)\to 1$$ and $$1\to \pi_1(\Sigma_g)/L^3\pi_1(\Sigma_g)\to \pi_1(E)/L^3\pi_1(\Sigma_g)\to \pi_1(\Sigma_h)\to 1.$$ The calculations in this section are key to the degeneration arguments in the applications to the section conjecture in Section~\ref{SSecConjApp}. We produce explicit examples (Theorem~\ref{T:nontrivialMorita} through Corollary \ref{cor:main-corollary-o1-pinwheel}) where the pull-back of the primary Morita class $o_1$ is nontrivial. When the pullback of the Morita class $o_1$ is trivial, we analyze the pullback of the secondary Morita class $\widetilde{o_2}$ (Definition~\ref{def:SecondaryMoritaClass}). We produce explicit examples where this secondary Morita class has exact order $2$
(Theorems~\ref{P:nontrivialsecondaryMorita} and \ref{thm:tree-o2-nonvanishing}).

Our results will require substantial direct computation with cocycles; we will delay these computations to Section \ref{sec:Non-abelian-cohomology} wherever possible.
\subsection{Computing the primary Morita class for surface groups}\label{S:MorSurfGrp}
We write $$\pi_1(\Sigma_h)=\left\langle a_1, \cdots, a_h, b_1, \cdots, b_h \middle\vert \prod_{i=1}^h [a_i, b_i]\right\rangle$$ for the standard presentation of $\pi_1(\Sigma_h)$.

Fix a homomorphism $$\gamma: \pi_1(\Sigma_h)\to \text{Mod}(g).$$ Choose lifts $\widetilde{\gamma(a_1)},\widetilde{\gamma(b_1)}\cdots, \widetilde{\gamma(a_h)}, \widetilde{\gamma(b_h)}$ of $\gamma(a_i), \gamma(b_i)$ from $\text{Mod}(g) \subset \text{Out}(\pi_1(\Sigma_g))$ to $\text{Mod}(g,1) \subset \text{Aut}(\pi_1(\Sigma_g))$. Then $$\widetilde{R}= \prod_{i=1}^h [\widetilde{\gamma(a_i)}, \widetilde{\gamma(b_i)}]$$ is an inner automorphism of $\pi_1(\Sigma_g)$, and hence can be written as conjugation by some element $\tilde r\in \pi_1(\Sigma_g)$. Let $\Hur \colon \pi_1(\Sigma_g){\rightarrow} H_1(\Sigma_g, \mathbb{Z})$ be the Hurewicz (abelianization) map.

\begin{proposition}[The Morita class for surface groups]\label{P:MoritaSurfaceGroup}
Under the identification $$H^2(\pi_1(\Sigma_h), H_1(\Sigma_g, \mathbb{Z}))\simeq H_0(\pi_1(\Sigma_h), H_1(\Sigma_g, \mathbb{Z})) \simeq H_1(\Sigma_g, \mathbb{Z})_{\pi_1(\Sigma_h)},$$ from Corollary \ref{h2-corollary}, the pull-back $\gamma^*o_{\text{1}}$ is identified with the image of $\tilde r$ under the composition $$\pi_1(\Sigma_g)\overset{\Hur}{\longrightarrow} H_1(\Sigma_g, \mathbb{Z})\twoheadrightarrow H_1(\Sigma_g, \mathbb{Z})_{\pi_1(\Sigma_h)}.$$
\end{proposition}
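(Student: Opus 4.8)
The plan is to reduce the claim to a direct cocycle computation using a one-relator presentation of $\pi_1(\Sigma_h)$, exploiting the fact that $H^2$ of a surface group can be computed by ``evaluating the relator'' on a set-theoretic splitting --- a compatibility I would extract from Corollary \ref{h2-corollary}.

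First I would unwind what $\gamma^*o_1$ classifies. Pulling the extension (\ref{seq:abelianized-Birman}) back along $\gamma$ gives a short exact sequence
\begin{equation*}
0\to H_1(\Sigma_g,\mathbb{Z})\to E\to \pi_1(\Sigma_h)\to 1,\qquad E:=\bigl(\text{Mod}(g,1)/[\pi_1(\Sigma_g),\pi_1(\Sigma_g)]\bigr)\times_{\text{Mod}(g)}\pi_1(\Sigma_h),
\end{equation*}
and $\gamma^*o_1$ is its class in $H^2(\pi_1(\Sigma_h),H_1(\Sigma_g,\mathbb{Z}))$, where $\pi_1(\Sigma_h)$ acts through $\gamma$ and the $\text{Mod}(g)$-action on $H_1$. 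Since $\Sigma_h$ is an aspherical closed surface with the displayed one-relator presentation $w=\prod_{i=1}^h[a_i,b_i]$, Corollary \ref{h2-corollary} gives $H^2(\pi_1(\Sigma_h),M)\simeq H_0(\pi_1(\Sigma_h),M)\simeq M_{\pi_1(\Sigma_h)}$ for every coefficient module $M$; the content I need from it is that, under this identification (cap product with the fundamental class of $\Sigma_h$, represented by the single $2$-cell glued along $w$), the class of an extension $0\to M\to E\to\pi_1(\Sigma_h)\to 1$ is the image in $M_{\pi_1(\Sigma_h)}$ of the ``relator value'' $w(\widetilde{a_i},\widetilde{b_i})\in M$, computed from \emph{any} choice of set-theoretic lifts $\widetilde{a_i},\widetilde{b_i}\in E$ of the generators (this value lies in $M=\ker(E\to\pi_1(\Sigma_h))$ because $w=1$ in $\pi_1(\Sigma_h)$).

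Then I would run this recipe with the lifts coming from the chosen $\widetilde{\gamma(a_i)},\widetilde{\gamma(b_i)}\in\text{Mod}(g,1)$: take $\widetilde{a_i}=(\overline{\widetilde{\gamma(a_i)}},a_i)$ and $\widetilde{b_i}=(\overline{\widetilde{\gamma(b_i)}},b_i)$ in $E$, where $\overline{(\cdot)}$ denotes reduction into $\text{Mod}(g,1)/[\pi_1(\Sigma_g),\pi_1(\Sigma_g)]$; these are legitimate lifts since $\widetilde{\gamma(a_i)}\mapsto\gamma(a_i)$ in $\text{Mod}(g)$. The $\pi_1(\Sigma_h)$-coordinate of $w(\widetilde{a_i},\widetilde{b_i})$ is $\prod[a_i,b_i]=1$, and the other coordinate is the image of $\widetilde R=\prod_i[\widetilde{\gamma(a_i)},\widetilde{\gamma(b_i)}]$. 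By hypothesis $\widetilde R$ lies in the Birman kernel $\pi_1(\Sigma_g)\subset\text{Mod}(g,1)$ (it maps to $\gamma(\prod[a_i,b_i])=1$ in $\text{Mod}(g)$), and under the identification of that kernel with $\pi_1(\Sigma_g)$ via inner automorphisms it equals $\tilde r$; hence $w(\widetilde{a_i},\widetilde{b_i})=\Hur(\tilde r)$ in $H_1(\Sigma_g,\mathbb{Z})=\ker(E\to\pi_1(\Sigma_h))$. Passing to coinvariants gives exactly the asserted formula.

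The main obstacle is bookkeeping with conventions rather than mathematical substance. I would need to verify that the isomorphism of Corollary \ref{h2-corollary} is normalized so that the ``relator value'' recipe holds with the correct sign, that the chosen identification of the Birman kernel with $\text{Inn}(\pi_1(\Sigma_g))$ sends $\tilde r$ (rather than $\tilde r^{-1}$) to conjugation by $\tilde r$, and that the left/right conventions for the $\pi_1(\Sigma_h)$-action on $H_1(\Sigma_g,\mathbb{Z})$ are consistent throughout. One should also record well-definedness: replacing a lift $\widetilde{\gamma(a_i)}$ by $c\,\widetilde{\gamma(a_i)}$ with $c\in\pi_1(\Sigma_g)$ alters $\tilde r$ only by elements of the form $x-\rho(c)x$ (for $\rho$ the action), which vanish in $H_1(\Sigma_g,\mathbb{Z})_{\pi_1(\Sigma_h)}$ --- consistent with the fact that $\gamma^*o_1$ is canonical while $\tilde r$ itself is not.
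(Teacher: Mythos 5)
Your proposal is correct and is essentially the paper's argument: the paper likewise computes $\gamma^*o_1$ by lifting the generators to $\widetilde{\gamma(a_i)},\widetilde{\gamma(b_i)}$, evaluating the relator $\prod_i[a_i,b_i]$ there to get $\tilde r$, and identifying the resulting class in coinvariants via Corollary \ref{h2-corollary}. The one point to flag is that the ``relator value recipe'' you propose to extract is not contained in Corollary \ref{h2-corollary} alone (which only gives the isomorphism $H^2\simeq H_0$); the paper justifies it by Gruenberg's theorem on representing extension classes through the free presentation, comparing the Gruenberg resolution of the one-relator presentation with the resolution of Proposition \ref{prop:resolution-of-Z}, so you would need to supply that standard input (or an equivalent) at exactly that step.
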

\begin{proof}
Let $F^{2h} \colonequals \langle a_1,b_1,\ldots,a_h,b_h\rangle$ be the free group with $2h$ generators, and let $R \colonequals \prod_{i=1}^h[a_i,b_i]$. Let $\langle R \rangle \lhd F^{2h}$ be the normal subgroup generated by $R$, and let $\langle R \rangle^{ab}$ denote its abelianization.

By \cite[Section 3.1, Theorem 2, Proposition 3]{Gruenberg}, there is a resolution of $\mathbb{Z}$ as a $\pi_1(\Sigma_h)$ module of the form
\begin{equation}\label{seq:free-resoln-surface-gp}
0 \rightarrow \langle R \rangle^{ab} \rightarrow \mathbb{Z}[\pi_1(\Sigma_h)]^{2h}\rightarrow \mathbb{Z}[\pi_1(\Sigma_h)]
\xrightarrow{\epsilon}
 \mathbb{Z} \rightarrow 0.
\end{equation}
 Comparing it with the exact sequence (\ref{equ:resolution-of-Z}), we get the isomorphism
\begin{align}\label{Eq:isoab}
    \begin{split}
        \langle R \rangle^{ab} &\simeq \mathbb{Z}[\pi_1(\Sigma_h)], \\
        R &\mapsto 1,
    \end{split}
\end{align}
hence this resolution is free.

Consider the following commutative diagram of exact sequences, where the second row is the Birman exact sequence, the third row is the pushout of the second row along the Hurewicz map $\Hur \colon \pi_1(\Sigma_g) \rightarrow H_1(\Sigma_g,\mathbb{Z})$, and the map from the first row to the second row comes from lifting the map $\gamma \colon \pi_1(\Sigma_h) \rightarrow \text{Mod}(g)$ to the map $\phi_2 \colon F^{2h} \rightarrow \text{Mod}(g,1)$ defined by $\phi_2(a_i) = \widetilde{\gamma(a_i)}$ and $\phi_2(b_i) = \widetilde{\gamma(b_i)}$ for all $i$. 



$$\xymatrix{
1 \ar[r]  & 
\langle R \rangle \ar[r]^{\iota_1}\ar[d]^{\phi_1} &
F^{2h} \ar[r]\ar[d]^{\phi_2} &
\pi_1(\Sigma_h)\ar[r]\ar[d]^{\gamma}& 1 \\
1 \ar[r]  & 
\pi_1(\Sigma_g) \ar[r]^{\iota_2}\ar[d]^{H} &
\text{Mod}(g,1) \ar[r]\ar[d] &
\text{Mod}(g) \ar[r]\ar[d]^{\simeq}& 1 \\
1 \ar[r] & 
H_1(\Sigma_g,\mathbb{Z}) \ar[r] &
\text{Mod}(g,1)/L^2\pi \ar[r] &
\text{Mod}(g) \ar[r]&
1}$$

By \cite[Section 5.3, Theorem 1]{Gruenberg}, the extension class $\gamma^*o_{\text{1}} \in H^2(\pi_1(\Sigma_h),H_1(\Sigma_g,\mathbb{Z}))$ is represented by the vertical map $H\circ\phi_1$, where we compute $H^2$ using the free resolution (\ref{seq:free-resoln-surface-gp}). Unwinding the isomorphism given  $H^2(\pi_1(\Sigma_h),H_1(\Sigma_g,\mathbb{Z})) \cong H_1(\Sigma_g,\mathbb{Z})_{\pi_1(\Sigma_h)}$ described in Corollary~\ref{h2-corollary} gives the result.
\end{proof}
\begin{remark}
We will primarily apply this result when $h=1$ --- that is, for surface bundles over a torus. It turns out this suffices for our applications. Indeed, this should be unsurprising; we are interested in proving that the Gysin images of certain cohomology classes do not vanish. These classes live on the boundary of $\mathscr{M}_g$ in cohomological degree $1$. An argument analogous to the proof of Lemma \ref{h1-cyclic-subgroup} shows that one can detect the non-vanishing of such classes by pulling along maps from the circle $S^1$; the preimage of a circle in the deleted neighborhood of a boundary component is (homotopy equivalent to) a surface of genus $1$. 
\end{remark}

Suppose we are given disjoint simple closed curves $l_1,\ldots,l_n$ on $\Sigma_g$ and a homeomorphism $S: \Sigma_g\to \Sigma_g$ which permutes them up to isotopy. For each $i$, let $T_{l_i} \in \text{Mod}(g)$ denote the corresponding Dehn twist.
Let $\pi_1(\Sigma_1) = \langle a,b | [a,b] \rangle$. Define $\gamma \colon \pi_1(\Sigma_1) \rightarrow \text{Mod}(g)$ by specifying the images of $a$ and $b$ as follows. Let $\gamma(a)$ be the Dehn multitwist $$\gamma(a) \colonequals T = \prod_{i=1}^{n}T_{l_i}.$$ Let $\gamma(b) = S \in \text{Mod}(g)$. As $S$ permutes the $l_1, \cdots,l_n$ up to isotopy, $S$ and $T$ commute in $\text{Mod}(g)$, so this defines a genuine homomorphism. Our goal now is to compute $\gamma^*o_1$ for $\gamma$ of this form.

\begin{construction}[Lifts of $S,T \in \Mod(g)$ to $\Mod(g,1)$]\label{remark:liftsST}
To apply Proposition~\ref{P:MoritaSurfaceGroup}, we need lifts $\widetilde{T},\widetilde{S} \in \Aut(\pi_1(\Sigma_g))$, which we now construct.
Fix a point $B \in \Sigma_g$ that does not lie on any of the closed curves $l_1,\ldots,l_n$. Then $B$ is fixed by $T$, and we get an induced map $\widetilde{T}: \pi_1(\Sigma_g,B) \to \pi_1(\Sigma_g,B)$, given by $g \mapsto T(g)$ for each $g$ in $\pi_1(\Sigma_g,B)$.

For any choice of a path $\lambda$ from $B$ to $S(B)$, there is an associated isomorphism \begin{align*}\pi_1(\Sigma_g,S(B)) &\simeq \pi_1(\Sigma_g,B) \\ g &\mapsto \lambda g \lambda^{-1}. \end{align*} 
 Using this isomorphism, we define a lift \begin{align*}\widetilde{S}: \pi_1(\Sigma_g,B) &\to \pi_1(\Sigma_g,B) \\ g &\mapsto \lambda S(g) \lambda^{-1}, \end{align*} 
  with inverse given by $$\widetilde{S}^{-1}(g) = S^{-1}(\lambda^{-1}) S^{-1}(g) S^{-1}(\lambda).$$
\end{construction}
  
\begin{lemma}\label{commutatorcomputation} Let $\gamma, S, T, \lambda$ be as above. Let $h$ be the loop $T(\lambda)\lambda^{-1}$.
Then the commutator $[\widetilde{T},\widetilde{S}] \in \Aut(\pi_1(\Sigma_g,B))$ is the inner automorphism of $\pi_1(\Sigma_g,B)$ given by $$g \to hgh^{-1}.$$
\end{lemma}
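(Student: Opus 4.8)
The plan is to compute the commutator $[\widetilde{T},\widetilde{S}]$ directly from the explicit formulas for $\widetilde T$ and $\widetilde S$ given in Construction~\ref{remark:liftsST}, and to show that the resulting automorphism is conjugation by the loop $h = T(\lambda)\lambda^{-1}$. The first thing I would do is pin down notation: $\widetilde T(g) = T(g)$, $\widetilde S(g) = \lambda S(g)\lambda^{-1}$, and their inverses $\widetilde T^{-1}(g) = T^{-1}(g)$, $\widetilde S^{-1}(g) = S^{-1}(\lambda^{-1})\, S^{-1}(g)\, S^{-1}(\lambda)$. Here a key point to keep straight is that $T$ (a homeomorphism fixing $B$) acts on $\pi_1(\Sigma_g,B)$ by functoriality, so $T(xy) = T(x)T(y)$, and similarly $S$ is a homomorphism from $\pi_1(\Sigma_g,B)$ to $\pi_1(\Sigma_g,S(B))$; the path $\lambda$ is used precisely to transport the latter back to $\pi_1(\Sigma_g,B)$. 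Since $S$ and $T$ commute in $\Mod(g)$ (they permute the $l_i$ up to isotopy), $T$ and $S$ commute in $\Out(\pi_1(\Sigma_g))$, hence $[\widetilde T,\widetilde S]$ is indeed inner; the content of the lemma is to identify the conjugating element.

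The core computation is then to expand $[\widetilde T,\widetilde S](g) = \widetilde T\,\widetilde S\,\widetilde T^{-1}\,\widetilde S^{-1}(g)$ step by step. Applying $\widetilde S^{-1}$ gives $S^{-1}(\lambda^{-1})\,S^{-1}(g)\,S^{-1}(\lambda)$; applying $\widetilde T^{-1}$ gives $T^{-1}S^{-1}(\lambda^{-1})\cdot T^{-1}S^{-1}(g)\cdot T^{-1}S^{-1}(\lambda)$; applying $\widetilde S$ then conjugates each $T^{-1}S^{-1}(-)$ by $\lambda$ and replaces $S(T^{-1}S^{-1}(-)) = ST^{-1}S^{-1}(-)$, which equals $T^{-1}(-)$ as elements of $\pi_1$ up to the inner ambiguity — here I must be careful: $ST^{-1}S^{-1}$ and $T^{-1}$ agree in $\Out$, and since $S,T$ literally commute as homeomorphisms (or at least their induced maps on $\pi_1$ commute up to a controlled inner automorphism arising from the isotopies), one tracks the correction term. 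Finally applying $\widetilde T$ lands everything back, and after cancelling the telescoping products $\lambda$, $S^{-1}(\lambda)$, $T^{-1}(\cdots)$ against each other, what survives is conjugation by $T(\lambda)\lambda^{-1}$. I would organize this as a sequence of four displayed equations, one per application of $\widetilde T^{\pm1}$, $\widetilde S^{\pm 1}$, each obtained from the previous by a single substitution, so the bookkeeping stays transparent.

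The main obstacle I anticipate is handling the interaction between $S$ and $T$ honestly. They commute in $\Mod(g)$, meaning $S\circ T$ is \emph{isotopic} to $T\circ S$, not equal to it; so on fundamental groups $S_* T_*$ and $T_* S_*$ differ by conjugation by some element $c\in\pi_1(\Sigma_g,B)$ coming from the isotopy (and from the basepoint tracking). One must either (a) choose the isotopy and the path $\lambda$ compatibly so that this correction term is trivial at the basepoint — which is natural here because $B$ lies off all the curves $l_i$ and one can take an isotopy supported away from $B$ — or (b) carry the correction $c$ through the computation and check it cancels in the final commutator. I would argue for option (a): since $B\notin \bigcup l_i$, the multitwist $T$ fixes a neighborhood of $B$ pointwise, and $S$ permutes the $l_i$, so one can arrange the supporting isotopy of $S\circ T \simeq T\circ S$ to fix $B$, making $S_* T_* = T_* S_*$ as honest automorphisms of $\pi_1(\Sigma_g,B)$. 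With that normalization in place, the telescoping cancellation goes through cleanly and yields exactly $g\mapsto h g h^{-1}$ with $h = T(\lambda)\lambda^{-1}$, which is the claimed formula. A secondary, purely notational hazard is sign/order conventions for composition of paths and for $[\,\cdot\,,\cdot\,]$; I would fix these once at the start and not revisit them.
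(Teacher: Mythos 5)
Your proposal is correct and takes essentially the same route as the paper: the published proof is exactly the four-step expansion of $\widetilde{T}\circ\widetilde{S}\circ\widetilde{T}^{-1}\circ\widetilde{S}^{-1}(g)$ using the explicit formulas of Construction~\ref{remark:liftsST}, with the telescoping cancellation producing conjugation by $T(\lambda)\lambda^{-1}$. The paper silently uses $S\circ T^{-1}\circ S^{-1}=T^{-1}$ on loops (your option (a) normalization, which is available since $B$ lies off the $l_i$), so your extra care on that point is consistent with, and slightly more explicit than, the paper's argument.
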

\begin{proof}
For any $g \in \pi_1(\Sigma_g,B)$,
\begin{align*}
[\widetilde{T},\widetilde{S}](g) &= \widetilde{T}\circ\widetilde{S}\circ\widetilde{T}^{-1}\circ\widetilde{S}^{-1}(g)
= \widetilde{T}\circ\widetilde{S}\circ\widetilde{T}^{-1}(S^{-1}(\lambda^{-1}g\lambda))\\
&= \widetilde{T}\circ\widetilde{S}(T^{-1}S^{-1}(\lambda^{-1}g\lambda))
= \widetilde{T}(\lambda T^{-1}(\lambda^{-1}g\lambda)\lambda^{-1})\\
&= T(\lambda) \lambda^{-1}g\lambda T(\lambda^{-1}). \qedhere
\end{align*}
\end{proof}

Combining this lemma with Proposition~\ref{P:MoritaSurfaceGroup}, we get the following corollary.
\begin{corollary}\label{cor:reppullback}
The class $\gamma^*o_1$ 
is the image of  $\Hur(T(\lambda)\lambda^{-1})$ in $H_1(\Sigma_g,\mathbb{Z})_{\pi_1(\Sigma_1)}$.
\end{corollary}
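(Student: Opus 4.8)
The plan is to obtain Corollary~\ref{cor:reppullback} as a direct specialization of Proposition~\ref{P:MoritaSurfaceGroup} to the case $h=1$, fed by the explicit commutator computation of Lemma~\ref{commutatorcomputation}. First I would record that for $\pi_1(\Sigma_1)=\langle a,b\mid [a,b]\rangle$ the defining relator is $R=[a,b]$, so the element $\widetilde{R}=\prod_{i=1}^{h}[\widetilde{\gamma(a_i)},\widetilde{\gamma(b_i)}]$ appearing in Proposition~\ref{P:MoritaSurfaceGroup} is the single commutator $[\widetilde{\gamma(a)},\widetilde{\gamma(b)}]=[\widetilde{T},\widetilde{S}]\in\Aut(\pi_1(\Sigma_g,B))$, where $\widetilde{T},\widetilde{S}$ are the lifts of $T=\gamma(a)$ and $S=\gamma(b)$ to $\Mod(g,1)$ produced in Construction~\ref{remark:liftsST}. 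I would briefly note that these are legitimate lifts in the sense required by Proposition~\ref{P:MoritaSurfaceGroup} — i.e. $\widetilde{T}$ and $\widetilde{S}$ project to $T$ and $S$ in $\Mod(g)\subset\Out(\pi_1(\Sigma_g))$ — which is immediate, since $\widetilde{T}$ is the action of $T$ on $\pi_1(\Sigma_g,B)$ (using that $T$ fixes $B$, as $B$ avoids the curves $l_i$), and $\widetilde{S}$ is the action of $S$ followed by the change-of-basepoint isomorphism along $\lambda$.

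Next I would invoke Lemma~\ref{commutatorcomputation}, which identifies $\widetilde{R}=[\widetilde{T},\widetilde{S}]$ with the inner automorphism $g\mapsto hgh^{-1}$ of $\pi_1(\Sigma_g,B)$, where $h$ is the loop $T(\lambda)\lambda^{-1}$. Since $g>1$, the surface group $\pi_1(\Sigma_g)$ has trivial center, so the element $\tilde r$ of Proposition~\ref{P:MoritaSurfaceGroup} — characterized by the requirement that $\widetilde{R}$ be conjugation by $\tilde r$ — is uniquely determined and equals $\tilde r=h=T(\lambda)\lambda^{-1}$ (with the conjugation convention of Lemma~\ref{commutatorcomputation}). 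Proposition~\ref{P:MoritaSurfaceGroup} then states that $\gamma^*o_1$ is the image of $\tilde r$ under the composition $\pi_1(\Sigma_g)\xrightarrow{\Hur}H_1(\Sigma_g,\mathbb{Z})\twoheadrightarrow H_1(\Sigma_g,\mathbb{Z})_{\pi_1(\Sigma_1)}$, i.e. the image of $\Hur(T(\lambda)\lambda^{-1})$ in $H_1(\Sigma_g,\mathbb{Z})_{\pi_1(\Sigma_1)}$, which is exactly the assertion of the corollary.

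There is essentially no substantive obstacle here, since all the real work is contained in Proposition~\ref{P:MoritaSurfaceGroup} and Lemma~\ref{commutatorcomputation}; the corollary is a bookkeeping exercise combining the two. The only points demanding care are matching the sign conventions — confirming that "conjugation by $\tilde r$" matches the direction $g\mapsto hgh^{-1}$ of Lemma~\ref{commutatorcomputation} so that $\tilde r=h$ rather than $h^{-1}$ (and hence that we land on $\Hur(T(\lambda)\lambda^{-1})$ and not its inverse) — and checking that the concatenation $T(\lambda)\lambda^{-1}$ is a genuine loop based at $B$, which holds because $B$, and therefore $S(B)$ (as $S$ permutes the $l_i$), avoids all the curves $l_i$, so $T$ fixes both endpoints of $\lambda$. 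Both of these are already dealt with in Construction~\ref{remark:liftsST} and the proof of Lemma~\ref{commutatorcomputation}, so in the write-up I would simply cite them rather than reprove them.
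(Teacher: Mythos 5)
Your proposal is correct and follows exactly the paper's route: the paper obtains Corollary \ref{cor:reppullback} by combining Lemma \ref{commutatorcomputation} with Proposition \ref{P:MoritaSurfaceGroup} in precisely this way, with your extra remarks (triviality of the center pinning down $\tilde r$, the lifts being legitimate) being routine bookkeeping the paper leaves implicit.
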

We also make the following simple observation:
\begin{corollary}\label{cor:subsurf}
Consider the connected subsurfaces of $\Sigma_g$ with boundary components given by loops $\{l_1,\ldots,l_n\}$. If there exists such a subsurface $\Sigma'$ stabilized by $S$, i.e. $S$ is isotopic to a mapping class which restricts to an automorphism of $\Sigma'$, then $\gamma^*o_\text{1}$ is trivial.
\end{corollary}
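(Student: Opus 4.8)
\textbf{Proof plan for Corollary~\ref{cor:subsurf}.}
The plan is to reduce this to Corollary~\ref{cor:reppullback} by a careful choice of the auxiliary path $\lambda$ from the basepoint $B$ to $S(B)$. Recall that Corollary~\ref{cor:reppullback} identifies $\gamma^*o_1$ with the image of $\operatorname{Hur}(T(\lambda)\lambda^{-1})$ in $H_1(\Sigma_g,\mathbb{Z})_{\pi_1(\Sigma_1)}$, and that we are free to choose $\lambda$ however we like (the class $\gamma^*o_1$ does not depend on this choice). So the strategy is: if $S$ stabilizes a connected subsurface $\Sigma'\subset \Sigma_g$ whose boundary is exactly $l_1\cup\cdots\cup l_n$, choose the basepoint $B$ to lie in $\Sigma'$ and choose the path $\lambda$ to lie entirely inside $\Sigma'$.

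First I would set up the geometry. Since the loops $l_i$ are the boundary components of $\Sigma'$ and $B$ does not lie on any $l_i$, I can (after an isotopy of $B$ if necessary, which does not affect the cohomology class) assume $B\in\Sigma'$; since $S$ restricts to a self-homeomorphism of $\Sigma'$, we have $S(B)\in\Sigma'$ as well. As $\Sigma'$ is connected we may pick a path $\lambda$ from $B$ to $S(B)$ with image contained in $\Sigma'$. Now the key point is that the support of the Dehn multitwist $T=\prod T_{l_i}$ is a union of annular neighborhoods of the $l_i$, which are the boundary annuli of $\Sigma'$; since $\lambda$ lies in the interior of $\Sigma'$ (disjoint from the $l_i$ by general position), I would argue $T(\lambda)$ is isotopic rel endpoints to $\lambda$ inside $\Sigma_g$ — more precisely, $T$ fixes $B$ and $S(B)$, and the only way $T$ moves $\lambda$ is by wrapping it around those boundary annuli it crosses, but since $\lambda$ can be taken to avoid the supporting annuli entirely (pushing it off the boundary into the interior of $\Sigma'$), the loop $T(\lambda)\lambda^{-1}$ is null-homotopic. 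Hence $\operatorname{Hur}(T(\lambda)\lambda^{-1}) = 0$ and $\gamma^*o_1$ is trivial.

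The one subtlety — and the step I expect to require the most care — is the claim that $\lambda$ can be chosen to avoid the supporting annuli of the $l_i$, i.e. that $T(\lambda)$ is genuinely isotopic to $\lambda$ rel endpoints and not merely homologous to it. This is where one must use that the $l_i$ are precisely the \emph{boundary} of $\Sigma'$ rather than some arbitrary collection of curves meeting its interior: because $l_i = \partial\Sigma'$, the interior of $\Sigma'$ is disjoint from the $l_i$, and any path between two interior points can be homotoped rel endpoints into the interior, where $T$ acts as the identity. (If instead some $l_i$ ran through the interior of $\Sigma'$, a path might be forced to cross it and the argument would fail — which is why connectedness of $\Sigma'$ with the $l_i$ as its full boundary is the right hypothesis.) Once this isotopy statement is in hand, the conclusion is immediate from Corollary~\ref{cor:reppullback}. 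Alternatively, and perhaps more cleanly, one could observe directly that $S$ acting on $\Sigma'$ and $T$ being supported away from the interior of $\Sigma'$ means the whole homomorphism $\gamma$ factors, up to the relevant quotient, through the mapping class group of $\Sigma'$ with its natural map to $\Mod(g)$, and invoke functoriality of $o_1$ together with the fact that the restriction to $\Sigma'$ kills the multitwist $T$; but the path-choice argument via Corollary~\ref{cor:reppullback} is the most direct route given what has been established.
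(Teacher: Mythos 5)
Your proposal is correct and is essentially the paper's own argument: choose the basepoint $B$ and the path $\lambda$ inside the $S$-stable subsurface $\Sigma'$, so that the multitwist $T$ fixes $\lambda$, hence $T(\lambda)\lambda^{-1}$ is trivial and Corollary~\ref{cor:reppullback} gives $\gamma^*o_1=0$. The extra care you take about pushing $\lambda$ off the supporting annuli is exactly the (implicit) content of the paper's one-line justification, so there is no substantive difference.
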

\begin{proof}
Pick the base point $B$ in Construction \ref{remark:liftsST} to be on $\Sigma'$, and the path $\lambda$ from $B$ to $S(B)$ to be contained in $\Sigma'$.  Since $\lambda$ does not intersect any $l_i$, the multitwist $T$ acts on $\lambda$ trivially and therefore $T(\lambda)\lambda^{-1} = 1$. Now apply Corollary~\ref{cor:reppullback}.
\end{proof}

\begin{remark}
 Corollary \ref{cor:subsurf} above could also be proven via geometric considerations; associated to the representation $\pi_1(\Sigma_1)\to \text{Mod}(g)$ is a (homotopy class of) fiber bundle over $\Sigma_1$ with fiber $\Sigma_g$, and the hypotheses of Corollary \ref{cor:subsurf} guarantee that this fiber bundle has a continuous section.
\end{remark}

\begin{figure}
\labellist
\small
\pinlabel $l_1$ [t] at 161.56 0.00
\pinlabel $l_2$ [tr] at 22.68 42.52
\pinlabel $l_3$ [br] at 34.01 150.22
\pinlabel $l_4$ [br] at 153.06 184.23
\pinlabel $l_{g-1}$ [tl] at 294.78 31.18
\pinlabel $\lambda_1$ [tr] at 82.20 11.34
\pinlabel $\lambda_2$ [r] at -2.83 90.70
\pinlabel $\lambda_3$ [br] at 73.69 172.90
\pinlabel $\lambda_{g-1}$ [tl] at 263.60 14.17
\pinlabel $b_1$ [bl] at 187.07 11.34
\pinlabel $b_2$ [bl] at 59.52 36.85
\pinlabel $b_3$ [tl] at 42.52 133.22
\pinlabel $b_4$ [tl] at 153.06 170.06
\pinlabel $b_{g-1}$ [br] at 297.61 56.69
\pinlabel $\alpha_1$ [bl] at 133.22 28.34
\pinlabel $\alpha_2$ [tl] at 65.19 76.53
\pinlabel $\alpha_3$ [t] at 107.71 136.05
\pinlabel $\alpha_{g-1}$ [l] at 246.59 42.52
\pinlabel $\beta_1$ [tl] at 124.71 51.02
\pinlabel $\beta_2$ [t] at 76.53 96.37
\pinlabel $\beta_3$ [bl] at 133.22 136.05
\pinlabel $\beta_{g-1}$ [bl] at 221.08 53.85
\endlabellist
\includegraphics[scale=1]{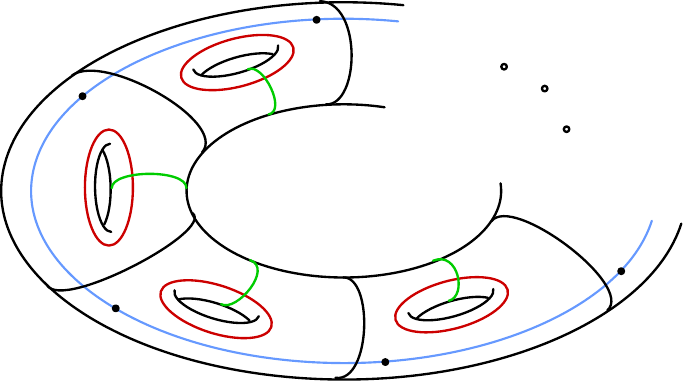}
\caption{The marked surface $\Sigma_g$ with dual graph $C_{g-1}$}\label{fig:marked-surface-g-1}
\end{figure}

 We now give an example where $\gamma^*o_{1}$ has order $g-1$. Let $\Sigma_1^2$ be a genus one surface with two boundary components $m_1, m_2$. Consider the surface obtained by taking the quotient $$\left(\bigcup_{i\in \mathbb{Z}/(g-1)\mathbb{Z}} \Sigma_1^2\right)/\sim$$ where $\sim$ is the equivalence relation identifying the copy of $m_1$ on the $i$-th copy of $\Sigma_1^2$ with $m_2$ on the $i+1$-th copy of $\Sigma_1^2$. This is a surface of genus $g$ with $g-1$ marked loops (namely, the images of the $m_i$), which we denote $l_1, \cdots, l_{g-1}$. This marked surface is pictured in Figure \ref{fig:marked-surface-g-1}. 
 
 Let $S$ be the automorphism of $\Sigma_g$ that rotates the surface clockwise $\frac{2\pi}{g-1}$ radians (i.e.~it is induced by cyclically permuting the components of the disjoint union in the definition of our surface). Let $T_i \in \text{Mod}(g)$ be the Dehn twist around the loop $l_i \in \Sigma_g$, as indicated in Figure \ref{fig:marked-surface-g-1}.
Then we define
\begin{align*}
   \gamma(a) = T= \prod_{i=1}^{g-1}T_i,\quad
    \gamma(b) = S.
\end{align*}
Note that the dual graph of the marked surface constructed above is the stable graph $C_{g-1}$ described in the introduction.
\begin{theorem}\label{T:nontrivialMorita}
For $\gamma$ as above, the order of 
$\gamma^*o_{1}$ is  $g-1$.
\end{theorem}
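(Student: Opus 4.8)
The plan is to apply Corollary~\ref{cor:reppullback}: we must compute $\Hur(T(\lambda)\lambda^{-1})\in H_1(\Sigma_g,\mathbb{Z})_{\pi_1(\Sigma_1)}$ for an explicit choice of basepoint $B$ and connecting path $\lambda$ from $B$ to $S(B)$, then identify the order of this element in the group of coinvariants. First I would set up coordinates: pick the basepoint $B$ to lie on, say, the $0$-th copy $\Sigma_1^2$ of the building block, and choose $\lambda=\lambda_1$ to be a path running ``once around'' from $B$ to $S(B)$ crossing exactly one of the marked loops, say $l_1$ (see Figure~\ref{fig:marked-surface-g-1}); in fact $S$ cyclically shifts the blocks, so $\lambda$ is forced to cross one separating loop. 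The multitwist $T=\prod T_i$ then acts on $\lambda$, and since $\lambda$ meets only $l_1$ transversally (in one point), $T(\lambda)\lambda^{-1}$ is homotopic to the loop $l_1$ itself (or its inverse, depending on orientation conventions); hence $\Hur(T(\lambda)\lambda^{-1}) = \pm[l_1]$ in $H_1(\Sigma_g,\mathbb{Z})$.

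The heart of the matter is then the computation of the coinvariants $H_1(\Sigma_g,\mathbb{Z})_{\pi_1(\Sigma_1)}$ and the order of the class $[l_1]$ therein. The $\pi_1(\Sigma_1)$-action factors through $\langle \gamma(a),\gamma(b)\rangle = \langle T, S\rangle$. The Dehn twist $T_i$ acts on $H_1$ by the transvection $x\mapsto x + \langle x, [l_i]\rangle [l_i]$, so the multitwist $T$ acts by $x \mapsto x + \sum_i \langle x,[l_i]\rangle[l_i]$; thus in the coinvariants we impose $\sum_i \langle x, [l_i]\rangle [l_i] = 0$ for all $x$. The rotation $S$ acts by cyclically permuting the blocks: it sends $[l_i]\mapsto [l_{i+1}]$ (indices mod $g-1$) and permutes the $\alpha_i,\beta_i$ homology classes of the handles accordingly. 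I would compute the quotient of $\mathbb{Z}^{2g} = H_1(\Sigma_g,\mathbb{Z})$ by the subgroup generated by all $(1-S)x$ together with all $\sum_i \langle x,[l_i]\rangle[l_i]$. Using the basis adapted to Figure~\ref{fig:marked-surface-g-1} --- the handle classes $\alpha_i,\beta_i$ for each block and the separating classes $[l_i]$ --- the $S$-coinvariance collapses all the $\alpha_i$ to a single class $\bar\alpha$, all $\beta_i$ to $\bar\beta$, and all $[l_i]$ to a single class $\bar\ell$, while the handle relation in $H_1$ (from the one relator of $\pi_1(\Sigma_g)$) together with the transvection relations pin down the torsion. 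A short matrix computation should show the image of $[l_1]=\bar\ell$ is annihilated exactly by $g-1$: morally, each of the $g-1$ transvection relations contributes a copy of $\bar\ell$, and $S$-symmetry identifies them, yielding the relation $(g-1)\bar\ell=0$ with nothing finer.

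The main obstacle I anticipate is the bookkeeping in this last step: correctly writing down the intersection form on the chosen basis, tracking orientations and signs so that the transvection relations and the $S$-action are recorded accurately, and verifying that no additional relations force the order below $g-1$ (i.e.\ that $\bar\ell$ has order \emph{exactly} $g-1$, not a proper divisor). Concretely I would exhibit an explicit surjection $H_1(\Sigma_g,\mathbb{Z})_{\pi_1(\Sigma_1)}\twoheadrightarrow \mathbb{Z}/(g-1)\mathbb{Z}$ carrying $[l_1]$ to a generator (which gives the lower bound $g-1\mid \operatorname{ord}[l_1]$ and in particular that $[l_1]$ has order at least $g-1$), and separately show $(g-1)[l_1]=0$ in the coinvariants directly from the transvection relations summed over the cycle (which gives the upper bound). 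Combining the two bounds yields that $\gamma^* o_1$ has order exactly $g-1$. An alternative, perhaps cleaner, route to the lower bound is geometric: if $[l_1]$ had order $d<g-1$ in the coinvariants, then by Proposition~\ref{prop:o1-vanishing-tautological-consequence} the corresponding bundle over a $(g-1)/d$-fold-type cover... — but I would expect the direct homological computation to be the most transparent and I would present that.
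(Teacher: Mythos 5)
Your proposal follows essentially the same route as the paper's proof: apply Corollary \ref{cor:reppullback} to identify $\gamma^*o_1$ with the image of $[l_1]$ in the coinvariants, get the upper bound from the multitwist relation (in the paper, $(T-1)\lambda=(g-1)[l_1]$), and get the lower bound by exhibiting an explicit $\langle S,T\rangle$-equivariant surjection onto $\mathbb{Z}/(g-1)\mathbb{Z}$ sending $[l_1]$ to a generator. The only caveat is the bookkeeping you already flag: the classes $[l_i]$ are all homologous, so your proposed spanning set $\{\alpha_i,\beta_i,[l_i]\}$ is not a basis --- one needs the ``around the necklace'' class $\lambda$ (as in the paper's symplectic basis) to define the surjection and check equivariance.
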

\begin{proof}
We start by constructing lifts $\widetilde T, \widetilde S$ of $T$ and $S$ to $\Aut (\pi_1(\Sigma_g))$, following Construction \ref{remark:liftsST}, so that we may apply Corollary~\ref{cor:reppullback} to compute $\gamma^* o_{1}$. Fix a base point $B=b_1$ on $\Sigma_g$ that does not lie on $l_i,i=1,\ldots,g-1$. Let $\{b_1,\ldots,b_{g-1}\}$ be the $S$-orbit of $B$, such that $S(b_i) = b_{i+1}$ for $i=1,\ldots,g-2$ and $S(b_{g-1})=b_1$. The  map $S$ induces an isomorphism $\pi_1(\Sigma_g,b_1) \simeq \pi_1(\Sigma_g,b_2)$ and $S^{-1}$ induces $\pi_1(\Sigma_g,b_1) \to \pi_1(\Sigma_g,b_{g-1})$. 

Let $\lambda_1$ be a path from $b_1$ to $b_2$, and let $\lambda_i=S^{i-1}(\lambda_1)$, as indicated in blue in Figure \ref{fig:marked-surface-g-1}. Now we have isomorphisms $\pi_1(\Sigma_g,b_2) \simeq \pi_1(\Sigma_g,b_1)$ and $\pi_1(\Sigma_g,b_{g-1}) \simeq \pi_1(\Sigma_g,b_1)$ induced by conjugation by $\lambda_1$ and $\lambda_{g-1}^{-1}$. As in Construction \ref{remark:liftsST}, conjugation by these isomorphisms gives us $\widetilde{S}$ and $\widetilde{S}^{-1} : \pi_1(\Sigma_g,b_1) \to \pi_1(\Sigma_g,b_1)$. 

By construction, Dehn twists $T_i$ for $i > 1$ acts as identity on $\lambda_1$ and therefore $\widetilde{T}(\lambda_1) = T_1(\lambda_1)$. So by Corollary~\ref{cor:reppullback}, it suffices to show that the image of $ T_1(\lambda_1) \lambda_1^{-1}$ in $H_1(\Sigma_g,\mathbb{Z})_{\langle S, T \rangle}$ has order $g-1$.

We now compute the $\langle S, T \rangle$ action on $H_1(\Sigma_g,\mathbb{Z})$. For an element $g \in \pi_1(\Sigma_g,B)$, we denote by $[g]$ the class it represents in $H_1(\Sigma_g,\mathbb{Z})$. Then there is a symplectic basis $$\{ [\lambda_1\ldots\lambda_{g-1}], [l_1], \alpha_1, \ldots,\alpha_{g-1}, \beta_1,\ldots,\beta_{g-1}\}$$ of $H_1(\Sigma_g,\mathbb{Z})$ (pictured in Figure \ref{fig:marked-surface-g-1}).  For simplicity, we denote by $\lambda$ the class $[\lambda_1\ldots\lambda_{g-1}]$. Note that the image of $T_1(\lambda_1)\lambda_1^{-1}$ in $H^1(\Sigma_g, \mathbb{Z})$ is simply $[l_1]$; we wish to show that the image of this class in $H_1(\Sigma_g, \mathbb{Z})_{\langle S, T\rangle}$ has order $g-1$.

Explicitly, we have:
\begin{align*}
    S(\lambda) &= \lambda,S([l_1]) = [l_1],\quad
    S(\alpha_i) = \alpha_{i+1},\quad S(\beta_i) = \beta_{i+1},\\
    T(\lambda) &= \lambda+(g-1)[l_1],\quad
    T([l_1]) = [l_1],\quad
    T(\alpha_i)=\alpha_i,\quad T(\beta_i) = \beta_i.
\end{align*}
As $(T-1)\lambda=(g-1)[l_1]$, the class $[l_1]$ has order dividing $(g-1)$ in $H^1(\Sigma_g, \mathbb{Z})_{<S, T>}$. We now show it has order divisible by $g-1$, as desired.

Indeed, consider the map $H_1(\Sigma_g, \mathbb{Z})\to \mathbb{Z}/(g-1)\mathbb{Z}$ sending $[l_1]$ to $1$ and $\lambda, \alpha_i, \beta_j$ to zero. This map is evidently $S, T$-equivariant (for the trivial $S, T$-action on $\mathbb{Z}/(g-1)\mathbb{Z}$), and factors through $H_1(\Sigma_g, \mathbb{Z})_{\langle S, T\rangle}$, which completes the proof.
\end{proof}

\begin{remark}
The strategy of the proof of Theorem 
\ref{T:nontrivialMorita} can be used to compute the Morita class of many other marked surfaces with a given automorphism. For example, the graph in Figure \ref{Fig:Genus4Max}, where each vertex has genus $0$, is a genus $4$ stable graph, dual to the marked surface on the right.  Let $T$ be the Dehn multitwist about the marked loops and $S$ the order $3$ automorphism given by rotating $\frac{2\pi}{3}$ degrees clockwise. By direct computation, we conclude the Morita class of the associated $\Sigma_4$-bundle over the torus is nontrivial of order $3$.
\end{remark}

\begin{figure}[h]
\includegraphics[scale=0.6]{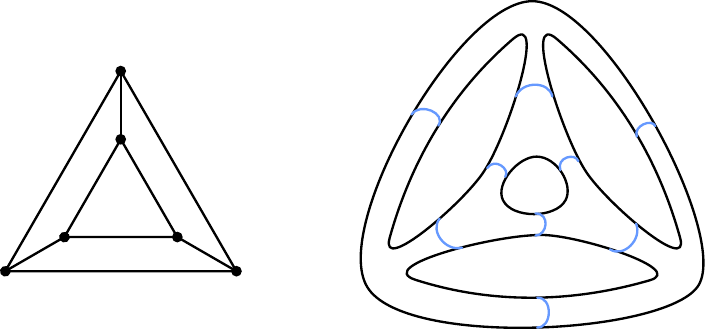}
\caption{A genus $4$ curve with maximal degeneration}\label{Fig:Genus4Max}
\end{figure}
We now observe that the result of Theorem \ref{T:nontrivialMorita} can be used to cheaply give many other examples of surface bundles with non-trivial Morita class $o_1$. We do not attempt to give an exhaustive list here, but we indicate some strategies and examples.
\begin{proposition}\label{prop:fromE1ToE2}
Let $\Sigma_{g_1} \to E_1 \to \Sigma_h$ and $\Sigma_{g_2} \to E_2 \to \Sigma_h$ be two fibrations, corresponding to maps $\gamma:\pi_1(\Sigma_h) \to \Mod(g_1)$ and $\xi:\pi_1(\Sigma_h) \to \Mod(g_2)$. Let $\rho:E_1\to E_2$ be a map over $\Sigma_h$. The induced map $H_1(\Sigma_{g_1}, \mathbb{Z})\to H_1(\Sigma_{g_2}, \mathbb{Z})$ on the homology of the fibers gives a map $$\rho_*: H^2(\pi_1(\Sigma_h),H_1(\Sigma_{g_1}, \mathbb{Z}))\to H^2(\pi_1(\Sigma_h),H_1(\Sigma_{g_2}, \mathbb{Z}))$$ satisfying $$\rho_*(\gamma^*o_1)=\xi^*o_1.$$
\end{proposition}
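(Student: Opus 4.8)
The plan is to treat this as a pure naturality statement: the Morita class $o_1$ is an extension class, and the pullback of extension classes along group homomorphisms is compatible with the pushforward along equivariant maps of the abelian kernel. So the whole proof is bookkeeping with a commutative ladder of central extensions, plus one elementary cocycle observation.

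First I would fix compatible basepoints: a point $b\in\Sigma_h$, a point $e_1\in E_1$ over $b$, and $e_2:=\rho(e_1)\in E_2$, which again lies over $b$ since $\rho$ is a map over $\Sigma_h$. Restricting $\rho$ to the fiber over $b$ gives a map $\Sigma_{g_1}\to\Sigma_{g_2}$ and hence $\rho_*\colon\pi_1(\Sigma_{g_1},e_1)\to\pi_1(\Sigma_{g_2},e_2)$, whose abelianization is exactly the map on fiber homology appearing in the statement. Being basepoint-preserving and fibered over $\Sigma_h$, $\rho$ then induces a morphism between the two homotopy exact sequences $1\to\pi_1(\Sigma_{g_i})\to\pi_1(E_i)\to\pi_1(\Sigma_h)\to1$: the identity on $\pi_1(\Sigma_h)$, and $\rho_*$ on both the fiber groups and the total-space groups. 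These two sequences are the pullbacks along $\gamma$ and $\xi$, respectively, of the Birman exact sequence (\ref{equ:Birman}).

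Next I would abelianize. Since $\rho_*\colon\pi_1(E_1)\to\pi_1(E_2)$ carries $[\pi_1(\Sigma_{g_1}),\pi_1(\Sigma_{g_1})]$ into $[\pi_1(\Sigma_{g_2}),\pi_1(\Sigma_{g_2})]$, passing to the quotients (equivalently, pushing out along the Hurewicz maps) produces a morphism of central extensions of $\pi_1(\Sigma_h)$ with kernels $H_1(\Sigma_{g_i},\mathbb{Z})$, the map on kernels being $\rho_*$. In particular $\rho_*$ on fiber homology is $\pi_1(\Sigma_h)$-equivariant (inner automorphisms act trivially on $H_1$), so the pushforward $\rho_*$ on $H^2(\pi_1(\Sigma_h),-)$ in the statement is well defined. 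At this point I would invoke Proposition \ref{prop:o1-vanishing-tautological-consequence}(1), which identifies $\gamma^*o_1$ (resp.\ $\xi^*o_1$) with the extension class of the top (resp.\ bottom) row of this morphism of central extensions.

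The proof then concludes with the elementary fact that, given a morphism of group extensions of a fixed group $Q$ with abelian kernels that is the identity on $Q$ and an equivariant map $f$ on kernels, the extension classes are carried to one another by $f_*$: one sees this by pushing a set-theoretic section of the first extension forward to the second and noting the normalized $2$-cocycle transforms by $f$. Applying this with $f=\rho_*$ gives $\rho_*(\gamma^*o_1)=\xi^*o_1$. I do not expect a genuine obstacle here, since the argument is formal; the one point that requires a little care is the translation between the cohomological pullback $\gamma^*o_1$ and the extension-class description — i.e.\ confirming that the homotopy exact sequence of $E_1\to\Sigma_h$ really is the pullback of the Birman sequence along $\gamma$ and that its abelianization computes $\gamma^*o_1$ — but this is essentially the content of Proposition \ref{prop:o1-vanishing-tautological-consequence}(1), so no real extra work is needed.
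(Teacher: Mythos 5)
Your argument is correct and is exactly the paper's proof, which simply asserts that the claim is ``immediate from the functoriality of $o_1$''; you have spelled out that functoriality via the morphism of abelianized homotopy exact sequences and the standard pushforward-of-extension-classes fact. The only nitpick is that the extensions with kernels $H_1(\Sigma_{g_i},\mathbb{Z})$ are extensions with abelian (not in general central) kernel, since $\pi_1(\Sigma_h)$ acts through the monodromy, but your argument only uses equivariance of $\rho_*$ and so goes through unchanged.
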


\begin{proof}
Immediate from the functoriality of $o_1$.
\end{proof}

Using Proposition \ref{prop:fromE1ToE2}, we can extend the result of Theorem \ref{T:nontrivialMorita} to many other graphs, described in the following corollary.

\begin{corollary}\label{cor:main-corollary-o1-pinwheel}
Let $\Sigma_{g}^{2r}$ be a genus $g$ surface with $2r$ boundary components, labeled $m_1, \cdots, m_r, n_1, \cdots, n_r$, and let $l_1, \cdots, l_j$ be disjoint simple closed loops on $\Sigma_{g}^{2r}$. Suppose that we are given a continuous map $$f: \Sigma_g^{2r}\to \Sigma_1^2$$ sending the $m_i$ isomorphically onto one of the boundary components of $\Sigma_1^2$ and the $n_i$ isomorphically onto the other boundary component, and sending the $l_i$ to points.

Let $$\Xi:= \left(\bigsqcup_{i\in \mathbb{Z}/d\mathbb{Z}} \Sigma_{g}^{2r}\right)/\sim$$ be the surface obtained by taking $d$ copies of  $\Sigma_{g}^{2r}$, indexed by $\mathbb{Z}/d\mathbb{Z}$, and identifying the boundary component $m_j$ on the $i$-th copy with the boundary component $n_j$ on the $i+1$-th copy. Let $S$ be the automorphism of this surface obtained by cyclically permuting the components and let $T$ be the Dehn multitwist about the curves $l_i$ (on all copies of  $\Sigma_{g}^{2r}$ in the disjoint union) and the images of the $m_i, n_j$. Let $\Gamma$ be the dual graph of $\Xi$ (with all of these marked curves). Then if $G$ is the genus of $\Xi$, the induced map $$\gamma: \langle S, T\rangle =\pi_1(\Sigma_1)\to \on{Mod}(G)$$ has the property that $\gamma^*o_1$ has order divisible by $d$.
\end{corollary}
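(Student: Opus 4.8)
The plan is to deduce the statement from Theorem~\ref{T:nontrivialMorita} by means of the functoriality of $o_1$ recorded in Proposition~\ref{prop:fromE1ToE2}. Let $\Xi'$ be the surface obtained by gluing $d$ copies of $\Sigma_1^2$ cyclically in the pattern of Theorem~\ref{T:nontrivialMorita}. Applying the given map $f\colon \Sigma_g^{2r}\to \Sigma_1^2$ separately on each of the $d$ copies of $\Sigma_g^{2r}$ that make up $\Xi$, and using that $f$ carries the $m_i$ onto one boundary circle of $\Sigma_1^2$ and the $n_i$ onto the other (so that after a small isotopy near the boundary the boundary identifications match up), one obtains a well-defined continuous map $F\colon \Xi\to\Xi'$. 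An Euler characteristic count gives $\chi(\Xi')=-2d$, so $\Xi'$ has genus $d+1$; thus $\Xi'$ is precisely the marked surface of Theorem~\ref{T:nontrivialMorita} with the parameter ``$g$'' there equal to $d+1$, and the maps $S',T'$ of that theorem are the cyclic rotation of the $d$ copies and the multitwist about the $d$ gluing circles. Write $\gamma'\colon \pi_1(\Sigma_1)=\langle a,b\rangle\to \on{Mod}(d+1)$, $a\mapsto T'$, $b\mapsto S'$, for the associated monodromy homomorphism, so that by Theorem~\ref{T:nontrivialMorita} the class $(\gamma')^*o_1$ has order exactly $d$.

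Next I would verify that $F$ intertwines the two pairs of mapping classes up to isotopy, so that it globalizes to a map of bundles. By construction $F$ carries the cyclic rotation $S$ of $\Xi$ to the rotation $S'$ of $\Xi'$ (on the nose, since $F$ is ``the same $f$ on each copy''). For the multitwist $T$: after arranging $f$ to pinch an annular neighborhood of each $l_i$ to a point, each twist $T_{l_i}$ is absorbed, i.e.\ $F\circ T_{l_i}\simeq F$; and near each gluing loop $F$ is a local homeomorphism onto a collar of the corresponding gluing circle of $\Xi'$, so the product of the $r$ Dehn twists about the $r$ parallel gluing loops lying over a single gluing circle of $\Xi'$ pushes forward to a \emph{single} twist about that circle (the $r$ loops being disjoint, the twists act on disjoint collars, and each collar maps homeomorphically onto the same collar downstairs). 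Hence $F\circ T_\Xi\simeq T_{\Xi'}\circ F$. These equivariance relations allow one to promote $F$ to a map $\rho\colon E_1\to E_2$ over $\Sigma_1$, where $E_1\to\Sigma_1$ is the $\Sigma_G$-bundle classified by the monodromy $\gamma$ of the corollary, $E_2\to\Sigma_1$ is the $\Sigma_{d+1}$-bundle classified by $\gamma'$, and $\rho$ induces $F$ on the fiber.

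Finally, Proposition~\ref{prop:fromE1ToE2} gives $\rho_*(\gamma^*o_1)=(\gamma')^*o_1$, where $\rho_*\colon H^2(\pi_1(\Sigma_1),H_1(\Sigma_G,\mathbb{Z}))\to H^2(\pi_1(\Sigma_1),H_1(\Sigma_{d+1},\mathbb{Z}))$ is induced by $F_*\colon H_1(\Xi,\mathbb{Z})\to H_1(\Xi',\mathbb{Z})$. Since $\rho_*$ is a homomorphism of abelian groups, $\operatorname{ord}\big((\gamma')^*o_1\big)$ divides $\operatorname{ord}(\gamma^*o_1)$, and as the former equals $d$ by Theorem~\ref{T:nontrivialMorita}, the order of $\gamma^*o_1$ is divisible by $d$, as claimed. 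The main obstacle is the second paragraph: making precise that Dehn twists about curves collapsed by $F$ disappear and that several parallel curves mapping to one curve contribute only a single twist, and then carrying out the (standard but somewhat fiddly) passage from an equivariant-up-to-isotopy map of fibers to an honest map $\rho$ of bundles over $\Sigma_1$. Everything else is formal given Theorem~\ref{T:nontrivialMorita} and Proposition~\ref{prop:fromE1ToE2}.
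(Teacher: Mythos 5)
Your proposal is correct and is essentially the paper's own argument: the paper likewise observes that $f$ induces a map of surface bundles over $\Sigma_1$ from the bundle with fiber $\Xi$ to the bundle of Theorem \ref{T:nontrivialMorita} (with fiber the genus-$(d+1)$ cyclic surface, whose class has order $d$), and then concludes by the functoriality statement Proposition \ref{prop:fromE1ToE2}. Your write-up simply makes explicit the intertwining of the monodromies (collapsed twists disappearing, parallel gluing twists mapping to a single twist) that the paper leaves implicit.
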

See Figure \ref{Fig:C34} for an illustration of the dual graphs $\Gamma$ of the marked surfaces $\Xi$ constructed as above.

\begin{proof}
The map $f$ induces a map of surface bundles from the surface bundle in the statement of the Corollary to the one considered in Theorem \ref{T:nontrivialMorita}, over $\Sigma_1$. The result is immediate from Proposition \ref{prop:fromE1ToE2}. 
\end{proof}

\begin{figure}[h]
\labellist
\tiny
\pinlabel $1$ [bl] at 93.53 291.94
\pinlabel $1$ [tl] at 121.88 235.25
\pinlabel $1$ [tr] at 51.02 235.25
\pinlabel $1$ [bl] at 93.53 96.37
\pinlabel $1$ [bl] at 121.88 45.35
\pinlabel $1$ [br] at 51.02 45.35
\pinlabel $\epsilon_2$ [r] at 405.32 266.43
\pinlabel $\epsilon_1$ [r] at 263.60 266.43
\pinlabel $n_1$ [r] at 280.60 56.69
\pinlabel $m_1$ [r] at 419.49 56.69
\endlabellist
\includegraphics[scale=.5]{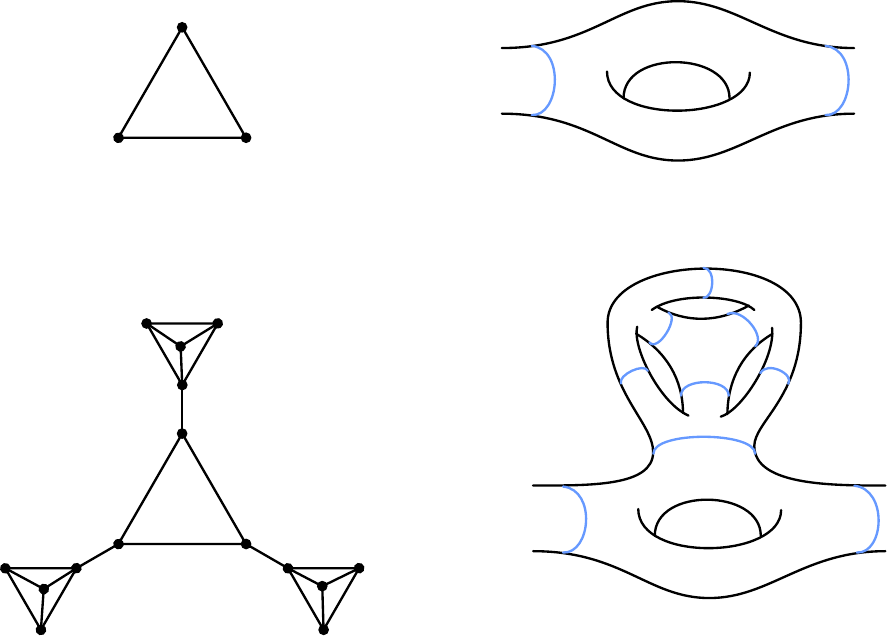}
\caption{The dual graph of a marked surface as in Corollary \ref{cor:main-corollary-o1-pinwheel}, with $r=1$. There is a map from the lower surface to the upper surface satisfying the conditions of the corollary, given by collapsing the upper subsurface to a point. The unlabeled vertices have genus $0$.}\label{Fig:C34}
\end{figure}

\subsection{Computing the secondary Morita classes for surface groups}\label{sec:computation-secondary-Morita}

We now consider situations where $o_1$ vanishes. Parallel to our analysis of $o_1$ in Section~\ref{S:MorSurfGrp}, we will now study pullbacks of the secondary Morita class $\widetilde{o_2}$ to surface groups.

Suppose we are given a homomorphism $\gamma_2: \pi_1(\Sigma_h) \to  \pi_1(\mathbf{Pic}^1_{\mathscr{C}_g/\mathscr{M}_g})$. 
\begin{remark}\label{rem:splitting-and-independence}
Note that as the Abel-Jacobi map induces a canonical isomorphism  $$\Mod(g,1)/L^2\pi_1(\Sigma_g)\simeq \pi_1(\mathbf{Pic}^1_{\mathscr{C}_g/\mathscr{M}_g})$$ so the data of a map $\gamma_2$ as above is the same as a map $\gamma: \pi_1(\Sigma_h)\to \text{Mod}(g)$ and a choice of splitting of the induced sequence $$1\to H_1(\Sigma_g, \mathbb{Z})\to \pi_1(\Sigma_h)\times_{\text{Mod}(g)} \Mod(g,1)/L^2\pi_1(\Sigma_g) \to \pi_1(\Sigma_h)\to 1.$$ Such a splitting exists if and only if $\gamma^*o_1=0$, by definition. By Proposition \ref{prop:obs-independent-of-section} and the definition of $\widetilde{o_2}$, the pullback $\gamma_2^*\widetilde{o_2}$ is independent of the given splitting. That is, $\gamma_2^*\widetilde{o_2}$ only depends on $\gamma$, not the choice of lift $\gamma_2$. Hence if we wish to be agnostic of the choice of lift we will denote it $\gamma^*\widetilde{o_2}$.
\end{remark}
Choose lifts $\widetilde{\gamma_2(a_1)},\widetilde{\gamma_2(b_1)}\cdots, \widetilde{\gamma_2(a_h)}, \widetilde{\gamma_2(b_h)}$ of $\gamma_2(a_i), \gamma_2(b_i)$ from $ \pi_1(\mathbf{Pic}^1_{\mathscr{C}_g/\mathscr{M}_g}) \simeq  \Mod(g,1)/L^2\pi_1(\Sigma_g)$ to $\text{Mod}(g,1)$. Then $$\widetilde{R}_2= \prod_{i=1}^h [\widetilde{\gamma_2(a_i)}, \widetilde{\gamma_2(b_i)}]$$ is an inner automorphism of $\pi_1(\Sigma_g)$, and hence can be written as conjugation by some element $\tilde r_2 \in \pi_1(\Sigma_g)$ as in Proposition \ref{P:MoritaSurfaceGroup}. Since $\gamma_2$ was a homomorphism, in fact $\tilde r_2 \in L^2 \pi_1(\Sigma_g)$.

To reduce notational clutter, we will write $\pi=\pi_1(\Sigma_g)$.

\begin{proposition}[The secondary Morita class for surface groups]\label{P:SecondaryMoritaSurfaceGroup} 
The secondary Morita class $\gamma_2^*\widetilde{o_2}$ is the image of $\tilde r_2$ under the map $$L^2\pi\to (L^2\pi/L^3\pi)_{\pi_1(\Sigma_h)}\overset{\sim}{\to} H^2(\pi_1(\Sigma_h), L^2\pi/L^3\pi)\to M(\pi_1(\Sigma_h), L^2\pi/L^3\pi),$$ where the first map is the natural quotient, the isomorphism comes from Corollary \ref{h2-corollary}, and the last map is the quotient map.
\end{proposition}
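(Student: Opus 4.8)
The plan is to mirror exactly the proof of Proposition \ref{P:MoritaSurfaceGroup}, replacing the Birman exact sequence (which computed $o_1$) by the ``$2$-nilpotent'' version that computes $o_2$, and then reading off the answer through the same Gruenberg-style free-resolution machinery used there.

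First I would recall the relevant diagram. We have the free group $F^{2h}=\langle a_1,b_1,\ldots,a_h,b_h\rangle$ with relator $R=\prod_{i=1}^h[a_i,b_i]$ and the free resolution (\ref{seq:free-resoln-surface-gp}) of $\mathbb{Z}$ as a $\pi_1(\Sigma_h)$-module, together with the identification $\langle R\rangle^{\mathrm{ab}}\simeq \mathbb{Z}[\pi_1(\Sigma_h)]$ sending $R\mapsto 1$. The data of $\gamma_2$, via the isomorphism $\Mod(g,1)/L^2\pi\simeq \pi_1(\mathbf{Pic}^1_{\mathscr{C}_g/\mathscr{M}_g})$ (Remark \ref{rem:splitting-and-independence}), is the same as the data of $\gamma:\pi_1(\Sigma_h)\to\Mod(g)$ together with a splitting $s$ of sequence (\ref{seq:abelianized-Birman}) pulled back along $\gamma$, i.e.\ a lift to the diagonal-type subgroup. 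Choosing the lifts $\widetilde{\gamma_2(a_i)},\widetilde{\gamma_2(b_i)}\in\Mod(g,1)$ amounts to a homomorphism $\phi_2: F^{2h}\to\Mod(g,1)$ through which $\gamma_2$ (on $F^{2h}$) factors modulo $L^2\pi$; the obstruction cocycle is $\phi_2|_{\langle R\rangle}$, which lands in $\pi=\pi_1(\Sigma_g)$ and in fact in $L^2\pi$ because $\gamma_2$ is a genuine homomorphism (so $\widetilde R_2=\phi_2(R)$ is conjugation by $\tilde r_2\in L^2\pi$). This is the content of the hypotheses already recorded before the Proposition.

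Next I would invoke the construction of $\widetilde{o_2}$ from Section \ref{subsec:nonabelian-cohomology}/Definition \ref{def:SecondaryMoritaClass} in its functorial form: pulling back the universal situation $(\pi,\tilde\pi_g,G)$ along $\gamma_2$ gives $(\pi=\pi_1(\Sigma_g), \tilde\pi:=\pi_1(\Sigma_h)\times_{\Mod(g)}\Mod(g,1), \pi_1(\Sigma_h))$ together with the splitting $s$, and the class $o_2$ of sequence (\ref{pi2sequence}) pulled back (via $\Delta$, i.e.\ via $s$) is classified by the obstruction to lifting $s$ to a splitting of $1\to L^2\pi/L^3\pi\to \tilde\pi/L^3\pi\to \pi_1(\Sigma_h)\to 1$ after pushing out $\pi$ to $\pi/L^3\pi$. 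Running Gruenberg's theorem (\cite[Section 5.3, Theorem 1]{Gruenberg}) exactly as in Proposition \ref{P:MoritaSurfaceGroup}, but now with the $L^2\pi/L^3\pi$-valued extension in place of the $H_1(\Sigma_g,\mathbb{Z})$-valued one, the extension class is represented by the composite $F^{2h}\supset\langle R\rangle\xrightarrow{\phi_1}\pi\to L^2\pi\twoheadrightarrow L^2\pi/L^3\pi$ evaluated on the generator $R$ of the free module $\langle R\rangle^{\mathrm{ab}}$ — that is, it is the image of $\phi_1(R)=\tilde r_2$ in $(L^2\pi/L^3\pi)_{\pi_1(\Sigma_h)}\simeq H^2(\pi_1(\Sigma_h),L^2\pi/L^3\pi)$ under the Corollary \ref{h2-corollary} identification. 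Finally, passing to the quotient $M(\pi_1(\Sigma_h),L^2\pi/L^3\pi)$ that defines $\widetilde{o_2}$ (Definition \ref{def:SecondaryMoritaClass}) gives the asserted description of $\gamma_2^*\widetilde{o_2}$; independence of the chosen splitting is guaranteed by Proposition \ref{prop:obs-independent-of-section} (cf.\ Remark \ref{rem:splitting-and-independence}), which is precisely why one can quotient by the image of $\delta$.

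The main obstacle I anticipate is \emph{bookkeeping}, not mathematics: one must check that the diagram chase identifying ``the Gruenberg cocycle of the pulled-back $2$-nilpotent extension'' with ``$\phi_1(R)\bmod L^3\pi$'' is compatible with the non-abelian boundary map $\delta_\Delta$ and the cup-product map $m$ used to form $M(-,-)$, so that passing to the cokernel is legitimate and the result is well defined. Concretely, the subtlety is that the secondary class lives in a quotient of $H^2$ rather than $H^2$ itself, so one needs the functoriality and section-independence statements of Section \ref{sec:Non-abelian-cohomology} (Propositions \ref{prop:non-linear-bdry-map}, \ref{prop: bdry-independence-of-section}, \ref{prop:obs-independent-of-section}) to know that the naive cocycle representative $\tilde r_2$ computes the image in $M(\pi_1(\Sigma_h),L^2\pi/L^3\pi)$ correctly regardless of the auxiliary choices ($\phi_2$, the splitting $s$, the $\pi_1(\Sigma_h)$-action on $\pi/L^3\pi$). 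Everything else is a verbatim transcription of the proof of Proposition \ref{P:MoritaSurfaceGroup} with $H_1(\Sigma_g,\mathbb{Z})$ replaced by $L^2\pi/L^3\pi$.
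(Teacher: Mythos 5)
Your proposal is correct and follows essentially the same route as the paper: the paper writes down the analogous three-row commutative diagram (with $\langle R\rangle\to L^2\pi\to L^2\pi/L^3\pi$ in place of $\langle R\rangle\to\pi\to H_1(\Sigma_g,\mathbb{Z})$), applies the Gruenberg argument exactly as in Proposition \ref{P:MoritaSurfaceGroup} to conclude that $\gamma_2^*o_2$ is represented by $\Hur_2(\tilde r_2)$, and then passes to $M(\pi_1(\Sigma_h),L^2\pi/L^3\pi)$ via the functoriality of the quotient (Remark \ref{rem:o2-functoriality}). The ``bookkeeping'' worry you flag is precisely what Remark \ref{rem:o2-functoriality} (together with Proposition \ref{prop:obs-independent-of-section}) disposes of, so no additional argument is needed.
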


\begin{proof}
As in \ref{P:MoritaSurfaceGroup} we have the following commutative diagram:

$$\xymatrix{
1 \ar[r]  & 
\langle R \rangle \ar[r]\ar[d] &
F^{2h} \ar[r]\ar[d] &
\pi_1(\Sigma_h)\ar[r]\ar[d]^{\gamma_2}& 1 \\
1 \ar[r]  & 
L^2\pi \ar[r]\ar[d]^{\on{Hur}_2} &
\text{Mod}(g,1) \ar[r]\ar[d] &
\pi_1(\Pic^1) \ar[r]\ar[d]^{\simeq}& 1 \\
1 \ar[r] & 
L^2\pi/L^3\pi \ar[r] &
\text{Mod}(g,1)/L^3\pi \ar[r] &
\pi_1(\Pic^1) \ar[r]&
1.}$$
We conclude as in Proposition \ref{P:MoritaSurfaceGroup} that the class $\gamma_2^*o_2$ can be represented by $\Hur_2(\tilde{r}_2)$. By Remark \ref{rem:o2-functoriality}, the class $\gamma_2^*o_2 \in H^2(\pi_1(\Sigma_h),L^2\pi/L^3\pi)$ maps to $\gamma_2^*\widetilde{o_2} \in M(\pi_1(\Sigma_h),L^2\pi/L^3\pi)$.
\end{proof}


Now for each $g \in \mathbb{Z}_{\ge 1}$, we give a map $$\gamma: \pi_1(\Sigma_1)\to \Mod(2g)$$ such that the Morita class $\gamma^*o_1$ is trivial, and such that the secondary Morita class $\gamma_2^*\widetilde{o_2}$ is nontrivial, where $$\gamma_2: \pi_1(\Sigma_1)\to \pi_1(\mathbf{Pic}^1_{\mathscr{C}_g/\mathscr{M}_g})$$ is a lift of $\gamma$ as in Remark \ref{rem:splitting-and-independence}.

    \begin{figure}[h]
    \labellist
    \small\pinlabel $g$ [b] at 2.83 53.85
\pinlabel $g$ [b] at 65.19 53.85
\pinlabel $g$ [t] at 164.39 -2.83
\pinlabel $g$ [t] at 277.77 -2.83
\pinlabel $S$ [l] at 226.75 116.21
\pinlabel $b$ [r] at 195.57 11.34
\pinlabel $S(b)$ [l] at 235.25 11.34
\pinlabel $\lambda$ [br] at 204.08 68.03
\pinlabel $l$ [tl] at 212.92 45.35
\endlabellist
	\centering
	\includegraphics[height=40mm]{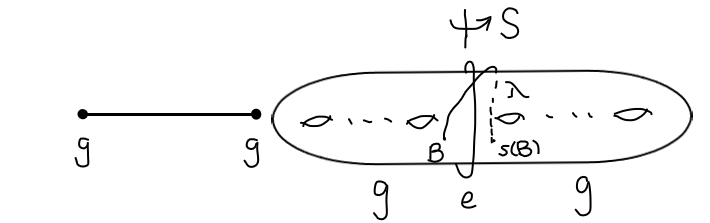}
	\caption{The surface discussed in Theorem \ref{P:nontrivialsecondaryMorita}}\label{fig:D_2g}
\end{figure}

Let $\Sigma_{2g}$ be a closed Riemann surface of genus $2g$. Let $l$ be a null-homologous closed curve which separates $\Sigma_{2g}$ into two subsurfaces each of genus $g$. Let $S$ be the order $2$ orientation-preserving mapping class that preserves $l$ and interchanges the two subsurfaces shown in Figure \ref{fig:D_2g}. Let $T$ be the Dehn twist around the loop $l$. Observe that $T$ and $S$ commute, and hence give rise to a map $$\gamma \colon \pi_1(\Sigma_1) = \langle a,b \ | \ [a,b] \rangle \rightarrow \text{Mod}(g),$$ defined by $a \mapsto T$ and $b \mapsto S$. This map gives rise to a $\Sigma_{2g}$-bundle over the torus $\Sigma_1$; we denote its total space by $E$ so that we have a fiber sequence $$\Sigma_{2g}\to E\to \Sigma_1.$$ We will denote by $G$ the group $\pi_1(\Sigma_1)$. 

\begin{lemma}\label{lemma:Morita-trivial-Dg}
The Morita class $\gamma^*o_{1}$ is trivial.
\end{lemma}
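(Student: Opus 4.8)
The plan is to apply Corollary~\ref{cor:reppullback} directly, since the homomorphism $\gamma$ has precisely the shape studied there: take $n = 1$ with the single loop $l_1 = l$, set $\gamma(a) = T = T_l$ (so the ``multitwist'' is just a single Dehn twist), and $\gamma(b) = S$. The hypothesis of that setup is satisfied because $S$ is orientation preserving and maps $l$ to itself, so $ST_lS^{-1} = T_l$ and $\gamma$ is a genuine homomorphism (as already observed before the lemma). Corollary~\ref{cor:reppullback} then tells us that $\gamma^*o_1$ is the image of $\Hur(T(\lambda)\lambda^{-1})$ in $H_1(\Sigma_{2g},\mathbb{Z})_{\pi_1(\Sigma_1)}$, where $\lambda$ is any path from the basepoint $B \notin l$ to $S(B)$, chosen as in Construction~\ref{remark:liftsST}.

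So it will be enough to show that $\Hur(T(\lambda)\lambda^{-1})$ already vanishes in $H_1(\Sigma_{2g},\mathbb{Z})$. I would first isotope $\lambda$ to meet $l$ transversally, in points $p_1,\dots,p_k$ with local signs $\epsilon_i$; then the standard local picture of a Dehn twist acting on an arc shows that $T_l(\lambda)$ is homotopic rel endpoints to $\lambda$ with a loop winding $\epsilon_i$ times around $l$ inserted at each $p_i$. Hence $T_l(\lambda)\lambda^{-1}$ is, rel $B$, a product of conjugates of $l^{\pm 1}$, and $\Hur(T(\lambda)\lambda^{-1}) = \bigl(\sum_i \epsilon_i\bigr)[l]$ in $H_1(\Sigma_{2g},\mathbb{Z})$.

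Finally, the curve $l$ is separating by construction, hence null-homologous, so $[l] = 0$; therefore $\Hur(T(\lambda)\lambda^{-1}) = 0$ and, a fortiori, its image in the coinvariant quotient $H_1(\Sigma_{2g},\mathbb{Z})_{\pi_1(\Sigma_1)}$ vanishes, giving $\gamma^*o_1 = 0$. The only non-formal step is the homotopy computation of $T_l(\lambda)\lambda^{-1}$ in the second paragraph, which is routine. Note that Corollary~\ref{cor:subsurf} does not apply directly here: $S$ interchanges the two connected subsurfaces of $\Sigma_{2g}$ bounded by $l$, so it stabilizes neither of them, which is exactly why we argue via Corollary~\ref{cor:reppullback} instead. (One could alternatively hope for a geometric proof via a section of the associated bundle, but since every orientation-preserving involution exchanging the two sides of $l$ has all of its fixed points on $l$ itself — where the Dehn twist $T_l$ does not fix them — the cocycle computation above seems cleaner.)
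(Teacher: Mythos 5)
Your proposal is correct and follows essentially the same route as the paper: both apply Construction~\ref{remark:liftsST}, Lemma~\ref{commutatorcomputation} and Corollary~\ref{cor:reppullback} to identify $\gamma^*o_1$ with the image of $\Hur(T(\lambda)\lambda^{-1})$, which is a multiple of $[l]$ and hence zero since $l$ is separating. Your extra care in computing the coefficient as a signed intersection number (rather than tacitly taking $\lambda$ crossing $l$ once, as in the paper's figure) is a harmless refinement, not a different argument.
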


\begin{proof}
Let $B$ be a point in $\Sigma_{2g}\setminus l$ and let $\lambda$ be a path connecting $B$ and $S(B)$. We define lifts $\widetilde{T},\widetilde{S}$ of $T,S$ as described in Construction \ref{remark:liftsST}. By Lemma~\ref{commutatorcomputation}, the commutator $[\widetilde{T},\widetilde{S}]$ is an inner automorphism of $\pi_1(\Sigma_{2g},B)$ given as conjugation by $T(\lambda)\lambda^{-1}$.

By Corollary \ref{cor:reppullback}, the Morita class $\gamma^*o_{1}$ is represented by $\Hur(T(\lambda)\lambda^{-1})$, that is, the homology class of $l$. Since $l$ is null-homologous, it follows that that $\gamma^*o_{1}$ is trivial.
\end{proof}
\begin{remark}
We could also prove this by imitating the proof of Proposition \ref{prop:o1-vanishing-cpct-type}, i.e.~by contracting the loop $l$ above to a point.
\end{remark}
As observed in the proof above, the loop $T(\lambda)\lambda^{-1}$ is null-homologous. Thus the commutator
 $[\widetilde{T},\widetilde{S}] \in \Mod(g,1)$ is in the kernel of the natural map $\iota: \Mod(g,1) \to \pi_1(\Pic^1)$. So we have already constructed a map  $$\gamma_2: G\to \pi_1(\mathbf{Pic}^1_{\mathscr{C}_g/\mathscr{M}_g})$$ as in Remark \ref{rem:splitting-and-independence}, via $$\gamma_2(a) = \iota(\widetilde{T})$$ and $$\gamma_2(b) = \iota(\widetilde{S}).$$ In particular, $\widetilde{T},\widetilde{S}$ are lifts of $\gamma_2(a),\gamma_2(b)$.

    \begin{figure}[h]
	\centering
	\labellist
	\tiny
\pinlabel $\alpha_1$ [tr] at 124.71 85.03
\pinlabel $\beta_1$ [br] at 99.20 62.36
\pinlabel $\alpha_g$ [bl] at 42.52 76.53
\pinlabel $\beta_g$ [bl] at 6.17 48.18
\pinlabel $S(\alpha_1)$ [bl] at 178.57 79.36
\pinlabel $S(\beta_1)$ [bl] at 192.74 59.52
\pinlabel $S(\alpha_g)$ [tr] at 243.93 79.36
\pinlabel $S(\beta_g)$ [tl] at 239.43 14.17
\pinlabel $B$ [tr] at 124.71 11.34
\pinlabel $S(B)$ [tl] at 161.56 11.34
\pinlabel $l$ [b] at 144.55 93.53
\pinlabel $\lambda$ [l] at 138.88 48.18
\endlabellist
	\includegraphics[scale=1]{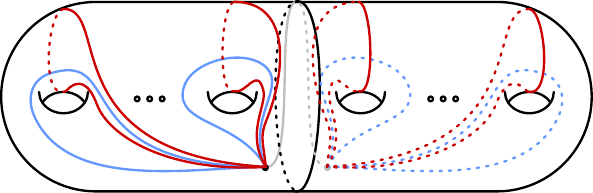}
	\caption{$\Sigma_{2g}$ with a set of generators of $\pi_1(\Sigma_{2g})$}\label{fig:D_2gRiemann}
\end{figure}

Let $B$ be a point in $\Sigma_{2g}\setminus l$ and let $\lambda$ be a path connecting $B$ and $S(B)$, as  shown in Figure \ref{fig:D_2gRiemann}. As in the picture, we choose $\lambda$ such that the loop $\lambda S(\lambda)$ is nullhomotopic. We choose a set of generators of $\pi_1(\Sigma_{2g})$, denoted $$\{\alpha_1,\beta_1, \ldots,\alpha_{2g},\beta_{2g} \},$$ in which $\alpha_1,\beta_1,\ldots,\alpha_g,\beta_g$ are drawn in Figure \ref{fig:D_2gRiemann}, and in which $\alpha_i = \lambda S(\alpha_{2g+1-i})\lambda^{-1},\beta_i = \lambda S(\beta_{2g+1-i})\lambda^{-1}$, for $g+1 \le i \le 2g$. We will use this basis for $\pi_1(\Sigma_{2g})$ for the rest of the section. The homology classes represented by these elements will be denoted by $x_1,y_1 \ldots, x_{2g},y_{2g}$ for $\alpha_1,\beta_1,\ldots,\alpha_{2g},\beta_{2g}$; they form a symplectic basis for $H_1(\Sigma_{2g},\mathbb{Z})$. 
 
 \begin{lemma}\label{lemma:gamma_2o_2of-order2}
 The class $\gamma_2^*o_2 \in H^2(G,L^2\pi_1(\Sigma_{2g})/L^3\pi_1(\Sigma_{2g}))$ has order $2$ (with $o_2$ defined as in Equation \eqref{eq:o2}).
 \end{lemma}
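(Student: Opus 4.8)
By Proposition \ref{P:SecondaryMoritaSurfaceGroup}, the class $\gamma_2^*o_2$ is computed as the image of $\tilde r_2 \in L^2\pi_1(\Sigma_{2g})$ in $(L^2\pi_1(\Sigma_{2g})/L^3\pi_1(\Sigma_{2g}))_{G}$, where $\tilde r_2$ is the element by which the commutator $\widetilde R_2 = [\widetilde T, \widetilde S]$ acts as an inner automorphism of $\pi_1(\Sigma_{2g})$. (Here we work with $o_2$ rather than $\widetilde{o_2}$, so we keep the $H^2$-valued class and do not yet pass to the quotient $M$.) The first step is therefore to compute $\tilde r_2$ explicitly as an element of $\pi_1(\Sigma_{2g})$, using the chosen basis $\{\alpha_1,\beta_1,\dots,\alpha_{2g},\beta_{2g}\}$ and the lifts $\widetilde T, \widetilde S$ from Construction \ref{remark:liftsST}. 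By Lemma \ref{commutatorcomputation}, $\tilde r_2 = T(\lambda)\lambda^{-1}$, viewed now as an element of $\pi_1(\Sigma_{2g}, B)$ rather than merely a homology class: since $\lambda$ crosses the separating curve $l$ once (see Figure \ref{fig:D_2gRiemann}), $T(\lambda)\lambda^{-1}$ is (a conjugate of) the loop $l$ itself, which — being separating — represents a specific nontrivial element of $L^2\pi_1(\Sigma_{2g})$. Concretely, with the basis as drawn, $l$ is homotopic to the boundary of the genus-$g$ subsurface generated by $\alpha_1,\beta_1,\dots,\alpha_g,\beta_g$, so $\tilde r_2$ is conjugate to $\prod_{i=1}^g [\alpha_i,\beta_i]$.

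\textbf{Computing the class in the coinvariants.} The next step is to identify the image of $\tilde r_2$ in the group $(L^2\pi/L^3\pi)_G$, where $\pi = \pi_1(\Sigma_{2g})$ and $G = \langle S, T\rangle$ acts through $\gamma$. Recall $L^2\pi/L^3\pi$ is a quotient of $\bigwedge^2 H_1(\Sigma_{2g},\mathbb{Z})$ — specifically, $L^2\pi/L^3\pi \cong (\bigwedge^2 H_1)/\mathbb{Z}\omega$ where $\omega = \sum_{i=1}^{2g} x_i \wedge y_i$ is the symplectic form. The image of $\tilde r_2 = \prod_{i=1}^g[\alpha_i,\beta_i]$ in $\bigwedge^2 H_1$ is $\sum_{i=1}^g x_i \wedge y_i$; call this element $\eta$, and note $2\eta - \omega = \eta - (\omega - \eta) = \sum_{i\le g} x_i\wedge y_i - \sum_{i>g} x_i \wedge y_i$ measures the "difference" of the two halves. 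Now I compute the $G = \langle S, T\rangle$-action: $T$ is a Dehn twist about the separating curve $l$, so it acts trivially on $H_1$ and hence on $\bigwedge^2 H_1$; the involution $S$ swaps the two genus-$g$ halves, i.e. (with the chosen basis) $S(x_i) = x_{2g+1-i}$, $S(y_i) = y_{2g+1-i}$ up to signs dictated by orientation. Under $S$, the element $\eta$ goes to $\omega - \eta$ (the class of the complementary subsurface), so in the coinvariants $\eta \equiv \omega - \eta$, i.e. $2\eta \equiv \omega \equiv 0$. Thus the image of $\tilde r_2$ in $(L^2\pi/L^3\pi)_G$ is $2$-torsion. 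It remains to show it is nonzero; for this I exhibit a $G$-equivariant surjection $(L^2\pi/L^3\pi)_G \to \mathbb{Z}/2\mathbb{Z}$ sending $\eta \mapsto 1$ — for instance, reduce $\bigwedge^2 H_1 \to \mathbb{Z}/2\mathbb{Z}$ by pairing against $\eta + (\omega-\eta) $... more carefully, the map $\bigwedge^2 H_1 \to \mathbb{Z}/2\mathbb{Z}$, $x_i\wedge y_i \mapsto 1$ for $i \le g$ and $\mapsto 0$ otherwise (extended to be zero on all other basis wedges) — one checks this kills $\omega$ modulo $2$, is $T$-invariant (automatic), and is $S$-invariant modulo $2$ since $S$ permutes the $g$ "first-half" symplectic pairs with the $g$ "second-half" pairs, so the composite count is unchanged mod $2$... actually $S$ sends a first-half pair to a second-half pair, which this functional sends to $0$, so I must instead use the functional $\sum_{\text{all } i} x_i\wedge y_i \mapsto$ (number of pairs)$= 2g \equiv 0$, which is not what I want. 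The correct equivariant functional is: send $x_i \wedge y_i \mapsto 1$ for \emph{all} $i$, other wedges to $0$; this is the mod-$2$ reduction of "pairing with the symplectic form", is manifestly $G$-invariant, kills $\omega$ only if $2g$ is... no.

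\textbf{The main obstacle and its resolution.} The subtle point — and the step I expect to require the most care — is producing the right $G$-equivariant quotient of $(L^2\pi/L^3\pi)_G$ detecting that $\eta$ is nonzero mod $2$, precisely because $L^2\pi/L^3\pi$ is $\bigwedge^2 H_1$ \emph{modulo $\omega$}, and the naive functionals either fail to kill $\omega$ or fail $S$-invariance. The resolution is to work with the intersection-form-induced decomposition: $S$ acts on $H_1(\Sigma_{2g}) = H_1^+ \oplus H_1^-$ into $\pm 1$ eigenspaces (rationally), each of rank $g$, and $\bigwedge^2 H_1$ decomposes accordingly with $\omega \in \bigwedge^2 H_1^+ \oplus \bigwedge^2 H_1^-$; the element $\eta = \sum_{i\le g} x_i\wedge y_i$ has a nonzero component in $H_1^+ \otimes H_1^-$ on which $S$ acts by $-1$ and on which $\omega$ vanishes, and this component is exactly "half of $2\eta - \omega$". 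One then argues that the coinvariants $(H_1^+\otimes H_1^-)_G$ contain the class of this component with order exactly $2$ — here $T$ acts trivially and $S$ by $-1$, so the coinvariants of $H_1^+\otimes H_1^-$ under $S$ are $(H_1^+\otimes H_1^-)/2(H_1^+\otimes H_1^-)$, a nonzero $\mathbb{F}_2$-vector space, and the image of $\eta$ there is nonzero because its coefficient vector (in the $x_i\wedge y_i$ terms, $i\le g$) is primitive. This shows $\gamma_2^*o_2$ has order exactly $2$. (Separately one must check $\tilde r_2$ genuinely lies in $L^2\pi$, not just that its homology class vanishes — this is where the careful choice of $\lambda$ with $\lambda S(\lambda)$ nullhomotopic enters, ensuring $\widetilde R_2 \in \ker(\Mod(g,1)\to\pi_1(\Pic^1))$ and hence $\tilde r_2 \in L^2\pi$, as recorded in the paragraph preceding the Lemma.)
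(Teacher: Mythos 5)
Your reduction of the problem — identifying $\gamma_2^*o_2$ with the image of $\eta=\sum_{i=1}^{g}x_i\wedge y_i$ in $(\wedge^2 H_1(\Sigma_{2g},\mathbb{Z})/\langle\omega\rangle)_G$ via Proposition \ref{P:SecondaryMoritaSurfaceGroup} and Lemma \ref{lem:free-Lie-alg-2nd-piece}, and the computation $S(\eta)=\omega-\eta$ giving the $2$-torsion upper bound — is exactly the paper's route and is fine. The genuine gap is in the lower bound, i.e.\ the nonvanishing of $\eta$ in the coinvariants. The class you must detect is $2$-torsion, so it can only be seen by an \emph{integral} $G$-equivariant map to a $2$-torsion group; your proposed detector is instead built from the eigenspace decomposition $H_1=H_1^+\oplus H_1^-$, which exists only after inverting $2$ (and each eigenspace has rank $2g$, not $g$, since $H_1\simeq\mathbb{Z}[\mathbb{Z}/2]^{2g}$). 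Concretely, writing $u_i=x_i+x_{2g+1-i}$, $v_i=x_i-x_{2g+1-i}$, $w_i=y_i+y_{2g+1-i}$, $z_i=y_i-y_{2g+1-i}$, the component of $\eta$ in $H_1^+\otimes H_1^-$ is $(2\eta-\omega)/2=\tfrac14\sum_{i\le g}\bigl(u_i\wedge z_i-w_i\wedge v_i\bigr)$, which does not even lie in the lattice $H_1^+\otimes H_1^-$; so the phrase ``its coefficient vector is primitive'' has no content, and no homomorphism from $(\wedge^2H_1/\langle\omega\rangle)_G$ to $(H_1^+\otimes H_1^-)/2(H_1^+\otimes H_1^-)$ is actually produced. (Your two earlier integral attempts also fail for identifiable reasons: the functional supported on $x_i\wedge y_i$ for $i\le g$ is not $S$-invariant, and the one supported on all $x_i\wedge y_i$ does kill $\omega$ mod $2$ but sends $\eta$ to $g\bmod 2$, which vanishes whenever $g$ is even.)

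The missing idea is to support the detecting functional on a single $S$-orbit of symplectic pairs: the paper defines $\rho:\wedge^2H_1/\langle\omega\rangle\to\mathbb{Z}/2\mathbb{Z}$ by $\rho(x_1\wedge y_1)=\rho(x_{2g}\wedge y_{2g})=1$ and $\rho=0$ on every other basis wedge. This kills $\omega$ mod $2$ (exactly two of its terms lie in the support), is $S$-equivariant because $S$ exchanges $x_1\wedge y_1$ and $x_{2g}\wedge y_{2g}$ and permutes the remaining basis wedges among themselves, is trivially $T$-equivariant since $T$ acts trivially on $H_1$, and satisfies $\rho(\eta)=\rho(x_1\wedge y_1)=1$. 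That single explicit functional closes the gap; with it, the rest of your argument goes through as in the paper.
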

 
 \begin{proof}
By Proposition \ref{P:SecondaryMoritaSurfaceGroup}, the class $\gamma_2^*o_2$ is represented by $\Hur_2(T(\lambda)\lambda^{-1})$ (where $\Hur_2: L^2\pi_1(\Sigma_{2g})\to L^2\pi_1(\Sigma_{2g})/L^3\pi_1(\Sigma_{2g})$ is the natural quotient map).
Observe that the null-homologous loop $T(\lambda)\lambda^{-1}$ is represented by word $\prod_{i=1}^g [\alpha_i,\beta_i]$. By Lemma 
\ref{lem:free-Lie-alg-2nd-piece}, there is an isomorphism $$L^2\pi_1(\Sigma_{2g})/L^3\pi_1(\Sigma_{2g}) \simeq \wedge^2 H_1(\Sigma_{2g},\mathbb{Z})/\left\langle\sum_{i=1}^{2g} x_i\wedge y_i\right\rangle$$ induced by the map $[\alpha,\beta] \mapsto \Hur(\alpha)\wedge \Hur(\beta)$. The image of $\Hur_2(T(\lambda)\lambda^{-1})$ under  this isomorphism is $\sum_{i=1}^{g} x_i \wedge y_i$, or equivalently $ -\sum_{i=g+1}^{2g} x_i\wedge y_i$. 

Since $l$ is null-homologous, the Dehn twist $T$ acts trivially on $H_1(\Sigma_{2g},\mathbb{Z})$ and hence on $L^2\pi/L^3\pi$. Since $S$ exchanges the two subsurfaces into which $l$ separates $\Sigma_{2g}$, we have
$S(x_{i}) = x_{2g+1-i}$ and $S(y_{i}) = y_{2g+1-i}$. In particular, we have $$S\left(\sum_{i=1}^g x_i\wedge y_i\right)=\sum_{i=g+1}^{2g} x_i\wedge y_i=-\sum_{i=1}^{g} x_i \wedge y_i,$$ so the image of $\sum_{i=1}^g x_i\wedge y_i$ in $(L^2 \pi_1(\Sigma_{2g})/L^3 \pi_1(\Sigma_{2g}))_G$ is $2$-torsion.

We now show this element has order exactly $2$ in $(L^2 \pi_1(\Sigma_{2g})/L^3 \pi_1(\Sigma_{2g}))_G$, by constructing a $G$-equivariant map $L^2\pi_1(\Sigma_{2g})/L^3\pi_1(\Sigma_{2g}) \to \mathbb{Z}/2\mathbb{Z}$ such that the image of $\sum_{i=1}^g x_i\wedge y_i$ is nontrivial. 

As a $\mathbb{Z}$-module, $\wedge^2 H_1(\Sigma_{2g},\mathbb{Z})$ has basis $\langle x_i \wedge x_j, y_i \wedge y_j, 1\le i<j \le 2g,x_i\wedge y_j, 1\le i,j \le 2g\rangle$. 
Define a map $$\rho: \wedge^2 H_1(\Sigma_{2g},\mathbb{Z})/\langle\sum_{i=1}^{2g} x_i\wedge y_i\rangle \to \mathbb{Z}/2\mathbb{Z}$$ by $\rho(x_1\wedge y_1) = \rho(x_{2g}\wedge y_{2g}) = 1$ and $\rho(x_i\wedge y_j)=0$ for the other basis elements. Since $S(x_1\wedge y_1)=x_{2g}\wedge y_{2g} $, this map is $G$-equivariant. As $\rho(\sum_{i=1}^g x_i\wedge y_i)=1$,
we've shown that the image of $\Hur_2(T(\lambda)\lambda^{-1})$ in $(L^2 \pi_1(\Sigma_{2g})/L^3 \pi_1(\Sigma_{2g}))_G$ has order exactly $2$ as desired.
 \end{proof}
 
\begin{lemma}\label{lem:H^G-to-H_G}
The image of $H_1(\Sigma_{2g},\mathbb{Z})^G$ in 
$H_1(\Sigma_{2g}, \mathbb{Z})_G \otimes (\mathbb{Z}/2\mathbb{Z})$ is zero.
\end{lemma}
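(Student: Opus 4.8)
The plan is to identify the $G$-action on $H_1(\Sigma_{2g},\mathbb{Z})$ explicitly and then reduce the statement to a one-line computation about induced $\mathbb{Z}[\mathbb{Z}/2\mathbb{Z}]$-modules.

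First I would pin down the action. By construction $G=\pi_1(\Sigma_1)=\langle a,b\mid[a,b]\rangle$ acts on $H_1(\Sigma_{2g},\mathbb{Z})$ through $\gamma$, with $a$ acting by $T$ and $b$ by $S$. Since $l$ is null-homologous, $T$ acts trivially on $H_1(\Sigma_{2g},\mathbb{Z})$, so the $G$-action factors through the order-two quotient $G\twoheadrightarrow\langle S\rangle$. Hence $H_1(\Sigma_{2g},\mathbb{Z})^G=H_1(\Sigma_{2g},\mathbb{Z})^{\langle S\rangle}$ and $H_1(\Sigma_{2g},\mathbb{Z})_G=H_1(\Sigma_{2g},\mathbb{Z})_{\langle S\rangle}$, and it suffices to treat the cyclic group $\langle S\rangle\cong\mathbb{Z}/2\mathbb{Z}$.

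Next I would use the explicit description of $S$ on the symplectic basis fixed earlier in the section, namely $S(x_i)=x_{2g+1-i}$ and $S(y_i)=y_{2g+1-i}$. Because $2g+1$ is odd, this permutation of the $4g$ basis vectors is fixed-point-free, so it decomposes the basis into $2g$ two-element orbits. Choosing one representative $v_j$ from each orbit exhibits a $\mathbb{Z}$-basis of the form $\{v_1,Sv_1,\ldots,v_{2g},Sv_{2g}\}$; equivalently, $H_1(\Sigma_{2g},\mathbb{Z})$ is a free $\mathbb{Z}[\langle S\rangle]$-module of rank $2g$.

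Finally I would conclude: any $S$-invariant element can be written $m=\sum_{j}c_j(v_j+Sv_j)$ with $c_j\in\mathbb{Z}$, and in the coinvariants $H_1(\Sigma_{2g},\mathbb{Z})_G$ one has $Sv_j\equiv v_j$, so $m\equiv\sum_j 2c_j\,\overline{v_j}\in 2\,H_1(\Sigma_{2g},\mathbb{Z})_G$. Therefore the image of $m$ in $H_1(\Sigma_{2g},\mathbb{Z})_G\otimes(\mathbb{Z}/2\mathbb{Z})$ is zero, proving the lemma. There is essentially no obstacle here: the only point requiring attention is the observation that $S$ permutes the chosen basis without fixed points — geometrically, that $l$ separates $\Sigma_{2g}$ into two disjoint genus-$g$ halves which $S$ interchanges — which is exactly what makes $H_1$ an induced $\mathbb{Z}[\mathbb{Z}/2\mathbb{Z}]$-module and forces the composite $M^G\to M_G$ to land in $2M_G$.
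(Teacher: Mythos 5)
Your proof is correct and follows the same route as the paper: the action factors through $\mathbb{Z}/2\mathbb{Z}$ because $T$ acts trivially, and $H_1(\Sigma_{2g},\mathbb{Z})$ is a free $\mathbb{Z}[\mathbb{Z}/2\mathbb{Z}]$-module since $S$ permutes the symplectic basis without fixed points, so invariants are norm elements and land in $2\,H_1(\Sigma_{2g},\mathbb{Z})_G$. You have simply written out the ``direct computation'' that the paper leaves implicit.
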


\begin{proof}
The action of $G$ on $H_1(\Sigma_{2g},\mathbb{Z})$ factors through $\mathbb{Z}/2\mathbb{Z}$. Now the result  follows from direct computation from the fact that $H_1(\Sigma_{2g},\mathbb{Z})$ is a direct sum of free  $\mathbb{Z}[\mathbb{Z}/2\mathbb{Z}]$-modules.
\end{proof}

\begin{theorem}\label{P:nontrivialsecondaryMorita} 
The secondary Morita class $\gamma_2^*\widetilde{o_2} \in M(G,L^2 \pi_1(\Sigma_{2g})/L^3 \pi_1(\Sigma_{2g}))$ has order exactly $2$.
\end{theorem}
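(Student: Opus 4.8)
By Lemma \ref{lemma:gamma_2o_2of-order2}, the class $\gamma_2^*o_2 \in H^2(G, L^2\pi/L^3\pi)$ has order exactly $2$, where $\pi := \pi_1(\Sigma_{2g})$. Since $\widetilde{o_2}$ is by construction the image of $o_2$ in the quotient $M(\pi_1(\mathbf{Pic}^1_{\mathscr{C}_g/\mathscr{M}_g}), L^2\pi/L^3\pi) = H^2(-, L^2\pi/L^3\pi)/(\on{im}(m)+\on{im}(\delta_\Delta))$, functoriality (Remark \ref{rem:o2-functoriality}) identifies $\gamma_2^*\widetilde{o_2}$ with the image of $\gamma_2^*o_2$ in $M(G, L^2\pi/L^3\pi) = H^2(G, L^2\pi/L^3\pi)/(\on{im}(m)+\on{im}(\delta_\Delta))$. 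So $2\gamma_2^*\widetilde{o_2}=0$ automatically, and the entire content is to show $\gamma_2^*o_2 \notin \on{im}(m)+\on{im}(\delta_\Delta)$. Write $H := H_1(\Sigma_{2g},\mathbb{Z}) = \pi^{\mathrm{ab}}$ and $W := L^2\pi/L^3\pi$; by Lemma \ref{lem:free-Lie-alg-2nd-piece}, $W\cong \wedge^2 H/\langle\omega\rangle$ with $\omega=\sum_{i=1}^{2g}x_i\wedge y_i$, and $G=\pi_1(\Sigma_1)=\langle a,b\rangle$ acts on $H$ (resp.\ $W$) with $a=T$ trivial and $b=S$ acting by the symplectic involution $\sigma$ (resp.\ $\wedge^2\sigma$). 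Since $G$ is the fundamental group of the torus, Corollary \ref{h2-corollary} gives $H^2(G,-)\cong (-)_G$, so the whole computation takes place in $\sigma$-coinvariants, and by Lemma \ref{lemma:gamma_2o_2of-order2} the class $\gamma_2^*o_2$ is represented by $\overline{\sum_{i=1}^g x_i\wedge y_i}\in W_G$.

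First I would pin down $\on{im}(m)$. The Koszul resolution for $G=\mathbb{Z}^2$ gives $H^1(G,H)\cong H^\sigma\oplus H_\sigma$, and the cup product $H^1(G,H)^{\otimes 2}\to H^2(G,H^{\otimes 2})\cong (H^{\otimes 2})_G$ is the ``torus determinant'' pairing $(P_1,Q_1)\otimes(P_2,Q_2)\mapsto \overline{P_1\otimes Q_2 - P_2\otimes Q_1}$; post-composing with the commutator $H^{\otimes 2}\to W$ shows that $\on{im}(m)\subseteq W_G$ is exactly the image of $H^\sigma\wedge H\subseteq\wedge^2 H$. Next I would build a functional detecting $\gamma_2^*o_2$ while killing $\on{im}(m)$. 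Here Lemma \ref{lem:H^G-to-H_G} is the key input: it says $H^\sigma$ maps to zero in $H_\sigma\otimes\mathbb{Z}/2\mathbb{Z}$, i.e.\ $H^\sigma\subseteq(\sigma-1)H+2H$. Therefore the mod-$2$ reduction of the projection $H\to H/(\sigma-1)H$ annihilates $H^\sigma$, and composing $\wedge^2$ of this projection with the functional $\bar x_1^*\wedge\bar y_1^*$ on $\wedge^2\big((H/(\sigma-1)H)\otimes\mathbb{F}_2\big)$ produces a $\sigma$-invariant functional $\lambda\colon\wedge^2 H\to\mathbb{F}_2$ which (i) vanishes on $H^\sigma\wedge H$, hence on $\on{im}(m)$; (ii) vanishes on $\omega$, which maps to $2\sum_{i=1}^g\bar x_i\wedge\bar y_i=0$ in the quotient, so $\lambda$ descends to $\lambda\colon W_G\to\mathbb{F}_2$; and (iii) sends $\sum_{i=1}^g x_i\wedge y_i$ to $1$ since $\bar x_1,\dots,\bar x_g,\bar y_1,\dots,\bar y_g$ form a basis of $H_\sigma\otimes\mathbb{F}_2$.

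The remaining, and principal, obstacle is to check that $\lambda$ also annihilates $\on{im}(\delta_\Delta)$. For this I would make the non-abelian connecting map $\delta_\Delta\colon H^1(G,H)\to H^2(G,W)$ explicit by evaluating the obstruction $2$-cocycle on the relator $[a,b]$ of $G$. Using that $a=T$ acts trivially on $W$ (Lemma \ref{lemma:gamma_2o_2of-order2}; indeed, being a separating twist, $T$ lies in the Johnson kernel and acts trivially on all of $\pi/L^3\pi$), the value of $\delta_\Delta$ on a class with $a$-coordinate $P\in H^\sigma$ and $b$-coordinate $Q$ reduces, modulo $\on{im}(m)=\overline{H^\sigma\wedge H}$, to the ``lift-defect'' term $\widetilde P^{-1}\cdot{}^{b}\widetilde P\in W$ recording how the $S$-action displaces a chosen lift $\widetilde P\in\pi/L^3\pi$ of $P$. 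In the explicit generators of $\pi_1(\Sigma_{2g})$ of Figure \ref{fig:D_2gRiemann}, where $S$ permutes generators, this term is a sum of wedges $x_i\wedge x_{2g+1-i}$, $y_i\wedge y_{2g+1-i}$ together with a ``Johnson-type'' correction constrained by $S^2=1$; each such term is killed by $\lambda$, because $\bar x_i$ and $\bar x_{2g+1-i}$ (and likewise $\bar y_i,\bar y_{2g+1-i}$) have the same image in $H/(\sigma-1)H$, so the corresponding wedges vanish mod $2$. This shows $\lambda(\on{im}(\delta_\Delta))=0$, hence $\lambda$ descends to $M(G,W)\to\mathbb{F}_2$, and $\lambda(\gamma_2^*\widetilde{o_2})=1\neq 0$; combined with $2\gamma_2^*\widetilde{o_2}=0$ this gives order exactly $2$.

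I expect the delicate point throughout to be that everything in sight vanishes after inverting $2$ --- already in $\wedge^2 H/\langle\omega\rangle$ one has $\sum_{i\le g}x_i\wedge y_i\equiv\tfrac12\omega\equiv 0$ modulo $2$-torsion --- so the argument is genuinely a $2$-adic bookkeeping problem, and the two facts that make it close up are Lemma \ref{lem:H^G-to-H_G} (controlling $\on{im}(m)$ via the $\mathbb{Z}[\mathbb{Z}/2\mathbb{Z}]$-module structure of $H_1(\Sigma_{2g})$) and the Johnson-kernel membership of the separating twist $T$ (killing the ``$a$-direction'' contributions to $\delta_\Delta$). The cocycle manipulations identifying $\delta_\Delta$ with the lift-defect term are exactly the kind of computation the paper defers to Section \ref{sec:Non-abelian-cohomology}, and I would carry them out there.
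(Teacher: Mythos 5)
Your plan is correct and is essentially the paper's own proof: reduce to showing $\gamma_2^*o_2\notin\operatorname{im}(m)+\operatorname{im}(\delta)$ inside $H^2(G,W)\cong W_G$, build a mod-$2$ functional factoring through $\wedge^2$ of the coinvariants (your $\lambda$ is the paper's $\rho'$, i.e.\ the composite $w\circ g'$ through $\wedge^2(H_G)$), use Lemma \ref{lem:H^G-to-H_G} to kill $\operatorname{im}(m)\subseteq\overline{H^G\wedge H}$, and check on the explicit generators of $\ker d_2^*$ that the connecting map contributes only $H^G\wedge H$ terms plus wedges $x_i\wedge x_{2g+1-i}$, $y_i\wedge y_{2g+1-i}$, which die mod $2$ in coinvariants — exactly the computation of Lemma \ref{lem:compute-delta}. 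The only cosmetic differences are your appeal to the Johnson-kernel membership of the separating twist to kill the $T$-lift-defect, where the paper instead verifies directly that this defect is $[\widetilde{\phi(f)},\widetilde{l}]\in L^3\pi_1(\Sigma_{2g})$ since $\widetilde{l}\in L^2\pi_1(\Sigma_{2g})$, and your choice of detecting functional $\bar x_1^*\wedge\bar y_1^*$ in place of the paper's $\rho$.
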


\begin{proof}
In this proof, we will let $H=\pi_1(\Sigma_{2g})/L^2\pi_1(\Sigma_{2g})$ and $\omega = \sum_{i=1}^{2g}x_i\wedge y_i$. Then by Lemma \ref{lem:free-Lie-alg-2nd-piece} we have $L^2\pi_1(\Sigma_{2g})/L^3\pi_1(\Sigma_{2g}) = \wedge^2 H /\langle \omega \rangle$. From the proof of Lemma \ref{lemma:gamma_2o_2of-order2} we have that the element $[e]:= \sum_{i=1}^{g}x_i\wedge y_i \in \wedge^2 H$ represents the class $\gamma_2^*o_2 \in H^2(G,L^2\pi_1(\Sigma_{2g})/L^3\pi_1(\Sigma_{2g})) \simeq (\wedge^2 H /\langle \omega \rangle)_G$, and its image under the map $\rho$ defined in the proof of Lemma \ref{lemma:gamma_2o_2of-order2} is nontrivial.

Let the maps $g,h$ be as defined in Lemma \ref{lem:pass-to-H_G}.
We have the following commutative diagram, where the horizontal solid arrows  are the natural quotient maps:

\[
\xymatrix{ H^G \wedge H \ar[d] & & \\
\wedge^2 H \ar[d]^{g'}\ar[r] & \wedge^2H/\langle\omega\rangle \ar[d]^{g}\ar[r]^{} & (\wedge^2H/\langle\omega\rangle)_G \ar[ld]^{h}\ar[ldd]^{\rho} \ar[dd]\\
\wedge^2(H_G) \ar[r]\ar[d]^{w} & \wedge^2(H_G)/\langle \overline{\omega} \rangle \ar[d] &\\
\mathbb{Z}/2\mathbb{Z} \ar@{=}[r]& \mathbb{Z}/2\mathbb{Z} & \overline{H^2(G, \wedge^2H/\langle\omega\rangle)}. \ar@{.>}[l]^-{\rho'}
}
\]
We will define $\rho'$ later. Here $H^G\wedge H$ denotes the subspace of $\wedge^2H$ spanned by elements of the form $a\wedge b$ with $a\in H^G, b\in H$.

To show the image of $\gamma_2^*o_2$ in $M(G,L^2\pi_1(\Sigma_{2g})/L^3\pi_1(\Sigma_{2g}))$ is nontrivial, we will give explicit descriptions of the images of $m$ and $\delta$ in $(\wedge^2 H/\langle \omega\rangle)_G$ and show they go to $0$ in $\mathbb{Z}/2\mathbb{Z}$ under the map $\rho$ in the commutative diagram above. See Section \ref{subsec:nonabelian-cohomology} for the definition of the maps $m, \delta$.

\textbf{Analysis of the image of $m$:}
We start by analyzing the image of map $m$, defined as in Definition \ref{def:m}, using Lemma \ref{lem:compute-m}; we will freely use the notation and results of Section \ref{subsubsec:genus-1-computations}.
As $T$ acts on $H$ trivially, Equation \eqref{eq:cocycle-condition} implies that
for $\phi \in \Hom(P_1,H)$ to be in $\ker d_2^*$, we must have $\phi(e) \in H^G$.
Take $\phi,\psi \in \ker d_2^*$; then a representative of $m([\phi]\otimes[\psi])$ is given by equation \eqref{equ:m-for-h=1}:
\[ m(\phi \otimes \psi)(u)=\phi(e) \wedge \psi(f) - \phi(f) \wedge \psi(e). \]
This implies that any class in $\im m$ can be represented by an element of
$H^G \wedge H$. By Lemma \ref{lem:H^G-to-H_G}, the image of $H^G \wedge H \to \wedge^2(H_G)$ lies in $2(\wedge^2(H_G))$. Thus $H^G \wedge H$ goes to $0$ under the map $w \circ g'$, as desired.

In particular, we see that $\rho$ induces a natural map $$\rho': \overline{H^2(G, \wedge^2H/\langle \omega\rangle)}\to \mathbb{Z}/2\mathbb{Z}$$ fitting into the commutative diagram above.

\textbf{Analysis of the image of $\delta$:}
Now we analyze the map $$\delta: H^1(G, H)\to \overline{H^2(G,L^2\pi/L^3\pi)}$$ of Definition \ref{defn:independent-bdry-map}, using Lemma \ref{lem:compute-delta}; we wish to show that $\rho'\circ \delta=0$.  Note that using the resolution of the trivial $\mathbb{Z}[G]$-module described in Section \ref{subsubsec:genus-1-computations}, we have a natural surjection $$\ker(d_2^*:\Hom(P_1, H)\to \Hom(P_2, H))\to H^1(G,H).$$

Because the map $\rho'\circ \delta$ is linear by Proposition \ref{prop:non-linear-bdry-map}, it suffices to check that it vanishes on a set of generators of $\ker(d_2^*)$. As $T$ acts trivially on $H$ the condition that $\phi\in \Hom(P_1, H)$ lies in $\ker(d_2)$ is exactly the condition that $\phi(e)\in H^G$.

Now let $\phi \in \ker d_2^*$ represent a class $[\phi] \in H^1(G,H)$ such that $\phi(e)=x_i+x_{2n+1-i}$ (or $y_i+y_{2n+1-i}$) for some $1 \le i \le g$ and $\phi(f) = x_j$ (or $y_j$) for some $1 \le j \le 2g$. Such $\phi$ generate $\ker(d_2^*)$ by the previous paragraph. Then we may choose $\widetilde{\phi(e)} = \alpha_i\lambda S(\alpha_{i})\lambda^{-1}$ (resp. $\beta_i\lambda S(\beta_i)\lambda^{-1}$) and $\widetilde{\phi(f)}=\alpha_j$ (resp. $\beta_j$) for $1 \le j \le g$ or $\widetilde{\phi(f)}=\lambda S(\alpha_{2g+1-j})\lambda^{-1}$ (resp. $\lambda S(\beta_{2g+1-j})\lambda^{-1}$) for $g+1 \le j \le 2g$ to be lifts of $\phi(e),\phi(f)$ to $\pi_1(\Sigma_{2g})/L^3\pi_1(\Sigma_{2g})$.

By Lemma \ref{lem:compute-delta},  a representative in $L^2\pi_1(\Sigma_{2g})/L^3\pi_1(\Sigma_{2g})$ of the class $\delta([\phi])$ is given by
\[[\phi(e)^{-1}, \phi(f)]\widetilde{\phi(f)}^T(\widetilde{\phi(f)})^{-1} \widetilde{\phi(e)}(\widetilde{\phi(e)}^S)^{-1}. \]

Let $\widetilde{l} \in \pi_1(\Sigma_{2g},B)$ be a loop based at $B$ homologous to $l$. If $\widetilde{\phi(f)}=\alpha_j$ or $\beta_j$, then $T$ acts trivially on it and the element $\widetilde{\phi(f)}^T(\widetilde{\phi(f)})^{-1}$ is trivial. If $\widetilde{\phi(f)}=\lambda S(\alpha_{2g+1-j})\lambda^{-1}$ or $\lambda S(\beta_{2g+1-j})\lambda^{-1}$, then
$$\widetilde{\phi(f)}^T(\widetilde{\phi(f)})^{-1} =[\widetilde{\phi(f)},\widetilde{l}] \in L^3\pi_1(\Sigma_{2g})$$
as $\widetilde{l}\in L^2\pi_1(\Sigma_{2g})$. In both cases we have \[[\phi(e)^{-1}, \phi(f)]\widetilde{\phi(f)}^T(\widetilde{\phi(f)})^{-1} \widetilde{\phi(e)}(\widetilde{\phi(e)}^S)^{-1}=[\phi(e)^{-1}, \phi(f)]\widetilde{\phi(e)}(\widetilde{\phi(e)}^S)^{-1} \bmod L^3\pi_1(\Sigma_{2g}).\]

Now suppose $\widetilde{\phi(e)} = \alpha_i\lambda S(\alpha_{i})\lambda^{-1}$. Then 
\[\widetilde{\phi(e)}^S =\lambda S(\alpha_i) S(\lambda) \alpha_{i}S(\lambda^{-1})\lambda^{-1} = \lambda S(\alpha_i) S(\lambda) \alpha_{i}. \]
Thus 
\[\widetilde{\phi(e)}(\widetilde{\phi(e)}^S)^{-1} = \alpha_i\lambda S(\alpha_{i})\lambda^{-1} \alpha_i^{-1} S(\lambda)^{-1}S(\alpha_i)^{-1} \lambda^{-1}= [\alpha_i,\lambda S(\alpha_{i})\lambda^{-1}]. \]
So we have that
$$[\phi(e)^{-1}, \phi(f)]\widetilde{\phi(e)}(\widetilde{\phi(e)}^S)^{-1} =[\phi(e)^{-1}, \phi(f)] [\alpha_i,\lambda S(\alpha_{i})\lambda^{-1}] \bmod L^3\pi_1(\Sigma_g)$$
is a representative for $\delta([\phi])$. In additive notation, we've found that $$- \phi(e) \wedge \phi(f) + x_i\wedge x_{2g+1-i}\in \wedge^2 H$$ is a representative for $\delta([\phi])$. Since $\phi(e) \wedge \phi(f) \in H^G \wedge H$, it is sent to $0$ under the map $w \circ g'$. Since $S(x_i)=x_{2g+1-i}$, we have $g'(x_i\wedge x_{2g+1-i})=0$. Thus $\rho'(\delta([\phi]))=0$.  Now an identical argument works in the case  $\widetilde{\phi(e)}=\beta_i\lambda S(\beta_i)\lambda^{-1}$, from which we conclude the result.
\end{proof}

Note that the only property of the Dehn twist $T$ used in the proof of Theorem \ref{P:nontrivialsecondaryMorita} is it lies in the Torelli subgroup of $\Mod(g)$, i.e, it acts trivially on $H_1(\Sigma_{2g},\mathbb{Z})$. So an identical (if notationally more involved) proof yields:

\begin{theorem}\label{thm:tree-o2-nonvanishing}
Let $\Sigma_{2g}$ be a surface of genus $g$, and let  $l_1, \cdots, l_N$ be disjoint simple closed curves on $\Sigma_{2g}$. Suppose that the dual graph $\Gamma$ of this marked surface is a stable tree. Suppose moreover that $\Sigma_{2g}$ admits an involution $S$ permuting the $l_i$, such that the induced automorphism of $\Gamma$ has the following property: $S$ fixes no vertices and stabilizes exactly one edge. Let $E$ be the surface bundle over the torus induced by the automorphism $S$ and the Dehn multitwist about the $l_i$, and let $\gamma: \pi_1(\Sigma_1)\to \on{Mod}(g)$ be the induced map. Then $\gamma^*o_1=0$, and $\gamma^*\widetilde{o_2}$ has order exactly $2$.
\end{theorem}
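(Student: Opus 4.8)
The plan is to run the proof of Theorem~\ref{P:nontrivialsecondaryMorita} almost verbatim, with the separating curve $l$ there replaced by the curve $l_0$ attached to the unique $S$-stabilized edge $e_0$, and to isolate the one genuinely new point. First I would record the combinatorics forced by the hypotheses: since $S$ fixes no vertex, $e_0$ is not a loop and $S$ interchanges its two endpoints $v_0,S(v_0)$; deleting $e_0$ from the tree $\Gamma$ produces two subtrees $\Gamma_-\ni v_0$ and $\Gamma_+\ni S(v_0)$, which $S$ must interchange (otherwise $S|_{\Gamma_-}$ is a nontrivial involution of a finite tree, hence fixes a vertex or a second edge --- contradicting the hypotheses). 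Correspondingly $l_0$ is separating and cuts $\Sigma_{2g}$ into subsurfaces $\Sigma_\pm$ of equal genus, interchanged by $S$, with $H_1(\Sigma_\pm)=\bigoplus_{v\in\Gamma_\pm}H_1(\Sigma_v)$ since $\Gamma_\pm$ are trees and carry no cycle classes. As every edge of a tree is separating, each $l_i$ is separating, so the multitwist $T=\prod_i T_{l_i}$ lies in the Torelli subgroup and acts trivially on $H\colonequals H_1(\Sigma_{2g},\mathbb{Z})$ and on $L^2\pi/L^3\pi$, where $\pi\colonequals\pi_1(\Sigma_{2g})$. Thus $(\Sigma_{2g},l_0,S,T)$ sits in exactly the situation of Theorem~\ref{P:nontrivialsecondaryMorita} with $l\rightsquigarrow l_0$; the only differences are that $T$ is now a Torelli multitwist rather than a single Dehn twist and that there are extra separating curves $l_i$ lying inside $\Sigma_\pm$.

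Next I would reprove $\gamma^*o_1=0$ and construct the lift. Choose a base point $B$ off every $l_i$ and a path $\lambda$ from $B$ to $S(B)$ meeting $l_0$ once transversally and meeting no other $l_i$, so that $T(\lambda)\lambda^{-1}=\widetilde{l_0}$ is a based representative of the null-homologous curve $l_0$; Corollary~\ref{cor:reppullback} then gives $\gamma^*o_1=0$ exactly as in Lemma~\ref{lemma:Morita-trivial-Dg}. Since $\widetilde{l_0}\in L^2\pi$, Lemma~\ref{commutatorcomputation} shows $[\widetilde T,\widetilde S]$ lies in $\ker(\Mod(g,1)\to\pi_1(\mathbf{Pic}^1_{\mathscr{C}_g/\mathscr{M}_g}))$, so $a\mapsto\iota(\widetilde T),\ b\mapsto\iota(\widetilde S)$ defines a lift $\gamma_2\colon\pi_1(\Sigma_1)\to\pi_1(\mathbf{Pic}^1_{\mathscr{C}_g/\mathscr{M}_g})$ of $\gamma$ as in Remark~\ref{rem:splitting-and-independence}. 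By Proposition~\ref{P:SecondaryMoritaSurfaceGroup} and Corollary~\ref{h2-corollary}, $\gamma_2^*o_2$ is the image of $\widetilde{l_0}$ in $H^2(G,L^2\pi/L^3\pi)\cong(L^2\pi/L^3\pi)_G$, where $G=\pi_1(\Sigma_1)$. Using Lemma~\ref{lem:free-Lie-alg-2nd-piece} I would identify $L^2\pi/L^3\pi\cong\wedge^2H/\langle\omega\rangle$ ($\omega$ the symplectic form), pick a symplectic basis $x_1,y_1,\dots,x_g,y_g$ of $H_1(\Sigma_-)$ with each $x_i,y_i$ supported in a single vertex-subsurface, and set $x_i'\colonequals S(x_i),\ y_i'\colonequals S(y_i)$; then $\omega=\sum_i x_i\wedge y_i+\sum_i x_i'\wedge y_i'$, $S$ swaps primed and unprimed, $T$ acts trivially, and $\widetilde{l_0}\mapsto\theta\colonequals\sum_{i=1}^gx_i\wedge y_i$. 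Since $S(\theta)=\omega-\theta\equiv-\theta$, we get $2[\theta]=0$ in $(\wedge^2H/\langle\omega\rangle)_G$, while the $G$-equivariant map $\rho\colon\wedge^2H/\langle\omega\rangle\to\mathbb{Z}/2\mathbb{Z}$ sending $x_1\wedge y_1,\ x_1'\wedge y_1'\mapsto1$ and all other basis wedges to $0$ (well-defined as $\rho(\omega)=0$, $G$-equivariant since $T$ is trivial and $S$ permutes basis wedges) has $\rho(\theta)=1$. So $\gamma_2^*o_2$ has order exactly $2$ in $H^2(G,L^2\pi/L^3\pi)$.

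The remaining step, which I expect to be the main obstacle, is to show $\rho$ descends to $M(G,L^2\pi/L^3\pi)$, i.e. kills $\im(m)$ and $\im(\delta)$. For $\im(m)$ I would invoke the cocycle formula (Lemma~\ref{lem:compute-m}) to see each such class is represented in $H^G\wedge H$, together with the analogue of Lemma~\ref{lem:H^G-to-H_G} --- valid here because $S$ exhibits $H=H_1(\Sigma_-)\oplus H_1(\Sigma_+)$ as a free $\mathbb{Z}[\mathbb{Z}/2\mathbb{Z}]$-module, so $H^G\to H_G\otimes\mathbb{Z}/2\mathbb{Z}$ vanishes --- to conclude $\rho$ kills it, yielding $\rho'\colon\overline{H^2(G,L^2\pi/L^3\pi)}\to\mathbb{Z}/2\mathbb{Z}$. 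For $\im(\delta)$, by linearity (Proposition~\ref{prop:non-linear-bdry-map}) it suffices to evaluate $\rho'\circ\delta$ on generators $\phi$ of $\ker d_2^*$, i.e. those with $\phi(e)\in H^G$ and $\phi(f)$ a basis element. Here is the point that needs the tree structure: one chooses $\widetilde{\phi(f)}$ in the form $\tau\cdot\eta\cdot\tau^{-1}$, with $\eta$ an essential loop in a single vertex-subsurface (hence disjoint from all $l_i$) and $\tau$ a path to it, so that $T(\widetilde{\phi(f)})\widetilde{\phi(f)}^{-1}=[\mu,\widetilde{\phi(f)}]$ where $\mu=T(\tau)\tau^{-1}$ is a product of conjugates of the null-homologous loops $\widetilde{l_i}$; thus $\mu\in L^2\pi$ and $[\mu,\widetilde{\phi(f)}]\in L^3\pi$, so this correction term vanishes modulo $L^3$. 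With that in hand, the representative of $\delta([\phi])$ coming from Lemma~\ref{lem:compute-delta} reduces, exactly as in Theorem~\ref{P:nontrivialsecondaryMorita}, to $-\phi(e)\wedge\phi(f)+v\wedge S(v)$ in $\wedge^2H/\langle\omega\rangle$ (writing $\phi(e)=v+S(v)$); the first summand lies in $H^G\wedge H$ and the second has $g'(v\wedge S(v))=0$ since $v\equiv S(v)$ in $H_G$ ($g'$ the map of Lemma~\ref{lem:pass-to-H_G}), so $\rho'(\delta([\phi]))=0$. Hence $\rho'$ descends to $M(G,L^2\pi/L^3\pi)$ with $\rho'(\gamma_2^*\widetilde{o_2})=\rho(\theta)=1$, and since $\gamma^*\widetilde{o_2}=\gamma_2^*\widetilde{o_2}$ is independent of the lift by Remark~\ref{rem:splitting-and-independence}, it has order exactly $2$. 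Everything apart from the zero-net-intersection choice of representatives --- which neutralizes the extra internal curves $l_i$ --- is a direct transcription of the proof of Theorem~\ref{P:nontrivialsecondaryMorita} with $l_0$ playing the role of $l$.
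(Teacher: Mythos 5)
Your proposal is correct and is essentially the paper's own proof: the paper establishes this theorem precisely by observing that the argument for Theorem \ref{P:nontrivialsecondaryMorita} uses only that the multitwist $T$ is a Torelli element, and your transcription --- with $l_0$ in place of $l$, the based lifts chosen in the form $\tau\eta\tau^{-1}$ with $\eta$ inside a single vertex subsurface so that the $T$-correction terms become commutators with the null-homologous loops $\widetilde{l_i}\in L^2\pi$ and hence die in $L^3\pi$ --- is exactly the intended ``identical, notationally more involved'' argument, including the order-$2$ computation for $\theta=\sum_{i\le g}x_i\wedge y_i$ and the treatment of $\operatorname{im}(m)$ and $\operatorname{im}(\delta)$. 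One small repair: the functional $\rho$ as you define it (value $1$ only on the basis wedges $x_1\wedge y_1$ and $x_1'\wedge y_1'$) does \emph{not} annihilate $H^G\wedge H$ (e.g. $\rho((x_1+x_1')\wedge y_1)=1$), so the detection must be carried out, as in the paper, with the functional that factors through $\wedge^2(H_G)$ (the composite $w\circ g'$), which still takes the value $1$ on $\theta$, kills $H^G\wedge H$ because $H^G\to H_G\otimes\mathbb{Z}/2\mathbb{Z}$ vanishes, and kills $v\wedge S(v)$ since $\bar v=\overline{S(v)}$ in $H_G$ --- exactly the mechanism you already invoke via $g'$ in your $\operatorname{im}(\delta)$ step.
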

\begin{remark}
Suppose $\Gamma$ is any graph admitting an involution $S$ which fixes no vertices and stabilizes a unique edge. Suppose moreover that this edge is separating. Let $\Sigma_\Gamma$ be the corresponding marked surface, and consider the $\Sigma_\Gamma$-bundle over the torus obtained from the involution $S$ and the Dehn multitwist about the marked curves. Then we expect the methods above will show that $\widetilde{o_2}$ obstructs sections for this surface bundle.
\end{remark}
\subsection{Analytic consequences}\label{M_g-consequences}


We now deduce from the computations of Sections \ref{S:MorSurfGrp} and \ref{sec:computation-secondary-Morita} the non-vanishing of certain Gysin images of the primary and secondary Morita classes. In this section, we work with complex-analytic stacks.
\subsubsection{Degeneration of the primary Morita class}
Let $$\gamma: \pi_1(\Sigma_1)=\langle a, b\mid [a,b]\rangle\to \text{Mod}(g)$$ be a homomorphism such that $\gamma(a)$ is a Dehn multi-twist along a collection of disjoint loops $l_1, \cdots, l_n\in \Sigma_g,$ and such that $\gamma(b)$ is (in the isotopy class of) a self-homeomorphism of $\Sigma_g$ permuting the $l_i$. Let $\Gamma$ be the dual graph of the marked surface $(\Sigma_g, l_1, \cdots, l_n)$. That is, $\Gamma$ is the labeled graph with one vertex for each component of $\Sigma_g\setminus \{l_1, \cdots, l_n\}$, labeled with the genus of this subsurface, and an edge between adjacent components for each shared boundary component $l_i$ (see Figure \ref{fig:graph-surface}). Suppose $\Gamma$ is a stable graph, and let $Z_\Gamma$ be the corresponding stratum of the boundary of $\mathscr{M}_g$. 

Let $$E_\Gamma\subset \on{Bl}_{\overline{Z_\Gamma}}\overline{\mathscr{M}_g}$$ be the exceptional divisor of the blowup of $\overline{\mathscr{M}_g}$ at $\overline{Z_\Gamma}$. Let $S_g$ be the set of stable graphs of genus $g$ with a single edge, and $S_{g,\Gamma}$ the set of stable graphs of genus $g$ with a single edge which specialize to $\Gamma$. That is, $S_g$ corresponds to the set of boundary divisors of $\overline{\mathscr{M}_g}$, and $S_{g, \Gamma}$ corresponds to the set of stable graphs with one edge obtainable from $\Gamma$ via contraction, or equivalently the set of boundary divisors of $\overline{\mathscr{M}_g}$ containing $Z_\Gamma$. Let $$\overline{\mathscr{M}_{g,\Gamma}}=\on{Bl}_{\overline{Z_\Gamma}}\overline{\mathscr{M}_g}\setminus\bigcup_{\Gamma'\in S_g\setminus S_{g,\Gamma}} \widetilde{D_{\Gamma'}}$$
and
$$E_\Gamma^\circ=E_\Gamma\cap \overline{\mathscr{M}_{g,\Gamma}},$$
where $D_{\Gamma'}$ is the boundary divisor of $\mathscr{M}_g$ corresponding to $\Gamma'$, and $\widetilde{D_{\Gamma'}}$ is its proper transform in $\on{Bl}_{\overline{Z_\Gamma}}\overline{\mathscr{M}_g}.$ That is, $\overline{\mathscr{M}_{g,\Gamma}}$ is the complement in $\on{Bl}_{\overline{Z_\Gamma}}\overline{\mathscr{M}_g}$ of the proper transforms of the boundary components not containing $Z_\Gamma$, and $E_\Gamma^\circ$ is the part of the exceptional divisor contained in this complement.

There is a natural inclusion $$\iota: \mathscr{M}_g\hookrightarrow \overline{\mathscr{M}_{g, \Gamma}},$$ with complement $E_\Gamma^\circ$. Now let $\mathscr{F}$ be a locally constant sheaf of Abelian groups on $\mathscr{M}_g$. Let $\widetilde{E_\Gamma^\circ}$ be a deleted neighborhood of $E_\Gamma^\circ$ in $\overline{\mathscr{M}_{g, \Gamma}}$ and $$\pi: \widetilde{E_\Gamma^\circ}\to E_\Gamma^\circ$$ the corresponding circle bundle. Recall from Section \ref{subsec:gysin} that there is a natural Gysin map $$g_\Gamma: H^2(\mathscr{M}_g, \mathscr{F})\to H^1(E_\Gamma^\circ, R^1\pi_*\mathscr{F}|_{\widetilde{E_\Gamma^\circ}}).$$
\begin{proposition}\label{prop:gysin-image-o1}
With the above notation, suppose $\gamma^*o_1\not=0$. Then $g_\Gamma(o_1)$ is non-zero of order divisible by that of $\gamma^*o_1.$
\end{proposition}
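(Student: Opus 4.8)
The plan is to detect the non-vanishing of $g_\Gamma(o_1)$ by restricting it to a circle sitting inside $E_\Gamma^\circ$, over which a deleted neighborhood becomes a $2$-torus, and then to identify the resulting class with $\gamma^*o_1$ via the fact that over a circle the Gysin map is an isomorphism.

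First I would produce a torus mapping into the deleted neighborhood $\widetilde{E_\Gamma^\circ}$ that realizes $\gamma$. By Lemma~\ref{lem:dehn-twist-inertia}, under the (non-canonical) identification of the marked surface $(\Sigma_g;l_1,\dots,l_n)$ with $\Sigma_\Gamma$, the local inertia $I_\Gamma\subset\pi_1(\mathscr{M}_g)=\on{Mod}(g)$ about $E_\Gamma^\circ$ is generated by $\gamma(a)=\prod_i T_{l_i}$. Since $\gamma(b)$ stabilizes the multicurve $\{l_i\}$, it lies in the decomposition group $\on{im}(\pi_1(\widetilde{E_\Gamma^\circ})\to\pi_1(\mathscr{M}_g))$ (this is part of the description of the monodromy around boundary strata used in the proof of Lemma~\ref{lem:dehn-twist-inertia}); let $\bar b\in\pi_1(E_\Gamma^\circ)$ be a class lifting it. Applying Lemma~\ref{lem:pi1-factoring} with $D=E_\Gamma^\circ$, inertia generator $\gamma(a)$, and $a=\bar b$ (the proof of that lemma applies verbatim to the stacky setting, being purely about $\pi_1$) yields commuting elements $\tilde a,\tilde b\in\pi_1(\widetilde{E_\Gamma^\circ})$ with $\tilde a$ generating $I_\Gamma$, hence mapping to $\gamma(a)$ in $\pi_1(\mathscr{M}_g)$, and with $\tilde b$ mapping to $\gamma(b)\gamma(a)^m$ for some $m\in\mathbb{Z}$. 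As $T^2=K(\mathbb{Z}^2,1)$, the homomorphism $\mathbb{Z}^2\to\pi_1(\widetilde{E_\Gamma^\circ})$, $a\mapsto\tilde a$, $b\mapsto\tilde b$, is realized by a map $\psi\colon T^2\to\widetilde{E_\Gamma^\circ}$. The composite $\iota\circ\psi\colon T^2\to\mathscr{M}_g$ (with $\iota\colon\widetilde{E_\Gamma^\circ}\hookrightarrow\mathscr{M}_g$) differs from the classifying map $f_\gamma$ of $\gamma$ only by precomposition with a shear self-homeomorphism of $T^2$; so by functoriality of $o_1$ (as in Proposition~\ref{prop:o1-vanishing-tautological-consequence}(1)), $(\iota\psi)^*o_1$ has the same order as $\gamma^*o_1$.

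Next I would run the Gysin computation. The composite $\pi\circ\psi\colon T^2\to E_\Gamma^\circ$ kills $a$ on $\pi_1$, since $\tilde a\in I_\Gamma=\ker(\pi_*)$; so there is a map $\bar f\colon S^1\to E_\Gamma^\circ$ (realizing $1\mapsto\bar b$) with $\pi\psi\simeq\bar f\circ q$, where $q\colon T^2\to S^1$ collapses the $a$-circle. To avoid the obstruction in $\pi_2(E_\Gamma^\circ)$ to factoring $\pi\psi$ through $q$ directly, one instead forms the pulled-back circle bundle $P\to S^1$ along $\bar f$, observes that $P$ is aspherical (a circle bundle over a circle), and checks on $\pi_1$ that the map $\pi_1(P)=\mathbb{Z}^2\to\pi_1(\widetilde{E_\Gamma^\circ})$ can be arranged so that the composite $T^2\xrightarrow{\ \mu\ }P\to\widetilde{E_\Gamma^\circ}$ is $\psi$ up to homotopy, with $\mu$ covering $q$ and restricting to a homotopy equivalence on fibers. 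Then $\psi$, viewed through $P$, is a map over $\bar f$, so naturality of the Gysin (Leray edge) maps of Proposition~\ref{gysin-topology} gives
\[\bar f^*g_\Gamma(o_1)=g_q\big((\iota\psi)^*o_1\big)\in H^1\big(S^1,\,R^1q_*(\iota\psi)^*\mathbb{V}_1\big),\]
where $g_q\colon H^2(T^2,(\iota\psi)^*\mathbb{V}_1)\to H^1(S^1,R^1q_*(\iota\psi)^*\mathbb{V}_1)$ is the Gysin map of the circle bundle $q$, and where the coefficient system is identified with $R^1\pi_*\mathbb{V}_1|_{\widetilde{E_\Gamma^\circ}}$ along $\bar f^*$ by fiberwise base change. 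Because $S^1$ has cohomological dimension $1$, the terms $H^2(S^1,q_*(-))$ and $H^3(S^1,q_*(-))$ flanking $g_q$ in the Thom--Gysin sequence vanish, so $g_q$ is an isomorphism. Hence $\bar f^*g_\Gamma(o_1)$ has order exactly that of $(\iota\psi)^*o_1$, which equals the order $N$ of $\gamma^*o_1$. Since $\bar f^*$ is a group homomorphism, $N$ divides the order of $g_\Gamma(o_1)$; as $N>1$ by hypothesis, $g_\Gamma(o_1)$ is non-zero of order divisible by that of $\gamma^*o_1$.

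The main obstacle is not the cohomological input — that reduces to the vanishing of $H^{\geq 2}(S^1,-)$ — but the bookkeeping in the first two steps: verifying that $\gamma(b)$ genuinely lies in the decomposition group, invoking Lemma~\ref{lem:pi1-factoring} with the correct basepoint normalizations, and (the subtlest point) turning the map $\psi\colon T^2\to\widetilde{E_\Gamma^\circ}$, which a priori is \emph{not} a map over the base $E_\Gamma^\circ$, into a genuine morphism of circle bundles over $\bar f$ so that Gysin naturality and the identification of $R^1q_*(\iota\psi)^*\mathbb{V}_1$ with $R^1\pi_*\mathbb{V}_1|_{\widetilde{E_\Gamma^\circ}}$ both apply. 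This is the step that requires passing through the auxiliary aspherical bundle $P\to S^1$.
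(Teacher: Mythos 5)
Your argument is correct, and its cohomological core is the same as the paper's: both factor $\gamma$ through $\pi_1(\widetilde{E_\Gamma^\circ})$ via Lemmas \ref{lem:dehn-twist-inertia} and \ref{lem:pi1-factoring}, with $a$ going to a generator of the inertia $I_\Gamma$, and both conclude because the Gysin map over a circle (equivalently, for the group $\mathbb{Z}$) is an isomorphism, the flanking terms $H^2(\mathbb{Z},-)$ and $H^3(\mathbb{Z},-)$ vanishing for dimension reasons, so that the restriction of $g_\Gamma(o_1)$ recovers $\gamma^*o_1$ exactly. The difference is in the execution: the paper never leaves group cohomology --- it writes down the morphism of extensions from $1\to\langle a\rangle\to\pi_1(\Sigma_1)\to\mathbb{Z}\to 1$ to $1\to I_\Gamma\to\pi_1(\widetilde{E_\Gamma^\circ})\to\pi_1(E_\Gamma^\circ)\to 1$ and compares the two Hochschild--Serre long exact sequences --- whereas you realize everything by maps of spaces (a map $T^2\to\widetilde{E_\Gamma^\circ}$, the auxiliary pulled-back circle bundle $P\to S^1$, Leray/Gysin naturality for a map of circle bundles). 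That machinery is precisely what the paper's formulation avoids, and it creates the one soft spot in your write-up: the claim that the composite $T^2\xrightarrow{\ \mu\ }P\to\widetilde{E_\Gamma^\circ}$ is homotopic to $\psi$ is not justified, since $\widetilde{E_\Gamma^\circ}$ need not be aspherical and agreement on $\pi_1$ does not by itself give a homotopy; but it is also not needed, because $o_1$ and $\mathbb{V}_1$ are pulled back from the aspherical orbifold $\mathscr{M}_g$, so pullbacks to $T^2$ depend only on the induced homomorphism $\pi_1(T^2)\to\on{Mod}(g)$, and you may simply discard the hand-built $\psi$ and work with $P$ (precomposed with your shear) from the start. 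In exchange, your version is more careful than the paper about the ambiguity of lifting $\gamma(b)$ only up to $\gamma(a)^m$, which, as you observe, is harmless since precomposing with a shear automorphism of $\pi_1(\Sigma_1)$ does not change the order of $\gamma^*o_1$.
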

\begin{proof}
Recall that $$o_1\in H^2(\mathscr{M}_g, \mathbb{V}_1);$$ we will abuse notation and also denote by $\mathbb{V}_1$ the $\text{Mod}(g)$-representation corresponding to this local system.

By Lemmas \ref{lem:dehn-twist-inertia} and \ref{lem:pi1-factoring}, the map $$\gamma: \pi_1(\Sigma_1)\to \text{Mod}(g)$$ factors through $\pi_1(\widetilde{E_\Gamma^\circ})$, with $\gamma(a)$ a generator of the inertia subgroup $I_\Gamma\subset \pi_1(\widetilde{E_\Gamma^\circ})$ (that is, the subgroup of $\pi_1(\widetilde{E_\Gamma^\circ})$ generated by a fiber of $\pi$ --- namely, a Dehn multitwist).

We have a commutative diagram of short exact sequences of groups
$$\xymatrix{
0 \ar[r] & \langle a\rangle \ar[r] \ar[d] & \pi_1(\Sigma_1)\ar[r]\ar[d]^\gamma & \pi_1(\Sigma_1)/\langle a \rangle \ar[d] \ar[r] & 0\\
0\ar[r] & I_\Gamma\ar[r] & \pi_1(\widetilde{E_\Gamma^\circ}) \ar[r]^{\pi_*} & \pi_1(E_\Gamma^\circ)\ar[r] & 1.
}$$
Writing $\mathbb{Z}$ for $\pi_1(\Sigma_1)/\langle a \rangle$, this diagram induces a morphism of long exact sequences arising from the Hochschild-Serre spectral sequence (see Section \ref{subsec:gysin}) as follows :
$$\xymatrix{
\cdots \ar[r] & H^2(\pi_1(E_\Gamma^\circ), \mathbb{V}_1^{I_\Gamma}) \ar[r] \ar[d] & H^2(\pi_1(\widetilde{E_\Gamma^\circ}), \mathbb{V}_1) \ar[r]^{g_\Gamma} \ar[d]^{\gamma^*} & H^1(\pi_1(E_\Gamma^\circ), (\mathbb{V}_1)_{I_\Gamma}) \ar[r] \ar[d]^{\gamma^*} & H^3(\pi_1(E_\Gamma^\circ), \mathbb{V}_1^{I_\Gamma}) \ar[r] \ar[d]& \cdots  \\
\cdots \ar[r] & H^2(\mathbb{Z}, \gamma^*\mathbb{V}_1^{\langle a\rangle}) \ar[r]  & H^2(\pi_1(\Sigma_1), \gamma^*\mathbb{V}_1) \ar[r]^{p} & H^1(\mathbb{Z}, \gamma^*(\mathbb{V}_1)_{\langle a \rangle}) \ar[r] & H^3(\mathbb{Z}, \gamma^*\mathbb{V}_1^{\langle a\rangle})\ar[r] & \cdots .
}$$
Now $$H^2(\mathbb{Z}, \gamma^*\mathbb{V}_1^{\langle a\rangle})=H^3(\mathbb{Z}, \gamma^*\mathbb{V}_1^{\langle a\rangle})=0$$ for degree reasons, so the map $p$ in the diagram above is an isomorphism. Hence $$\gamma^*g_\Gamma(o_1)=p(\gamma^*o_1),$$ has the same order of $\gamma^*o_1$, which completes the proof.
\end{proof}
The following is immediate:
\begin{corollary}\label{cor:o1-gysin-corollary}
For the graphs $\Gamma$ in Corollary \ref{cor:main-corollary-o1-pinwheel}, $g_\Gamma(o_1)$ is non-zero.
\end{corollary}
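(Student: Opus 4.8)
The plan is to deduce this immediately from Proposition \ref{prop:gysin-image-o1} together with Corollary \ref{cor:main-corollary-o1-pinwheel}, so the argument is essentially a matter of matching hypotheses. First I would recall that each graph $\Gamma$ occurring in Corollary \ref{cor:main-corollary-o1-pinwheel} is, by construction, the dual graph of an explicit marked surface: namely $\Xi = \left(\bigsqcup_{i\in\mathbb{Z}/d\mathbb{Z}}\Sigma_g^{2r}\right)/\!\sim$, equipped with the disjoint simple closed curves $l_1,\dots,l_j$ (on all copies) together with the images of the boundary curves $m_1,\dots,m_r,n_1,\dots,n_r$. The associated homomorphism $\gamma\colon \pi_1(\Sigma_1) = \langle a,b\mid[a,b]\rangle \to \on{Mod}(G)$ sends $a$ to the Dehn multitwist $T$ about precisely this collection of disjoint loops, and sends $b$ to the homeomorphism $S$ of $\Xi$ obtained by cyclically permuting the $d$ copies, which permutes those loops. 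This is exactly the setup in which Proposition \ref{prop:gysin-image-o1} applies: $\gamma(a)$ is a Dehn multitwist along a collection of disjoint simple closed curves, $\gamma(b)$ is (isotopic to) a self-homeomorphism permuting them, and $\Gamma$ is the dual graph of the resulting marked surface.

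Next, Corollary \ref{cor:main-corollary-o1-pinwheel} asserts that $\gamma^*o_1$ has order divisible by $d$; in particular, whenever $d>1$ we have $\gamma^*o_1\neq 0$. (For $d=1$ the exceptional divisor $E_\Gamma$ is a single boundary divisor and the statement is vacuous or reduces to the $d>1$ case via the base change described in Proposition \ref{prop:fromE1ToE2}, so we may assume $d\geq 2$.) Applying Proposition \ref{prop:gysin-image-o1} then gives that $g_\Gamma(o_1)$ is non-zero — indeed of order divisible by $d$ — which is the claim.

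I do not expect any genuine obstacle here: all of the substance is already contained in Corollary \ref{cor:main-corollary-o1-pinwheel} (the computation of $\gamma^*o_1$ via Corollary \ref{cor:reppullback}) and in Proposition \ref{prop:gysin-image-o1} (the Gysin-map comparison, which rests on Lemmas \ref{lem:dehn-twist-inertia} and \ref{lem:pi1-factoring} and the vanishing of the relevant $H^2,H^3$ of $\mathbb{Z}$ for degree reasons). The only thing worth being careful about is the purely notational point that the marked surface $\Xi$ and the mapping classes $S,T$ built in Corollary \ref{cor:main-corollary-o1-pinwheel} literally satisfy the hypotheses imposed on $\gamma$ at the start of Section \ref{M_g-consequences}, which is transparent from the definitions. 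It may also be worth remarking that the identical argument applies verbatim to the maximally degenerate genus-$4$ graph of Figure \ref{Fig:Genus4Max} and to the graphs of Figure \ref{Fig:C34}, yielding the non-vanishing of $g_\Gamma(o_1)$ in those cases as well.
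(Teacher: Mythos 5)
Your proposal is correct and matches the paper's argument, which deduces the corollary immediately from Proposition \ref{prop:gysin-image-o1} together with Corollary \ref{cor:main-corollary-o1-pinwheel}; the hypothesis-matching you spell out (that $\gamma(a)$ is the multitwist about the marked curves, $\gamma(b)$ the permuting homeomorphism, and $\Gamma$ the dual graph) is exactly what the paper leaves implicit. Your aside about the $d=1$ case is a reasonable precaution but not needed, since the graphs in question have $d\geq 2$.
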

For example, we have that $g_\Gamma(o_1)$ is non-zero for the graph $C_{g-1}$ described in Theorem \ref{thm:main-geometric-thm}, as well as the graphs depicted in Figures \ref{Fig:Genus4Max} and \ref{Fig:C34}.
\subsubsection{Degeneration of the secondary Morita class}\label{subsubsec:o2-degeneration}
We now analyze the analogous situation with $\widetilde{o_2}$, in cases that $\gamma^*o_1=0$. Our results and proofs are almost identical to those above, except insofar as there are additional complications arising from the fact that $\gamma^*\widetilde{o_2}$ resides in a (non-trivial) quotient of $H^2(\pi_1(\Sigma_1), \gamma^*\mathbb{V}_2),$ namely $M(\pi_1(\Sigma_1), \gamma^*\mathbb{V}_2),$ and from the fact that we are forced to work with the scheme $P_{1,g}$ rather than $\mathbf{Pic}^1_{\mathscr{C}_g/\mathscr{M}_g},$ due to the inadequacy of existing compactifications of this latter stack for our purposes. (Recall that $P_{d,g}$ coarsely represents the degree $d$ Picard functor over the locus of automorphism-free curves $M_g^0$; see Section \ref{subsec:pic-preliminaries} for details.)

As before, let $$\gamma: \pi_1(\Sigma_1)=\langle a, b\mid [a,b]\rangle\to \text{Mod}(g)$$ be a homomorphism such that $\gamma(a)$ is a Dehn multi-twist along a collection of disjoint loops $l_1, \cdots, l_n\in \Sigma_g,$ and such that $\gamma(b)$ is (in the isotopy class of) a self-homeomorphism of $\Sigma_g$ permuting the $l_i$. Let $\Gamma$ be the dual graph of the marked surface $(\Sigma_g, l_1, \cdots, l_n)$. Suppose $\Gamma$ is a stable tree, and let $Z_\Gamma$ be the corresponding stratum of the boundary of $\overline{M_g}^0$ (the locus in $\overline{M_g}$ parametrizing automorphism-free stable curves). Suppose now that $\gamma^*o_1=0$, so by Remark \ref{rem:o2-defn-remark} or Remark \ref{rem:splitting-and-independence}, we may define $\gamma^*\widetilde{o_2}$.

Let $q: \overline{P_{1, g}}\to \overline{M_g}$ be the canonical forgetful map, and let $$\overline{P_{1, g}}^0=q^{-1}(\overline{M_g}^0)$$ be the preimage of $\overline{M_g}^0$. Let $$\overline{P_{1, g, \Gamma}}:= \on{Bl}_{q^{-1}(\overline{Z_\Gamma})} \overline{P_{1, g}}^0$$ and let $F_\Gamma$ be the exceptional divisor. Let $\overline{P_{1, g, \Gamma}}^r$ be the regular locus of $\overline{P_{1, g, \Gamma}}^0$; as $\overline{P_{1, g}}$ is Cohen-Macaulay and $\overline{Z_\Gamma}$ is  an lci subscheme of $\overline{M_g}^0$, $F_\Gamma$ has non-empty intersection with $\overline{P_{1, g, \Gamma}}^r$ by Lemma \ref{lem:cohen-macaulay}. Note that $F_\Gamma$ is irreducible by Proposition \ref{prop:irreducible-fibers}. The map $q$ lifts to a natural map $$p: \overline{P_{1, g, \Gamma}}^r\to \on{Bl}_{\overline{Z_\Gamma}\cap \overline{M_g}^0}\overline{M_g}^0.$$ Let $F_\Gamma^\circ = p^{-1}(E_\Gamma^\circ),$ and let $$P_\Gamma=p^{-1}(\overline{M_{g, \Gamma}}\cap \on{Bl}_{\overline{Z_\Gamma}\cap \overline{M_g}^0}\overline{M_g}^0)\setminus F_{\Gamma}^{\circ, \text{sing}},$$ where $\overline{M_{g, \Gamma}}$ is the coarse space of $\overline{\mathscr{M}_{g, \Gamma}}$. Note that we have deleted the singular locus of $F_\Gamma^\circ$ from this scheme.

Observe that $P_{1,g}$ is a smooth open subscheme of $P_\Gamma$, and $F_{\Gamma}^{\circ, \text{ns}}$ is its complement; it is regular by definition. Let $\widetilde{F_{\Gamma}^{\circ, \text{ns}}}$ be a deleted neighborhood of $F_{\Gamma}^{\circ, \text{ns}}$, and let $$\pi: \widetilde{F_{\Gamma}^{\circ, \text{ns}}}\to {F_{\Gamma}^{\circ, \text{ns}}}$$ be the corresponding circle bundle. There is a natural Gysin map $$g_\Gamma: H^2(P_{1,g}, \mathbb{V}_2|_{P_{1,g}})\to H^1(F_\Gamma^{\circ, \text{ns}}, R^1\pi_* \mathbb{V}_2|_{\widetilde{F_{\Gamma}^{\circ, \text{ns}}}}).$$

Let $$h_\Gamma: M(P_{1,g}, \mathbb{V}_2|_{P_{1,g}})\to N(F_\Gamma^{\circ, \text{ns}}, R^1\pi_* \mathbb{V}_2|_{\widetilde{F_{\Gamma}^{\circ, \text{ns}}}})$$ be the induced map, where $N(F_\Gamma^{\circ, \text{ns}}, R^1\pi_* \mathbb{V}_2|_{\widetilde{F_{\Gamma}^{\circ, \text{ns}}}})$ is the maximal quotient of $H^1(F_\Gamma^{\circ, \text{ns}}, R^1\pi_* \mathbb{V}_2|_{\widetilde{F_{\Gamma}^{\circ, \text{ns}}}})$ such that such a factorization of $g_\Gamma$ through $M(P_{1,g}, \mathbb{V}_2|_{P_{1,g}})$ exists, as described in detail in Section \ref{subsubsec:defn-of-N}.
\begin{proposition}\label{prop:topological-o2-gysin}
With the above notation, suppose $$\gamma^*\widetilde{o_2}\in M(\pi_1(\Sigma_1), \gamma^*\mathbb{V}_2)$$ is non-zero. Then $h_\Gamma(\widetilde{o_2})\in N(F_\Gamma^{\circ, \text{ns}}, R^1\pi_* \mathbb{V}_2|_{\widetilde{F_{\gamma}^{\circ, \text{ns}}}})$ is non-zero of order divisible by that of $\gamma^*\widetilde{o_2}$.
\end{proposition}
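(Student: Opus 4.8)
The plan is to run the argument of Proposition~\ref{prop:gysin-image-o1} with the coefficient system $\mathbb{V}_1$ replaced by $\mathbb{V}_2$, adding two ingredients: \textbf{(i)} we must lift the factorization of $\gamma$ through $\pi_1(\widetilde{E_\Gamma^\circ})$ to a factorization of a suitable lift $\gamma_2$ through $\pi_1(\widetilde{F_\Gamma^{\circ,\text{ns}}})$; and \textbf{(ii)} since $\widetilde{o_2}$ and its pullback live in the non-abelian quotients $M(-)$ and $N(-)$, we must at the end descend the comparison isomorphism coming from the Gysin long exact sequence through these quotients. Throughout, as in Proposition~\ref{prop:gysin-image-o1}, we use that $\widetilde{F_\Gamma^{\circ,\text{ns}}}$ and $F_\Gamma^{\circ,\text{ns}}$ are aspherical (a circle, resp.\ abelian-variety-torsor, bundle over an aspherical base with vanishing $\pi_2$), so that all cohomology in sight is group cohomology and we may argue group-theoretically once the factorization is in place.

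For \textbf{(i)}: by construction $P_{1,g}$ is a smooth open subscheme of $P_\Gamma$ whose complement is the divisor $F_\Gamma^{\circ,\text{ns}}$, which is regular by construction and irreducible by Proposition~\ref{prop:irreducible-fibers} (since $\Gamma$ is a tree); the Cohen--Macaulay-ness of $\overline{P_{1,g}}$ together with Lemma~\ref{lem:cohen-macaulay} guarantees that $F_\Gamma^\circ$ meets the regular locus, so $F_\Gamma^{\circ,\text{ns}}$ is a nonempty prime Cartier divisor. The map $p$ restricts to $F_\Gamma^\circ \to E_\Gamma^\circ$ with connected fibers (each a $\mathrm{Pic}^1$-torsor over a Jacobian), and $p^\ast E_\Gamma^\circ = F_\Gamma^\circ$ with multiplicity one, so $p$ carries the local inertia of $F_\Gamma^{\circ,\text{ns}}$ isomorphically onto that of $E_\Gamma^\circ$ and induces a surjection $\pi_1(\widetilde{F_\Gamma^{\circ,\text{ns}}}) \twoheadrightarrow \pi_1(\widetilde{E_\Gamma^\circ})$. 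Applying Lemma~\ref{lem:pi1-factoring} to the pair $(P_\Gamma, F_\Gamma^{\circ,\text{ns}})$ (shrinking $F_\Gamma^{\circ,\text{ns}}$ to a dense regular open if needed, which does not affect the construction of $N$) produces commuting elements $a', b' \in \pi_1(\widetilde{F_\Gamma^{\circ,\text{ns}}})$ with $b'$ generating the local inertia and mapping to $\gamma(a)$, and $a'$ mapping to $\gamma(b)$, in $\on{Mod}(g)=\pi_1(\mathscr{M}_g)$. Setting $\gamma_2(a)=b'$, $\gamma_2(b)=a'$ gives a homomorphism $\gamma_2:\pi_1(\Sigma_1)\to \pi_1(\widetilde{F_\Gamma^{\circ,\text{ns}}})\to \pi_1(P_{1,g})$ lifting $\gamma$; by Remark~\ref{rem:splitting-and-independence}, $\gamma_2^\ast\widetilde{o_2}=\gamma^\ast\widetilde{o_2}$, which is nonzero by hypothesis.

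Now, exactly as in Proposition~\ref{prop:gysin-image-o1}, $\gamma_2$ fits into a morphism from the extension $1\to I_\Gamma\to \pi_1(\widetilde{F_\Gamma^{\circ,\text{ns}}})\to \pi_1(F_\Gamma^{\circ,\text{ns}})\to 1$ to the extension $1\to \langle a\rangle\to \pi_1(\Sigma_1)\to \mathbb{Z}\to 1$, yielding a morphism of Gysin (Hochschild--Serre) long exact sequences with coefficients in $\mathbb{V}_2$; since $H^2(\mathbb{Z},-)=H^3(\mathbb{Z},-)=0$, the boundary map $p:H^2(\pi_1(\Sigma_1),\gamma^\ast\mathbb{V}_2)\xrightarrow{\sim}H^1(\mathbb{Z},\gamma^\ast(\mathbb{V}_2)_{\langle a\rangle})$ is an isomorphism and $p^{-1}\circ\gamma_2^\ast\circ g_\Gamma$ equals the pullback $\gamma_2^\ast$ on $H^2(P_{1,g},\mathbb{V}_2)$. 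Composing with the quotient to $M(\pi_1(\Sigma_1),\gamma^\ast\mathbb{V}_2)$ gives a map $\Psi:H^1(F_\Gamma^{\circ,\text{ns}},R^1\pi_\ast\mathbb{V}_2)\to M(\pi_1(\Sigma_1),\gamma^\ast\mathbb{V}_2)$ with $\Psi\circ g_\Gamma$ equal to the composite $H^2(P_{1,g},\mathbb{V}_2)\xrightarrow{\gamma_2^\ast} H^2(\pi_1(\Sigma_1),\gamma^\ast\mathbb{V}_2)\to M(\pi_1(\Sigma_1),\gamma^\ast\mathbb{V}_2)$. Because $\gamma_2^\ast$ is compatible with the $M$-construction --- it sends $\ker\bigl(H^2(P_{1,g},\mathbb{V}_2)\to M(P_{1,g},\mathbb{V}_2)\bigr)$ into $\ker\bigl(H^2(\pi_1(\Sigma_1),\gamma^\ast\mathbb{V}_2)\to M(\pi_1(\Sigma_1),\gamma^\ast\mathbb{V}_2)\bigr)$, by the naturality of the maps $m$ and $\delta$ from Section~\ref{sec:Non-abelian-cohomology} --- the map $\Psi$ kills $g_\Gamma$ of the kernel of $H^2(P_{1,g},\mathbb{V}_2)\to M(P_{1,g},\mathbb{V}_2)$, hence by the very definition of $N(F_\Gamma^{\circ,\text{ns}},R^1\pi_\ast\mathbb{V}_2)$ in Section~\ref{subsubsec:defn-of-N} it factors as $N(F_\Gamma^{\circ,\text{ns}},R^1\pi_\ast\mathbb{V}_2)\xrightarrow{\psi}M(\pi_1(\Sigma_1),\gamma^\ast\mathbb{V}_2)$, with $\psi\circ h_\Gamma$ the natural pullback $M(P_{1,g},\mathbb{V}_2)\to M(\pi_1(\Sigma_1),\gamma^\ast\mathbb{V}_2)$. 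Therefore $\psi\bigl(h_\Gamma(\widetilde{o_2})\bigr)=\gamma_2^\ast\widetilde{o_2}=\gamma^\ast\widetilde{o_2}\neq 0$, which forces $h_\Gamma(\widetilde{o_2})\neq 0$.

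The main obstacle is step \textbf{(i)}: making precise the relation between the local inertia of $F_\Gamma^{\circ,\text{ns}}$ inside the only Cohen--Macaulay (not \emph{a priori} smooth) blown-up Caporaso compactification and the inertia of $E_\Gamma^\circ$, and arranging the geometric setup so that Lemma~\ref{lem:pi1-factoring} genuinely applies (including verifying the asphericity claims used to reduce to group cohomology); once this is done, the descent through the quotients $M$ and $N$ is formal, given the functoriality of the secondary Morita class established in the Appendix.
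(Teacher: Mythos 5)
Your argument is correct and is essentially the paper's own proof: both factor a lift of $\gamma$ through $\pi_1(\widetilde{F_\Gamma^{\circ, \text{ns}}})$ via Lemmas \ref{lem:dehn-twist-inertia} and \ref{lem:pi1-factoring}, compare the resulting map of Gysin/Hochschild--Serre sequences, use $H^2(\mathbb{Z},-)=H^3(\mathbb{Z},-)=0$ to see the bottom comparison map is an isomorphism, and descend through the quotients $M$ and $N$ using the functoriality established in Section \ref{sec:Non-abelian-cohomology}. The only (harmless) repackaging is that you construct the lift $\gamma_2$ directly upstairs and appeal to Remark \ref{rem:splitting-and-independence} for independence of the choice of lift, and you unwind the compatibility of $m$, $\delta$ with pullback by hand rather than citing the commutative square relating $h$ and $h'$ at the level of $M$ and $N$, as the paper does.
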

\begin{proof}
The proof is essentially the same as that of Proposition \ref{prop:gysin-image-o1}. Indeed, let $$\tilde\gamma: \pi_1(\Sigma_1)\to \pi_1(\mathbf{Pic}^1_{\mathscr{C}_g/\mathscr{M}_g})$$ be a lift of $\gamma$; such a lift exists as $\gamma^*o_1=0$ by assumption. As before, $\tilde\gamma$ factors through $\pi_1(\widetilde{F_\Gamma^{\circ, \text{ns}}})$ by Lemmas \ref{lem:dehn-twist-inertia} and \ref{lem:pi1-factoring}. So we have a commutative square (arising from a map of Gysin sequences)
$$\xymatrix{
M(\pi_1(\widetilde{F_\Gamma^{\circ, \text{ns}}}), \mathbb{V}_2) \ar[r]^{h_\Gamma} \ar[d]^{\tilde\gamma^*} & N(\pi_1(F_\Gamma^{\circ, \text{ns}}), (\mathbb{V}_2)_{I_\Gamma})  \ar[d]^{\tilde\gamma^*} \\
M(\pi_1(\Sigma_1), \tilde\gamma^*\mathbb{V}_2) \ar[r]^{p} & N(\mathbb{Z}, \tilde\gamma(\mathbb{V}_2)_{\langle a \rangle}) 
}$$
But the map $p$ above is an isomorphism as $$H^2(\mathbb{Z}, \tilde\gamma^*\mathbb{V}_2^{\langle a\rangle})=H^3(\mathbb{Z}, \tilde\gamma^*\mathbb{V}_2^{\langle a\rangle})=0$$ for degree reasons. Now $$\tilde\gamma^*h_\Gamma(\widetilde{o_2})=p(\tilde\gamma^*\widetilde{o_2})$$ is non-zero by assumption, which completes the proof.
\end{proof}
We record a function-field analogue of this statement. Let $\widehat{S_\Gamma}$ be the fraction field of the complete local ring of $P_\Gamma$ at the generic point of $F_\Gamma^{\circ, \text{ns}}$, and let $T_\Gamma=\mathbb{C}(F_\Gamma^{\circ, \text{ns}})$ be its residue field. As before we have a Gysin map $$h_\Gamma: M(\widehat{S}_\Gamma, \widehat{V_2}|_{S_\Gamma})\to N(T_\Gamma, \widehat{\mathbb{V}_2}(-1)|_I)$$ (see \ref{subsec:gysin} for details on Galois-cohomological Gysin maps and Section \ref{subsubsec:defn-of-N} for a recollection of the group $N$). 
\begin{proposition}
For $\Gamma$ as in Proposition \ref{prop:topological-o2-gysin}, we have $$h_\Gamma(\widetilde{o_{2, \text{\'et}}})\not=0.$$
\end{proposition}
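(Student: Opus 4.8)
The plan is to reduce the claimed Galois-cohomological statement to the topological non-vanishing result $\gamma^*\widetilde{o_2}\neq 0$ from Theorem~\ref{thm:tree-o2-nonvanishing} (equivalently, to Proposition~\ref{prop:topological-o2-gysin}), using the comparison isomorphisms between singular and étale cohomology over $\mathbb{C}$ together with the compatibility of Gysin maps recorded in Proposition~\ref{EtaleGysinCompleted}. First I would set up the relevant geometry exactly as in the topological case: $P_\Gamma$ is a regular $\mathbb{C}$-scheme (regularity was arranged by deleting $F_\Gamma^{\circ,\mathrm{sing}}$, and we used Lemmas~\ref{lem:cohen-macaulay} and the Cohen--Macaulay property of the Caporaso compactification together with the lci-ness of $\overline{Z_\Gamma}$), $P_{1,g}\subset P_\Gamma$ is open with complement the regular divisor $F_\Gamma^{\circ,\mathrm{ns}}$, which is irreducible by Proposition~\ref{prop:irreducible-fibers}. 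The complete local ring of $P_\Gamma$ at the generic point of $F_\Gamma^{\circ,\mathrm{ns}}$ is a complete DVR with residue field $T_\Gamma=\mathbb{C}(F_\Gamma^{\circ,\mathrm{ns}})$ and fraction field $\widehat{S_\Gamma}$; the class $\widetilde{o_{2,\text{\'et}}}$ restricts to a class in $M(\widehat{S_\Gamma},\widehat{\mathbb{V}_2}|_{S_\Gamma})$ by functoriality, and $h_\Gamma$ is the Galois-cohomological Gysin map of Section~\ref{subsec:gysin} (descended to the relevant quotients $M$ and $N$ as in Section~\ref{subsubsec:defn-of-N}).

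Next I would invoke Artin's comparison theorem: for the lisse finite sheaves $\widehat{\mathbb{V}_2}/n$ on the smooth $\mathbb{C}$-variety $P_{1,g}$, étale cohomology agrees with singular cohomology of the analytification, compatibly with the inclusion $P_{1,g}\hookrightarrow P_\Gamma$ and restriction to the complete local ring at $F_\Gamma^{\circ,\mathrm{ns}}$. Proposition~\ref{EtaleGysinCompleted} then gives a commutative diagram identifying the topological Gysin map $g_\Gamma: H^2(P_{1,g}^{\mathrm{an}},\mathbb{V}_2)\to H^1((F_\Gamma^{\circ,\mathrm{ns}})^{\mathrm{an}}, R^1\pi_*\mathbb{V}_2)$ with the étale Gysin map $H^2(P_{1,g,\text{\'et}},\widehat{\mathbb{V}_2})\to H^1(F_{\Gamma,\text{\'et}}^{\circ,\mathrm{ns}}, j^*R^1\iota_*\widehat{\mathbb{V}_2})$ and, in turn, with the Galois-cohomological Gysin map $H^2(\widehat{S_\Gamma},\widehat{\mathbb{V}_2}|_{\widehat{S_\Gamma}})\to H^1(T_\Gamma,(\widehat{\mathbb{V}_2}(-1))_I)$. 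Passing everything to the functorial quotients $M(-)$ and $N(-)$ (which are defined by exactly the same universal property in the étale and topological settings, so the comparison isomorphisms descend), and using that $\widetilde{o_{2,\text{\'et}}}$ maps to $\widetilde{o_2}$ under comparison, we get a commutative square whose left vertical arrow sends $\widetilde{o_{2,\text{\'et}}}$ to $\widetilde{o_2}\in M(P_{1,g},\mathbb{V}_2|_{P_{1,g}})$ and whose top and bottom horizontal arrows are the two Gysin maps $h_\Gamma$. Since $h_\Gamma(\widetilde{o_2})\neq 0$ by Proposition~\ref{prop:topological-o2-gysin} (which applies because $\Gamma$ is a stable tree with an involution fixing no vertices and stabilizing a unique edge, so Theorem~\ref{thm:tree-o2-nonvanishing} gives $\gamma^*o_1=0$ and $\gamma^*\widetilde{o_2}$ of order exactly $2$), the commutativity forces $h_\Gamma(\widetilde{o_{2,\text{\'et}}})\neq 0$ as well.

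The main obstacle is bookkeeping rather than a genuine mathematical difficulty: one must check that the quotient constructions $M(-,-)$ and $N(-,-)$ are sufficiently functorial/natural that the singular-to-étale and étale-to-Galois comparison isomorphisms pass to them, i.e.\ that the cup-product-and-commutator map $m$ and the non-abelian boundary map $\delta$ of Section~\ref{subsec:nonabelian-cohomology} are themselves compatible with these comparisons. This is where I would need to be careful: the construction of $\widetilde{o_2}$ and of the group $M$ goes through the non-abelian cohomology machinery of the Appendix, so one has to observe that all the maps there ($m$, $\delta_\Delta$, the quotients) are built from cup products, the commutator pairing on $\pi_1(\Sigma_g)^{\mathrm{ab}}\otimes\pi_1(\Sigma_g)^{\mathrm{ab}}\to L^2/L^3$, and connecting maps in group cohomology, all of which commute with the comparison isomorphisms and with pullback along the inclusions of fundamental groups $\pi_1(\widetilde{F_\Gamma^{\circ,\mathrm{ns}}})\hookleftarrow\Gamma_D$. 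Granting this — which is already implicitly used in the construction of $\widetilde{o_{2,\text{\'et}}}$ in Section~\ref{subsubsec:etale-o2} — the proof is a two-line diagram chase, entirely parallel to the derivation of the étale versions of Corollary~\ref{cor:o1-gysin-corollary} from their topological counterparts. I would phrase the final step as: ``Apply Proposition~\ref{EtaleGysinCompleted} to $X=P_\Gamma$, $D=F_\Gamma^{\circ,\mathrm{ns}}$, $U=P_{1,g}$, and the sheaf $\mathbb{V}_2$ (with its finite quotients), descend to the quotients $M$ and $N$, and combine with Proposition~\ref{prop:topological-o2-gysin}.''
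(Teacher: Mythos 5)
Your reduction-by-comparison strategy has a genuine gap at its final step. Comparison (Artin plus Proposition \ref{EtaleGysinCompleted}) does let you transport non-vanishing of the Gysin image from singular to \'etale cohomology of the divisor, and for $o_1$ this is essentially how the paper argues (Proposition \ref{prop:blowup-o1-generic-point}), because there the Gysin image lies in an honest $H^1$ of the divisor, and $H^1$ of a normal variety injects into the Galois cohomology of its generic point. But for $\widetilde{o_2}$ the Gysin image lives in the quotient $N(-,-)$, and your chain ``non-vanishing in $N(F_\Gamma^{\circ,\mathrm{ns}},\dots)$ implies non-vanishing in $N(T_\Gamma,\widehat{\mathbb{V}_2}(-1)_I)$'' needs the induced map on $N$-quotients under restriction to the generic point to be injective on the class in question. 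That is not automatic and you never address it: $N(T_\Gamma,\dots)$ is the quotient of $H^1(T_\Gamma,\dots)$ by the images of $g\circ m$ and $g\circ\delta$ computed from $H^1(\widehat{S_\Gamma},\widehat{\mathbb{V}_1})$, which is far larger than the restriction of $H^1$ of $P_{1,g}$ or of the punctured neighborhood; the subgroup being quotiented out grows and could a priori swallow the class even though it is non-zero upstream. This is precisely the kind of issue the paper must confront explicitly in the proof of Theorem \ref{thm:example-o2-p-adic}, where a surjectivity statement on $H^1$ is proved (by a weight argument) exactly to control the passage to $N$ over a smaller base; no such argument appears in your write-up. (A smaller unaddressed point is the comparison of the $\mathbb{Z}$-coefficient topological $M,N$ with the $\widehat{\mathbb{Z}}$-coefficient \'etale ones, where the paper warns the relevant pullback maps may not be isomorphisms; that could be patched since the detection is by an order-$2$ quotient, but the generic-point step cannot be waved through.)

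The paper's actual proof sidesteps the generic-point problem entirely: it reruns the argument of Proposition \ref{prop:topological-o2-gysin} directly over $\widehat{S_\Gamma}$, with Lemma \ref{lem:galois-factoring} replacing Lemma \ref{lem:pi1-factoring}. One produces a commuting pair in the decomposition group --- a generator $b$ of inertia (corresponding to the Dehn multitwist via Lemma \ref{lem:dehn-twist-inertia}) and a lift $a'$ of an element of $\pi_1^{\text{\'et}}(F_\Gamma^{\circ,\mathrm{ns}})$ inducing $S$ --- and pulls the Galois-cohomological Gysin square back along the resulting map from a rank-two abelian (pro-)group into $\on{Gal}(\overline{\widehat{S_\Gamma}}/\widehat{S_\Gamma})$. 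There the bottom horizontal arrow is an isomorphism for degree reasons, and the pullback of $\widetilde{o_{2,\text{\'et}}}$ is non-zero because the explicit order-$2$ computation behind Theorem \ref{thm:tree-o2-nonvanishing} carries over verbatim to profinite coefficients. If you wish to keep your comparison route, you would have to prove the non-annihilation statement on $N$-quotients noted above, which is essentially the content of the proposition itself.
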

\begin{proof}
The proof is identical to that of Proposition \ref{prop:topological-o2-gysin}, replacing the use of Lemma \ref{lem:pi1-factoring} with Lemma \ref{lem:galois-factoring}.
\end{proof}
We immediately deduce:
\begin{corollary}\label{cor:o2-gysin-nonvanishing}
Let $\Gamma$ be as in Theorem \ref{thm:tree-o2-nonvanishing}. Then $h_\Gamma(\widetilde{o_2})$ (resp.~$h_\Gamma(\widetilde{o_{2, \text{\'et}}})$) is non-zero.
\end{corollary}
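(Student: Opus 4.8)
The plan is to observe that this corollary is a direct application of Proposition \ref{prop:topological-o2-gysin} and its function-field analogue to the surface bundle over the torus constructed in Theorem \ref{thm:tree-o2-nonvanishing}, so that the task reduces to checking that that construction meets the standing hypotheses of Section \ref{subsubsec:o2-degeneration}. First I would record that the homomorphism $\gamma \colon \pi_1(\Sigma_1) \to \on{Mod}(g)$ of Theorem \ref{thm:tree-o2-nonvanishing} is exactly of the required shape: $\gamma(a)$ is the Dehn multitwist along the disjoint simple closed curves $l_1, \dots, l_N$, $\gamma(b) = S$ is a self-homeomorphism permuting the $l_i$, and the dual graph of the marked surface is the given stable tree $\Gamma$. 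Since $\Gamma$ is a tree, Proposition \ref{prop:irreducible-fibers} applies and, together with Lemma \ref{lem:cohen-macaulay}, guarantees that the blow-up $\overline{P_{1,g,\Gamma}}$ is Cohen--Macaulay with exceptional divisor meeting the regular locus, so that $F_\Gamma^{\circ, \text{ns}}$ and the Gysin maps $h_\Gamma$ (topological and Galois) are defined exactly as in Section \ref{subsubsec:o2-degeneration}.

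Next I would feed in the two outputs of Theorem \ref{thm:tree-o2-nonvanishing}: that $\gamma^*o_1 = 0$, so that (by Remark \ref{rem:o2-defn-remark}, equivalently Remark \ref{rem:splitting-and-independence}) the pulled-back secondary Morita class $\gamma^*\widetilde{o_2}$ is well-defined in $M(\pi_1(\Sigma_1), \gamma^*\mathbb{V}_2)$ and independent of the chosen lift of $\gamma$; and that $\gamma^*\widetilde{o_2}$ has order exactly $2$, hence is non-zero. Proposition \ref{prop:topological-o2-gysin} then gives $h_\Gamma(\widetilde{o_2}) \neq 0$ immediately, which is the first assertion.

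For the étale statement I would run the identical argument using the function-field analogue of Proposition \ref{prop:topological-o2-gysin} (the one whose proof replaces Lemma \ref{lem:pi1-factoring} by Lemma \ref{lem:galois-factoring}): the same $\gamma$ factors through the decomposition group of $F_\Gamma^{\circ, \text{ns}}$, the relevant Gysin square is an isomorphism on $H^2$ for degree reasons, and the needed input --- non-vanishing of $\gamma^*\widetilde{o_{2, \text{\'et}}}$ --- holds because $\widetilde{o_{2, \text{\'et}}}$ maps to $\widetilde{o_2}$ under the standard comparison between étale and singular cohomology used in Section \ref{SSecConjApp}, so its pullback is non-zero as well. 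Hence $h_\Gamma(\widetilde{o_{2, \text{\'et}}}) \neq 0$.

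The only point requiring care here is pure bookkeeping rather than new mathematics: confirming that the $\gamma$ coming out of Theorem \ref{thm:tree-o2-nonvanishing} literally satisfies the hypotheses under which $h_\Gamma$ and its Galois counterpart were set up, and that the comparison isomorphism intertwines $\widetilde{o_2}$ with $\widetilde{o_{2, \text{\'et}}}$ and the two versions of $h_\Gamma$. Once the definitions of Sections \ref{subsubsec:o2-degeneration} and \ref{SSecConjApp} are unwound there is no genuine obstacle, which is why the corollary is stated as an immediate consequence.
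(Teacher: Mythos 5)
Your argument matches the paper's: the corollary is deduced immediately by combining Theorem \ref{thm:tree-o2-nonvanishing} (which supplies $\gamma^*o_1=0$ and $\gamma^*\widetilde{o_2}$ of order exactly $2$, hence non-zero) with Proposition \ref{prop:topological-o2-gysin} and its function-field analogue, exactly as you propose. One small caution on your étale step: the comparison map goes from the integral topological class to the profinite étale one (not the reverse), so the non-vanishing of the étale pullback should be justified by noting that the order-$2$ class survives profinite completion (e.g.\ via the same map to $\mathbb{Z}/2\mathbb{Z}$ used in the proof of Theorem \ref{thm:tree-o2-nonvanishing}) --- a point the paper likewise leaves implicit.
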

In particular, this non-vanishing holds for the graphs $T_g$ described in Theorem \ref{thm:main-geometric-thm}.
\section{Consequences for the section conjecture}\label{SSecConjApp}

\subsection{Geometric results}
We now deduce the main geometric results stated in the introduction from the Gysin computations performed in the previous section. Let $k$ be a field, possibly of positive characteristic, and let $g>2$ be an integer.

\subsubsection{Consequences arising from non-vanishing of $o_1$}
Recall that if $\Gamma$ was a stable graph of genus $g$, we defined $Z_\Gamma$ to be the corresponding stratum of the boundary of $\overline{\mathscr{M}_{g, k}}$ and $E_\Gamma$ to be the exceptional divisor of the blowup $\on{Bl}_{\overline{Z_\Gamma}}\overline{\mathscr{M}_{g,k}}$. We let $\widehat{L_\Gamma}$ be the fraction field of the complete local ring of $\on{Bl}_{\overline{Z_\Gamma}}\overline{\mathscr{M}_{g,k}}$ at the generic point of $E_\Gamma$. Recall from Proposition \ref{prop:gysin-image-o1} that we denoted the Gysin map into the cohomology of $E_\Gamma^\circ$ by $g_\Gamma$.
\begin{proposition}\label{prop:blowup-o1-generic-point}
Let $\Gamma$ be a stable graph of genus $g$ such that, over the complex numbers, $g_\Gamma(o_1)$ has order $d>1$. Then over a field of characteristic $0$, $o_{1, \text{\'et}}|_{\widehat{L_\Gamma}}$ has order divisible by $d$. Over a field of characteristic $p>0$ with $p$ not dividing $d$, $o_{1, \text{\'et}}^{(p)}|_{\widehat{L_\Gamma}}$  has order divisible by $d$.
\end{proposition}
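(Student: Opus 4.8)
\noindent\emph{Proof proposal.} The plan is to deduce the statement from its purely topological counterpart — the hypothesis that $g_\Gamma(o_1)$ has order $d$ — by a comparison argument, after first reducing the ground field to $\mathbb{C}$.

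\emph{Reductions on the base field.} First I would dispose of positive characteristic along the lines of Proposition~\ref{prop:o1-vanishing-cpct-type}: the blowup $\on{Bl}_{\overline{Z_\Gamma}}\overline{\mathscr{M}_{g,\mathbb{Z}_p}}$ is smooth over $\mathbb{Z}_p$ away from its boundary, the exceptional divisor $E_\Gamma$ is flat over $\mathbb{Z}_p$ with geometrically irreducible fibres (being a projective bundle over the lci locus $\overline{Z_\Gamma}$), and $\widehat{\mathbb{V}_1}^{(p)}$ is tame; using the base-change isomorphism $H^2(\mathscr{M}_{g,\mathbb{F}_p}, \widehat{\mathbb{V}_1}^{(p)}) \xrightarrow{\sim} H^2(\mathscr{M}_{g,\mathbb{Z}_p}, \widehat{\mathbb{V}_1}^{(p)})$ and the specialization from $\widehat{L_\Gamma}$ in characteristic zero to $\widehat{L_\Gamma}$ in characteristic $p$ inside a suitable two-dimensional local ring of this blowup, one checks that the order of $o_{1,\text{ét}}^{(p)}|_{\widehat{L_\Gamma}}$ is a multiple of the order of $o_{1,\text{ét}}|_{\widehat{L_\Gamma}}$ over $\on{Frac}(\mathbb{Z}_p)$, so it suffices to treat characteristic zero. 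There I would observe that $\mathscr{M}_g$, the graph $\Gamma$, and the class $o_{1,\text{ét}}$ are all defined over $\mathbb{Q}$, that forming $o_{1,\text{ét}}|_{\widehat{L_\Gamma}}$ commutes with extension of the ground field (the blowup, the completion along the generic point of the geometrically irreducible $E_\Gamma$, and the construction of $o_{1,\text{ét}}$ all do), and that $o_{1,\text{ét}}$ is $(2g-2)$-torsion — for instance because $(2g-2)[\mathbf{Pic}^1_{\mathscr{C}_g/\mathscr{M}_g}] = [\mathbf{Pic}^{2g-2}_{\mathscr{C}_g/\mathscr{M}_g}] = 0$, the relative canonical bundle providing a section. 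Since the order of a cohomology class over a subfield is a multiple of its order over any extension field, and since torsion étale cohomology is invariant under extension of an algebraically closed base field (apply SGA~4 invariance to the residue field $k(E_\Gamma)$ and propagate through the Gysin sequence of Proposition~\ref{gysin-etale}), the order of $o_{1,\text{ét}}|_{\widehat{L_\Gamma}}$ is the same over every algebraically closed field of characteristic zero; embedding an arbitrary characteristic-zero $k$ into an algebraically closed field reduces us to $k = \mathbb{C}$.

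\emph{The case $k = \mathbb{C}$.} Here I would pass to the scheme $M_g^0$ and its compactification, as is legitimate for questions about $o_1$ (see Section~\ref{subsec:pic-preliminaries}), in order to apply Proposition~\ref{EtaleGysinCompleted} with $X = \overline{\mathscr{M}_{g,\Gamma}}$, $D = E_\Gamma^\circ$, $U = \mathscr{M}_g$; then $\widehat{\mathbb{C}(X)} = \widehat{L_\Gamma}$ and $\mathbb{C}(D) = \mathbb{C}(E_\Gamma)$. Working with coefficients $\mathbb{V}_1/N$ for $N$ divisible by $2g-2$ and by the exponent of the torsion of $H^1(E_\Gamma^\circ, R^1\pi_*\mathbb{V}_1)$ (so that mod-$N$ reduction preserves the orders of all relevant classes), the left outer vertical composite of the diagram in Proposition~\ref{EtaleGysinCompleted} carries $o_1$ to $o_{1,\text{ét}}|_{\widehat{L_\Gamma}}$ by Artin comparison, and commutativity of that diagram identifies the image of $o_{1,\text{ét}}|_{\widehat{L_\Gamma}}$ under the Galois-cohomological Gysin map $H^2(\widehat{L_\Gamma}, \widehat{\mathbb{V}_1}) \to H^1(\mathbb{C}(E_\Gamma), (\widehat{\mathbb{V}_1}(-1))_I)$ with the image of $g_\Gamma(o_1) \in H^1(E_\Gamma^\circ, R^1\pi_*\mathscr{F})$ under restriction to the generic point of $E_\Gamma$. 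This last restriction is injective: $E_\Gamma^\circ$ is smooth, and for a closed $Z \subseteq E_\Gamma^\circ$ of codimension $\geq 1$ one has $H^1_Z(E_\Gamma^\circ, \mathscr{G}) = 0$ for every lcc sheaf $\mathscr{G}$ of finite abelian groups (stratify $Z$ into smooth pieces and apply the Thom isomorphism $H^i_{Z'} \cong H^{i-2c}(Z', \mathscr{G}(-c))$), so the localization sequence gives $H^1(E_\Gamma^\circ, \mathscr{G}) \hookrightarrow H^1(\mathbb{C}(E_\Gamma), \mathscr{G})$. Hence the Gysin image of $o_{1,\text{ét}}|_{\widehat{L_\Gamma}}$ has order exactly $d$, and since the Gysin map is a group homomorphism, $o_{1,\text{ét}}|_{\widehat{L_\Gamma}}$ has order divisible by $d$. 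Together with the reductions above, this proves the statement in every characteristic.

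\emph{The main obstacle.} I expect the difficulty to be organizational rather than conceptual: making rigorous the passage between the stacks $\mathscr{M}_g$, $\mathbf{Pic}^1_{\mathscr{C}_g/\mathscr{M}_g}$ and the schemes $M_g^0$, $P_{1,g}$ required to invoke Proposition~\ref{EtaleGysinCompleted} — in particular verifying that no cohomology relevant to $o_1$ is lost on $M_g^0$, which is delicate in small genus where the locus of curves with automorphisms has codimension one — together with confirming that $o_{1,\text{ét}}$ is torsion (so that Artin comparison and the algebraically-closed-base invariance are available) and arranging the multi-field and mod-$N$ bookkeeping so that one genuinely obtains divisibility by $d$ and not merely a lower bound on the order. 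The purity input and the compatibilities of the various Gysin maps (Propositions~\ref{gysin-etale} and \ref{EtaleGysinCompleted}) are standard and do the rest.
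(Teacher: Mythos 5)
Your central mechanism for the case $k=\mathbb{C}$ — the commutative square comparing the Gysin map $H^2(\mathscr{M}_g,\cdot)\to H^1(E_\Gamma^\circ,\cdot)$ with the Galois-cohomological Gysin map out of $H^2(\widehat{L_\Gamma},\cdot)$, followed by injectivity of restriction from $H^1(E_\Gamma^\circ,\mathscr{G})$ to $H^1$ of its generic point (your purity argument is a correct proof of the fact the paper simply quotes for smooth normal DM stacks) — is exactly the paper's argument. The genuine gap is in your reduction of an arbitrary characteristic-zero field to $\mathbb{C}$. You invoke ``SGA~4 invariance'' of torsion étale cohomology under extension of an algebraically closed base field for the objects $k(E_\Gamma)$ and $\widehat{L_\Gamma}$, but that invariance theorem is a statement about schemes of finite type over the base field, and these are not; indeed it is false as stated for function fields, e.g. $H^1(\mathbb{C}(t),\mu_n)=\mathbb{C}(t)^*/(\mathbb{C}(t)^*)^n$ injects into but does not equal $H^1(\Omega(t),\mu_n)$ for $\Omega\supsetneq\mathbb{C}$ algebraically closed. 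What your reduction actually needs is the hard direction: knowing the order over $\bar{\mathbb{Q}}$ (or $\mathbb{C}$), conclude the same lower bound after restricting up to an arbitrary algebraically closed $\bar k$ of characteristic $0$; restriction along such an extension can a priori decrease the order, and ruling that out is precisely the content of the proposition, not something you may assume. (Your trick of bounding the order over a subfield by the order over an extension works only for $k$ embeddable in $\mathbb{C}$, so fields of large transcendence degree are not covered.)

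The repair is to reorder the steps as the paper does, and you already have every ingredient: never leave the base field $k$. Over $k$ one has the same commutative Gysin square (its algebraic half needs no analytic input), and the only place topology enters is the lower bound on the order of $g_\Gamma(o_{1,\text{\'et}})\in H^1(E_{\Gamma,k}^\circ, j^*R^1\iota_*\widehat{\mathbb{V}_1})$; for that, restrict to $\bar k$, where $E_\Gamma^\circ$ is a finite-type smooth DM stack, so invariance under change of algebraically closed field and Artin comparison with the topological class $g_\Gamma(o_1)$ are legitimately available, and restriction from $k$ to $\bar k$ goes the harmless direction. Then your injectivity-into-the-generic-point step finishes, with no detour through $M_g^0$, coarse spaces, or mod-$N$ coefficients (the $(2g-2)$-torsion observation is fine but unnecessary once you stay stacky). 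The same reordering also cleans up your positive-characteristic step: rather than a ``specialization inside a two-dimensional local ring'' at the level of $\widehat{L_\Gamma}$, which as written is only a sketch, the paper cospecializes the finite-type classes along $H^*(\mathscr{M}_{g,k},\widehat{\mathbb{V}_1}^{(p)})\to H^*(\mathscr{M}_{g,K},\widehat{\mathbb{V}_1}^{(p)})$ and $H^1(E_{\Gamma,k}^\circ,\cdot)\to H^1(E_{\Gamma,K}^\circ,\cdot)$ with $K=\on{Frac}(W(k))$ (note $k$ is an arbitrary field of characteristic $p$, not just $\mathbb{F}_p$), and then reruns the identical injectivity argument over $k$.
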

\begin{proof}
The idea of the proof is to use the fact that for normal varieties the first \'etale cohomology group injects into the Galois cohomology of the generic point; the same is true for normal Deligne-Mumford stacks. We use the notation from Section \ref{M_g-consequences}. Let $$\iota:\mathscr{M}_g\hookrightarrow \overline{\mathscr{M}_{g, \Gamma}}$$ be the natural open embedding and let $$j: E_\Gamma^\circ\hookrightarrow \overline{\mathscr{M}_{g, \Gamma}}$$ its closed complement. 
In characteristic $0$, we have a commutative diagram of Gysin maps 
$$\xymatrix{
H^2(\mathscr{M}_g, \widehat{\mathbb{V}_1})\ar[r]^{g_\Gamma}\ar[d] & H^1(E_\Gamma^\circ, j^*R^1\pi_*\widehat{\mathbb{V}_1}) \ar@{^(->}[d]\\
H^2(\widehat{L_\Gamma}, \widehat{\mathbb{V}_1}|_{\widehat{L_\Gamma}}) \ar[r] & H^1(k(E_\Gamma^{\circ}), \widehat{\mathbb{V}_1}|_{\widehat{L_\Gamma}}(-1)|_I).
}$$
By comparison with the analytic setting $g_\Gamma(o_{1, \text{\'et}})$ is non-zero; now we conclude by the injectivity of the right-hand vertical arrow (as first \'etale cohomology of a smooth Deligne-Mumford stack injects into the Galois cohomology of its generic point).

The proof in characteristic $p>0$ is identical; by the argument above it suffices to show that for the Gysin map $$g_\Gamma: H^2(\mathscr{M}_{g,k}, \widehat{\mathbb{V}_1}^{(p)})\to H^1(E_\Gamma^{\circ}, j^*R^1\pi_*\widehat{\mathbb{V}_1}^{(p)}),$$ we have $g_\Gamma(o_{1, \text{\'et}}^{(p)})$ is non-zero. Now let $W(k)$ be the Witt vectors of $k$ and let $K$ be the fraction field of $W(k)$. We have a commutative diagram
$$\xymatrix{
H^2(\mathscr{M}_{g,k}, \widehat{\mathbb{V}_1}^{(p)})\ar[r]^{g_\Gamma}\ar[d] & H^1(E_{\Gamma, k}^\circ, j^*R^1\pi_*\widehat{\mathbb{V}_1}^{(p)}) \ar[d]\\
H^2(\mathscr{M}_{g,K}, \widehat{\mathbb{V}_1}^{(p)})\ar[r]^{g_\Gamma}& H^1(E_{\Gamma, K}^\circ, j^*R^1\pi_*\widehat{\mathbb{V}_1}^{(p)}) 
}$$
where the vertical arrows are cospecialization maps, whence the result follows from the characteristic $0$ situation.
\end{proof}
\begin{corollary}\label{cor:o1-tropical-section-conjecture}
Let $\Gamma'$ be a graph specializing to one of the graphs $\Gamma$ appearing in Corollary \ref{cor:main-corollary-o1-pinwheel}, and let $k$ be a field. Let $d$ be as in Corollary \ref{cor:main-corollary-o1-pinwheel}. Then the tropical section conjecture (Conjecture \ref{conj:tropical-section-conjecture}) is true for $\Gamma', k$ as long as $\text{char}(k)$ does not divide $d$. In fact the \emph{abelianized} fundamental exact sequence $$1\to \pi_1^{\text{\'et}}(\mathscr{C}_{\overline{\widehat{K_{\Gamma'}}}})^{\text{ab}}\to \pi_1^{\text{\'et}}(\mathscr{C}_{\widehat{K_{\Gamma'}}})/L^2\pi_1^{\text{\'et}}(\mathscr{C}_{\overline{\widehat{K_{\Gamma'}}}})\to \on{Gal}(\overline{\widehat{K_{\Gamma'}}}/\widehat{K_{\Gamma'}})\to 1$$ does not split.
\end{corollary}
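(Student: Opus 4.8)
The plan is to reduce the statement, via Proposition \ref{prop:o1-vanishing-tautological-consequence}, to the non-vanishing of the \'etale Morita class at the generic point of $Z_{\Gamma'}$, and then to import this from the non-vanishing of the Gysin image established in Corollary \ref{cor:o1-gysin-corollary} and Proposition \ref{prop:blowup-o1-generic-point}. Concretely: by Proposition \ref{prop:o1-vanishing-tautological-consequence}(2) (when $\on{char}(k)=0$) or (3) (when $\on{char}(k)=p>0$, working with $o_{1,\text{\'et}}^{(p)}$), the displayed abelianized fundamental exact sequence for $\mathscr{C}_{\widehat{K_{\Gamma'}}}$ splits if and only if $o_{1,\text{\'et}}|_{\widehat{K_{\Gamma'}}}$ (resp.\ $o_{1,\text{\'et}}^{(p)}|_{\widehat{K_{\Gamma'}}}$) vanishes; and any splitting of the full sequence (\ref{fundamental-exact-sequence}) for $X=\mathscr{C}_{\widehat{K_{\Gamma'}}}$ pushes out along the Hurewicz map to a splitting of the abelianized one. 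So it suffices to show $o_{1,\text{\'et}}|_{\widehat{K_{\Gamma'}}}$ (resp.\ its characteristic-$p$ analogue) has order divisible by $d$, which at once gives the tropical section conjecture for $(\Gamma',k)$ and the final non-splitting assertion.

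I would first treat the case $\Gamma'=\Gamma$. By Corollary \ref{cor:o1-gysin-corollary} (together with Corollary \ref{cor:main-corollary-o1-pinwheel}), $g_\Gamma(o_1)$ has order divisible by $d$ over $\mathbb{C}$, so Proposition \ref{prop:blowup-o1-generic-point} gives that $o_{1,\text{\'et}}|_{\widehat{L_\Gamma}}$ (resp.\ $o_{1,\text{\'et}}^{(p)}|_{\widehat{L_\Gamma}}$ when $p\nmid d$) has order divisible by $d$. The blow-up morphism $b\colon \on{Bl}_{\overline{Z_\Gamma}}\overline{\mathscr{M}_{g,k}}\to\overline{\mathscr{M}_{g,k}}$ carries the generic point of $E_\Gamma$ to the generic point of $Z_\Gamma$, and since $E_\Gamma$ is an effective Cartier divisor with $b^*\mathscr{I}_{\overline{Z_\Gamma}}=\mathscr{I}_{E_\Gamma}$, pullback along $b$ is a continuous, faithfully flat local homomorphism $\widehat{\mathscr{O}_{\overline{\mathscr{M}_{g,k}},Z_\Gamma}}\to\mathscr{O}_{\widehat{L_\Gamma}}$, hence induces a field embedding $\widehat{K_\Gamma}\hookrightarrow\widehat{L_\Gamma}$ over $\mathscr{M}_{g,k}$. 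As restriction along this embedding is compatible with $o_{1,\text{\'et}}$, the class $o_{1,\text{\'et}}|_{\widehat{K_\Gamma}}$ maps to $o_{1,\text{\'et}}|_{\widehat{L_\Gamma}}$ and therefore also has order divisible by $d$. (The same argument with $\Gamma'$ the trivial graph, using the evident embedding $k(\mathscr{M}_g)\hookrightarrow\widehat{L_\Gamma}$, recovers Corollary \ref{cor:generic-point-of-Mg}.)

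Finally, for an arbitrary $\Gamma'$ specializing to $\Gamma$, I would reduce to the case just handled by a degeneration argument: if $o_{1,\text{\'et}}|_{\widehat{K_{\Gamma'}}}$ vanished, the resulting splitting of the abelianized sequence for $\mathscr{C}_{\widehat{K_{\Gamma'}}}$ --- equivalently, by the Picard description of $o_{1,\text{\'et}}$ in Section \ref{subsubsec:Pic-construction-o1} and Remark \ref{R:periodindex}, the relevant divisibility of the class $[\mathbf{Pic}^1_{\mathscr{C}_{\widehat{K_{\Gamma'}}}}]$ --- should persist under the degeneration of $\mathscr{C}_{\widehat{K_{\Gamma'}}}$ to reduction type $\Gamma$. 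Since $\Gamma'$ specializes to $\Gamma$ we have $\overline{Z_\Gamma}\subseteq\overline{Z_{\Gamma'}}$; one would choose a flag of boundary-type loci realizing $\overline{Z_\Gamma}$ as the centre of an iterated blow-up of $\overline{\mathscr{M}_{g,k}}$, pull back the corresponding blow-up of the Caporaso compactification $\overline{P_{1,g}}$ (still Cohen--Macaulay by Lemma \ref{lem:cohen-macaulay}, with irreducible fibres over the tree strata involved by Proposition \ref{prop:irreducible-fibers}), and use properness to propagate the point of $\mathbf{Pic}^1$ across the degeneration. Feeding the result back through Proposition \ref{prop:o1-vanishing-tautological-consequence} would force $o_{1,\text{\'et}}|_{\widehat{K_\Gamma}}$ (resp.\ $o_{1,\text{\'et}}^{(p)}|_{\widehat{K_\Gamma}}$) to vanish, contradicting the previous paragraph.

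The hard part is exactly this last degeneration step. The complete local ring at $Z_{\Gamma'}$ does not ``see'' the deeper stratum $Z_\Gamma$, so there is no direct map of local rings (nor of the fields $\widehat{K_{\Gamma'}}$, $\widehat{K_\Gamma}$), and one must work on a blow-up of $\overline{\mathscr{M}_{g,k}}$ along which $\overline{Z_\Gamma}$ becomes visible while carefully controlling how sections (equivalently, rational points of $\mathbf{Pic}^1$) extend. Because $o_{1,\text{\'et}}$ lives in cohomological degree $2$, where restriction to a field extension need not be injective, this genuinely requires the geometric input of the surface-bundle constructions of Section \ref{SMorSurfGp} --- encoded in Proposition \ref{prop:blowup-o1-generic-point} --- rather than a purely cohomological comparison.
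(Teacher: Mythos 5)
Your first two paragraphs are essentially the paper's argument: reduce via Proposition \ref{prop:o1-vanishing-tautological-consequence} to non-vanishing of $o_{1,\text{\'et}}$ (resp.\ $o_{1,\text{\'et}}^{(p)}$ when $p\nmid d$), get non-vanishing over $\widehat{L_\Gamma}$ from Corollary \ref{cor:o1-gysin-corollary} together with Proposition \ref{prop:blowup-o1-generic-point}, and pull back along a field embedding into $\widehat{L_\Gamma}$. The gap is your third paragraph, i.e.\ the case of a general $\Gamma'$ specializing to $\Gamma$. There is no need for any degeneration of sections there: the paper handles all $\Gamma'$ uniformly by the same one-line functoriality you used for $\Gamma'=\Gamma$ and for the trivial graph. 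Since $Z_\Gamma\subset\overline{Z_{\Gamma'}}$, there is a map $\gamma\colon\widehat{K_{\Gamma'}}\to\widehat{L_\Gamma}$ compatible with the maps to $\mathscr{M}_g$ --- the field attached to the exceptional divisor over the deepest stratum receives not only $k(\mathscr{M}_g)$ and $\widehat{K_\Gamma}$ but the completed local field at \emph{every} stratum whose closure contains $Z_\Gamma$ --- and then $\gamma^*\bigl(o_{1,\text{\'et}}|_{\widehat{K_{\Gamma'}}}\bigr)=o_{1,\text{\'et}}|_{\widehat{L_\Gamma}}\neq 0$ forces $o_{1,\text{\'et}}|_{\widehat{K_{\Gamma'}}}\neq 0$ (and its order is divisible by that of the restriction, hence by $d$). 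Your premise that the absence of a map between $\widehat{K_{\Gamma'}}$ and $\widehat{K_\Gamma}$ blocks this route misses that $\widehat{K_\Gamma}$ plays no role: $\widehat{L_\Gamma}$ is the common receptacle.

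The degeneration argument you propose in its place is a genuine gap, not just an inefficiency. It is only sketched (``should persist under the degeneration,'' ``one would choose a flag,'' ``use properness to propagate the point of $\mathbf{Pic}^1$''), and the mechanism is wrong in two ways. First, vanishing of $o_{1,\text{\'et}}|_{\widehat{K_{\Gamma'}}}$ does not produce a rational point of $\mathbf{Pic}^1_{\mathscr{C}_{\widehat{K_{\Gamma'}}}}$ that properness of $\overline{P_{1,g}}$ could spread out and specialize; it only says the torsor class dies under the Kummer map, a divisibility statement in the Weil--Ch\^atelet group, and splittings of the abelianized fundamental sequence are Galois-theoretic data with no evident specialization map toward deeper reduction type. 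Second, your logic runs restriction in the hard direction (pushing a hypothetical splitting from $\widehat{K_{\Gamma'}}$ down to reduction type $\Gamma$), whereas the whole design of the paper's argument is to run it the easy way: a class whose further restriction to the larger field $\widehat{L_\Gamma}$ is nonzero cannot already be zero over the subfield $\widehat{K_{\Gamma'}}$. Replacing your third paragraph by the observation that $\widehat{K_{\Gamma'}}$ itself maps to $\widehat{L_\Gamma}$ repairs the argument and recovers the paper's proof.
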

\begin{proof}
It suffices to show that $o_{1, \text{\'et}}|_{\widehat{K_{\Gamma'}}}$ is non-zero in characteristic $0$, and $o_{1, \text{\'et}}^{(p)}|_{\widehat{K_{\Gamma'}}}$ is non-zero in characteristic $p$. By assumption there is a natural map $\gamma: \widehat{K_{\Gamma'}}\to \widehat{L_\Gamma},$ with $\widehat{L_\Gamma}$ defined as in Proposition \ref{prop:blowup-o1-generic-point}. By functoriality we have $$\gamma^*o_{1, \text{\'et}}|_{\widehat{K_{\Gamma'}}}=o_{1, \text{\'et}}|_{\widehat{L_\Gamma}}$$ in characteristic $0$ and $\gamma^*o_{1, \text{\'et}}^{(p)}|_{\widehat{K_{\Gamma'}}}=o_{1, \text{\'et}}^{(p)}|_{\widehat{L_\Gamma}}$ in characteristic $p$, so it suffices to show that $o_{1, \text{\'et}}|_{\widehat{L_\Gamma}}$ (resp.~$o_{1, \text{\'et}}^{(p)}|_{\widehat{L_\Gamma}}$) is non-zero. But this follows by combining Proposition \ref{prop:blowup-o1-generic-point} with Corollary \ref{cor:o1-gysin-corollary}. 
\end{proof}
\begin{corollary}\label{cor:generic-point-o1-nonvanishing}
Suppose $\on{char}(k)=0$ or $\on{char}(k)> g-1.$ Let $L/k(\mathscr{M}_g)$ be an extension of degree not divisible by $g-1$. Then the class $o_{1,\text{\'et}}|_{{L}}$ is non-zero. That is, the abelianized fundamental exact sequence $$1\to \pi_1^{\text{\'et}}(\mathscr{C}_{g, \overline{k(\mathscr{M}_g)}})^{\text{ab}}\to \pi_1^{\text{\'et}}(\mathscr{C}_{g,k(\mathscr{M}_g)})/[\pi_1^{\text{\'et}}(\mathscr{C}_{g, \overline{k(\mathscr{M}_g)}},\pi_1^{\text{\'et}}(\mathscr{C}_{g, \overline{k(\mathscr{M}_g)}}]\to \text{Gal}(\overline{k(\mathscr{M}_g)}/L)\to 1$$ does not split. In particular, the section conjecture is ``trivially true" for the base change of the generic curve to $L$.
\end{corollary}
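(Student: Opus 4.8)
The plan is to deduce the non-vanishing of $o_{1,\text{\'et}}|_{k(\mathscr{M}_g)}$ from the degeneration statement for the cycle graph $C_{g-1}$, together with a specialization argument passing from a boundary stratum to the generic point. First I would invoke Corollary \ref{cor:o1-gysin-corollary} (via Theorem \ref{T:nontrivialMorita} and Corollary \ref{cor:main-corollary-o1-pinwheel}), which tells us that over $\mathbb{C}$ the Gysin image $g_{C_{g-1}}(o_1)$ is non-zero of order exactly $g-1$ in the cohomology of $E_{C_{g-1}}^\circ$. Then Proposition \ref{prop:blowup-o1-generic-point} upgrades this to the \'etale setting: over any field $k$ with $\on{char}(k)=0$ or $\on{char}(k)>g-1$ (so that $g-1$ is invertible and the comparison with the complex-analytic picture, resp.~the cospecialization from characteristic $0$, applies), the class $o_{1,\text{\'et}}|_{\widehat{L_{C_{g-1}}}}$ (resp.~$o_{1,\text{\'et}}^{(p)}|_{\widehat{L_{C_{g-1}}}}$) has order divisible by $g-1$, in particular is non-zero. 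Here $\widehat{L_{C_{g-1}}}$ is the fraction field of the complete local ring of $\on{Bl}_{\overline{Z_{C_{g-1}}}}\overline{\mathscr{M}_{g,k}}$ at the generic point of the exceptional divisor $E_{C_{g-1}}$.

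The key remaining point is that non-vanishing at this highly degenerate local field forces non-vanishing at the generic point $k(\mathscr{M}_g)$. For this I would use the following: there is a natural specialization (cospecialization) map $$H^2(\mathscr{M}_{g,k},\widehat{\mathbb{V}_1}) \to H^2(k(\mathscr{M}_g),\widehat{\mathbb{V}_1}|_{k(\mathscr{M}_g)})$$ obtained by restricting to the generic point, and another restriction map to $H^2(\widehat{L_{C_{g-1}}},\widehat{\mathbb{V}_1}|_{\widehat{L_{C_{g-1}}}})$; both factor the class $o_{1,\text{\'et}}$ coherently because the blowup $\on{Bl}_{\overline{Z_{C_{g-1}}}}\overline{\mathscr{M}_{g,k}}$ contains $\mathscr{M}_{g,k}$ as a dense open, and $E_{C_{g-1}}^\circ$ lies in the boundary. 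More precisely, the map $\on{Spec}(\widehat{L_{C_{g-1}}})\to \on{Bl}_{\overline{Z_{C_{g-1}}}}\overline{\mathscr{M}_{g,k}}$ hits the generic point of the blowup (which equals the generic point of $\mathscr{M}_{g,k}$), and hence factors through $\mathscr{M}_{g,k}$; thus the restriction of $o_{1,\text{\'et}}|_{k(\mathscr{M}_g)}$ to $\widehat{L_{C_{g-1}}}$ equals $o_{1,\text{\'et}}|_{\widehat{L_{C_{g-1}}}}$, which we have shown is non-zero. Therefore $o_{1,\text{\'et}}|_{k(\mathscr{M}_g)}$ cannot be zero. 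The final sentence follows from Proposition \ref{prop:o1-vanishing-tautological-consequence}(2) (resp.~(3)): the non-vanishing of $o_{1,\text{\'et}}|_{k(\mathscr{M}_g)}$ is equivalent to the non-splitting of the displayed abelianized fundamental exact sequence, and a curve whose $\pi_1$-sequence (hence a fortiori its abelianized quotient) admits no splitting trivially satisfies the section conjecture.

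I expect the main obstacle to be making the specialization-to-the-generic-point step rigorous at the level of stacks rather than schemes: one needs that $H^2$ (or at least the relevant class) behaves well under the dominant map $\on{Spec}(\widehat{L_{C_{g-1}}})\to \mathscr{M}_{g,k}$, and one must be careful that the generic point of the blowup really coincides with that of $\mathscr{M}_{g,k}$ and that $\widehat{\mathbb{V}_1}$ restricts compatibly. An alternative, perhaps cleaner, route that sidesteps the $H^2$ functoriality subtleties is to note that $k(\mathscr{M}_g)$ admits a place with residue field containing $k(E_{C_{g-1}}^\circ)$ and completed local field $\widehat{L_{C_{g-1}}}$; then one applies the Galois-cohomological Gysin compatibility of Proposition \ref{EtaleGysinCompleted} directly, observing that $o_{1,\text{\'et}}|_{k(\mathscr{M}_g)}$ restricted to this completion is precisely $o_{1,\text{\'et}}|_{\widehat{L_{C_{g-1}}}}$, which is non-zero. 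Either way the essential input — the non-vanishing of the Gysin image for the cycle graph $C_{g-1}$ and its \'etale avatar — has already been established in Sections \ref{SMorSurfGp} and \ref{M_g-consequences}, so the corollary is a formal consequence.
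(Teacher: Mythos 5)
Your proposal is correct and follows essentially the same route as the paper: the paper deduces the corollary in one line from Corollary \ref{cor:o1-tropical-section-conjecture} applied to the trivial graph (a single vertex of genus $g$, for which $\widehat{K_{\Gamma'}}=k(\mathscr{M}_g)$), and the proof of that corollary is exactly your chain — Corollary \ref{cor:o1-gysin-corollary} (via Theorem \ref{T:nontrivialMorita} and Corollary \ref{cor:main-corollary-o1-pinwheel}) combined with Proposition \ref{prop:blowup-o1-generic-point} to get non-vanishing over $\widehat{L_{C_{g-1}}}$, followed by the natural embedding $k(\mathscr{M}_g)\hookrightarrow\widehat{L_{C_{g-1}}}$ and functoriality, with Proposition \ref{prop:o1-vanishing-tautological-consequence} giving the non-splitting statement. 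You have simply inlined that argument in the special case $\Gamma=C_{g-1}$, so no further comment is needed.
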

\begin{proof}
This is immediate from Corollary \ref{cor:o1-tropical-section-conjecture} applied in the case where $\Gamma'$ is the graph consisting of a single vertex of genus $g$.
\end{proof}
\begin{remark}
In \cite{hain2011rational}, Hain proves (among other things) that the section conjecture is true for the generic curve of genus $g\geq 5$ over a field of characteristic $0$. Our result Corollary \ref{cor:generic-point-o1-nonvanishing} refines this result by showing that in fact the \emph{abelianized} analogue of the fundamental exact sequence does not split (and that the result is in fact true for all $g\geq 3$).
\end{remark}
\begin{remark}\label{R:periodindex}
In fact the estimates of Theorem \ref{T:nontrivialMorita} imply that the class $o_{1, \text{\'et}}|_{k(\mathscr{M}_g)}$ has order divisible by $g-1$. Because of the relationship between the class $o_{1, \text{\'et}}$ and the class $[\mathbf{Pic}^1_{\mathscr{C}_g/\mathscr{M}_g}]\in H^1(\mathscr{M}_{g,S,\text{\'et}}, J_{\mathscr{C}_g/\mathscr{M}_g, S})$ discussed in \ref{subsubsec:Pic-construction-o1}, this estimate gives a lower bound on the \emph{period} of the generic curve of genus $g$; namely the period of the generic curve is divisible by $g-1$. In fact it is known (by the main result of \cite{schroer2003strong}) that the period of the generic curve over any field is $2g-2$; see \cite{qixiao-period} for an explicit statement. Over fields for which it applies, Corollary \ref{cor:generic-point-o1-nonvanishing} implies that, if $C_{\text{gen}}$ is the generic curve, the class of $[\mathbf{Pic}^1_{C_{\text{gen}}}]$ is not divisible in the Weil-Chatelet group of $\mathbf{Pic}^0_{C_{\text{gen}}}$, which was not to our knowledge previously known. It is natural from the point of view of the section conjecture to ask if $[\mathbf{Pic}^1_{C_{\text{gen}}}]$ in fact generates the quotient of the Weil-Chatelet group by its divisible part, and what its order is in this group. Of course it is also natural to ask what is the true order of $o_{1, \text{\'et}}|_{k(M_g)}$. Our degeneration methods are related to those of Ma \cite{qixiao-period}. See e.g.~\cite{lichtenbaum1968period, lichtenbaum1969duality} for a discussion of the period-index problem for curves. 

We get similar bounds for the period of the curves $\mathscr{C}_{\widehat{K_\Gamma}}$ above. Analogously, it would be interesting to study the Picard groups of the curves  $\mathscr{C}_{\widehat{K_\Gamma}}$.
\end{remark}
\subsubsection{Consequences of the non-vanishing of $\widetilde{o_2}$}
We now perform a similar analysis with the class $\widetilde{o_2}$. The arguments are almost identical.
\begin{proposition}\label{prop:o2-generic-point-of-blowup}
Let $k$ be a field of characteristic different from $2$. Let $\widehat{S_\Gamma}$ be the fraction field of the complete local ring of $P_{\Gamma}$ at the generic point of $F_\Gamma^{\circ, \text{ns}}$, defined as in Section \ref{subsubsec:o2-degeneration}. Then if $k$ has characteristic $0$, $\widetilde{o_{2, \text{\'et}}}|_{\widehat{S_\Gamma}}$ is non-zero for $\Gamma$ as in Theorem \ref{thm:tree-o2-nonvanishing}; if $k$ has characterstic $p>2$, then $\widetilde{o_{2, \text{\'et}}}^{(p)}$ is non-zero.
\end{proposition}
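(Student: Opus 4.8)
The plan is to transcribe the proof of Proposition \ref{prop:blowup-o1-generic-point} essentially verbatim, with $o_1$ replaced by $\widetilde{o_2}$ and the ordinary cohomology groups replaced throughout by the non-abelian quotients $M(-,\mathbb{V}_2)$ of Definition \ref{def:SecondaryMoritaClass} and $N(-,\widehat{\mathbb{V}_2}(-1)|_I)$ of Section \ref{subsubsec:defn-of-N}. The formation of $P_\Gamma$, of the divisor $F_\Gamma^{\circ,\mathrm{ns}}$, of the complete local ring along its generic point, and of the étale and Galois-cohomological Gysin maps $h_\Gamma$ all make sense over an arbitrary base and are compatible with base change, so it suffices to (a) prove the statement over $\mathbb{C}$, (b) descend it to an arbitrary field of characteristic $0$, and (c) treat residue characteristic $p>2$ by cospecialization along Witt vectors. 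For (a): over $\mathbb{C}$ the Galois-cohomological Gysin map is $h_\Gamma\colon M(\widehat{S_\Gamma},\widehat{\mathbb{V}_2}|_{S_\Gamma})\to N(T_\Gamma,\widehat{\mathbb{V}_2}(-1)|_I)$, and $h_\Gamma(\widetilde{o_{2,\text{\'et}}})$ is non-zero by Corollary \ref{cor:o2-gysin-nonvanishing} (its étale half), which is itself obtained from the topological non-vanishing of Theorem \ref{thm:tree-o2-nonvanishing} via the Artin comparison isomorphism together with the $M/N$-analogue of the Gysin-compatibility diagram of Proposition \ref{EtaleGysinCompleted}. Since the element fed into a map whose output is non-zero is itself non-zero, this already gives $\widetilde{o_{2,\text{\'et}}}|_{\widehat{S_\Gamma}}\neq 0$ over $\mathbb{C}$.

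For (b): all the objects in sight are defined over $\mathbb{Q}$ (in fact over a localization of $\mathbb{Z}$), and $F_\Gamma^{\circ,\mathrm{ns}}$ is regular and geometrically irreducible (cf.\ Proposition \ref{prop:irreducible-fibers}). Hence for a field $k$ of characteristic $0$ there is an inclusion of residue fields $T_{\Gamma,k}\hookrightarrow T_{\Gamma,\bar k}$ compatible with the Gysin maps, and the $N$-groups are functorial in the base field, so the non-vanishing of $h_\Gamma(\widetilde{o_{2,\text{\'et}}}|_{\widehat{S_{\Gamma,k}}})$ in $N(T_{\Gamma,k},\dots)$ may be checked after restriction to $T_{\Gamma,\bar k}$; since $\widehat{\mathbb{Z}}$-coefficient étale cohomology is insensitive to extension of algebraically closed base fields, this reduces to the case $k=\mathbb{C}$ handled in (a). (This is the same maneuver used implicitly in the proof of Proposition \ref{prop:blowup-o1-generic-point}, where one uses that $H^1$ of a normal base injects into the Galois cohomology of its generic point; here one uses in addition the functoriality of the quotients $M,N$ so that the base-change maps descend to them.) For (c) one replaces $\widetilde{o_{2,\text{\'et}}}$ by its prime-to-$p$ analogue $\widetilde{o_{2,\text{\'et}}}^{(p)}$, which still carries the order-$2$ obstruction precisely because $p\neq 2$, and runs the cospecialization square between $\on{Spec}(k)$ and $\on{Spec}(K)$ with $K=\on{Frac}(W(k))$, exactly as in the positive-characteristic part of the proof of Proposition \ref{prop:blowup-o1-generic-point}, reducing to the characteristic-$0$ statement just proved.

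The main obstacle — and essentially the only place requiring genuine care — is the verification that the non-abelian quotients $M$ and $N$, and the Gysin maps between them, are compatible both with the Artin comparison isomorphism (the $M/N$-analogue of Proposition \ref{EtaleGysinCompleted}) and with pullback along field extensions and with cospecialization. Because these quotients and maps are constructed cocycle-theoretically in Section \ref{sec:Non-abelian-cohomology}, their functoriality in the relevant variables is not formal and must be checked; granting it, the argument is a line-by-line transcription of the proof of Proposition \ref{prop:blowup-o1-generic-point}.
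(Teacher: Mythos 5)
Your proposal is correct and follows essentially the same route as the paper: the paper's own proof is just the observation that the characteristic-$0$ case is immediate from Corollary \ref{cor:o2-gysin-nonvanishing} (the class maps under $h_\Gamma$ to something non-zero, hence is non-zero), while the characteristic-$p$ case is handled by the Witt-vector cospecialization argument of Proposition \ref{prop:blowup-o1-generic-point}. Your extra steps (base change to $\mathbb{C}$, functoriality of $M$ and $N$) merely spell out what the paper leaves implicit, the needed functoriality being supplied by Remark \ref{rem:o2-functoriality} and Section \ref{subsubsec:defn-of-N}.
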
\begin{proof}
The statement in characteristic $0$ is immediate from Corollary \ref{cor:o2-gysin-nonvanishing}; if $k$ has characteristic $p>0$ it follows as in the proof of Proposition \ref{prop:blowup-o1-generic-point}.
\end{proof}
\begin{corollary}\label{cor:tropical-section-on-pic}
Let $k$ be a field of characteristic different from $2$. Let $\Gamma'$ be a graph specializing to one of the graphs appearing in Theorem \ref{thm:tree-o2-nonvanishing}. Let $\widehat{Q_{\Gamma'}}$ be the fraction field of the complete local ring of $\overline{P_{1,g}}$ at $\pi^{-1}(\overline{Z_{\Gamma'}})$. Then if $\mathscr{C}_{g,\widehat{Q_{\Gamma'}}}$ is the base change of the universal curve to $\widehat{Q_{\Gamma'}}$, the sequence $$1\to \pi_1^{\text{\'et}}(\mathscr{C}_{g,\overline{\widehat{Q_{\Gamma'}}}})/L^3\pi_1^{\text{\'et}}(\mathscr{C}_{g,\overline{\widehat{Q_{\Gamma'}}}})\to \pi_1^{\text{\'et}}(\mathscr{C}_{g,{\widehat{Q_{\Gamma'}}}})/L^3\pi_1^{\text{\'et}}(\mathscr{C}_{g,\overline{\widehat{Q_{\Gamma'}}}})\to \on{Gal}(\overline{\widehat{Q_{\Gamma'}}}/{\widehat{Q_{\Gamma'}}})\to 1$$ does not split.
\end{corollary}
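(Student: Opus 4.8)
\emph{Proof plan.} The plan is to deduce this corollary from the non‑vanishing of $\widetilde{o_{2,\text{\'et}}}$ over the ``deep'' local field $\widehat{S_{T_g}}$ established in Proposition~\ref{prop:o2-generic-point-of-blowup}, by transporting that non‑vanishing along a specialization map. The structure of the argument runs exactly parallel to the deduction of Corollary~\ref{cor:o1-tropical-section-conjecture} from Proposition~\ref{prop:blowup-o1-generic-point}, with $\widetilde{o_{2,\text{\'et}}}$ and the fields $\widehat{S_\Gamma}$ playing the roles of $o_{1,\text{\'et}}$ and $\widehat{L_\Gamma}$.

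First I would reduce to showing that $\widetilde{o_{2,\text{\'et}}}|_{\widehat{Q_{\Gamma'}}}$ is non‑zero when $\on{char}(k)=0$, and that $\widetilde{o_{2,\text{\'et}}}^{(p)}|_{\widehat{Q_{\Gamma'}}}$ is non‑zero when $\on{char}(k)=p>2$. The key observation is that the generic point of the complete local ring defining $\widehat{Q_{\Gamma'}}$ maps to the generic point of $\overline{P_{1,g}}$, which lies in $P_{1,g}$ and corresponds to an automorphism‑free curve equipped with a degree one divisor class; one thus obtains a morphism $\on{Spec}(\widehat{Q_{\Gamma'}})\to \mathbf{Pic}^1_{\mathscr{C}_g/\mathscr{M}_g}$, and by the defining obstruction property of $\widetilde{o_{2,\text{\'et}}}$ (Section~\ref{subsubsec:etale-o2} and the introduction) the non‑vanishing of its pullback forces the displayed $2$‑nilpotent exact sequence for $\mathscr{C}_{g,\widehat{Q_{\Gamma'}}}$ not to split. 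In positive characteristic one uses $\widetilde{o_{2,\text{\'et}}}^{(p)}$, which obstructs splitting of the maximal prime‑to‑$p$ quotient of that sequence, and any splitting of the full profinite sequence would induce a splitting of this quotient. (Implicit here is that $\widehat{Q_{\Gamma'}}$ is well defined, i.e.\ that $\pi^{-1}(\overline{Z_{\Gamma'}})$ is irreducible; this holds by Proposition~\ref{prop:irreducible-fibers}, since $\Gamma'$, being a contraction of the tree $T_g$, is itself a stable tree.)

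Next I would construct the specialization map. Let $T_g$ be a tree as in Theorem~\ref{thm:tree-o2-nonvanishing} to which $\Gamma'$ specializes and let $\widehat{S_{T_g}}$ be the local field of Proposition~\ref{prop:o2-generic-point-of-blowup}. Since $\overline{Z_{T_g}}\subseteq\overline{Z_{\Gamma'}}$, the generic point of $F_{T_g}^{\circ,\text{ns}}$ maps, under the composite of the blowdown $P_{T_g}\to\overline{P_{1,g}}$ and the forgetful morphism, into $\pi^{-1}(\overline{Z_{T_g}})\subseteq\pi^{-1}(\overline{Z_{\Gamma'}})$; in particular it is a specialization of the generic point of $\pi^{-1}(\overline{Z_{\Gamma'}})$. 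Exactly as in the construction of the map $\widehat{K_{\Gamma'}}\to\widehat{L_\Gamma}$ used in the proof of Corollary~\ref{cor:o1-tropical-section-conjecture}, passing to complete local rings and fraction fields then yields a field homomorphism $\widehat{Q_{\Gamma'}}\to\widehat{S_{T_g}}$ compatible with the maps of both spectra to $\mathbf{Pic}^1_{\mathscr{C}_g/\mathscr{M}_g}$. By the functoriality of $\widetilde{o_{2,\text{\'et}}}$ in the quotient groups $M(-,-)$ (established in Section~\ref{sec:Non-abelian-cohomology}; cf.\ Remark~\ref{rem:o2-functoriality}), the restriction of $\widetilde{o_{2,\text{\'et}}}|_{\widehat{Q_{\Gamma'}}}$ along this homomorphism equals $\widetilde{o_{2,\text{\'et}}}|_{\widehat{S_{T_g}}}$, which is non‑zero by Proposition~\ref{prop:o2-generic-point-of-blowup}. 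Since the restriction of the zero class is zero, this forces $\widetilde{o_{2,\text{\'et}}}|_{\widehat{Q_{\Gamma'}}}\neq 0$, and the identical argument with $\widetilde{o_{2,\text{\'et}}}^{(p)}$ handles the case $\on{char}(k)=p>2$.

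The hard part, as in the $o_1$ case, is the construction of the natural field homomorphism $\widehat{Q_{\Gamma'}}\to\widehat{S_{T_g}}$ together with the verification that $\widetilde{o_{2,\text{\'et}}}$ is compatible along it. Here one must argue carefully with iterated complete local rings while moving between the Caporaso compactification $\overline{P_{1,g}}$, the blowup $\overline{P_{1,g,T_g}}=\on{Bl}_{q^{-1}(\overline{Z_{T_g}})}\overline{P_{1,g}}^0$ and the open subscheme $P_{T_g}$ of its regular locus, and the distinction between these coarse spaces and the stack $\mathbf{Pic}^1_{\mathscr{C}_g/\mathscr{M}_g}$. The Cohen--Macaulayness of $\overline{P_{1,g}}$ and of its blowup (Lemma~\ref{lem:cohen-macaulay}), together with the irreducibility of the fibers over tree strata (Proposition~\ref{prop:irreducible-fibers}), is precisely what guarantees that the schemes involved are regular along the relevant codimension‑one subschemes, so that this comparison and the functoriality of the quotient‑valued class $\widetilde{o_{2,\text{\'et}}}$ go through. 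Once this map is in hand, the remainder of the argument is formal and parallels the proof of Corollary~\ref{cor:o1-tropical-section-conjecture}.
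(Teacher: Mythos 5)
Your proposal is correct and follows essentially the same route as the paper: the paper's own proof simply reduces to showing $\widetilde{o_{2,\text{\'et}}}|_{\widehat{Q_{\Gamma'}}}\neq 0$ and then invokes Proposition \ref{prop:o2-generic-point-of-blowup} via the pullback to $\widehat{S_\Gamma}$, exactly as you do. Your write-up merely makes explicit the construction of the field homomorphism $\widehat{Q_{\Gamma'}}\to\widehat{S_\Gamma}$, the functoriality of the quotient-valued class, and the prime-to-$p$ variant, all of which the paper leaves implicit by analogy with the proof of Corollary \ref{cor:o1-tropical-section-conjecture}.
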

\begin{proof}
It suffices to show $\widetilde{o_{2,\text{\'et}}}|_{\widehat{Q_{\Gamma'}}}$ is non-zero. But by Proposition \ref{prop:o2-generic-point-of-blowup}, its pullback to $\widehat{S_\Gamma}$ is non-zero, as desired.
\end{proof}
We immediately deduce:
\begin{corollary}
Let $\Gamma'$ be a graph specializing to one of the graphs appearing in Theorem \ref{thm:tree-o2-nonvanishing}. Then the tropical section conjecture (Conjecture \ref{conj:tropical-section-conjecture}) is true for $\Gamma'$ over fields of characteristic different from $2$. In fact the sequence $$1\to \pi_1^{\text{\'et}}(\mathscr{C}_{g,\overline{\widehat{K_{\Gamma'}}}})/L^3\pi_1^{\text{\'et}}(\mathscr{C}_{g,\overline{\widehat{K_{\Gamma'}}}})\to \pi_1^{\text{\'et}}(\mathscr{C}_{g,{\widehat{K_{\Gamma'}}}})/L^3\pi_1^{\text{\'et}}(\mathscr{C}_{g,\overline{\widehat{K_{\Gamma'}}}})\to \on{Gal}(\overline{\widehat{K_{\Gamma'}}}/{\widehat{K_{\Gamma'}}})\to 1$$ does not split.
\end{corollary}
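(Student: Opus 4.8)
The plan is to deduce the statement from Corollary \ref{cor:tropical-section-on-pic} by a base-change argument: the field $\widehat{K_{\Gamma'}}$ embeds into $\widehat{Q_{\Gamma'}}$ compatibly with the structure maps to $\mathscr{M}_g$, and a splitting of the $2$-nilpotent sequence over $\widehat{K_{\Gamma'}}$ would pull back to a splitting over $\widehat{Q_{\Gamma'}}$, which that corollary forbids.

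First I would check that the hypotheses are consistent. Contracting an edge of a graph can never destroy a cycle, so any stable graph $\Gamma'$ specializing to one of the trees $\Gamma$ of Theorem \ref{thm:tree-o2-nonvanishing} is itself a stable tree; hence by Proposition \ref{prop:irreducible-fibers} the fiber $\pi^{-1}(\overline{Z_{\Gamma'}})$ of $\pi\colon \overline{P_{1,g}}\to \overline{M_g}$ over $\overline{Z_{\Gamma'}}$ is irreducible, and (since $\pi$ is proper and surjective onto the boundary) it dominates $\overline{Z_{\Gamma'}}$, so its generic point lies over the generic point $\eta$ of $\overline{Z_{\Gamma'}}$. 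Because $g>2$, the generic curve with reduction type $\Gamma'$ has trivial automorphisms (this is exactly what makes $\widehat{K_{\Gamma'}}$ a field rather than a gerbe, cf.\ the remark following Conjecture \ref{conj:tropical-section-conjecture}), so $\eta$ lies in the automorphism-free locus and there is no discrepancy between the stack $\overline{\mathscr{M}_g}$ used to define $\widehat{K_{\Gamma'}}$ and the coarse space $\overline{M_g}$ near $\eta$. Thus $\pi$ induces a local homomorphism $\mathscr{O}_{\overline{M_g},\eta}\to \mathscr{O}_{\overline{P_{1,g}},\xi}$ of local rings, with regular source (as $\overline{\mathscr{M}_g}$ is smooth), and completing yields an embedding of fields $\widehat{K_{\Gamma'}}\hookrightarrow \widehat{Q_{\Gamma'}}$ (passing to a branch of $\overline{P_{1,g}}$ at $\xi$ if necessary) through which the tautological map $\on{Spec}(\widehat{Q_{\Gamma'}})\to \mathscr{M}_g$ factors.

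Next I would identify the exact sequences. By construction $\mathscr{C}_{g,\widehat{Q_{\Gamma'}}}$ is the base change of $\mathscr{C}_{\widehat{K_{\Gamma'}}}$ along $\widehat{K_{\Gamma'}}\hookrightarrow \widehat{Q_{\Gamma'}}$; since $\mathscr{C}_{\widehat{K_{\Gamma'}}}$ is a proper curve, its geometric étale fundamental group is unchanged under the extension of algebraically closed fields $\overline{\widehat{K_{\Gamma'}}}\hookrightarrow \overline{\widehat{Q_{\Gamma'}}}$, and therefore $\pi_1^{\text{\'et}}(\mathscr{C}_{g,\widehat{Q_{\Gamma'}}})$ is the fiber product $\pi_1^{\text{\'et}}(\mathscr{C}_{\widehat{K_{\Gamma'}}})\times_{\on{Gal}(\overline{\widehat{K_{\Gamma'}}}/\widehat{K_{\Gamma'}})}\on{Gal}(\overline{\widehat{Q_{\Gamma'}}}/\widehat{Q_{\Gamma'}})$. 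Passing to the quotient by $L^3$ of the geometric fundamental group, the $2$-nilpotent sequence over $\widehat{Q_{\Gamma'}}$ is obtained from the one over $\widehat{K_{\Gamma'}}$ by pullback along the natural map $\on{Gal}(\overline{\widehat{Q_{\Gamma'}}}/\widehat{Q_{\Gamma'}})\to \on{Gal}(\overline{\widehat{K_{\Gamma'}}}/\widehat{K_{\Gamma'}})$, so any splitting of the former restricts to (and any splitting of the latter induces) a splitting of the other.

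Finally, suppose for contradiction that the $2$-nilpotent sequence over $\widehat{K_{\Gamma'}}$ splits. Then by the previous paragraph it splits over $\widehat{Q_{\Gamma'}}$, contradicting Corollary \ref{cor:tropical-section-on-pic}. Hence it does not split, and a fortiori the full fundamental sequence (\ref{fundamental-exact-sequence}) for $\mathscr{C}_{\widehat{K_{\Gamma'}}}$ has no sections (any such would descend to the $2$-nilpotent quotient), so $\mathscr{C}_{\widehat{K_{\Gamma'}}}$ trivially satisfies the section conjecture, i.e.\ Conjecture \ref{conj:tropical-section-conjecture} holds for $\Gamma'$. In characteristic $p>2$ the same argument applies verbatim with all fundamental groups replaced by their prime-to-$p$ versions and Corollary \ref{cor:tropical-section-on-pic} by its positive-characteristic analogue. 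The main thing to get right is the bookkeeping in the second and third paragraphs—producing the field embedding $\widehat{K_{\Gamma'}}\hookrightarrow\widehat{Q_{\Gamma'}}$ compatibly with the maps to $\mathscr{M}_g$ and with the identification of the base-changed universal curve, and invoking the invariance of geometric $\pi_1$—but each of these is routine once the coarse-space-versus-stack point near $\eta$ is handled.
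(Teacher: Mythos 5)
Your proposal is correct and takes essentially the same route as the paper: the paper's own proof just cites Corollary \ref{cor:tropical-section-on-pic} (non-vanishing of $\widetilde{o_{2,\text{\'et}}}$, resp.\ $\widetilde{o_{2,\text{\'et}}}^{(p)}$, after pullback to $\widehat{Q_{\Gamma'}}$) and calls the result immediate, which is exactly your observation that a splitting of the $2$-nilpotent sequence over $\widehat{K_{\Gamma'}}$ would pull back along $\widehat{K_{\Gamma'}}\hookrightarrow\widehat{Q_{\Gamma'}}$ to a splitting forbidden by that corollary. One small caveat: your parenthetical that the generic curve of reduction type $\Gamma'$ is automorphism-free for $g>2$ is not accurate in general (e.g.\ genus-one leaves of such trees carry elliptic involutions), but this extra justification goes beyond what the paper itself supplies and does not affect the substance of the pullback argument.
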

\begin{proof}
We have just shown that $\widetilde{o_{2, \text{\'et}}}$ (resp.~$\widetilde{o_{2, \text{\'et}}}^{(p)}$) does not vanish after pulling back to $\widehat{Q_{\Gamma'}}$; the result is immediate.
\end{proof}
Finally, we have the following simple corollary:
\begin{corollary}\label{cor:generic-point-of-pic}
Let $k$ be a field of characteristic different from $2$, and let $Q$ be the field of meromorphic functions on $P_{1,g,k}$. Then if $g>2$ is even and if $\mathscr{C}_{g,Q}$ is the base change of the universal curve to $Q$, the sequence 
$$1\to \pi_1^{\text{\'et}}(\mathscr{C}_{g,\overline{Q}})/L^3\pi_1^{\text{\'et}}(\mathscr{C}_{g,\overline{Q}})\to \pi_1^{\text{\'et}}(\mathscr{C}_{g,{Q}})/L^3\pi_1^{\text{\'et}}(\mathscr{C}_{g,\overline{Q}})\to \on{Gal}(\overline{Q}/{Q})\to 1$$
does not split.
\end{corollary}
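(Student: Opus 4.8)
The plan is to reduce the assertion to the non-vanishing of the étale secondary Morita class at the generic point of $P_{1,g}$, which (up to pullback) has already been established in Proposition~\ref{prop:o2-generic-point-of-blowup}. Recall from the construction of $\widetilde{o_{2,\text{\'et}}}$ (Section~\ref{subsubsec:etale-o2} and Section~\ref{sec:Non-abelian-cohomology}), and as used in the proofs of the two preceding corollaries, that for $Q=k(P_{1,g})$ the class $\widetilde{o_{2,\text{\'et}}}\in M(P_{1,g},\widehat{\mathbb{V}_2})$ has the property that if its restriction $\widetilde{o_{2,\text{\'et}}}|_Q$ is non-zero, then the sequence $1\to \pi_1^{\text{\'et}}(\mathscr{C}_{g,\overline{Q}})/L^3\pi_1^{\text{\'et}}(\mathscr{C}_{g,\overline{Q}})\to \pi_1^{\text{\'et}}(\mathscr{C}_{g,{Q}})/L^3\pi_1^{\text{\'et}}(\mathscr{C}_{g,\overline{Q}})\to \on{Gal}(\overline{Q}/{Q})\to 1$ in the statement does not split (here one uses that the generic point of $P_{1,g}$ lies over the generic, hence automorphism-free, point of $M_g$, so that it genuinely corresponds to a $Q$-point of $\mathbf{Pic}^1_{\mathscr{C}_g/\mathscr{M}_g,k}$ and $\mathscr{C}_{g,Q}$ is the associated curve). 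So it suffices to prove $\widetilde{o_{2,\text{\'et}}}|_Q\neq 0$.

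To do this, fix $\Gamma=T_g$ a stable tree of genus $g$ admitting an involution that fixes no vertex and stabilizes exactly one edge; such a tree exists precisely because $g$ is even — for instance take two vertices of genus $g/2$ joined by a single edge, with the involution exchanging them — so that $\Gamma$ satisfies the hypotheses of Theorem~\ref{thm:tree-o2-nonvanishing}. By Proposition~\ref{prop:o2-generic-point-of-blowup}, the restriction $\widetilde{o_{2,\text{\'et}}}|_{\widehat{S_\Gamma}}$ is non-zero, where $\widehat{S_\Gamma}$ is the fraction field of the complete local ring of $P_\Gamma$ at the generic point of $F_\Gamma^{\circ,\text{ns}}$. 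Since $P_{1,g}$ is a dense open subscheme of $P_\Gamma$ (with complement $F_\Gamma^{\circ,\text{ns}}$, by the construction in Section~\ref{subsubsec:o2-degeneration}), one has $k(P_\Gamma)=k(P_{1,g})=Q$, and both the discrete valuation ring $\mathscr{O}_{P_\Gamma,F_\Gamma^{\circ,\text{ns}}}$ and its completion have fraction field containing $Q$. Thus there is a factorization $\on{Spec}(\widehat{S_\Gamma})\to\on{Spec}(Q)\to P_{1,g}$, and by functoriality of the non-abelian quotient $M(-,\widehat{\mathbb{V}_2})$ (Section~\ref{sec:Non-abelian-cohomology}) the class $\widetilde{o_{2,\text{\'et}}}|_{\widehat{S_\Gamma}}$ is the further restriction of $\widetilde{o_{2,\text{\'et}}}|_Q$. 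As the former is non-zero, so is the latter, which completes the argument when $\on{char}(k)=0$. (Equivalently, this is the special case $\Gamma'=$ trivial graph of Corollary~\ref{cor:tropical-section-on-pic}, once one notes that there $\widehat{Q_{\Gamma'}}$ is just the function field $Q$.)

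In characteristic $p>2$ one runs the identical argument with the prime-to-$p$ class $\widetilde{o_{2,\text{\'et}}}^{(p)}$ and the positive-characteristic half of Proposition~\ref{prop:o2-generic-point-of-blowup}: a splitting of the full $L^3$-sequence above would push forward to a splitting of its prime-to-$p$ analogue, which the non-vanishing of $\widetilde{o_{2,\text{\'et}}}^{(p)}|_Q$ forbids. I expect no serious obstacle here: the real content of the corollary is upstream, in Proposition~\ref{prop:o2-generic-point-of-blowup} — hence in the surface-bundle computation of Theorem~\ref{P:nontrivialsecondaryMorita} and the Cohen--Macaulay and irreducible-fiber properties of the (blown-up) Caporaso compactification — together with the functoriality of the appendix's quotient construction $M(-,-)$. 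The only points needing care within this proof itself are the combinatorial observation that a tree $T_g$ of the required type exists exactly when $g$ is even, and the routine bookkeeping that the restriction maps among the relevant groups $M$ are compatible along $\on{Spec}(\widehat{S_\Gamma})\to\on{Spec}(Q)\to P_{1,g}$; both are immediate from the set-up already in place.
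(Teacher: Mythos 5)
Your argument is correct and is essentially the paper's proof: the paper deduces the statement in one line from Corollary~\ref{cor:tropical-section-on-pic} applied to the trivial graph (so that $\widehat{Q_{\Gamma'}}=Q$), which is exactly the reduction you carry out by hand via Proposition~\ref{prop:o2-generic-point-of-blowup} and the factorization $\on{Spec}(\widehat{S_\Gamma})\to\on{Spec}(Q)\to P_{1,g}$. Your explicit remarks on the existence of a suitable tree $T_g$ for even $g$ and on the prime-to-$p$ variant in characteristic $p>2$ match the hypotheses already built into Theorem~\ref{thm:tree-o2-nonvanishing} and Corollary~\ref{cor:tropical-section-on-pic}, so nothing further is needed.
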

\begin{proof}
This is immediate from Corollary \ref{cor:tropical-section-on-pic} for the case of the graph consisting only of a single vertex.
\end{proof}

\subsection{Arithmetic results} 
We now use the results above to show the existence of \emph{arithmetic} examples of curves trivially satisfying the section conjecture, over $p$-adic fields and then number fields. In this section we work over $\mathbb{Z}$. We first show the existence of examples such that $o_{1, \text{\'et}}$ obstructs sections. For a closed point $z$ of a scheme we denote by $\kappa(z)$ its residue field. 
\begin{theorem} \label{thm:examples-o1-p-adic}
Let $\Gamma$ be a graph as in Corollary \ref{cor:main-corollary-o1-pinwheel}. There exists a Zariski-dense set $S$ of closed points of $Z_\Gamma$ such that: for each $s\in S$, there exists a $\on{Frac}(W(\kappa(s)))$-point $s'$ of $\mathscr{M}_g$ specializing to $s$, such that the corresponding curve $\mathscr{C}_{g, s'}$ trivially satisfies the section conjecture (indeed $o_{1,\text{\'et}}|_{s'}$ is non-vanishing). 
\end{theorem}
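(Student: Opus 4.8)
The plan is to shift the degree-$2$ obstruction $o_{1,\text{\'et}}$ down to cohomological degree $1$ by a Gysin map along the boundary, apply the modified Chebotarev theorem (Theorem~\ref{modified-chebotarev}) to produce a Zariski-dense set of closed points at which this degree-$1$ class survives, and then transport the nonvanishing back up to the generic fibre of a one-parameter smoothing over a $p$-adic field. Concretely, by Corollary~\ref{cor:o1-gysin-corollary} the Gysin image $g_\Gamma(o_1)$ is nonzero in $H^1(E_\Gamma^\circ, R^1\pi_*\mathbb V_1)$ over $\mathbb{C}$; moreover the computation underlying Proposition~\ref{prop:gysin-image-o1} uses the map $\gamma\colon\pi_1(\Sigma_1)\to\on{Mod}(g)$ with $\gamma(a)=T_\Gamma$ the balanced Dehn multitwist and $\gamma(b)=S$ the symmetry permuting the marked loops, and $S$ fixes the ``balanced'' smoothing direction $[1:\cdots:1]$ in the fibres of $E_\Gamma^\circ\to\overline{Z_\Gamma}$. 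Hence that computation factors through the section $\tau\colon Z_\Gamma\hookrightarrow E_\Gamma^\circ$ cut out by this direction, so already $\tau^* g_\Gamma(o_1)\neq 0$ in $H^1$ of $Z_\Gamma$. Via the comparison isomorphisms of \S\ref{subsec:gysin} the same holds for $\tau^* g_\Gamma(o_{1,\text{\'et}})$, and spreading out yields a nonzero degree-$1$ class on a $\mathbb{Z}[1/N]$-model of $Z_\Gamma$, where $N$ collects the finitely many primes to be inverted (in particular every prime dividing the relevant order $d$). Working on $Z_\Gamma$ rather than $E_\Gamma^\circ$ here is exactly what will let us produce points over $\on{Frac}(W(\kappa(s)))$ itself, with no residue-field extension.

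Next I would apply Theorem~\ref{modified-chebotarev} to $\tau^* g_\Gamma(o_{1,\text{\'et}})$ on this model of $Z_\Gamma$: the set $S$ of closed points $s$ at which it does not vanish is Zariski-dense. Fix $s\in S$ with residue characteristic $p\nmid N$, let $C_s/\kappa(s)$ be the stable curve of type $\Gamma$ it classifies, and set $F:=\on{Frac}(W(\kappa(s)))$. Using that $\overline{\mathscr M_g}$ is smooth over $\mathbb{Z}$ with normal-crossings boundary, I smooth $C_s$ over $W(\kappa(s))$ in the balanced direction (each of the $c$ nodes deformed at rate $p$); the generic fibre $\mathscr C_{g,s'}$ is smooth, so this gives a point $s'\in\mathscr M_g(F)$ specializing to $s$, whose specialization arc meets every boundary branch transversally and, after blowing up, meets $E_\Gamma^\circ$ at the point $\tau(s)$.

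The heart of the argument is then the compatibility of Gysin maps in mixed characteristic, a variant of Proposition~\ref{EtaleGysinCompleted}: under the $p$-adic Gysin sequence of Proposition~\ref{gysin-etale} for $W(\kappa(s))$, the residue of $o_{1,\text{\'et}}|_{s'}\in H^2(F,\widehat{\mathbb V_1}|_F)$ in $H^1(\kappa(s),\widehat{\mathbb V_1}(-1)_I)$ equals $\bigl(\tau^* g_\Gamma(o_{1,\text{\'et}})\bigr)|_s$. The transversality of the arc ensures the inertia at $p$ in $\Gal(\overline F/F)$ surjects onto $I_\Gamma$, which is the Dehn-multitwist inertia by Lemma~\ref{lem:dehn-twist-inertia}, and the identification of the two residue maps follows from a diagram chase relating the decomposition group of the arc $\on{Spec} W(\kappa(s))$ to the fundamental group of a deleted neighborhood of $E_\Gamma^\circ$ (the argument is parallel to Lemma~\ref{lem:galois-factoring} and Proposition~\ref{prop:gysin-image-o1}, now over a discrete valuation ring rather than a field). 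Granting this, $\bigl(\tau^* g_\Gamma(o_{1,\text{\'et}})\bigr)|_s\neq 0$ forces $o_{1,\text{\'et}}|_{s'}\neq 0$. Finally, by Proposition~\ref{prop:o1-vanishing-tautological-consequence}(2) the abelianized fundamental exact sequence of $\mathscr C_{g,s'}/F$ does not split; a fortiori the sequence (\ref{fundamental-exact-sequence}) does not split, so $\mathscr C_{g,s'}$ has no $\pi_1$-section, hence $\mathscr C_{g,s'}(F)=\emptyset$, and so $\mathscr C_{g,s'}$ trivially satisfies the section conjecture. Since $S$ is Zariski-dense in $Z_\Gamma$ by construction, this proves the theorem.

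I expect the main obstacle to be the Gysin compatibility of the third paragraph: the equal-characteristic statement (Proposition~\ref{EtaleGysinCompleted}) is already intricate, and here one must compare a ``geometric'' residue along the boundary stratum over the finite field $\kappa(s)$ with an ``arithmetic'' residue along the closed point of $\on{Spec} W(\kappa(s))$, which forces one to construct the smoothing and track its transversality to all the boundary branches carefully enough that the $p$-adic inertia is matched with $I_\Gamma$. A secondary point, already addressed above, is to realize the smoothing over $W(\kappa(s))$ itself rather than an unramified extension — which is why one first restricts $g_\Gamma(o_{1,\text{\'et}})$ to the balanced, hence $\kappa(s)$-rational, section $\tau$ before invoking Chebotarev — together with bookkeeping of which primes must be inverted.
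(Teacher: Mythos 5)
Your overall architecture — push the degree-two class into degree one by a boundary Gysin map, apply Theorem \ref{modified-chebotarev} to get a Zariski-dense set of closed points where the degree-one class survives, then deform transversally to a $\on{Frac}(W(\kappa(s)))$-point and use a Gysin compatibility to conclude $o_{1,\text{\'et}}|_{s'}\neq 0$ — is exactly the paper's. But the step you insert to land on $Z_\Gamma$ is a genuine gap. The ``balanced section'' $\tau\colon Z_\Gamma\hookrightarrow E_\Gamma^\circ$ is not defined: $E_\Gamma$ is the projectivization of a normal bundle that decomposes as a direct sum of line bundles (one smoothing parameter per node) with no canonical trivializations, and these summands are permuted by the monodromy of $Z_\Gamma$, so the point $[1:\cdots:1]$ has no intrinsic meaning (at best one gets local choices or a multisection after auxiliary data). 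Even granting some section, non-vanishing of $g_\Gamma(o_1)$ on $E_\Gamma^\circ$ (Corollary \ref{cor:o1-gysin-corollary}) does not imply non-vanishing of its restriction to a subvariety: the detection in Proposition \ref{prop:gysin-image-o1} pulls back along a loop in a deleted neighborhood of $E_\Gamma^\circ$ lifting the mapping class $S$, and you give no argument that such a loop can be taken inside $\tau(Z_\Gamma)$; ``the computation factors through the section'' is precisely what would have to be proved. The paper avoids this entirely: it applies Theorem \ref{modified-chebotarev} directly to $g_\Gamma(o_{1,\text{\'et}}^{(p)})$ on (a scheme-like open substack of) $E^{\circ}_{\Gamma,\mathbb{F}_p}$ — non-vanishing on every open being guaranteed by Proposition \ref{prop:blowup-o1-generic-point} — and takes $\kappa(s)$ to be the residue field of the chosen closed point of the exceptional divisor, then maps these points to $Z_\Gamma$; no section of $E_\Gamma^\circ\to Z_\Gamma$ is needed, and once it is dropped your ``balanced direction'' plays no role (any deformation transverse to $E_\Gamma^\circ$ works, and exists because $\on{Bl}_{\overline{Z_\Gamma}}\overline{\mathscr{M}_g}$ is smooth).

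The second problem is the mixed-characteristic compatibility, which you correctly identify as the crux but leave as a sketch, and which your setup makes harder than necessary. The full profinite class $o_{1,\text{\'et}}$ with $\widehat{\mathbb{Z}}$-coefficients does not spread out over a $\mathbb{Z}[1/N]$-model (the $p$-part of $\widehat{\mathbb{V}_1}$ is not lisse in residue characteristic $p$), so ``spreading out'' as stated is not available. The paper instead works prime-to-$p$: the classes $o_{1,\text{\'et}}^{(p)}$ and sheaves $\widehat{\mathbb{V}_1}^{(p)}$ live over $\mathscr{M}_{g,W(\kappa(s))}$, so the needed statement is just the commutativity of the square of Gysin maps with vertical restriction along $s'\colon\on{Spec}(\on{Frac}(W(\kappa(s))))\to\mathscr{M}_{g,W(\kappa(s))}$ and along $s$, with transversality of the arc matching the arithmetic inertia with $I_\Gamma$; your ``residue equals restriction of $\tau^*g_\Gamma$'' identity is the same kind of statement but is never set up on an integral model, and with $\widehat{\mathbb{Z}}$-coefficients it cannot be. With the two repairs above — run Chebotarev on $E^{\circ}_{\Gamma,\mathbb{F}_p}$ rather than on $Z_\Gamma$, and phrase the specialization step for $o_{1,\text{\'et}}^{(p)}$ over $W(\kappa(s))$ — your argument becomes the paper's proof.
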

\begin{proof}
Notation is as in Proposition \ref{prop:blowup-o1-generic-point}. We first observe that for $p\gg g$, we have $$g_\Gamma(o_{1, \text{\'et}}^{(p)})\not=0\in H^1(E_{\Gamma, \mathbb{F}_p}^{\circ}, j^*R^1\pi_* \widehat{\mathbb{V}_1}^{(p)}),$$ by Proposition \ref{prop:blowup-o1-generic-point} and Corollary \ref{cor:main-corollary-o1-pinwheel}. The restriction of this class to any open subscheme of $E_{\Gamma, \mathbb{F}_p}^{\circ}$ is non-zero by Proposition \ref{prop:blowup-o1-generic-point} as well. Hence by Theorem \ref{modified-chebotarev} (applied after replacing $E_{\Gamma, \mathbb{F}_p}^{\circ}$ with an open substack representable by a scheme, and the map $\gamma^*$ defined in the proof of Proposition \ref{prop:gysin-image-o1}), there exists a Zariski-dense set of points $S_p$ of $E_{\Gamma, \mathbb{F}_p}^{\circ}$ such that $o_{1, \text{\'et}}^{(p)}|_s$ is non-zero for $s\in S_p$. Let $S$ be the union of the images of the $S_p$ (over all $p$) in $Z_\Gamma$.

Now for $s\in S_p$, let $s'$ be any deformation of $s$ into $\mathscr{M}_g$, over $W(\kappa(s))$, which is transverse to $E_{\Gamma}^{\circ}$ (i.e.~a local equation for $E_{\Gamma}^{\circ}$ pulls back to a uniformizer of $W(\kappa(s))$). Such a lift exists as $\on{Bl}_{\overline{Z_\Gamma}}\overline{\mathscr{M}_g}$ is smooth. Now we have a commutative diagram of Gysin maps
$$\xymatrix{
H^2({\mathscr{M}_{g, W(\kappa(s))}}, \widehat{\mathbb{V}_1}^{(p)})\ar[r]^{g_\Gamma} \ar[d] & H^1(E_{\Gamma, \mathbb{F}_p}^{\circ}, j^*R^1\pi_* \widehat{\mathbb{V}_1}^{(p)})\ar[d] \\
H^2(\on{Frac}(W(\kappa(s))), \widehat{\mathbb{V}_1}^{(p)}|_{\on{Frac}(W(\kappa(s)))})\ar[r] & H^1(\kappa(s), j^*R^1\pi_* \widehat{\mathbb{V}_1}^{(p)}|_{\kappa(s)})
}$$
where the vertical arrows are restriction. By our choice of $s$, we have $g_\Gamma(o_{1, \text{\'et}}^{(p)})|_{\kappa(s)}$ non-zero. Hence $o_{1, \text{\'et}}^{(p)}|_{s'}$ is non-zero, as desired.
\end{proof}
\begin{remark}
The method above could be used equally well to show the existence of examples of curves over e.g.~$\mathbb{F}_q((t))$ satisfying the section conjecture or indeed to show the existence of examples over any complete discrete valuation ring with finite residue field.
\end{remark}
We now use an essentially identical argument to product examples where $\widetilde{o_{2,\text{\'et}}}$ obstructs sections, with some mild complications arising from the fact that $\widetilde{o_{2,\text{\'et}}}$ is not a cohomology class, but rather a coset of such:
\begin{theorem}\label{thm:example-o2-p-adic}
Let $\Gamma$ be a graph as in Theorem \ref{thm:tree-o2-nonvanishing}. There exists a Zariski-dense set $S$ of closed points of $F_{\Gamma}^{\circ,\text{ns}}$ such that: for each $s\in S$, there exists a $\on{Frac}(W(\kappa(s)))$-point $s'$ of $P_{1,g}$ specializing to $s$, such that the corresponding curve $\mathscr{C}_{g, s'}$ trivially satisfies the section conjecture (indeed $o_{2,\text{\'et}}|_{s'}$ is non-vanishing). 
\end{theorem}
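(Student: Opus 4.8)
The plan is to transcribe the proof of Theorem~\ref{thm:examples-o1-p-adic} essentially verbatim, replacing the triple $(\mathscr{M}_g, E_\Gamma^\circ, o_{1,\text{\'et}})$ by $(P_{1,g}, F_\Gamma^{\circ,\text{ns}}, \widetilde{o_{2,\text{\'et}}})$ and replacing the Gysin map into $H^1$ by the map $h_\Gamma$ into the quotient functor $N$ constructed in Section~\ref{subsubsec:o2-degeneration}. The first step is the positive-characteristic input: for every prime $p$ with $p>2$ and $p$ sufficiently large relative to $g$, the class $h_\Gamma(\widetilde{o_{2,\text{\'et}}}^{(p)})$ is nonzero in $N(F_{\Gamma,\mathbb{F}_p}^{\circ,\text{ns}},R^1\pi_*\widehat{\mathbb{V}_2}^{(p)})$, and its restriction to every nonempty Zariski-open of $F_{\Gamma,\mathbb{F}_p}^{\circ,\text{ns}}$ remains nonzero. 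The first assertion follows from Corollary~\ref{cor:o2-gysin-nonvanishing} together with the cospecialization argument already used in Propositions~\ref{prop:blowup-o1-generic-point} and~\ref{prop:o2-generic-point-of-blowup} --- here one uses that $\widetilde{o_2}$ is $2$-torsion by Theorem~\ref{thm:tree-o2-nonvanishing}, so every sheaf and cohomology group in sight is finite of $2$-power order and cospecialization genuinely applies. The second assertion follows, as in the proof of Proposition~\ref{prop:blowup-o1-generic-point}, from the injectivity of first \'etale cohomology of the normal scheme $P_\Gamma$ into the Galois cohomology of its generic point, which forces the restriction map $N(F_{\Gamma,\mathbb{F}_p}^{\circ,\text{ns}},-)\to N(\text{generic point},-)$ to be injective (the ``bad subgroup'' defining $N$, being functorial, is the preimage of the bad subgroup at the generic point); one may also need to shrink $F_{\Gamma,\mathbb{F}_p}^{\circ,\text{ns}}$ to an open substack representable by a scheme, which is harmless for the same reason.

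Next comes the Chebotarev step: one needs a Zariski-dense set $S_p$ of closed points $s$ of $F_{\Gamma,\mathbb{F}_p}^{\circ,\text{ns}}$ such that the restriction $h_\Gamma(\widetilde{o_{2,\text{\'et}}}^{(p)})|_{\kappa(s)}$ is nonzero in $N(\kappa(s),R^1\pi_*\widehat{\mathbb{V}_2}^{(p)}|_{\kappa(s)})$. Because $\widetilde{o_{2,\text{\'et}}}$ is not literally a cohomology class and $N$ is only a quotient of $H^1$, Theorem~\ref{modified-chebotarev} does not apply off the shelf; instead one establishes its analogue for the functor $N$. Concretely, $N$ is $H^1$ modulo the ``bad subgroup'' $K$ spanned by the Gysin images of the cup-product map $m$ and the coboundary $\delta$ entering the definition of $M$ (Sections~\ref{subsec:nonabelian-cohomology} and~\ref{subsubsec:defn-of-N}). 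Since $m$ and $\delta$ commute with base change and with restriction to subgroups, $K$ is a functorial subgroup, and the restriction--corestriction and inflation--restriction manipulations in the proofs of Lemmas~\ref{p-group-reduction} and~\ref{h1-cyclic-subgroup} descend from $H^1$ to the quotient $N$ (using the projection formula for $m$ under corestriction). This reduces non-vanishing in $N$ to the case of a cyclic monodromy subgroup, whence the classical Chebotarev density theorem produces the desired $S_p$ exactly as in the proof of Theorem~\ref{modified-chebotarev}. Set $S=\bigcup_p \mathrm{image}(S_p)$ inside $F_\Gamma^{\circ,\text{ns}}$; it is Zariski-dense.

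For $s\in S_p$, lift $s$ to a point $s'\in P_{1,g}(\on{Frac}(W(\kappa(s))))$ transverse to $F_\Gamma^{\circ,\text{ns}}$, i.e.\ so that a local equation of $F_\Gamma^{\circ,\text{ns}}$ pulls back to a uniformizer of $W(\kappa(s))$. Such a lift exists because near a generic closed point of $F_\Gamma^{\circ,\text{ns}}$ both $P_\Gamma$ and the divisor $F_\Gamma^{\circ,\text{ns}}$ are regular: this is precisely why $P_\Gamma$ was built as (an open in) a blowup of $\overline{P_{1,g}}^0$ along an lci center, using Lemma~\ref{lem:cohen-macaulay} (Cohen--Macaulayness), Proposition~\ref{prop:irreducible-fibers} (irreducibility of the exceptional fibers), and the deletion of $F_\Gamma^{\circ,\text{sing}}$. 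Now run the commutative square of Gysin maps, with the functors $M,N$ in place of $H^2,H^1$ and using their functoriality from Section~\ref{sec:Non-abelian-cohomology}:
$$\xymatrix{
M(P_{1,g,W(\kappa(s))},\widehat{\mathbb{V}_2}^{(p)}) \ar[r]^-{h_\Gamma} \ar[d] & N(F_{\Gamma,\mathbb{F}_p}^{\circ,\text{ns}},R^1\pi_*\widehat{\mathbb{V}_2}^{(p)}) \ar[d] \\
M(\on{Frac}(W(\kappa(s))),\widehat{\mathbb{V}_2}^{(p)}) \ar[r]^-{h_s} & N(\kappa(s),R^1\pi_*\widehat{\mathbb{V}_2}^{(p)}).
}$$
The image of $\widetilde{o_{2,\text{\'et}}}^{(p)}|_{s'}$ under the point-level Gysin map $h_s$ equals $h_\Gamma(\widetilde{o_{2,\text{\'et}}}^{(p)})|_{\kappa(s)}$, which is nonzero by the choice of $s$; hence $\widetilde{o_{2,\text{\'et}}}^{(p)}|_{s'}\neq 0$ in $M(\on{Frac}(W(\kappa(s))),\widehat{\mathbb{V}_2}^{(p)})$. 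By the obstruction property of $\widetilde{o_{2,\text{\'et}}}$ recalled in the introduction, the $2$-nilpotent fundamental exact sequence of $\mathscr{C}_{g,s'}$ does not split, so $\mathscr{C}_{g,s'}$ admits no $\pi_1$-section, hence no $\on{Frac}(W(\kappa(s)))$-rational point, and therefore trivially satisfies the section conjecture.

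The only genuinely new ingredient, and the main obstacle, is the middle step: formulating and proving the ``$N$-valued Chebotarev'' statement, i.e.\ that membership of a class in the functorial subgroup $K$ can be tested on Frobenius elements. Granting the compatibility of $K$ with restriction and corestriction --- which should be a formal consequence of the projection formula and the functoriality of $m$ and $\delta$ --- the argument is otherwise a line-by-line copy of the proof of Theorem~\ref{thm:examples-o1-p-adic}, the only additional geometric input being the Cohen--Macaulayness and irreducibility facts for $\overline{P_{1,g}}$ recorded in Section~\ref{subsec:pic-preliminaries}.
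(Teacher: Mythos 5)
There is a genuine gap, and it sits exactly where you flagged it: the ``$N$-valued Chebotarev'' step. Your plan needs, at each closed point $s$, non-vanishing of the restricted class in $N(\kappa(s),\cdot)$, and you propose to get this by extending Theorem \ref{modified-chebotarev} to the functor $N$ via formal compatibility of the bad subgroup with restriction and corestriction. But the subgroup one quotients by to form $N(\kappa(s),\cdot)$ is generated by Gysin images of $m$ and $\delta$ applied to \emph{all} of $H^1(L,\widehat{\mathbb{V}_1}^{(p)}|_L)$ with $L=\on{Frac}(W(\kappa(s)))$; this local Galois cohomology group is not controlled by the finite monodromy quotient through which the global class factors, so the restriction--corestriction and inflation--restriction reductions that drive the proof of Theorem \ref{modified-chebotarev} (Lemmas \ref{p-group-reduction}, \ref{h1-cyclic-subgroup}) have no analogue for $N$: the bad subgroup can grow upon specialization in a way invisible to Frobenius conjugacy classes. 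So the step is not a formal consequence of the projection formula; as stated it would fail, and it is precisely here that a non-formal arithmetic input is needed.

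The paper avoids an $N$-valued Chebotarev altogether. It applies the ordinary Theorem \ref{modified-chebotarev} to the honest cohomology class $g_\Gamma(o_{2,\text{\'et}}^{(p)})\in H^1(F_{\Gamma,\mathbb{F}_p}^{\circ,\text{ns}},\cdot)$, which is non-zero because its image $h_\Gamma(\widetilde{o_{2,\text{\'et}}}^{(p)})$ in the quotient $N$ is non-zero (Corollary \ref{cor:o2-gysin-nonvanishing}); this produces the dense set $S_p$ with $g_\Gamma(o_{2,\text{\'et}}^{(p)})|_{\kappa(s)}\neq 0$ in plain $H^1(\kappa(s),\cdot)$. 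The remaining work, which replaces your missing step, is to show that this class is not annihilated in the passage from $H^1(\kappa(s),\cdot)$ to $N(\kappa(s),\cdot)$. By the definition of $N$ this reduces to the surjectivity of $H^1(P_{1,g,\mathscr{O}_L},\widehat{\mathbb{V}_1}^{(p)})\to H^1(L,\widehat{\mathbb{V}_1}^{(p)}|_L)$, proved using the section of the structure map $P_{1,g,\mathscr{O}_L}\to\on{Spec}(\mathscr{O}_L)$ together with inflation--restriction and a weight argument: since $\Gamma$ is a tree, inertia acts trivially on $\widehat{\mathbb{V}_1}^{(p)}$, and $\on{Hom}_{\text{cts}}(I^{\text{ab}},\widehat{\mathbb{V}_1}^{(p)})^{G_L/I}$ vanishes because $I^{\text{ab}}$ has weight $-2$ while $\widehat{\mathbb{V}_1}^{(p)}$ has weight $-1$. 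Note that this is exactly the kind of non-formal input (and the place where the tree hypothesis on $\Gamma$ is used) that your proposal would also need; the rest of your outline --- cospecialization, the transverse lift $s'$ using smoothness of $P_\Gamma$, and the commuting $M$--$N$ Gysin square --- matches the paper's argument.
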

\begin{proof}
By Corollary \ref{cor:o2-gysin-nonvanishing}, we have that for $p\gg 0$, $$h_\Gamma(\widetilde{o_{2, \text{\'et}}}^{(p)})\in N(F_{\Gamma, \mathbb{F}_p}^{\circ, \text{ns}}, i^*R^1j_*\widehat{\mathbb{V}_2}^{(p)})$$ is non-zero, where $j: P_{1,g}\to P_\Gamma$ is the natural inclusion and $i: F_\Gamma^{\circ, \text{ns}}\to P_\Gamma$ is the inclusion of its complement. Hence the same is true for $$g_\Gamma(o_{2, \text{\'et}}^{(p)})\in H^1(F_{\Gamma, \mathbb{F}_p}^{\circ, \text{ns}}, i^*R^1j_*\widehat{\mathbb{V}_2}^{(p)}),$$ where $o_{2, \text{\'et}}^{(p)}$ is defined as in Section \ref{subsubsec:etale-o2}. Let $S_p$ be the set of closed points of $F_\Gamma^{\circ,\text{ns}}$ such that for $s\in S$, $$g_\Gamma(o_{2, \text{\'et}}^{(p)})|_{\kappa(s)}\not=0;$$ by Theorem \ref{modified-chebotarev} applied to the map $\Tilde{\gamma}^*$ defined in the proof of Proposition \ref{prop:topological-o2-gysin}, this set is Zariski-dense in $F_{\Gamma, \mathbb{F}_p}^{\circ, \text{ns}}.$ As before let $S$ be the union of the $S_p$ and, for each $s\in S$, let $s'$ be a deformation of $s$ to a $\on{Frac}(W(\kappa(s)))$-point of $P_{1,g}$, transverse to $F_\Gamma^{\circ, \text{ns}}$. Such a deformation exists by the smoothness of $P_\Gamma$.

Now let $L=\on{Frac}(W(\kappa(s)))$ and $\mathscr{O}_L=W(\kappa(s))$. We wish to show that for an $L$-point $s'$ as above, we have $\widetilde{o_{2, \text{\'et}}}^{(p)}|_{s'}\not=0$. We have a commutative diagram of Gysin maps 
$$\xymatrix{
M(P_{1,g, \mathscr{O}_L}, \widehat{\mathbb{V}_2}^{(p)})\ar[r]^{h_\Gamma} \ar[d] & N(F_{\Gamma, \mathscr{O}_L}^{\circ, \text{ns}}, j^*R^1\pi_* \widehat{\mathbb{V}_2}^{(p)})\ar[d] \\
M(L, \widehat{\mathbb{V}_2}^{(p)}|_{L})\ar[r] & N(\kappa(s), j^*R^1\pi_* \widehat{\mathbb{V}_2}^{(p)}|_{\kappa(s)})
}$$
By assumption, $g_\Gamma(o_{2, \text{\'et}}^{(p)})|_{\kappa(s)}\not=0;$ it thus suffices to show that this class is not annihilated in the passage from $H^1(\kappa(s), j^*R^1\pi_* \widehat{\mathbb{V}_2}^{(p)}|_{\kappa(s)})$ to $N(\kappa(s), j^*R^1\pi_* \widehat{\mathbb{V}_2}^{(p)}|_{\kappa(s)})$.

As $h_\Gamma(o_{2, \text{\'et}}^{(p)})\not=0$ by assumption, it suffices to show that the natural map $$H^1(P_{1,g, \mathscr{O}_L}, \widehat{\mathbb{V}_1}^{(p)})\to H^1(L, \widehat{\mathbb{V}_1}^{(p)}|_L)$$ is surjective, by the definition of $N$ (see Section \ref{subsubsec:defn-of-N}). Now $$H^1(P_{1,g, \mathscr{O}_L}, \widehat{\mathbb{V}_1}^{(p)})\to H^1(\mathscr{O}_L, \widehat{\mathbb{V}_1}^{(p)}|_{\mathscr{O}_L})$$ is surjective because it has a section induced by the structure map $P_{1, g, \mathscr{O}_L}\to \on{Spec}(\mathscr{O}_L)$. Thus it suffices to show that the natural map $$H^1(\mathscr{O}_L, \widehat{\mathbb{V}_1}^{(p)}|_{\mathscr{O}_L})\to H^1(L, \widehat{\mathbb{V}_1}^{(p)}|_{L})$$ is surjective. But this follows from the inflation-restriction exact sequence; if $I\subset G_L:=\on{Gal}(\overline{L}/L)$ is the inertia subgroup, the cokernel of the above map injects into $$H^1(I,\widehat{\mathbb{V}_1}^{(p)})^{G_L/I}.$$ By assumption $\Gamma$ is a stable tree, so $I$ acts trivially on $\widehat{\mathbb{V}_1}^{(p)}.$ Thus this group is simply $\on{Hom}_{\text{cts}}(I^{\text{ab}}, \widehat{\mathbb{V}_1}^{(p)})^{G_L/I}.$ But this last vanishes for weight reasons; $I^{\text{ab}}$ has weight $-2$ and $\widehat{\mathbb{V}_1}^{(p)}$ has weight $-1$ (again as $\Gamma$ is a tree).
\end{proof}
\begin{remark}\label{rem:approximation-remark}
One may immediately use the above theorems to construct examples of curves over number fields for which the existence of $\pi_1$-sections is obstructed by $o_{1, \text{\'et}}$ (resp.~$\widetilde{o_{2,\text{\'et}}}$) by algebraization and Artin approximation. See e.g.~\cite[7.5]{stix2010period} for an explanation.
\end{remark}
\section{Appendix: Group cohomology constructions and computations}\label{sec:Non-abelian-cohomology}
\subsection{Obstructions arising from extensions by a $2$-nilpotent group}\label{subsec:nonabelian-cohomology}
Suppose we are given a short exact sequence of continuous maps of (not necessarily commutative) discrete or pro-finite groups
\[ 1 \to \pi \to \tilde{\pi} \to G \to 1.  \]
Then conjugation induces an outer action of $G$ on $\pi$. 

Let $\pi = L^1\pi \supset L^2\pi \supset \ldots$, where $L^{k+1}\pi = \overline{[\pi,L^k\pi]}$, be the lower central series of $\pi$. 
\subsubsection{Non-abelian cohomology computations}
Consider the sequences
\begin{equation}\label{seq:nilpotent_seq} 0 \to L^2\pi/L^3\pi \to  \pi/L^3\pi \to  \pi/L^2\pi \to 0 ,\end{equation}
\begin{equation} \label{seq:abelian_seq} 0\to \pi/L^2\pi\to \tilde\pi/L^2\pi\to G\to 1, \end{equation}
and 
\begin{equation} \label{seq:l2modl3extension}
0\to L^2\pi/L^3\pi\to \tilde\pi/L^3\pi\to \tilde\pi/L^2\pi\to 1.\end{equation}

\begin{definition}\label{def:EquivSections}
Let $$1 \to A \to B \to C \to 1$$ be a split exact sequence of groups, with $A$ abelian.
For two sections $s_1,s_2: C \to B$, we say they are equivalent if there exists some element $a \in A$ such that $s_1(c) = a s_2(c) a^{-1}$ for any $c\in C$.
\end{definition}

\begin{lemma}\label{lem:splittings-torsor}
Let $$1 \to A \to B \to C \to 1$$ be a exact sequence of (discrete or pro-finite) groups, with $A$ abelian. Then the set of continuous sections $s:C \to B$ up to equivalence is, if non-empty, canonically a torsor for $H^1(C, A)$.
\end{lemma}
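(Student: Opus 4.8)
The plan is to exhibit an explicit action of the group $Z^1(C,A)$ of continuous $1$-cocycles on the set of continuous sections of $B \to C$, and then to check that this descends to a simply transitive action of $H^1(C,A)$ on equivalence classes of sections. First I would fix a continuous section $s_0 \colon C \to B$ (the hypothesis is that one exists). For a continuous function $f \colon C \to A$, define $s_f(c) := f(c)\cdot s_0(c)$, where we use the inclusion $A \hookrightarrow B$ to regard $f(c)$ as an element of $B$. The key computation is to check when $s_f$ is a homomorphism: writing the $C$-action on $A$ induced by $s_0$ (via conjugation in $B$) as $c \cdot a$, one finds $s_f(cc') = f(c)\,(c\cdot f(c'))\, s_0(cc')$ while $s_f(c)s_f(c') = f(c)\, (c\cdot f(c'))\, s_0(c)s_0(c')$ — identical since $s_0$ is already a homomorphism. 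Wait: that shows $s_f$ is \emph{always} a homomorphism, which is wrong, so the correct computation is that $s_f(cc') = f(cc')\, s_0(cc')$ must equal $f(c)\,(c\cdot f(c'))\, s_0(c)s_0(c')= f(c)\,(c\cdot f(c'))\, s_0(cc')$, giving the cocycle condition $f(cc') = f(c)\cdot(c\cdot f(c'))$. Hence $s_f$ is a section if and only if $f \in Z^1(C,A)$, and every section arises this way since the difference of two sections composed into $A$ is well-defined and is seen to be a cocycle.

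Next I would identify the equivalence relation of Definition \ref{def:EquivSections} with the coboundary relation. Conjugating $s_f$ by a fixed $a \in A$ sends $s_f(c) = f(c)s_0(c)$ to $a f(c) s_0(c) a^{-1} = a\,f(c)\,(c\cdot a^{-1})\, s_0(c)$ (using $s_0(c)a^{-1}s_0(c)^{-1} = c\cdot a^{-1}$ and $A$ abelian), i.e.\ to $s_{f'}$ with $f'(c) = f(c) + a - c\cdot a$ in additive notation — precisely $f' = f + \partial a$ where $\partial a$ is the principal $1$-cocycle attached to $a$. Conversely, if $s_{f_1}$ and $s_{f_2}$ are equivalent via some $a \in A$, the same computation forces $f_1 - f_2 = \partial a$. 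Therefore two sections are equivalent exactly when their associated cocycles differ by a coboundary, so the set of sections up to equivalence is a torsor under $Z^1(C,A)/B^1(C,A) = H^1(C,A)$.

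The final point is canonicity: I must check the $H^1$-torsor structure does not depend on the auxiliary choice of $s_0$. Replacing $s_0$ by another section $s_0'$ changes the parametrization $f \mapsto s_f$ by a translation in $Z^1(C,A)$ (namely by the cocycle recording $s_0'$ relative to $s_0$) and also changes the $C$-action on $A$ by an inner automorphism of the action — but since $A$ is abelian this inner twist is trivial, so $Z^1$, $B^1$ and hence $H^1$ are literally the same group, and the two torsor structures agree after the evident translation. I expect the main (really the only) obstacle to be bookkeeping: being careful about left-versus-right conventions for the conjugation action and the cocycle condition, and about continuity in the profinite case (where one uses that $A$ is a discrete or profinite $C$-module and all maps in sight are continuous, so $Z^1_{\mathrm{cts}}$ and $B^1_{\mathrm{cts}}$ behave as expected). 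None of this is deep, but the sign/side conventions must be pinned down consistently with the rest of Section \ref{sec:Non-abelian-cohomology}.
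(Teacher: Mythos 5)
Your argument is correct, and it is the standard one. The paper itself does not spell out a proof: it disposes of this lemma with a one-line citation to Neukirch--Schmidt--Wingberg (\cite[I.5, Exercise 4]{neukirch2013cohomology}), and the computation you give --- twist a fixed section $s_0$ by a continuous map $f\colon C\to A$, observe that $s_f(c)=f(c)s_0(c)$ is a homomorphism precisely when $f$ satisfies the cocycle condition $f(cc')=f(c)\,(c\cdot f(c'))$, check that conjugating a section by $a\in A$ changes the associated cocycle by the principal cocycle attached to $a$, and note that every section arises as some $s_f$ since $c\mapsto s(c)s_0(c)^{-1}$ lands in $A$ --- is exactly the content of that exercise. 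So you have simply made explicit what the paper delegates to a reference; your mid-proof self-correction ends at the right formula and does not affect the argument.

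One point deserves to be stated a bit more sharply than you do, since it is the real source of the word ``canonically'': because $A$ is abelian and normal in $B$, the conjugation action of $B$ on $A$ kills $A$ itself and hence descends to a $C$-action that is independent of any choice of section. This is what makes $H^1(C,A)$ well-defined before any section is chosen, and it is also why replacing $s_0$ by another section $s_0'$ changes nothing but the base point of the parametrization (a translation by the cocycle comparing $s_0'$ to $s_0$), so the simply transitive action of $H^1(C,A)$ on equivalence classes is independent of $s_0$. Your remark that the ``inner twist is trivial since $A$ is abelian'' is this same observation; phrased as above it settles canonicity in one line. The continuity bookkeeping in the profinite case is as routine as you expect: all the maps in the construction are built from continuous ones by multiplication and inversion, so twisting preserves continuity and the cocycles produced are continuous.
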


\begin{proof}
This is \cite[I.5, Exercise 4]{neukirch2013cohomology}, taking $f=\text{id}, G=G'$.
\end{proof}

\begin{proposition}\label{prop:section-H1-torsor}
The set of continuous sections to sequence \ref{seq:abelian_seq} up to equivalence is, if non-empty, canonically a torsor for $H^1(G, \pi/L^2\pi)$.
\end{proposition}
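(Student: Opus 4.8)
The plan is to deduce this immediately from Lemma~\ref{lem:splittings-torsor}. First I would observe that sequence~\ref{seq:abelian_seq}, namely $0\to \pi/L^2\pi\to \tilde\pi/L^2\pi\to G\to 1$, is a short exact sequence of (discrete or pro-finite) groups whose kernel $\pi/L^2\pi$ is abelian: indeed $L^2\pi = \overline{[\pi,\pi]}$, so $\pi/L^2\pi$ is precisely the (topological) abelianization of $\pi$, and in particular the continuity of the relevant maps is inherited from the original extension. Hence the hypotheses of Lemma~\ref{lem:splittings-torsor} are satisfied with $A = \pi/L^2\pi$, $B = \tilde\pi/L^2\pi$, and $C = G$.

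Next I would check that the equivalence relation on sections implicit in the statement of Proposition~\ref{prop:section-H1-torsor} is literally the one of Definition~\ref{def:EquivSections} that governs Lemma~\ref{lem:splittings-torsor}: two continuous sections $s_1, s_2 \colon G \to \tilde\pi/L^2\pi$ are declared equivalent when there is a fixed $a \in \pi/L^2\pi$ with $s_1(c) = a\, s_2(c)\, a^{-1}$ for all $c \in G$. With this identification in hand, Lemma~\ref{lem:splittings-torsor} applies verbatim and yields that, whenever the set of continuous sections is non-empty, it is canonically a torsor for $H^1(C, A) = H^1(G, \pi/L^2\pi)$, with the $G$-action on $\pi/L^2\pi$ being the one induced by conjugation in $\tilde\pi/L^2\pi$ (equivalently, the abelianization of the outer conjugation action of $G$ on $\pi$).

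The only point worth spelling out beyond the citation is that the torsor structure is \emph{canonical}, i.e.\ that it does not depend on choosing a basepoint section; but this canonicity is exactly what is asserted by Lemma~\ref{lem:splittings-torsor} (and ultimately by the cited exercise in Neukirch--Schmidt--Wingberg), so nothing new is needed. I do not expect any real obstacle here: the proposition is a restatement of Lemma~\ref{lem:splittings-torsor} in the single case that will be used repeatedly in the construction of the secondary Morita class, recorded separately only for convenience of later reference.
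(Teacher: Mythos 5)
Your proposal is correct and matches the paper's proof, which simply invokes Lemma~\ref{lem:splittings-torsor} with $A=\pi/L^2\pi$, $B=\tilde\pi/L^2\pi$, $C=G$; you have merely spelled out the routine verifications (abelianness of $\pi/L^2\pi$ and the matching equivalence relation) that the paper leaves implicit.
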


\begin{proof}
Immediate from Lemma \ref{lem:splittings-torsor}.
\end{proof}

\begin{definition}\label{def:boundary}
Suppose sequence \ref{seq:abelian_seq} admits a splitting $s$, inducing an action of $G$ on $\pi/L^3\pi$. Let $$\delta_s: H^1(G, \pi/L^2\pi)\to H^2(G, L^2\pi/L^3\pi)$$ be the boundary map in non-abelian cohomology arising from sequence \ref{seq:nilpotent_seq}. Concretely, for a cocycle $x: G \to \pi/L^2\pi$, we lift it to a continuous map $\tilde{x}:G \to \pi/L^3\pi$ and define a cocycle representing the class $\delta_s([x])$ as 
\begin{equation}\label{eq:cocycle-for-boundary}
    \delta_s(x)(a,b) = \tilde{x}(a)(\tilde{x}(b))^{s(a)}(\tilde{x}(ab))^{-1}
\end{equation}
where here $a\in G$ acts on $\tilde{x}(b) \in \pi/L^3\pi$ via the splitting $s$. For reference, see \cite[Section 5.6, 5.7]{Serre97book}.
\end{definition}

\begin{definition}\label{def:m}
We define a map $m: H^1(G, \pi/L^2\pi)^{\otimes 2} \to H^2(G, L^2\pi/L^3\pi)$ as the  composition of the cup product with the map on $H^2$ induced by the commutator map:
$$m: H^1(G, \pi/L^2\pi)^{\otimes 2} \overset{\cup}{\longrightarrow} H^2(G, (\pi/L^2\pi)^{\otimes 2})\overset{[-,-]}{\longrightarrow} H^2(G, L^2\pi/L^3\pi).$$  Explicitly, the second map is induced by the following map between coefficients: $$(\pi/L^2\pi)^{\otimes 2}\to L^2\pi/L^3\pi$$ $$\alpha \otimes \beta \mapsto \tilde{\alpha}\tilde{\beta}\tilde{\alpha}^{-1}\tilde{\beta}^{-1}$$
where $\tilde{\alpha},\tilde{\beta}$ are lifts of $\alpha,\beta \in \pi/L^2\pi$ to $\pi/L^3\pi$.
\end{definition}
Note that the map $m$ above is defined independent of any choice of section to sequence \ref{seq:abelian_seq}.

\begin{proposition}[Compare to {\cite[Proposition 1]{ellenberg2nilpotent}}]\label{prop:non-linear-bdry-map}
Let $\delta_s$ and $m$ be the maps defined in Definitions \ref{def:boundary}, \ref{def:m}.  Then we have $$\delta_s(x+y)-\delta_s(x)-\delta_s(y)=m(x\otimes  y).$$ 
\end{proposition}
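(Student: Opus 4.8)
The plan is a direct cocycle computation. Fix a section $s$ of sequence \eqref{seq:abelian_seq}, giving an action of $G$ on $\pi/L^3\pi$, and fix cocycles $x, y : G \to \pi/L^2\pi$ together with continuous set-theoretic lifts $\tilde x, \tilde y : G \to \pi/L^3\pi$. The natural candidate lift of the cocycle $x+y$ is the pointwise product $\tilde x \tilde y : a \mapsto \tilde x(a)\tilde y(a)$ (computed in $\pi/L^3\pi$), which indeed lifts $x+y$ since $\pi/L^2\pi$ is abelian. Plugging this lift into the boundary formula \eqref{eq:cocycle-for-boundary}, one gets
$$\delta_s(x+y)(a,b) = \tilde x(a)\tilde y(a)\,\bigl(\tilde x(b)\tilde y(b)\bigr)^{s(a)}\,\bigl(\tilde x(ab)\tilde y(ab)\bigr)^{-1}.$$
The task is to massage the right-hand side, working modulo $L^3\pi$ (so that $L^2\pi/L^3\pi$ is central and commutators of $L^2$-elements vanish), into the product of $\delta_s(x)(a,b)$, $\delta_s(y)(a,b)$, and the commutator term representing $m(x\otimes y)(a,b)$.

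The key steps, in order: (1) Since we will ultimately land in $L^2\pi/L^3\pi$, which is central in $\pi/L^3\pi$, I may freely permute and regroup factors at the cost of recording commutators; moreover any commutator of two elements of $L^2\pi/L^3\pi$ is trivial, and a commutator $[\,\tilde u, \tilde v\,]$ for $\tilde u,\tilde v$ lifting $u,v \in \pi/L^2\pi$ depends only on $u\wedge v$ (this is exactly the coefficient map in Definition \ref{def:m}). (2) Expand $\bigl(\tilde x(b)\tilde y(b)\bigr)^{s(a)} = \tilde x(b)^{s(a)}\,\tilde y(b)^{s(a)}$ and $\bigl(\tilde x(ab)\tilde y(ab)\bigr)^{-1} = \tilde y(ab)^{-1}\tilde x(ab)^{-1}$, then move the $\tilde y$-factors past the $\tilde x$-factors to collect $\tilde x(a)\,\tilde x(b)^{s(a)}\,\tilde x(ab)^{-1} = \delta_s(x)(a,b)$ and $\tilde y(a)\,\tilde y(b)^{s(a)}\,\tilde y(ab)^{-1} = \delta_s(y)(a,b)$; the discrepancy between $\delta_s(x+y)(a,b)$ and $\delta_s(x)(a,b)\,\delta_s(y)(a,b)$ is then a single commutator of the form $[\,\tilde y(a),\ \tilde x(b)^{s(a)}\,]$ (up to the already-understood ambiguities), because the two "collecting" commutators of the form $[\tilde x(ab), \tilde y(ab)]$ and $[\tilde x(a),\tilde y(a)]$ that arise cancel against the contribution of the differential of the $(\pi/L^2\pi)^{\otimes 2}$-valued cochain $(a,b)\mapsto x(a)\otimes y(b)$. (3) Recognize the leftover commutator $[\,y(a),\, {}^{a}x(b)\,]$ as precisely the value at $(a,b)$ of the cup product $x\cup y$ pushed through the commutator map, i.e.\ $m(x\otimes y)(a,b)$, using the standard formula for the cup product of two $1$-cochains; here is where the $G$-action via $s$ on $\pi/L^2\pi$ enters, matching the definition of $\cup$. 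Assembling (1)--(3) gives $\delta_s(x+y) = \delta_s(x) + \delta_s(y) + m(x\otimes y)$ on the level of cohomology classes.

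The main obstacle is bookkeeping step (2): keeping careful track of which commutators are "absorbed" into coboundaries and which survive, and checking that the surviving term is genuinely $m(x\otimes y)$ and not, say, its negative or a shuffle thereof. Concretely, the cleanest way to organize this is to introduce the auxiliary $1$-cochain $c: G \to \pi/L^3\pi$, $c(a) = \tilde x(a)\tilde y(a)$, compute $\delta_s(x+y)$ as the image of $c$ under the (non-abelian) coboundary, and then compare with $\delta_s(x), \delta_s(y)$ by noting that the three lifts $\tilde x \tilde y$, $\tilde x$, $\tilde y$ differ in a way governed by the cup product; the identity then reduces to the purely formal statement that in a central extension $0 \to M \to N \to Q \to 1$ with $Q$ abelian and $M$ the commutator subgroup, the "squaring" or "sum of two sections" defect is measured by the commutator pairing — this is essentially \cite[Proposition 1]{ellenberg2nilpotent}, which is cited as the model for this proposition, so I would follow that computation, merely translating it into the $L^2\pi/L^3\pi$ setting. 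One subtlety worth flagging: the formula \eqref{eq:cocycle-for-boundary} uses the $s$-action on $\pi/L^3\pi$, and one must verify that replacing the chosen lifts $\tilde x,\tilde y$ by others changes $\delta_s$ only by a coboundary, so that the final identity is well-defined at the level of $H^2$; this is routine but should be stated.
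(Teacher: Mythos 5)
Your proposal follows essentially the same route as the paper: lift $x+y$ by the pointwise product $\tilde x\tilde y$, plug into the boundary formula \eqref{eq:cocycle-for-boundary}, and use centrality of $L^2\pi/L^3\pi$ in $\pi/L^3\pi$ to collect $\delta_s(x)(a,b)$ and $\delta_s(y)(a,b)$, leaving a single surviving commutator $[\tilde y(a),\tilde x(b)^{s(a)}]$ — exactly the term the paper's computation produces.

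The one place where your step (3) needs repair is the very point you flagged as the main obstacle: the surviving commutator $[\tilde y(a),\tilde x(b)^{a}]$ is \emph{not} the standard cup-product cocycle for $x\cup y$ pushed through the commutator map; by the formula $(x\cup y)(a,b)=x(a)\otimes y(b)^{a}$ used in Definition \ref{def:m}, it is the cocycle representing $m(y\otimes x)$. To conclude $m(x\otimes y)$ you need one additional observation, which the paper supplies: the left-hand side $\delta_s(x+y)-\delta_s(x)-\delta_s(y)$ is manifestly symmetric under exchanging $x$ and $y$, so it equals $m(x\otimes y)$ as well (equivalently, graded commutativity of the cup product in degree $(1,1)$ combined with the antisymmetry of the commutator pairing shows $m(x\otimes y)=m(y\otimes x)$ in cohomology). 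With that one line added, and with the routine check you already mention that changing the lifts $\tilde x,\tilde y$ alters $\delta_s$ only by a coboundary, your argument is complete and coincides with the paper's proof; the intermediate talk of cancelling "collecting" commutators against a coboundary of $(a,b)\mapsto x(a)\otimes y(b)$ is unnecessary, since centrality of $L^2\pi/L^3\pi$ already lets the bookkeeping go through directly.
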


\begin{proof}
Let $x,y: G \to \pi/L^2\pi$ be cocycles representing classes in $H^1(G,\pi/L^2\pi)$ and let $\tilde{x},\tilde{y}: G \to \pi/L^3\pi$ be continuous set-theoretic lifts of $x,y$ from Sequence \ref{seq:nilpotent_seq}.

By definition, a cocycle representing the class $ x \otimes y \in H^2(G,(\pi/L^2\pi)^{\otimes 2})$ is
\[ x \otimes y: a,b \mapsto x(a) \otimes y(b)^a.  \]
So $m(x \otimes y)$ can be represented by 
\[ m(x \otimes y): a,b \mapsto [\tilde{x}(a), \tilde{y}(b)^a]  \]
where the choice of the action of $a \in G$ on $\tilde{y}(b)$ does not affect this commutator.

Now we have 
\begin{align*}
    &(\delta_s(x+y) - \delta_s(x) -\delta_s(y))(a,b)\\ 
    =& \tilde{x}(a)\tilde{y}(a)\tilde{x}(b)^a\tilde{y}(b)^a \tilde{y}^{-1}(ab) \tilde{x}^{-1}(b)^a \tilde{x}^{-1}(a)\tilde{y}(ab)\tilde{y}^{-1}(b)^a \tilde{y}^{-1}(a)\\
     =& \tilde{x}(a)\tilde{y}(a)\tilde{x}(b)^a
     \tilde{y}^{-1}(a)\tilde{y}(a)
     \tilde{y}(b)^a \tilde{y}^{-1}(ab) \tilde{x}^{-1}(b)^a \tilde{x}^{-1}(a)\tilde{y}(ab)\tilde{y}^{-1}(b)^a \tilde{y}^{-1}(a)\\
    =& \tilde{x}(a)\tilde{y}(a)\tilde{x}(b)^a \tilde{y}^{-1}(a) \tilde{x}^{-1}(b)^a \tilde{x}^{-1}(a)
    \tilde{y}(a)\tilde{y}(b)^a \tilde{y}^{-1}(ab)
    \tilde{y}(ab)\tilde{y}^{-1}(b)^a \tilde{y}^{-1}(a)\\
    =& \tilde{x}(a) [\tilde{y}(a),\tilde{x}(b)^a]\tilde{x}^{-1}(a)\\
    =&[\tilde{y}(a),\tilde{x}(b)^a].
\end{align*}
Here, we used the fact that $\tilde{y}(a)\tilde{y}(b)^a \tilde{y}^{-1}(ab), [\tilde{y}(a),\tilde{x}(b)^a] \in L^2\pi/L^3\pi$ is in the center of $\pi/L^3\pi$. And we conclude by noticing $\delta_s(x+y)-\delta_s(x)-\delta_s(y)$ does not change if we switch $x,y$.
\end{proof}

In particular $\delta_s$ is a homomorphism of abelian groups modulo the image of $m$.

\begin{proposition}\label{prop: bdry-independence-of-section}
Let $s_1, s_2$ be sections to sequence \ref{seq:abelian_seq}. Following Definition \ref{def:boundary}, each section induces a $G$-action on $\pi/L^3\pi$ and hence a boundary map in cohomology $$\delta_{s_i}: H^1(G, \pi/L^2\pi)\to H^2(G, L^2\pi/L^3\pi).$$ Then $$\delta_{s_1}(x)- \delta_{s_2}(x)= m([s_1-s_2]\otimes x),$$ where $m$ is the map defined in Definition \ref{def:m} and $[s_1-s_2] \in H^1(G, \pi/L^2\pi)$ is the element classifying the difference between $s_1, s_2$ from Proposition \ref{prop:section-H1-torsor}. This class is represented by the cocycle $c_{s_2s_1}: g \mapsto s_1(g)s_2(g)^{-1}$. 
\end{proposition}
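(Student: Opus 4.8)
The plan is to carry out an explicit cocycle computation, parallel to the proof of Proposition~\ref{prop:non-linear-bdry-map}, comparing the two boundary maps $\delta_{s_1}$ and $\delta_{s_2}$ directly on cocycle representatives. First I would fix a cocycle $x: G\to \pi/L^2\pi$ representing a class in $H^1(G,\pi/L^2\pi)$ and a continuous set-theoretic lift $\tilde x: G\to \pi/L^3\pi$ of $x$ along sequence~\eqref{seq:nilpotent_seq}. Using formula~\eqref{eq:cocycle-for-boundary}, a cocycle representing $\delta_{s_i}([x])$ is $(a,b)\mapsto \tilde x(a)\,(\tilde x(b))^{s_i(a)}\,(\tilde x(ab))^{-1}$, where the superscript $s_i(a)$ denotes the action of $a$ on $\tilde x(b)\in\pi/L^3\pi$ via the splitting $s_i$. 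The only difference between the two expressions is the term $(\tilde x(b))^{s_i(a)}$, so the difference cocycle $\delta_{s_1}(x)-\delta_{s_2}(x)$ is represented (using that $L^2\pi/L^3\pi$ is central in $\pi/L^3\pi$, so that the ambiguity in lifts and the order of multiplication do not matter) by
$$(a,b)\longmapsto (\tilde x(b))^{s_1(a)}\,\big((\tilde x(b))^{s_2(a)}\big)^{-1}.$$

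Next I would identify this difference with a commutator. Since $s_1,s_2$ are both sections of sequence~\eqref{seq:abelian_seq}, the element $c(a):=s_1(a)s_2(a)^{-1}$ lies in $\pi/L^2\pi$, and $g\mapsto c(g)$ is precisely the cocycle $c_{s_2 s_1}$ representing $[s_1-s_2]\in H^1(G,\pi/L^2\pi)$ from Proposition~\ref{prop:section-H1-torsor}. Now for $w\in \pi/L^3\pi$ lying over $\bar w\in\pi/L^2\pi$, we have $w^{s_1(a)} = w^{c(a)s_2(a)} = (w^{s_2(a)})$ conjugated by a lift of $c(a)$; modulo $L^3\pi$ this differs from $w^{s_2(a)}$ by the commutator $[\tilde c(a),\, w^{s_2(a)}]$, where $\tilde c(a)$ is any lift of $c(a)$ to $\pi/L^3\pi$, and this commutator depends only on the images of $c(a)$ and $w$ in $\pi/L^2\pi$. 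Applying this with $w=\tilde x(b)$ and noting $(\tilde x(b))^{s_2(a)}$ has image $x(b)^a$ in $\pi/L^2\pi$, the difference cocycle becomes $(a,b)\mapsto [\, c(a),\, x(b)^a\,]$, computed via the commutator pairing $(\pi/L^2\pi)^{\otimes 2}\to L^2\pi/L^3\pi$. By the explicit description of $m$ in Definition~\ref{def:m} and of the cup product used there (compare the end of the proof of Proposition~\ref{prop:non-linear-bdry-map}, where $m(x\otimes y)$ is represented by $(a,b)\mapsto[\tilde x(a),\tilde y(b)^a]$), this is exactly a representative for $m([s_1-s_2]\otimes x)$. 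Hence $\delta_{s_1}(x)-\delta_{s_2}(x)=m([s_1-s_2]\otimes x)$ in $H^2(G,L^2\pi/L^3\pi)$, as claimed.

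The main obstacle I anticipate is bookkeeping: keeping careful track of which splitting induces which $G$-action at each occurrence of a superscript, verifying that all the replacements of one lift by another are harmless because the discrepancies land in the central subgroup $L^2\pi/L^3\pi$, and matching the final commutator expression to the precise cocycle formula for $m$ used in Definition~\ref{def:m} (in particular getting the argument $x(b)^a$ rather than $x(b)$, and the two tensor factors in the correct order). None of these steps is deep, but they must be done with care; the continuity of all maps in the profinite case is automatic since everything is built from continuous cochains. I would also remark, as the statement does, that $c_{s_2 s_1}(g)=s_1(g)s_2(g)^{-1}$ is visibly a $1$-cocycle valued in $\pi/L^2\pi$ (using that $s_1,s_2$ are homomorphisms and the $G$-action on $\pi/L^2\pi$ is the one induced by either section, which agree modulo inner automorphisms by $\pi/L^2\pi$, an ambiguity invisible on cohomology), so that $[s_1-s_2]$ is well-defined; this is just the content of Lemma~\ref{lem:splittings-torsor} and Proposition~\ref{prop:section-H1-torsor} and needs no new argument.
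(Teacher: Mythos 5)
Your proposal is correct and follows essentially the same route as the paper's proof: both compute the two boundary cocycles from the formula in Definition \ref{def:boundary}, use centrality of $L^2\pi/L^3\pi$ in $\pi/L^3\pi$ to cancel the common factors, and recognize the remaining term as the commutator $[\tilde{c}_{s_2s_1}(a),\tilde{x}(b)^{a}]$, which is exactly the representative of $m(c_{s_2s_1}\otimes x)$ used in the proof of Proposition \ref{prop:non-linear-bdry-map}. The only difference is cosmetic bookkeeping (you cancel $\tilde{x}(ab)$ and conjugate out $\tilde{x}(a)$ before forming the commutator, while the paper expands everything at once), so no further comment is needed.
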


\begin{proof}
    Following Definition \ref{def:boundary}, if we denote by $\tilde{x}: G \to \pi/L^3\pi$ a continuous lift of $x$, then
    \[ \delta_{s_i}(x)(a,b) = \tilde{x}(a)\tilde{x}(b)^{s_i(a)}\tilde{x}^{-1}(ab) \]
    where $a \in G$ acts on $\tilde{x}(b) \in \pi/L^3\pi$ through conjugation by $\tilde{s_i}(a)$, where $\tilde{s_i}: G \to \tilde{\pi}/L^3\pi$ is a continuous lift of $s_i$ to $\tilde\pi/L^3\pi$, i.e. \[ \tilde{x}(b)^{s_i(a)}=\tilde{s_i}(a)\tilde{x}(b)\tilde{s_i}(a)^{-1}. \]
    Note that since different choices of lifts differ by an element in $L^2\pi/L^3\pi$, which is in the center of $\pi/L^3\pi$, this action is independent of $\tilde s_i$, justifying the notation. So now we have
    \begin{align*}
    (\delta_{s_1}(x)-\delta_{s_2}(x))(a,b)
    =& \tilde{x}(a)\tilde{s_1}(a)\tilde{x}(b)\tilde{s_1}^{-1}(a)\tilde{x}^{-1}(ab)\tilde{x}(ab)\tilde{s_2}(a)\tilde{x}^{-1}(b)\tilde{s_2}^{-1}(a)\tilde{x}^{-1}(a)\\
    =& \tilde{x}(a)\tilde{s_1}(a)\tilde{x}(b)\tilde{s_1}^{-1}(a)\tilde{s_2}(a)\tilde{x}^{-1}(b)\tilde{s_2}^{-1}(a)\tilde{x}^{-1}(a)\\
    =& \tilde{x}(a)\tilde{s_1}(a)\tilde{s_2}^{-1}(a)\tilde{s_2}(a)
    \tilde{x}(b)\tilde{s_2}^{-1}(a)\tilde{s_2}(a)
    \tilde{s_1}^{-1}(a)\tilde{s_2}(a)\tilde{x}^{-1}(b)\tilde{s_2}^{-1}(a)\tilde{x}^{-1}(a)\\
    =& \tilde{x}(a)[\tilde{s_1}(a)\tilde{s_2}^{-1}(a),\tilde{s_2}(a)
    \tilde{x}(b)\tilde{s_2}^{-1}(a)]\tilde{x}^{-1}(a)\\
    =& m(c_{s_2s_1}\otimes x)(a,b).
\end{align*}

Here we used the fact that $m(c_{s_2s_1}\otimes x)(a,b)=[\tilde{c}_{s_2s_2}(a),\tilde{x}(b)^a]$ is defined independent of the choice of the $G$-action on $\pi/L^3\pi$, so we chose to take the action to be conjugation by $\tilde{s_2}(a)$.  Also, the elements $\tilde{s_1}(a)\tilde{s_2}^{-1}(a), \tilde{s_2}(a)\tilde{x}(b)\tilde{s_2}^{-1}(a) \in \pi/L^3\pi$, which implies their commutator lies in $L^2\pi/L^3\pi$ and hence commutes with $\tilde{x}(a) \in \pi/L^3\pi$.

\end{proof}

\subsubsection{Construction of the class $\widetilde{o_2}$}
\begin{definition}\label{defn:independent-bdry-map}
Let $$\overline{H^2(G, L^2\pi/L^3\pi)}:=H^2(G, L^2\pi/L^3\pi)/\on{im}(m).$$ By Proposition \ref{prop: bdry-independence-of-section}, for any two sections $s_1, s_2$ to sequence \ref{seq:abelian_seq}, the composite maps $$H^1(G, \pi/L^2\pi)\overset{\delta_{s_i}}{\longrightarrow} H^2(G, L^2\pi/L^3\pi)\to \overline{H^2(G, L^2\pi/L^3\pi)}$$ are canonically identified. We denote this (canonical) composite map by $\delta$. Note that by Proposition \ref{prop:non-linear-bdry-map}, $\delta$ is linear.
\end{definition}
As $L^2\pi/L^3\pi$ is abelian, sequence \ref{seq:l2modl3extension} gives rise to a class in $b\in H^2(\tilde \pi/L^2\pi, L^2\pi/L^3\pi)$. Given a splitting $s$ of sequence \ref{seq:abelian_seq}, there is an induced class $s^*b\in H^2(G, L^2\pi/L^3\pi)$; this class depends on $s$.
\begin{proposition}\label{prop:obs-independent-of-section}
Suppose $s_1, s_2$ are two sections to sequence \ref{seq:abelian_seq}, with difference $$[s_1-s_2]\in H^1(G, \pi/L^2\pi).$$ Denote by $\overline{s_1^*b-s_2^*b}$ the image of $s_1^*b-s_2^*b$ in $\overline{H^2(G, L^2\pi/L^3\pi)}.$ Then we have
$$\overline{s_1^*b-s_2^*b}= \delta([s_1-s_2]), \text{ where }$$ $$\delta: H^1(G, \pi/L^2\pi)\to \overline{H^2(G, L^2\pi/L^3\pi)}$$ is the map from Definition \ref{defn:independent-bdry-map}.
\end{proposition}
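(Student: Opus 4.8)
The plan is to give a direct cocycle computation, in the spirit of the proofs of Propositions~\ref{prop:non-linear-bdry-map} and \ref{prop: bdry-independence-of-section}, comparing the pullbacks $s_1^*b$ and $s_2^*b$ of the extension class of sequence~\ref{seq:l2modl3extension} with the boundary map $\delta_{s_i}$ of Definition~\ref{def:boundary}. First I would fix a continuous set-theoretic lift $\tilde{s}_i: G \to \tilde\pi/L^3\pi$ of each section $s_i: G \to \tilde\pi/L^2\pi$ (composing $s_i$ with a set-theoretic splitting of sequence~\ref{seq:l2modl3extension}), and recall that, by the standard description of the pullback of an extension class, $s_i^*b$ is represented by the $2$-cocycle $(a,b) \mapsto \tilde{s}_i(a)\,\tilde{s}_i(b)\,\tilde{s}_i(ab)^{-1} \in L^2\pi/L^3\pi$, the failure of $\tilde{s}_i$ to be a homomorphism.

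Next I would identify the difference cocycle. Writing $c(g) := \tilde{s}_1(g)\tilde{s}_2(g)^{-1}$ (a lift of the cocycle $c_{s_2 s_1}$ of Proposition~\ref{prop: bdry-independence-of-section} representing $[s_1 - s_2] \in H^1(G,\pi/L^2\pi)$), one computes $(s_1^*b - s_2^*b)(a,b)$ as
\[
\tilde{s}_1(a)\tilde{s}_1(b)\tilde{s}_1(ab)^{-1}\,\bigl(\tilde{s}_2(a)\tilde{s}_2(b)\tilde{s}_2(ab)^{-1}\bigr)^{-1},
\]
and, inserting $\tilde{s}_2(a)^{-1}\tilde{s}_2(a)$ and $\tilde{s}_2(b)^{-1}\tilde{s}_2(b)$ in appropriate places and using that $L^2\pi/L^3\pi$ is central in $\tilde\pi/L^3\pi$, I would rewrite this in terms of $c(a)$, $c(b)$, $c(ab)$ and the $G$-action induced by $s_2$ on $\pi/L^2\pi$. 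The key algebraic fact is that $c$ is a crossed homomorphism for that action, so the expression collapses, modulo $L^3\pi$, to a product of a coboundary of $c$ (which is a $1$-cochain with values in $L^2\pi/L^3\pi$, hence a $2$-coboundary, so zero in $H^2$) with a commutator term of the form $[c(a),\, \tilde{s}_2(a)\,\tilde{s}_2(b)\,\tilde{s}_2(a)^{-1} \cdot (\text{lift of }s_2(b))^{-1}]$-type brackets. This commutator term is exactly a representative of $m([s_1-s_2]\otimes x)$ applied along the right cocycle, and comparing with Definition~\ref{def:boundary} (equation~\eqref{eq:cocycle-for-boundary}) and the computation in Proposition~\ref{prop: bdry-independence-of-section} one sees it represents $\delta_{s_2}$ evaluated suitably --- more precisely, that $s_1^*b - s_2^*b$ agrees, up to a term in $\operatorname{im}(m)$ and a coboundary, with $\delta_{s_1}([s_1-s_2])$ (equivalently $\delta_{s_2}([s_1-s_2])$, these agreeing modulo $\operatorname{im}(m)$ by Proposition~\ref{prop: bdry-independence-of-section}). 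Passing to $\overline{H^2(G,L^2\pi/L^3\pi)} = H^2/\operatorname{im}(m)$ then gives $\overline{s_1^*b - s_2^*b} = \delta([s_1-s_2])$.

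The main obstacle I anticipate is purely bookkeeping: arranging the $2$-cocycle manipulation so that the relevant terms are visibly (i) a $2$-coboundary, (ii) an element of $\operatorname{im}(m)$, and (iii) a genuine representative of $\delta_{s_i}([s_1-s_2])$, while tracking carefully which $G$-action ($s_1$ or $s_2$) is being used at each step --- the centrality of $L^2\pi/L^3\pi$ in $\tilde\pi/L^3\pi$ is what makes all these choices irrelevant in the end, exactly as in the remarks following Definitions~\ref{def:boundary} and \ref{def:m}. A cleaner, essentially equivalent route, which I would present if the cocycle juggling becomes unwieldy, is conceptual: the map $s \mapsto s^*b$ is an affine map from the $H^1(G,\pi/L^2\pi)$-torsor of sections (Proposition~\ref{prop:section-H1-torsor}) to $H^2(G, L^2\pi/L^3\pi)$, whose ``linear part'' modulo $\operatorname{im}(m)$ must be $\delta$; this follows by functoriality of the connecting maps in the long exact sequence of non-abelian cohomology attached to sequence~\ref{seq:nilpotent_seq} together with the fact that pulling back the extension class along the section $s$ is exactly the obstruction measured by $\delta_s$. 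Either way, the identity is forced once one knows $\delta_s$ is the section-dependent connecting map and $m$ measures its failure to be linear, both established above.
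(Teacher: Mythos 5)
Your overall plan is the paper's plan: represent $s_i^*b$ by the failure of a lift $\tilde s_i:G\to\tilde\pi/L^3\pi$ to be a homomorphism, lift the difference cocycle to $c(g)=\tilde s_1(g)\tilde s_2(g)^{-1}$, and compare with the explicit boundary cocycle of Definition \ref{def:boundary}. Carried out carefully this works, and in fact gives more than you claim: plugging the lift $c$ into \eqref{eq:cocycle-for-boundary} for the section $s_2$ yields the identity $\delta_{s_2}(c_{s_2s_1})(x,y)=s_1^*b(x,y)-s_2^*b(x,y)$ \emph{on the nose}, at the level of cocycles, so no coboundary and no $\im(m)$ correction ever appears; the proposition then follows by passing to $\overline{H^2(G,L^2\pi/L^3\pi)}$, where $\delta_{s_2}$ becomes $\delta$.

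As written, though, two of your stated justifications are wrong and would derail the computation. First, $L^2\pi/L^3\pi$ is \emph{not} central in $\tilde\pi/L^3\pi$ in general: $G$ (hence $\tilde\pi$) acts nontrivially on it — this is exactly why $b$ lives in cohomology with nontrivial coefficients, and in the paper's applications the involution $S$ acts nontrivially on $\wedge^2H/\langle\omega\rangle$. What is true, and all that the computation needs, is that $L^2\pi/L^3\pi$ is central in $\pi/L^3\pi$; every element one must commute a cocycle value past (e.g. $\tilde s_1(xy)\tilde s_2(xy)^{-1}$) lies in $\pi/L^3\pi$, so you must arrange the bookkeeping so that only this weaker centrality is invoked. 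Second, your "key algebraic fact" that $c$ is a crossed homomorphism is false for the lift valued in $\pi/L^3\pi$: its failure to be a crossed homomorphism is, by Definition \ref{def:boundary}, precisely a representative of $\delta_{s_2}([s_1-s_2])$, and identifying that failure with $s_1^*b-s_2^*b$ is the entire content of the proposition (if $c$ were a genuine crossed homomorphism you would be proving $\overline{s_1^*b-s_2^*b}=0$). Relatedly, the claimed decomposition "coboundary plus a commutator representing $m([s_1-s_2]\otimes x)$" is never verified and leaves $x$ unidentified. Finally, the proposed conceptual fallback is circular: asserting that the affine map $s\mapsto s^*b$ on the torsor of sections has linear part $\delta$ modulo $\im(m)$ is exactly the statement to be proved, and functoriality of connecting maps does not by itself identify that linear part. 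The fix is simply to run the direct computation starting from $\delta_{s_2}(c_{s_2s_1})$ with the lift $c$, using only centrality of $L^2\pi/L^3\pi$ in $\pi/L^3\pi$, as the paper does.
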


\begin{proof}
By definition, the extension class $b \in H^2(\tilde{\pi}/L^2\pi,L^2\pi/L^3\pi)$ is represented by a cocycle $g_1,g_2 \mapsto \widetilde{g_1}\widetilde{g_2}\widetilde{g_1g_2}^{-1}$ where $\widetilde{g_1},\widetilde{g_2},\widetilde{g_1g_2}$ are lifts of $g_1,g_2,g_1g_2 \in \tilde{\pi}/L^2\pi$ to $\tilde{\pi}/L^3\pi$. Thus, the induced class $s_i^*b$ is represented by 
\[x,y \mapsto \tilde{s_i}(x)\tilde{s_i}(y)\tilde{s_i}^{-1}(xy)\]
where $\tilde{s_1},\tilde{s_2}:G \to \tilde{\pi}/L^3\pi$ are continuous lifts of $s_1,s_2$.

We will prove the desired statement by showing 
\[ s_1^*b-s_2^*b = \delta_{s_2}([s_1-s_2]) \in H^2(G,L^2\pi/L^3\pi). \]
Let $c_{s_2s_1}: g \mapsto s_1(g)s_2(g)^{-1}$ be a cocycle representing the class $[s_1-s_2]$. Then $g \to \tilde{s_1}(g)\tilde{s_2}^{-1}(g)$ is a continuous lift of $c_{s_2s_1}$ and by Definition \ref{def:boundary}, we have 
\begin{align*}
\delta_{s_2}(c_{s_2s_1})(x,y) =& \tilde{s_1}(x)\tilde{s_2}^{-1}(x) (\tilde{s_1}(y)\tilde{s_2}^{-1}(y))^{s_2(x)} \tilde{s_2}(xy)\tilde{s_1}^{-1}(xy)\\
=& \tilde{s_1}(x)\tilde{s_2}^{-1}(x)\tilde{s_2}(x) \tilde{s_1}(y)\tilde{s_2}^{-1}(y)\tilde{s_2}^{-1}(x) \tilde{s_2}(xy)\tilde{s_1}^{-1}(xy)\\
=& \tilde{s_1}(x)\tilde{s_1}(y)\tilde{s_2}^{-1}(y)\tilde{s_2}^{-1}(x) \tilde{s_2}(xy)\tilde{s_1}^{-1}(xy)\\
=& (\tilde{s_1}(x)\tilde{s_1}(y)\tilde{s_1}^{-1}(xy))(\tilde{s_1}(xy)\tilde{s_2}^{-1}(xy))(\tilde{s_2}(xy)\tilde{s_2}^{-1}(y)\tilde{s_2}^{-1}(x))( \tilde{s_2}(xy)\tilde{s_1}^{-1}(xy))\\
=&s_1^*b(x,y)-s_2^*b(x,y)
\end{align*}
In the last step, we used that the element $\tilde{s_1}(xy)\tilde{s_2}^{-1}(xy)$ lies in $\pi/L^3\pi$ and the element $\tilde{s_2}(xy)\tilde{s_2}^{-1}(y)\tilde{s_2}^{-1}(x)$ lies in $L^2\pi/L^3\pi$. So they commute with each other. 
\end{proof}

\begin{definition}\label{def:obs-independent-of-section}
Let $$M(G, L^2\pi/L^3\pi)=\on{coker}(\delta:  H^1(G, \pi/L^2\pi)\to \overline{H^2(G, L^2\pi/L^3\pi)}).$$ Then for any section $s: G\to \tilde\pi/L^2\pi$ of sequence \ref{seq:abelian_seq}, we let $\widetilde{o_2}\in M(G, L^2\pi/L^3\pi)$ be the image of $s^*b$. This class is independent of $s$ by Proposition \ref{prop:obs-independent-of-section}.
\end{definition}

\begin{remark}\label{rem:o2-functoriality}
Let $\gamma: G' \to G$ be a group homomorphism. Pulling back the sequence $$1\to \pi\to \tilde\pi\to G\to 1$$ along this line, we obtain a sequence $$1\to \pi\to \tilde\pi\times_GG'\to G'\to 1$$ with analogous properties to the corresponding sequence for $G$; hence we may define $M(G', L^2\pi/L^3\pi)$. There exists a unique map $\gamma^*:M(G,L^2\pi/L^3\pi) \to M(G',L^2\pi/L^3\pi)$ which makes the following diagram commute:
\[
\xymatrix{
H^2(G,L^2\pi/L^3\pi) \ar[r]^{\gamma^*}\ar[d] &H^2(G',L^2\pi/L^3\pi) \ar[d]\\
M(G,L^2\pi/L^3\pi) \ar[r]^{\gamma^*} &M(G',L^2\pi/L^3\pi)}
\]
From the definition it is clear that $\gamma^*\widetilde{o_2}=\widetilde{o_2}$.
\end{remark}
\begin{proposition}
The obstruction $\widetilde{o_2}\in M(G, L^2\pi/L^3\pi)$ vanishes if the sequence \[ 1 \rightarrow \pi/L^3 \pi \rightarrow \tilde{\pi}/L^3 \pi \rightarrow G \rightarrow 1\] splits.
\end{proposition}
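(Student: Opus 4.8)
The plan is to unwind the definitions and chase the obstruction class $\widetilde{o_2}$ back through the construction, showing that a splitting of the $2$-nilpotent sequence forces all the intermediate classes to vanish in the appropriate quotients. Recall that $\widetilde{o_2}$ is defined as the image of $s^*b$ in $M(G, L^2\pi/L^3\pi)$, where $b\in H^2(\tilde\pi/L^2\pi, L^2\pi/L^3\pi)$ classifies sequence \ref{seq:l2modl3extension}, and $s$ is any section of sequence \ref{seq:abelian_seq}. So I first need to produce such a section $s$ from the hypothesized splitting $\sigma: G\to \tilde\pi/L^3\pi$ of the sequence $1\to \pi/L^3\pi\to \tilde\pi/L^3\pi\to G\to 1$: composing $\sigma$ with the quotient $\tilde\pi/L^3\pi\to \tilde\pi/L^2\pi$ gives a section $s$ of sequence \ref{seq:abelian_seq} (this uses that the quotient $\tilde\pi/L^3\pi\to \tilde\pi/L^2\pi$ carries $\pi/L^3\pi$ onto $\pi/L^2\pi$, which is part of the compatibility of the lower central series quotients). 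The point is that for this particular section $s$, the pullback $s^*b$ is already zero in $H^2(G, L^2\pi/L^3\pi)$, not merely in the quotient $M$.

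The key step is the cocycle computation verifying $s^*b=0$ for this choice of $s$. Using the explicit representative for $b$ — namely $g_1,g_2\mapsto \widetilde{g_1}\widetilde{g_2}\widetilde{g_1g_2}^{-1}$ with lifts to $\tilde\pi/L^3\pi$ — I would choose the lifts of $s(g_i)\in\tilde\pi/L^2\pi$ to $\tilde\pi/L^3\pi$ to be precisely $\sigma(g_i)$. Since $\sigma$ is an honest homomorphism $G\to\tilde\pi/L^3\pi$, we get $\sigma(g_1)\sigma(g_2)\sigma(g_1g_2)^{-1}=1$, so the resulting cocycle representing $s^*b$ is identically trivial. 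Hence $s^*b=0$ in $H^2(G, L^2\pi/L^3\pi)$, and therefore its image in $\overline{H^2(G,L^2\pi/L^3\pi)}$ and then in $M(G,L^2\pi/L^3\pi)$ is zero. By Definition \ref{def:obs-independent-of-section}, $\widetilde{o_2}$ equals this image (the class is independent of the choice of section by Proposition \ref{prop:obs-independent-of-section}), so $\widetilde{o_2}=0$.

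The only subtlety — and what I expect to be the main (if modest) obstacle — is checking that $\sigma$ really does project to a genuine \emph{section} of sequence \ref{seq:abelian_seq}, i.e. that the composite $G\xrightarrow{\sigma}\tilde\pi/L^3\pi\to\tilde\pi/L^2\pi$ lands in a set-theoretic splitting compatible with the exact sequence structure, and that $\sigma$ can be used simultaneously as the lift $\tilde s$ needed in the formula for $b$. Both of these follow formally from the commutative diagram relating the $L^2$- and $L^3$-quotients of $\pi$ and $\tilde\pi$: the map $\tilde\pi/L^3\pi\to\tilde\pi/L^2\pi$ is surjective with kernel $L^2\pi/L^3\pi\subset\pi/L^3\pi$, so it restricts to the surjection $\pi/L^3\pi\to\pi/L^2\pi$ and is compatible with the projections to $G$. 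I would state this compatibility explicitly as a preliminary observation and then the cocycle argument is a one-line verification. No delicate estimate or case analysis is required; the content is entirely that an honest group-theoretic section of the finer quotient induces, via functoriality of the whole construction, the vanishing of the coarser obstruction.
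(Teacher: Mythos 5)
Your proposal is correct and matches the paper's intent: the paper's proof is simply ``immediate from the definition,'' and your argument is the natural unwinding of that. A splitting $\sigma$ of the $2$-nilpotent sequence projects to a section $s$ of the abelianized sequence for which $s^*b$ is represented (taking $\tilde s=\sigma$ as the lift) by the identically trivial cocycle, so $s^*b=0$ and hence its image $\widetilde{o_2}$ in $M(G, L^2\pi/L^3\pi)$ vanishes, with independence of the section supplied by Proposition \ref{prop:obs-independent-of-section}.
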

\begin{proof}
Immediate from the definition.
\end{proof}
\subsubsection{Gysin images of $\widetilde{o_2}$}\label{subsubsec:defn-of-N}
Throughout this paper we will consider various Gysin images of $\widetilde{o_2}$, as we now explain. Suppose that we are in the situation described above, and moreover $G$ sits in a short exact sequence $$1\to I\to G\to H\to 1$$ with $I=\mathbb{Z}$ (in the discrete setting) or $I=\widehat{\mathbb{Z}}$ (in the pro-finite setting). Then as in Section \ref{subsec:gysin}, we have a natural (Gysin) map $$g: H^2(G, L^2\pi/L^3\pi)\to H^1(H, (L^2\pi/L^3\pi)_I)$$ arising from the Hochschild-Serre spectral sequence.

We let $$\overline{H^1(H, (L^2\pi/L^3\pi)_I)}:=\on{coker}(g\circ m: H^1(G, \pi/L^2\pi)^{\otimes 2}\to H^1(H, (L^2\pi/L^3\pi)_I))$$ and define $$N(H, (L^2\pi/L^3\pi)_I):=\on{coker}(g\circ \delta: H^1(G, \pi/L^2\pi)\to \overline{H^1(H, (L^2\pi/L^3\pi)_I)}).$$ Then by definition the natural Gysin map $g$ above descends to a map $$h: M(G, L^2\pi/L^3\pi)\to N(H, (L^2\pi/L^3\pi)_I).$$

We now discuss the functoriality properties of the map $h$ defined above. Suppose we have a map of short exact sequences of groups
$$\xymatrix{
1\ar[r] & I' \ar[r] \ar[d]^\sim & \ar[r] G'\ar[d]^\gamma & H' \ar[r] \ar[d]^{\eta} & 1\\
1\ar[r] & I \ar[r] & \ar[r] G & H \ar[r] & 1\\
}$$
inducing an isomorphism $I'\overset{\sim}{\to} I$ as above.

Pulling back the sequence $$1\to \pi\to \tilde\pi\to G\to 1$$ along this line, we obtain a sequence $$1\to \pi\to \tilde\pi\times_GG'\to G'\to 1$$ with analogous properties to the corresponding sequence for $G$; hence we have an analogous map $$h':  M(G', L^2\pi/L^3\pi)\to N(H', (L^2\pi/L^3\pi)_{I'}).$$ It is immediate from the functoriality of the Hochschild-Serre spectral sequence that the evident square
$$\xymatrix{
 M(G, L^2\pi/L^3\pi)\ar[r]^h \ar[d]^{\gamma^*} &  N(H, (L^2\pi/L^3\pi)_I) \ar[d]^{\eta^*}\\
M(G', L^2\pi/L^3\pi)\ar[r]^{h'} & N(H', (L^2\pi/L^3\pi)_{I'})
}$$
commutes.
\subsection{Cohomological preliminaries for surface groups}
We now specialize to the case where $G$ is a surface group, i.e.~we
let $G= \pi_1(\Sigma_h)$ for some $h \ge 1$. $G$ has presentation $$G=\langle a_1, b_1 \cdots, a_h, b_h \mid \prod_{i=1}^h[a_i, b_i]\rangle.$$ We will make the computations described in Section \ref{subsec:nonabelian-cohomology} explicit in this case. 

Let $G$ act on $\mathbb{Z}[G]$ by multiplication on the left. We begin by introducing a finite free resolution of $\mathbb{Z}$ as a (trivial) $\mathbb{Z}[G]$-module. Let $R_i = [a_i,b_i]$ and $R=\prod_{i=1}^{h}R_i$. As a convention, we set $a_0=b_0=1$.

\begin{proposition}[\cite{GM15}, Proposition 2.1]\label{prop:resolution-of-Z}
There is an exact sequence of $\mathbb{Z}[G]$-modules 
\begin{equation}\label{equ:resolution-of-Z}
0 \rightarrow P_2 \xrightarrow{d_2} P_1 \xrightarrow{d_1} P_0 \xrightarrow{\epsilon} \mathbb{Z} \rightarrow 0
\end{equation}
where $P_0 = \mathbb{Z}[G]$ with generator $v$, $P_1 = (\mathbb{Z}[G])^{2h}$ with generators $e_i,f_i,i=1,\ldots,h$ and $P_2= \mathbb{Z}[G]$ with generator $u$.
Then, on the generators the maps are defined as
\begin{align*}
\epsilon &\colon v \mapsto 1, \\
d_1 &\colon e_i \mapsto (a_i-1)v,\\
&\ \  f_i \mapsto (b_i-1)v, \\
d_2 &\colon u \mapsto \sum_{i=1}^{h}\left( \frac{\partial R}{\partial a_i}e_i+ \frac{\partial R}{\partial b_i}f_i \right),
\end{align*}
where the partial derivatives are the Fox derivatives.
\end{proposition}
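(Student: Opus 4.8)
The statement to prove is Proposition \ref{prop:resolution-of-Z}: the existence of the free resolution $0 \to P_2 \to P_1 \to P_0 \to \mathbb{Z} \to 0$ of the trivial $\mathbb{Z}[G]$-module $\mathbb{Z}$, where $G = \pi_1(\Sigma_h)$ with its standard one-relator presentation. Since this is cited directly from \cite{GM15}, the cleanest route is simply to invoke that reference. But for a self-contained plan, here is how I would carry it out from scratch. The strategy is to build the resolution from the group presentation using Fox calculus, exactly as one does for the standard (Lyndon–Fox) partial resolution of a one-relator group, and then to verify that for a surface group this partial resolution \emph{terminates} — i.e., that the relation module is free of rank one — because $\Sigma_h$ is aspherical with $\mathrm{cd}(G) = 2$.

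First I would set up the beginning of the resolution in the usual way. For any group presentation $G = \langle x_1,\dots,x_n \mid r_1,\dots,r_m\rangle$ one has the Fox–Lyndon exact sequence $\mathbb{Z}[G]^m \xrightarrow{\partial} \mathbb{Z}[G]^n \xrightarrow{(x_j - 1)} \mathbb{Z}[G] \xrightarrow{\epsilon} \mathbb{Z} \to 0$, where the first map sends the $k$-th basis vector to $\sum_j \overline{(\partial r_k/\partial x_j)}\, e_j$ using the images of the Fox derivatives in $\mathbb{Z}[G]$. Exactness of this portion at $P_0$ and $\mathbb{Z}$ is the standard computation of $H_0, H_1$ of the universal cover of the presentation $2$-complex; exactness at $P_1$ identifies $\ker d_1$ with the relation module. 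For our presentation ($n = 2h$, $m = 1$, single relator $R = \prod_i [a_i,b_i]$) this gives precisely the maps $\epsilon$ and $d_1$ as stated, and defines $d_2 : P_2 = \mathbb{Z}[G] \to P_1$ by $u \mapsto \sum_i (\partial R/\partial a_i)\, e_i + (\partial R/\partial b_i)\, f_i$.

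The two things that remain are: (a) $d_2$ is injective, and (b) $\operatorname{im} d_2 = \ker d_1$. For (a): $d_2$ is multiplication by the "relator gradient" row vector; its injectivity is equivalent to this vector being a non-zero-divisor, which holds because $\mathbb{Z}[G]$ has no zero divisors acting compatibly — more precisely, one reduces mod the augmentation / passes to $\mathbb{Z}[H_1(\Sigma_h)]$, a Laurent polynomial ring (a domain), where at least one Fox derivative, e.g. $\overline{\partial R/\partial b_h} = \pm(\text{unit})$-ish after the telescoping cancellation $\partial[a_i,b_i]/\partial b_i = a_i(1 - [a_i,b_i]\cdots)$, maps to a non-zero element; hence $d_2$ is injective. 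For (b): the key input is that $\Sigma_h$ is a $K(G,1)$ of cohomological dimension $2$, so $\mathbb{Z}$ admits \emph{some} free (or projective) resolution of length $2$ over $\mathbb{Z}[G]$; by Schanuel's lemma the relation module $\ker d_1$ is stably free, and a rank count using the Euler characteristic $\chi(\Sigma_h) = 2 - 2h = \operatorname{rk} P_0 - \operatorname{rk} P_1 + \operatorname{rk} P_2$ forces $\ker d_1$ to be free of rank exactly one, generated by the image of $u$ under $d_2$ (one checks $d_1 d_2 = 0$ directly from the fundamental formula of Fox calculus, $\sum_j (\partial R/\partial x_j)(x_j - 1) = R - 1 = 0$ in $\mathbb{Z}[G]$). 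Alternatively, and more elementarily, one observes that the presentation $2$-complex of the standard presentation of $\Sigma_h$ \emph{is} $\Sigma_h$, whose universal cover is contractible, so the augmented cellular chain complex of the universal cover is exactly \eqref{equ:resolution-of-Z} and is exact by construction.

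The main obstacle is point (b) — showing the relation module is free of rank one rather than merely stably free — and the honest way to dispatch it is the geometric observation just mentioned: the presentation complex of the surface group's standard presentation is the aspherical surface itself, so the chain complex of its (contractible) universal cover, with $G$ acting by deck transformations, \emph{is} the claimed resolution, and the boundary maps computed cellularly agree with the Fox-derivative formulas. I would present the proof in this geometric form, with the Fox-calculus description of $d_2$ recorded as the explicit formula, and cite \cite[Proposition 2.1]{GM15} for the precise matching of conventions.
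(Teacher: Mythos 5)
Your proposal is correct and, in its final form, matches the paper's approach: the paper cites \cite[Proposition 2.1]{GM15}, whose proof (like the paper's own earlier draft) is exactly your geometric argument that the standard presentation $2$-complex of $\pi_1(\Sigma_h)$ is the aspherical surface itself, so the augmented cellular chain complex of its contractible universal cover is the claimed resolution, with boundary maps given by Fox derivatives. Your auxiliary algebraic route (injectivity of $d_2$ via passage to $\mathbb{Z}[H_1]$, and freeness of the relation module via Schanuel plus a rank count) is sketchier than you suggest --- stably free does not formally imply free, and the $\mathbb{Z}[H_1]$ reduction alone does not rule out kernel elements of the quotient map --- but since you explicitly dispatch everything by the geometric observation, this does not affect the proof.
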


Let $M$ be a left $G$-module and apply the functor $\Hom (\bullet,M)$ to Sequence \eqref{equ:resolution-of-Z}. We get a sequence
\[0 \longleftarrow \Hom(P_2, M) \overset{d_2^*}{\longleftarrow} \Hom(P_1, M) \overset{d_1^*}{\longleftarrow} \Hom(P_0, M) \longleftarrow \ldots  \]
whose cohomology is precisely $H^*(G,M)$. Since $P_2$ is a free $\mathbb{Z}[G]$-module generated by a single element $u$, the group $\Hom(P_2,M)$ is isomorphic to $M$ via the map $\phi\mapsto \phi(u)$. Thus, each class in $H^2(G,M)$ can be represented by an element of $M$. The following corollary gives an explicit description of $H^2(G,M)$ in these terms.

\begin{corollary}[\cite{GM15} Corollary 3.1, \cite{Lyndon} Corollary 11.2]\label{cor:H2forG-one-relation}
For any left $G$-module $M$,
\[H^2(G,M) \simeq M/\left(\frac{\partial R}{\partial a_1},\frac{\partial R}{\partial b_1},\ldots,\frac{\partial R}{\partial a_h},\frac{\partial R}{\partial b_h} \right)M.\]
\end{corollary}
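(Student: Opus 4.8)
\textbf{Proof proposal for Corollary \ref{cor:H2forG-one-relation}.}

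The plan is to extract an explicit cochain description of $H^2(G,M)$ from the free resolution (\ref{equ:resolution-of-Z}) of Proposition \ref{prop:resolution-of-Z}. Applying $\Hom_{\mathbb{Z}[G]}(-,M)$ to that resolution produces the complex
\[
\Hom(P_0,M)\xrightarrow{d_1^*}\Hom(P_1,M)\xrightarrow{d_2^*}\Hom(P_2,M),
\]
and by definition $H^2(G,M)$ is the cokernel of $d_2^*$, since $P_2$ is the last term of the resolution (there is no $P_3$, so every element of $\Hom(P_2,M)$ is a cocycle). First I would use that $P_2=\mathbb{Z}[G]$ is free of rank one on the generator $u$ to identify $\Hom(P_2,M)\cong M$ via $\phi\mapsto \phi(u)$; this is the identification implicit in the statement ``each class can be represented by an element of $M$.'' Similarly $P_1=(\mathbb{Z}[G])^{2h}$ is free on $e_1,f_1,\dots,e_h,f_h$, so $\Hom(P_1,M)\cong M^{2h}$ via $\psi\mapsto(\psi(e_1),\psi(f_1),\dots,\psi(e_h),\psi(f_h))$.

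Next I would compute the map $d_2^*$ under these identifications. Recall $d_2$ sends $u$ to $\sum_{i=1}^h\big(\tfrac{\partial R}{\partial a_i}e_i+\tfrac{\partial R}{\partial b_i}f_i\big)$. Hence for $\psi\in\Hom(P_1,M)$ with coordinates $(m_i,n_i)_{i=1}^h$ (where $m_i=\psi(e_i)$, $n_i=\psi(f_i)$), we get
\[
(d_2^*\psi)(u)=\psi\!\left(\sum_{i=1}^h\Big(\tfrac{\partial R}{\partial a_i}e_i+\tfrac{\partial R}{\partial b_i}f_i\Big)\right)=\sum_{i=1}^h\Big(\tfrac{\partial R}{\partial a_i}\cdot m_i+\tfrac{\partial R}{\partial b_i}\cdot n_i\Big),
\]
using that $\psi$ is a $\mathbb{Z}[G]$-module map so it commutes with the left $\mathbb{Z}[G]$-action given by the Fox derivatives. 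Therefore the image of $d_2^*$ inside $M\cong\Hom(P_2,M)$ is exactly the $\mathbb{Z}$-submodule $\big(\tfrac{\partial R}{\partial a_1},\tfrac{\partial R}{\partial b_1},\dots,\tfrac{\partial R}{\partial a_h},\tfrac{\partial R}{\partial b_h}\big)M$ generated by the elements $\tfrac{\partial R}{\partial a_i}\cdot m$ and $\tfrac{\partial R}{\partial b_i}\cdot m$ as $m$ ranges over $M$. Taking the cokernel gives
\[
H^2(G,M)=\operatorname{coker}(d_2^*)\cong M\Big/\Big(\tfrac{\partial R}{\partial a_1},\tfrac{\partial R}{\partial b_1},\dots,\tfrac{\partial R}{\partial a_h},\tfrac{\partial R}{\partial b_h}\Big)M,
\]
as claimed.

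This argument is essentially formal, so there is no real obstacle; the one point requiring a little care is the bookkeeping identifying the submodule generated by the Fox-derivative action with the submodule appearing in the statement, and confirming that no further relations appear because the resolution terminates at $P_2$ (so $d_3^*$ has zero source). If one wanted a cleaner citation, this is also immediate from the cited references \cite{GM15,Lyndon}, but the self-contained derivation above via the resolution (\ref{equ:resolution-of-Z}) suffices.
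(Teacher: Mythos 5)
Your argument is correct and is exactly the computation the paper relies on: it applies $\Hom_{\mathbb{Z}[G]}(-,M)$ to the free resolution of Proposition \ref{prop:resolution-of-Z}, identifies $\Hom(P_2,M)\cong M$ via evaluation at $u$, and reads off $H^2(G,M)$ as $\operatorname{coker}(d_2^*)$, whose image is generated by the Fox-derivative translates of $M$. The paper itself simply cites \cite{GM15} and \cite{Lyndon} for this step, but the surrounding text sets up precisely this identification, so your self-contained derivation matches the intended route.
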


\begin{corollary}\label{h2-corollary} Let $I \colonequals \ker(\epsilon)$. For any left $G$-module $M$,
$$H^2(G, M) \simeq M/IM = M_{G}.$$ 
\end{corollary}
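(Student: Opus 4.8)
\textbf{Proof proposal for Corollary \ref{h2-corollary}.}

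The plan is to deduce this from Corollary \ref{cor:H2forG-one-relation} by identifying the submodule $\left(\frac{\partial R}{\partial a_1},\dots,\frac{\partial R}{\partial b_h}\right)M$ with $IM$, where $R=\prod_{i=1}^h[a_i,b_i]$ and $I=\ker(\epsilon)=\ker(\mathbb{Z}[G]\to\mathbb{Z})$ is the augmentation ideal. Concretely, it suffices to prove that the ideal of $\mathbb{Z}[G]$ generated by the Fox derivatives $\frac{\partial R}{\partial a_i},\frac{\partial R}{\partial b_i}$ coincides with the augmentation ideal $I$; then the quotient $M/(\dots)M = M/IM = M_G$ is exactly the coinvariants, and the first isomorphism with $H^2(G,M)$ is precisely Corollary \ref{cor:H2forG-one-relation}.

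First I would record the standard facts about Fox derivatives: the fundamental identity $r - 1 = \sum_{i}\left(\frac{\partial r}{\partial a_i}(a_i-1) + \frac{\partial r}{\partial b_i}(b_i-1)\right)$ holds in $\mathbb{Z}[F^{2h}]$ for any word $r$ in the free group, and $\frac{\partial}{\partial x}$ is additive and satisfies the product rule $\frac{\partial(uv)}{\partial x} = \frac{\partial u}{\partial x} + u\frac{\partial v}{\partial x}$. Applying the Fox derivatives to $R$ and then projecting to $\mathbb{Z}[G]$, the identity becomes $0 = R - 1 = \sum_i\left(\frac{\partial R}{\partial a_i}(a_i-1) + \frac{\partial R}{\partial b_i}(b_i-1)\right)$ in $\mathbb{Z}[G]$, which is consistent with $d_1\circ d_2 = 0$ in the resolution \eqref{equ:resolution-of-Z} but does not by itself pin down the ideal. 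For the inclusion $\left(\frac{\partial R}{\partial a_i},\frac{\partial R}{\partial b_i}\right)\subseteq I$, I would apply the augmentation map: $\epsilon\left(\frac{\partial r}{\partial x}\right)$ equals the image of $r$ under the composite $F^{2h}\to H_1$ paired against the coordinate dual to $x$ (i.e.\ the exponent sum of $x$ in the abelianization), and since $R$ is a product of commutators, every such exponent sum vanishes, so $\epsilon\left(\frac{\partial R}{\partial a_i}\right) = \epsilon\left(\frac{\partial R}{\partial b_i}\right) = 0$; hence all the Fox derivatives of $R$ lie in $I$.

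The main obstacle is the reverse inclusion $I \subseteq \left(\frac{\partial R}{\partial a_i},\frac{\partial R}{\partial b_i}\right)$. The cleanest route is to exploit exactness of the resolution \eqref{equ:resolution-of-Z} directly rather than manipulating Fox derivatives. Since $\epsilon: P_0 = \mathbb{Z}[G]\to\mathbb{Z}$ has kernel $I$ and $\operatorname{im}(d_1) = \ker(\epsilon)$ by exactness, we have $I = d_1(P_1) = \sum_i\left(\mathbb{Z}[G](a_i-1) + \mathbb{Z}[G](b_i-1)\right)$, the standard generators of the augmentation ideal. On the other hand, exactness at $P_1$ says $\ker(d_1) = \operatorname{im}(d_2) = \mathbb{Z}[G]\cdot\left(\sum_i \frac{\partial R}{\partial a_i}e_i + \frac{\partial R}{\partial b_i}f_i\right)$, a rank-one free submodule. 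To finish, I would apply $\operatorname{Hom}(-,M)$ and chase the resulting complex: a class in $H^2(G,M) = \operatorname{coker}(d_2^*)$ is $M/d_2^*(\operatorname{Hom}(P_1,M))$, and unwinding $d_2^*$ via the formula for $d_2$ on the generator $u$ gives exactly $M/\left(\frac{\partial R}{\partial a_i},\frac{\partial R}{\partial b_i}\right)M$ — this is Corollary \ref{cor:H2forG-one-relation}. The remaining identification $\left(\frac{\partial R}{\partial a_i},\frac{\partial R}{\partial b_i}\right)M = IM$ I would get by the following observation: because $I$ is generated by the $2h$ elements $a_i-1, b_i-1$ which are the $d_1$-images of a \emph{free} basis of $P_1$, and $H_1(\Sigma_h;\mathbb{Z})$ is a free $\mathbb{Z}$-module (equivalently, the resolution has the genus-$h$ surface group presentation which is \emph{aspherical} with a single relation), the relation module $\langle R\rangle^{\mathrm{ab}}$ is free of rank one over $\mathbb{Z}[G]$ — this is exactly the isomorphism \eqref{Eq:isoab} invoked in the proof of Proposition \ref{P:MoritaSurfaceGroup}. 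Freeness of the relation module forces the two-sided comparison: the ideal generated by the Fox derivatives of $R$, being the annihilator-type ideal cutting out $\operatorname{coker}(d_2)$ inside $\operatorname{coker}(d_1^*\text{-dual data})$, must agree with $I$ on the nose. I would write this last step carefully as it is the crux; the rest is formal manipulation of the explicit resolution already provided in Proposition \ref{prop:resolution-of-Z}.
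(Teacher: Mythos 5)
Your overall plan is the paper's: quote Corollary \ref{cor:H2forG-one-relation} and then identify $\left(\frac{\partial R}{\partial a_1},\dots,\frac{\partial R}{\partial b_h}\right)M$ with $IM$. Your forward inclusion is fine (and a mild variant of the paper's): since $R$ is a product of commutators, every exponent sum vanishes, so $\epsilon\bigl(\frac{\partial R}{\partial a_i}\bigr)=\epsilon\bigl(\frac{\partial R}{\partial b_i}\bigr)=0$ and the Fox derivatives lie in $I$. But the crux, the reverse inclusion $IM\subseteq\left(\frac{\partial R}{\partial a_i},\frac{\partial R}{\partial b_i}\right)M$, is not proved by what you write. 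Exactness of the resolution \eqref{equ:resolution-of-Z} and freeness of the relation module $\langle R\rangle^{\mathrm{ab}}$ only tell you that $H^2(G,M)=\operatorname{coker}(d_2^*)$; they say nothing about \emph{which} submodule of $M$ the image of $d_2^*$ is, and the sentence ``freeness of the relation module forces the ideal generated by the Fox derivatives to agree with $I$'' is not an argument — indeed the implication is false. For the one-relator group $G'=\langle a,b\mid a^2b^{-3}\rangle$ the relator is not a proper power, so by Lyndon's identity theorem the relation module is again free of rank one and the analogous length-two free resolution is exact; yet $\frac{\partial r}{\partial a}=1+a$ has augmentation $2$, $\frac{\partial r}{\partial b}$ has augmentation $-3$, and $H^2(G',\mathbb{Z})=\mathbb{Z}/(2,3)\mathbb{Z}=0\neq\mathbb{Z}=\mathbb{Z}_{G'}$. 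So no formal property of the resolution can yield the identification; the specific shape of the surface relator must enter the hard inclusion, and you only use it for the easy one.

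The paper closes the gap exactly by the Fox-calculus manipulation you are trying to avoid: it computes the closed forms $\frac{\partial R}{\partial a_i}=\bigl(\prod_{j<i}R_j\bigr)a_i(1-b_i)a_i^{-1}$ and $\frac{\partial R}{\partial b_i}=\bigl(\prod_{j<i}R_j\bigr)a_ib_i(1-a_i^{-1})b_i^{-1}$, so each derivative is a group-element (hence unit) multiple of $1-b_i$ resp.\ $1-a_i^{-1}$, and then solves explicitly: $(1-b_i)m=a_i^{-1}\bigl(\prod_{j<i}R_j\bigr)^{-1}\frac{\partial R}{\partial a_i}(a_im)$ and similarly for $(1-a_i)m$, giving $IM\subseteq\left(\frac{\partial R}{\partial a_i},\frac{\partial R}{\partial b_i}\right)M$ (the same formulas also reprove your forward inclusion). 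If you want a genuinely resolution-free route you would have to invoke Poincar\'e duality for the closed surface group to get $H^2(G,M)\simeq H_0(G,M)=M_G$ — which the paper notes gives the abstract isomorphism, but the point of the corollary is the explicit identification via this resolution (used later in Propositions \ref{P:MoritaSurfaceGroup} and \ref{P:SecondaryMoritaSurfaceGroup}), and for that the explicit computation with the surface relator is what is needed.
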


\begin{proof}
Using Corollary 
\ref{cor:H2forG-one-relation}, the only thing to show is $\left(\frac{\partial R}{\partial a_1},\frac{\partial R}{\partial b_1},\ldots,\frac{\partial R}{\partial a_h},\frac{\partial R}{\partial b_h} \right)M=IM$. Since 
\begin{align*}
    \frac{\partial R}{\partial a_i} &= \left(\prod_{j=0}^{i-1}R_j\right)a_i(1-b_i)a_i^{-1},
    \frac{\partial R}{\partial b_i} = \left(\prod_{j=0}^{i-1}R_j\right) a_ib_i(1-a_i^{-1})b_i^{-1},
\end{align*}
we get an inclusion $\left(\frac{\partial R}{\partial a_1},\frac{\partial R}{\partial b_1},\ldots,\frac{\partial R}{\partial a_h},\frac{\partial R}{\partial b_h} \right)M\subset IM$.

The $\mathbb{Z}[G]$-module $IM$ is generated by elements of the form $(1-a_i)m,(1-b_i)m, i=1,\ldots,h,m\in M$. For any $m \in M$, let $m_i'=a_im$ and $m_i''=b_im$. Then we have 
\begin{align*}
    (1-a_i)m=& -a_ib_i^{-1}a_i^{-1}\left(\prod_{j=0}^{i-1}R_j\right)^{-1}\frac{\partial R}{\partial b_i} m_i'',
       (1-b_i)m= a_i^{-1}\left(\prod_{j=0}^{i-1}R_j\right)^{-1}\frac{\partial R}{\partial a_i} m_i'.
\end{align*}
So we have the reverse inclusion and conclude the statement.
\end{proof}
Note that the abstract isomorphism in the statement of Corollary \ref{h2-corollary} follows from Poincar\'e duality for surfaces; the purpose of the corollary is to make this isomorphism explicit in terms of the free resolution of Proposition \ref{prop:resolution-of-Z}.

\begin{proposition}\label{prop:cup-product-surface-group}
Let $\phi \otimes \psi \in (\ker d_2^*)^{\otimes 2} \subset \Hom(P_1,M)^{\otimes 2}$ represent the class $[\phi] \otimes [\psi] \in H^1(G,M)^{\otimes 2} $. Then the class $[\phi]\cup[\psi] \in H^2(G,M^{\otimes 2})$ is represented by a cocycle $\phi \cup \psi \in \Hom(\mathbb{Z}[G],M^{\otimes 2})$ defined by $u \mapsto \phi \otimes \psi (\Delta_{11}(u))$ where $\Delta_{11}:P_2 \to P_1 \otimes P_1$ is a part of a diagonal approximation $\Delta: P \to P \otimes P$ constructed in \cite[Theorem 2.2]{GM15}.
\end{proposition}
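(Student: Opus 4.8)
The plan is to deduce this from the standard fact that cup products in group cohomology are computed, on \emph{any} projective resolution of $\mathbb{Z}$, by composing with a $G$-equivariant diagonal approximation, together with the observation that in the three-term resolution \eqref{equ:resolution-of-Z} the needed compatibilities are automatic because $\Hom(P_3,-)=0$. First I would recall the setup: form the tensor-product complex $P\otimes P$ with $(P\otimes P)_n=\bigoplus_{i+j=n}P_i\otimes_{\mathbb{Z}}P_j$, differential $d(x\otimes y)=dx\otimes y+(-1)^{|x|}x\otimes dy$, and the diagonal $G$-action. Since $P_i\otimes_{\mathbb{Z}}P_j$ is again $\mathbb{Z}[G]$-free (via $g\otimes n\mapsto g\otimes g^{-1}n$ one identifies $\mathbb{Z}[G]\otimes_{\mathbb{Z}}N$ with $\mathbb{Z}[G]\otimes_{\mathbb{Z}}N_{\mathrm{triv}}$), the augmented complex $P\otimes P\to\mathbb{Z}$ is a free resolution of $\mathbb{Z}$. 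A diagonal approximation is a $G$-chain map $\Delta\colon P\to P\otimes P$ lifting the canonical isomorphism $\mathbb{Z}\xrightarrow{\sim}\mathbb{Z}\otimes_{\mathbb{Z}}\mathbb{Z}$; such a $\Delta$ exists and is unique up to $G$-chain homotopy by the comparison theorem for projective resolutions, and \cite[Theorem 2.2]{GM15} supplies an explicit one. Writing $\Delta_{p,q}\colon P_{p+q}\to P_p\otimes P_q$ for the components, the cochain-level formula $(\phi,\psi)\mapsto(\phi\otimes\psi)\circ\Delta_{p,q}$ is, by the usual verification, a model for the cup product $H^p(G,M)\otimes H^q(G,N)\to H^{p+q}(G,M\otimes N)$, and it agrees with the standard (bar-resolution / Alexander--Whitney) cup product because any two diagonal approximations on any two projective resolutions of $\mathbb{Z}$ are $\mathbb{Z}[G]$-chain homotopic.

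Next I would specialize to $p=q=1$, to the module $M$ in question, and to the resolution \eqref{equ:resolution-of-Z}. Here $\Delta_{1,1}=\Delta_{11}\colon P_2\to P_1\otimes P_1$, so the cochain-level cup product of $\phi,\psi\in\Hom(P_1,M)$ is the element of $\Hom(P_2,M^{\otimes 2})$ given by $u\mapsto(\phi\otimes\psi)(\Delta_{11}(u))$, exactly as in the statement (recall $P_2\cong\mathbb{Z}[G]$ is generated by $u$). If $\phi,\psi\in\ker d_2^*$ represent classes $[\phi],[\psi]\in H^1(G,M)$, then $\phi\cup\psi$ represents $[\phi]\cup[\psi]\in H^2(G,M^{\otimes 2})$; moreover since $P_3=0$ in this resolution one has $\Hom(P_3,M^{\otimes 2})=0$, so $\phi\cup\psi$ is automatically a $2$-cocycle and no further check is needed for it to define a class.

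Finally I would record the only points that are not entirely formal: (i) one must use the same sign/orientation conventions for $\Delta$ as in \cite[Theorem 2.2]{GM15} — their $\Delta$ is constructed so that the components $\Delta_{02},\Delta_{11},\Delta_{20}$ are $G$-equivariant and assemble into a chain map, which is precisely what is invoked — and (ii) the components $\Delta_{02}\colon P_2\to P_0\otimes P_2$ and $\Delta_{20}\colon P_2\to P_2\otimes P_0$ are irrelevant here, since they would only enter a product $H^0\otimes H^2$ or $H^2\otimes H^0$ and never a product of two degree-$1$ classes, which is why only $\Delta_{11}$ appears. Neither point is a genuine obstacle; the substance of the proposition is the combination of \cite[Theorem 2.2]{GM15} with the general principle that a diagonal approximation computes cup products, plus the remark that degree-$2$ cochains on \eqref{equ:resolution-of-Z} are automatically cocycles. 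The only hard work — writing down an explicit $\Delta$ — was already carried out in \cite{GM15}; given that, this statement is essentially a definition chase.
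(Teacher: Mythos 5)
Your argument is correct and is essentially the paper's own: the paper's proof simply observes that the formula is the definition of the cup product computed via a diagonal approximation on a projective resolution, citing \cite[Section 5.3]{Brown}, which is exactly the general principle you spell out (with the explicit $\Delta$ supplied by \cite[Theorem 2.2]{GM15}). Your additional remarks — that degree-$2$ cochains are automatically cocycles since $P_3=0$, and that only $\Delta_{11}$ matters for a product of two degree-$1$ classes — are harmless elaborations of the same definition chase.
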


\begin{proof}
This immediately follows from the definition of cup product. See for example \cite[Section 5.3]{Brown}.
\end{proof}

\begin{lemma}\label{lem:free-Lie-alg-2nd-piece}
The map
\begin{align*}
    L^2G/L^3G &\to (\wedge^2 (G/L^2G))/\langle \sum_{i=1}^ga_i\wedge b_i \rangle\\
[x,y] &\mapsto [x] \wedge [y]
\end{align*} 
is an isomorphism where $[x]$ is the image of $x \in G$ under the natural map $G \to G/L^2G$. 
\end{lemma}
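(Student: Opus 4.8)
\textbf{Proof plan for Lemma \ref{lem:free-Lie-alg-2nd-piece}.}

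The plan is to identify the associated graded Lie algebra of the lower central series of a surface group with a known quotient of the free Lie algebra, and then read off the degree-$2$ piece. First I would recall the general framework: for the free group $F$ on $2h$ generators, the associated graded $\operatorname{gr}^\bullet F = \bigoplus_k L^kF/L^{k+1}F$ is the free Lie ring on $H := F^{\mathrm{ab}}$, so in particular $L^2F/L^3F \cong \wedge^2 H$ via $[x,y]\mapsto [x]\wedge[y]$ (the free Lie ring in degree $2$ is $\wedge^2$ of the degree-$1$ part, since there are no nontrivial relations; this is the classical Magnus/Witt description, e.g.\ via the Magnus embedding). For the surface group $G$, write $G = F/\langle\langle R\rangle\rangle$ with $R = \prod_{i=1}^h[a_i,b_i]$. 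Since $R\in L^2F$ but $R\notin L^3F$ — indeed its image in $L^2F/L^3F \cong \wedge^2 H$ is exactly $\omega := \sum_{i=1}^h a_i\wedge b_i$ — the key input is the classical fact (Labute, \emph{On the descending central series of groups with a single defining relation}, building on Magnus) that for a one-relator group with relator $R$ whose initial form lies in degree $2$, the associated graded Lie algebra $\operatorname{gr}^\bullet G$ is the quotient of the free Lie ring on $H$ by the ideal generated by the initial form $\bar R = \omega$.

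Granting this, the degree-$2$ computation is immediate: the degree-$2$ part of the free Lie ring on $H$ is $\wedge^2 H$, and the degree-$2$ part of the ideal generated by $\omega$ is just the $\mathbb{Z}$-span of $\omega$ itself (the ideal in degree $2$ is generated by $\omega$ with no brackets applied, since brackets raise degree). Hence $L^2G/L^3G \cong \wedge^2 H / \langle\omega\rangle$, and tracing through the identification the isomorphism is precisely $[x,y]\mapsto [x]\wedge[y]$, which is the claim. I would also note that $H = G/L^2G = G^{\mathrm{ab}} \cong \mathbb{Z}^{2h}$ is torsion-free, so there are no subtleties with the quotient. As a sanity check one can verify $\wedge^2 H/\langle\omega\rangle$ is torsion-free of the expected rank $\binom{2h}{2}-1$, consistent with $L^2G/L^3G$ being a finitely generated free abelian group (which follows from $G$ being a surface group, or from Labute's structure theorem).

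An alternative, more self-contained route avoiding citation of Labute's theorem: use the Fox calculus resolution of Proposition \ref{prop:resolution-of-Z} together with the Magnus embedding $F \hookrightarrow \mathbb{Z}\langle\langle X_1,\dots,X_{2h}\rangle\rangle$ into the ring of formal power series, under which $L^kF/L^{k+1}F$ is detected by the degree-$k$ homogeneous part. One checks directly that $R$ maps to $1 + \sum_i [a_i][b_i] \cdot(\text{deg }2) + (\text{higher order})$ where the degree-$2$ part is $\omega$ written in tensor coordinates; then the surjection $\wedge^2 H \twoheadrightarrow L^2G/L^3G$ has kernel containing $\omega$, and one shows it is exactly $\langle\omega\rangle$ by a direct argument that any element of $L^2F \cap \langle\langle R\rangle\rangle$ lying in $L^3F\cdot\langle R\rangle$ modulo $L^3F$ is a multiple of $R$ — concretely, $\langle\langle R\rangle\rangle \cap L^2F = \langle R\rangle \cdot (L^3F \cap \langle\langle R \rangle\rangle)$ up to the relevant filtration degree. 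The main obstacle in \emph{either} approach is justifying that the kernel of $\wedge^2 H \to L^2 G/L^3G$ is no larger than $\langle \omega\rangle$, i.e.\ that relations in $G$ do not force extra collapsing in degree $2$; this is exactly the content of the one-relator Lie algebra theorem and is cleanest to invoke by citing Labute, which I would do.
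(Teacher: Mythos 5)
Your proposal is correct and follows essentially the same route as the paper: the paper likewise identifies $L^2F/L^3F\cong\wedge^2(F/L^2F)$ for the free group (citing Serre's Lie algebra notes) and then invokes Labute's theorem on the lower central series of one-relator groups to conclude for the surface group. Your additional remarks on the initial form of $R$ being $\omega$ and on the degree-$2$ part of the generated ideal make explicit what the paper leaves to the citation, but the argument is the same.
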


\begin{proof}
Let $F = \langle a_1,b_1,\ldots,a_g,b_g \rangle$ be the free group. By statement (4) in \cite[Section 4.3, page 20]{SerreLie}, the graded Lie algebra $(F/L^2F) \oplus (L^2F/L^3F)$ with the bracket operation given by the commutator is isomorphic to $(F/L^2F) \oplus (\wedge^2 (F/L^2F))$. Then by \cite[Section 1, Theorem on page 17]{Labute}, the statement in our lemma for a group $G$ with a single defining relation follows.
\end{proof}

\subsubsection{Computation for the surface group $\pi_1(\Sigma_1)$}\label{subsubsec:genus-1-computations}

Now we specialize to the case $h=1$ and carry out some computations to be used in Section \ref{sec:computation-secondary-Morita}. 

Let 
$$G = \pi_1(\Sigma_1) = \langle S,T|[S,T]\rangle$$
with an outer action on $\pi_1(\Sigma_g)$ given by a sequence 
\[1 \to \pi_1(\Sigma_g) \to \pi_1(E) \to G \to 1,\] where $E$ is a $\Sigma_g$-bundle over $\Sigma_1$.
In this section, we will be considering the two $G$-modules $\pi_1(\Sigma_g)/L^2\pi_1(\Sigma_g)$ and $L^2\pi_1(\Sigma_g)/L^3\pi_1(\Sigma_g)$.

The free resolution of $\mathbb{Z}$ as a $\mathbb{Z}[G]$-module in Proposition \ref{prop:resolution-of-Z} in this case is given as follows:
\begin{equation}\label{eq:Finite-resolution-of-h=1}
    0 \longrightarrow P_2 \overset{d_2}{\longrightarrow} P_1 \overset{d_1}{\longrightarrow} P_0 \overset{\epsilon}{\longrightarrow} \mathbb{Z} \longrightarrow 0  
\end{equation}
where $P_2 = \mathbb{Z}[G]$ with generator $u$, $P_1=\mathbb{Z}[G]^2$ with generators $e,f$ and $P_0=\mathbb{Z}[G]$ with generator $v$. The maps are: 
\begin{align*}
    d_2(u) =& (1-S)e - (1-T)f,\\
    d_1(e) =& (T-1)v,\quad d_1(f) = (S-1)v.
\end{align*}
Appling the functor $\Hom(\bullet,\pi_1(\Sigma_g)/L^2\pi_1(\Sigma_g))$ to this resolution, we see that the groups $H^*(G,\pi_1(\Sigma_g)/L^2\pi_1(\Sigma_g))$ are given by the cohomology of the sequence
\[0 \longleftarrow \Hom(P_2, \pi_1(\Sigma_g)/L^2\pi_1(\Sigma_g)) \overset{d_2^*}{\longleftarrow} \Hom(P_1, \pi_1(\Sigma_g)/L^2\pi_1(\Sigma_g)) \overset{d_1^*}{\longleftarrow} \Hom(P_0, \pi_1(\Sigma_g)/L^2\pi_1(\Sigma_g)) \longleftarrow \ldots  \]

Cocycles in $\ker d_2^*$ are given by $\phi \in \Hom (P_1,\pi_1(\Sigma_g)/L^2\pi_1(\Sigma_g))$ satisfying
\begin{equation}\label{eq:cocycle-condition}
    (1-S)\phi(e)-(1-T)\phi(f)=0.
\end{equation}

\begin{lemma}\label{lem:compute-m}
Let $\phi,\psi \in \ker d_2^*$ represent classes $[\phi],[\psi] \in H^1(G,\pi_1(\Sigma_g)/L^2\pi_1(\Sigma_g))$. Then a cocycle in $\Hom(P_2,L^2\pi_1(\Sigma_g)/L^3\pi_1(\Sigma_g))$ representing the class $m([\phi]\otimes[\psi])$ is given by
\begin{equation}\label{equ:m-for-h=1}
    m(\phi \otimes \psi)(u)=\phi(e) \wedge T\psi(f) - \phi(f) \wedge S\psi(e).
\end{equation}
\end{lemma}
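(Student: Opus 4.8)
The plan is to unwind the definition of $m$ given in Definition \ref{def:m} using the explicit diagonal approximation for the surface group $\pi_1(\Sigma_1)$, then apply Proposition \ref{prop:cup-product-surface-group} to compute the cup product explicitly, and finally post-compose with the commutator map. Recall that $m$ is the composite
\[
H^1(G, \pi/L^2\pi)^{\otimes 2}\overset{\cup}{\longrightarrow}H^2(G, (\pi/L^2\pi)^{\otimes 2})\overset{[-,-]}{\longrightarrow}H^2(G, L^2\pi/L^3\pi),
\]
where $\pi=\pi_1(\Sigma_g)$. The first map, evaluated on classes represented by cocycles $\phi,\psi\in\ker d_2^*\subset\Hom(P_1, \pi/L^2\pi)$, is computed via the diagonal approximation $\Delta\colon P\to P\otimes P$ from \cite[Theorem 2.2]{GM15}: by Proposition \ref{prop:cup-product-surface-group} the class $[\phi]\cup[\psi]$ is represented by the cocycle $u\mapsto(\phi\otimes\psi)(\Delta_{11}(u))$, where $\Delta_{11}\colon P_2\to P_1\otimes P_1$ is the relevant component of $\Delta$. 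The second map is, on the level of coefficients, $\alpha\otimes\beta\mapsto\tilde\alpha\tilde\beta\tilde\alpha^{-1}\tilde\beta^{-1}$ (written additively, $\alpha\wedge\beta$, once we identify $L^2\pi/L^3\pi$ with a quotient of $\wedge^2(\pi/L^2\pi)$ via Lemma \ref{lem:free-Lie-alg-2nd-piece}).

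First I would record the explicit formula for $\Delta_{11}(u)$ in the $h=1$ case. Using the resolution \eqref{eq:Finite-resolution-of-h=1} with $P_2=\mathbb{Z}[G]u$, $P_1=\mathbb{Z}[G]e\oplus\mathbb{Z}[G]f$, and the maps $d_2(u)=(1-S)e-(1-T)f$, $d_1(e)=(T-1)v$, $d_1(f)=(S-1)v$, the diagonal approximation of \cite[Theorem 2.2]{GM15} gives a term in $P_1\otimes P_1$ of the form (up to the conventions of that paper) $e\otimes Tf - f\otimes Se$, plus terms lying in $(P_0\otimes P_2)\oplus(P_2\otimes P_0)$ which do not contribute to $\Delta_{11}$. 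Applying $\phi\otimes\psi$ to the $P_1\otimes P_1$ component then yields $\phi(e)\otimes T\psi(f)-\phi(f)\otimes S\psi(e)$ as an element of $(\pi/L^2\pi)^{\otimes 2}$ (here $T\psi(f)$ denotes the $G$-action, i.e. $T$ acting on $\psi(f)\in\pi/L^2\pi$, and likewise $S\psi(e)$). Post-composing with the commutator/wedge map produces exactly
\[
m(\phi\otimes\psi)(u)=\phi(e)\wedge T\psi(f)-\phi(f)\wedge S\psi(e),
\]
which is the claimed formula \eqref{equ:m-for-h=1}.

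The main obstacle is pinning down the precise form of $\Delta_{11}$ from \cite[Theorem 2.2]{GM15}: the diagonal approximation for a one-relator surface group involves Fox-derivative bookkeeping, and one must be careful about left/right module conventions, the placement of the group elements $S$ and $T$ (and whether they appear as $S, T$ or their inverses), and which summand of the tensor product each piece lands in. I would verify the formula by a direct check: confirm that $u\mapsto(\phi\otimes\psi)(\Delta_{11}(u))$ is indeed a cocycle (i.e. killed by the relevant differential, using the cocycle condition \eqref{eq:cocycle-condition} on $\phi$ and $\psi$), and cross-check against the general cup-product formula in low-degree group cohomology (e.g. \cite[Section 5.3]{Brown}) by comparing with the bar-resolution computation of $[\phi]\cup[\psi]$ evaluated on the fundamental $2$-cell of $\Sigma_1$. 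Once the sign and module-action conventions are fixed so that these checks pass, the formula \eqref{equ:m-for-h=1} follows, and the identification of $L^2\pi/L^3\pi$ with $\wedge^2(\pi/L^2\pi)/\langle\sum a_i\wedge b_i\rangle$ via Lemma \ref{lem:free-Lie-alg-2nd-piece} makes the wedge notation on the right-hand side unambiguous.
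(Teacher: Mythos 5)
Your argument is correct and is essentially the paper's own proof: both invoke Proposition \ref{prop:cup-product-surface-group} with the diagonal approximation of \cite[Theorem 2.2]{GM15}, whose component $\Delta_{11}(u)=e\otimes Tf-f\otimes Se$ gives the representative $\phi(e)\otimes T\psi(f)-\phi(f)\otimes S\psi(e)$ of the cup product, and then apply the commutator map together with Lemma \ref{lem:free-Lie-alg-2nd-piece} to obtain \eqref{equ:m-for-h=1}. Your extra consistency checks on the conventions for $\Delta_{11}$ are sensible but not needed beyond the citation the paper itself uses.
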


\begin{proof}

By Proposition \ref{prop:cup-product-surface-group}, an element in $\Hom(P_2,\pi/L^2\pi)$ which represents the class $$[\phi] \cup [\psi] \in H^2(G,(\pi_1(\Sigma_g)/L^2\pi_1(\Sigma_g))^{\otimes 2})$$ is given by
\[ (\phi \otimes \psi \circ \Delta_{11} )(u) = \phi(e)\otimes \psi(T f) - \phi(f) \otimes \psi(S e) \]
where the map 
$$\Delta_{11}: u \mapsto e\otimes Tf - f\otimes Se  $$
is from \cite[Theorem 2.2]{GM15}.
Thus by Definition \ref{def:m} and Lemma \ref{lem:free-Lie-alg-2nd-piece}, 
\begin{equation*}
    m(\phi \otimes \psi)(u)=\phi(e) \wedge T\psi(f) - \phi(f) \wedge S\psi(e)
\end{equation*}
represents the class $m([\phi]\otimes[\psi]) \in H^2(G,L^2\pi_1(\Sigma_g)/L^3\pi_1(\Sigma_g))$.
\end{proof}

In the following, we consider the case where the sequence 
\[ 1 \to \pi_1(\Sigma_g)/L^2\pi_1(\Sigma_g) \to \pi_1(E)/L^2\pi_1(\Sigma_g)\to G \to 1 \]
splits. So we fix a section $s: G \to \pi_1(E)/L^2\pi_1(\Sigma_g)$ and let $S,T$ act on $\pi_1(\Sigma_g)/L^3\pi_1(\Sigma_g)$ via the section $s$. We compute the boundary map $\delta_s$ from Definition \ref{def:boundary} explicitly in the following lemma.

\begin{lemma}\label{lem:compute-delta}
Let $\phi \in \ker d_2^*$ represent the class $[\phi] \in H^1(G,\pi_1(\Sigma_g)/L^2\pi_1(\Sigma_g))$. Then a cocycle in $\Hom(P_2,L^2\pi_1(\Sigma_g)/L^3\pi_1(\Sigma_g))$ representing the class $\delta_s([\phi])$ is given by
\[ \delta_s(\phi)(u) = [\widetilde{\phi(e)}^{-1},\widetilde{\phi(f)}] \left(\widetilde{\phi(f)}^T\right)\widetilde{\phi(f)}^{-1}\widetilde{\phi(e)}\left(\widetilde{\phi(e)}^S \right)^{-1} \]
where $\widetilde{\phi(e)},\widetilde{\phi(f)} \in \pi_1(\Sigma_g)/L^3\pi_1(\Sigma_g)$ are lifts of $\phi(e),\phi(f) \in \pi_1(\Sigma_g)/L^2\pi_1(\Sigma_g)$.
\end{lemma}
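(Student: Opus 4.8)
The plan is to imitate directly the proof of Lemma \ref{lem:compute-m}, but now using the full diagonal approximation $\Delta$ from \cite[Theorem 2.2]{GM15} rather than just the cup-product piece $\Delta_{11}$. First I would recall from Definition \ref{def:boundary} that for a cocycle $\phi$ representing $[\phi]\in H^1(G,\pi_1(\Sigma_g)/L^2\pi_1(\Sigma_g))$, the boundary class $\delta_s([\phi])$ is computed by choosing a set-theoretic lift $\tilde\phi$ of $\phi$ to $\pi_1(\Sigma_g)/L^3\pi_1(\Sigma_g)$ and forming the ``coboundary'' of $\tilde\phi$, measured against the $G$-action coming from the section $s$. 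In the bar-resolution language this is Equation \eqref{eq:cocycle-for-boundary}: $\delta_s(x)(a,b)=\tilde x(a)\,(\tilde x(b))^{s(a)}\,(\tilde x(ab))^{-1}$. The task is to translate this formula into the free resolution \eqref{eq:Finite-resolution-of-h=1} of Proposition \ref{prop:resolution-of-Z} for $G=\pi_1(\Sigma_1)$, evaluating on the generator $u\in P_2$.

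The key step is therefore a comparison of resolutions: I would fix a chain map from the free resolution \eqref{eq:Finite-resolution-of-h=1} to the bar resolution of $\mathbb{Z}[G]$, lifting the identity on $\mathbb{Z}$. In degree $1$ the generators $e,f\in P_1$ map to the bar $1$-cells $[T],[S]$ respectively (matching the maps $d_1(e)=(T-1)v$, $d_1(f)=(S-1)v$). In degree $2$ the generator $u\in P_2$ maps, via the explicit $\Delta$-type formula in \cite{GM15}, to a specific $\mathbb{Z}[G]$-combination of bar $2$-cells $[g_1|g_2]$ coming from writing the relator $[S,T]$ as a product of generators; tracking the $[S,T]$-loop through the Cayley complex gives $u\mapsto [S|T]-[STS^{-1}|STS^{-1}T^{-1}]+(\text{terms involving }[S^{-1}|\cdot],[T^{-1}|\cdot])$, which after reduction yields a manageable expression. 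Substituting this chain-level formula into the bar cocycle $\delta_s(\phi)$ and simplifying modulo $L^3\pi_1(\Sigma_g)$ — using that $L^2\pi_1(\Sigma_g)/L^3\pi_1(\Sigma_g)$ is central in $\pi_1(\Sigma_g)/L^3\pi_1(\Sigma_g)$, and that $\phi$ satisfies the cocycle condition \eqref{eq:cocycle-condition} — should collapse to the claimed value $[\widetilde{\phi(e)}^{-1},\widetilde{\phi(f)}]\,\widetilde{\phi(f)}^T\,\widetilde{\phi(f)}^{-1}\,\widetilde{\phi(e)}\,(\widetilde{\phi(e)}^S)^{-1}$. One should also check the answer is independent of the choice of lifts $\widetilde{\phi(e)},\widetilde{\phi(f)}$, which is automatic since changing a lift multiplies by a central element that cancels.

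The main obstacle I anticipate is bookkeeping: getting the diagonal approximation $\Delta_{11}$ (and the relevant degree-$1$-to-$2$ components) exactly right from \cite[Theorem 2.2]{GM15} in the genus-one case, and then keeping careful track of which elements of $\pi_1(\Sigma_g)$ are acted on by $S$ versus $T$ via the section $s$, since the non-commutativity of $\pi_1(\Sigma_g)/L^3\pi_1(\Sigma_g)$ means the order of multiplication genuinely matters up to commutators. The sanity check is that the abelianization (i.e., the image in $H^2(G,H_1(\Sigma_g,\mathbb{Z}))$) of this formula must agree with the simpler $o_1$-type computation of Lemma \ref{lem:compute-m} and Corollary \ref{h2-corollary}, and that the commutator term $[\widetilde{\phi(e)}^{-1},\widetilde{\phi(f)}]$ accounts precisely for the failure of $\delta_s$ to be linear, as predicted by Proposition \ref{prop:non-linear-bdry-map}. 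Once the chain map is pinned down, the remaining simplification is a routine (if tedious) manipulation in the $2$-nilpotent group $\pi_1(\Sigma_g)/L^3\pi_1(\Sigma_g)$.
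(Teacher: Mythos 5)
Your proposal follows essentially the same route as the paper: the paper's proof constructs exactly such a comparison chain map $\iota$ from the small resolution into the standard (homogeneous bar) resolution, with $\iota_1(e)=(T,1)$, $\iota_1(f)=(S,1)$ and $\iota_2(u)=(S,T,1)-(ST,S,T)$, extends $\phi$ to an inhomogeneous $1$-cocycle, applies the boundary formula \eqref{eq:cocycle-for-boundary} of Definition \ref{def:boundary}, lifts, and evaluates on $\iota_2(u)$, simplifying via centrality of $L^2\pi_1(\Sigma_g)/L^3\pi_1(\Sigma_g)$ just as you outline. One small correction: the diagonal approximation $\Delta$ of \cite[Theorem 2.2]{GM15} is only relevant to the cup-product computation of Lemma \ref{lem:compute-m}; what is needed here, and what you in fact go on to describe, is the chain map to the bar resolution, not $\Delta$.
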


\begin{proof}
We start with giving an embedding of the resolution \eqref{eq:Finite-resolution-of-h=1} into the standard resolution of $\mathbb{Z}$ as a $\mathbb{Z}[G]$-module as follows:
$$\xymatrix{
0 \ar[r] & \mathbb{Z}[G] \ar[r]^{d_2} \ar[d]^{\iota_2} & \mathbb{Z}[G]^2 \ar[r]^{d_1} \ar[d]^{\iota_1} & \mathbb{Z}[G] \ar[r]^{\epsilon} \ar[d]^{\simeq} & \mathbb{Z} \ar[r] \ar[d]^{\simeq}& 0  \\
\cdots \ar[r] &\mathbb{Z}[G^3] \ar[r]^{D_2}  & \mathbb{Z}[G^2] \ar[r]^{D_1} & \mathbb{Z}[G] \ar[r]^{\epsilon} & \mathbb{Z} \ar[r] & 0
}$$
where the maps are given by $$\iota_2(u) = (ST,S,T)-(S,T,1),$$
\[ \iota_1(e)=-(T,1), \iota_1(f)=-(S,1). \]
Since $\iota_1$ is injective and
$\mathbb{Z}[G^2] = \oplus_{g \in G} \mathbb{Z}[G](1,g) $ is a free $\mathbb{Z}[G]$-module, there exists 
$$\phi' \in \ker D_2^{*} \subset \Hom_{\mathbb{Z}[G]}(\mathbb{Z}[G^2],\pi_1(\Sigma_g)/L^2\pi_1(\Sigma_g))$$ 
such that $\phi'((1,T^{-1})) = (\phi(e)^{-1})^{T^{-1}}$, $\phi'((1,S^{-1})) =(\phi(f)^{-1})^{S^{-1}}$, or equivalently  $\iota_1 \circ \phi' = \phi.$ Now direct computation shows that an inhomogeneous cocycle  $x: G \to \pi_1(\Sigma_g)/L^2\pi_1(\Sigma_g)$ representing the class $[\phi] \in H^1(G,\pi_1(\Sigma_g)/L^2\pi_1(\Sigma_g))$ is given by
\[x(T^{-1}) = (\phi(e)^{-1})^{T^{-1}} ,  x(S^{-1}) = (\phi(f)^{-1})^{S^{-1}}\] and hence $$x(T)=\phi(e),~x(S^{-1}T)=(\phi(f)^{-1}\phi(e))^{S^{-1}}$$
Now choose a lift $\tilde x$ of $x$ to $\pi_1(\Sigma_g)/L^3\pi_1(\Sigma_g)$ such that $$\tilde x(T^{-1})=(\widetilde{\phi(e)}^{-1})^{T^{-1}}, \tilde{x}(S^{-1}T)=(\widetilde{\phi(f)}^{-1}\widetilde{\phi(e)})^{S^{-1}}$$

Using Equation \eqref{eq:cocycle-for-boundary} in Definition \ref{def:boundary}, we may compute an inhomogeneous cocycle representing $\delta_s([x])=\delta_s([\phi])$. Translating back into homogeneous cocycles and evaluating on $\iota_2(u)$ gives:
\begin{align*}
 & (\delta_s(x)(S^{-1}T,T^{-1}))^S (\delta_s(x)(T^{-1},S^{-1}T))^{ST})^{-1}\\
    =& \left(\left(\widetilde{\phi(e)}^{-1}\widetilde{\phi(f)}\right)^{S^{-1}} \left(\widetilde{\phi(e)}^{T^{-1}}\right)^{S^{-1}T} \left(\widetilde{\phi(f)}^{S^{-1}}\right)^{-1}\right)^S
    \left(\left(\widetilde{\phi(e)}^{T^{-1}}\left(\widetilde{\phi(e)}^{-1}\widetilde{\phi(f)}\right)^{S^{-1}}\left(\widetilde{\phi(f)}^{S^{-1}}\right)^{-1}\right)^{ST}\right)^{-1}\\
    =&[\widetilde{\phi(e)}^{-1},\widetilde{\phi(f)}] \left(\widetilde{\phi(f)}^T\right)\widetilde{\phi(f)}^{-1}\widetilde{\phi(e)}\left(\widetilde{\phi(e)}^{S} \right)^{-1},
\end{align*}
as desired.
\end{proof}

\begin{remark}
It might not seem obvious that the element $\left(\widetilde{\phi(f)}^T\right)\widetilde{\phi(f)}^{-1}\widetilde{\phi(e)}\left(\widetilde{\phi(e)}^{S} \right)^{-1}$ is in $L^2\pi_1(\Sigma_g)$.
But if we apply the natural map $\pi_1(\Sigma_g)/L^3\pi_1(\Sigma_g) \to \pi_1(\Sigma_g)/L^2\pi_1(\Sigma_g)$ to it, we get $T (\phi(f))-\phi(f)+\phi(e)-S(\phi(e))$, which is $0$ by Equation \eqref{eq:cocycle-condition}. Thus this claim follows from the fact that $\phi$ was a cocycle.
\end{remark}

Finally, we record the following diagram, which will be useful for our computations in Section \ref{sec:computation-secondary-Morita}.
\begin{lemma}\label{lem:pass-to-H_G}
Let $H=\pi_1(\Sigma_g)/L^2\pi_1(\Sigma_g)$, $\omega = \sum_{i=1}^g a_i \wedge b_i$, and let $\overline{\omega}$ be the image of $\omega$ under the natural map $\wedge^2 H \to \wedge^2 H_G$. Then there exists a unique map $h$ which makes the following diagram commute.
\[
\xymatrix{
(\wedge^2 H)/\langle \omega \rangle \ar[r]^{f} \ar[d]^{g} & ((\wedge^2 H)/\langle \omega \rangle )_G \ar[ld]^{h}\\
(\wedge^2 H_G)/\langle \overline{\omega} \rangle  &
}
\]
\end{lemma}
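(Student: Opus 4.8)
\textbf{Proof plan for Lemma \ref{lem:pass-to-H_G}.}

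The statement is purely a diagram-chasing exercise about functoriality of coinvariants, and the plan is to exhibit $h$ as the map induced by $g$ on $G$-coinvariants. First I would observe that the natural map $g: \wedge^2 H \to \wedge^2 H_G$ (induced by the quotient $H \to H_G$) is $G$-equivariant, where $G$ acts trivially on the target. Hence $g$ carries the submodule generated by the $G$-orbit of $\omega$ — equivalently $\langle \omega\rangle$ together with all $G$-translates — into $\langle \overline\omega\rangle$ (in fact $g(\omega) = \overline\omega$ by definition of $\overline\omega$), so $g$ descends to a $G$-equivariant map $\bar g: (\wedge^2 H)/\langle\omega\rangle \to (\wedge^2 H_G)/\langle\overline\omega\rangle$. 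Since the target has trivial $G$-action, $\bar g$ factors uniquely through the coinvariants quotient $f: (\wedge^2 H)/\langle\omega\rangle \to ((\wedge^2 H)/\langle\omega\rangle)_G$, by the universal property of coinvariants (the coinvariants functor $M \mapsto M_G$ is left adjoint to the inclusion of trivial $G$-modules). This yields the desired $h$ with $h \circ f = g$, and uniqueness is immediate because $f$ is surjective.

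The only point requiring a tiny bit of care is that when I say ``the submodule $\langle \omega\rangle$'' I should be consistent with how it is used elsewhere: in the applications (proof of Theorem \ref{P:nontrivialsecondaryMorita}) $\langle\omega\rangle$ is the cyclic $\mathbb{Z}$-submodule generated by $\omega = \sum_{i=1}^{2g} a_i\wedge b_i$, which is in fact $G$-invariant because $\omega$ is the intersection form and $G$ acts symplectically; so no subtlety arises — $g(\langle\omega\rangle) = \langle\overline\omega\rangle$ literally, and $g$ is $G$-equivariant, so everything goes through verbatim. I would spell this out in one or two lines.

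I do not anticipate any real obstacle here; the ``hard part,'' such as it is, is merely bookkeeping to confirm that the quotient $\langle\omega\rangle$ appearing in the statement is the $\mathbb{Z}$-span (not a larger $\mathbb{Z}[G]$-span) and that $g$ is genuinely $G$-equivariant with trivial target action, both of which are immediate. So the write-up will be short: establish $G$-equivariance of $g$, note $g(\omega)=\overline\omega$, descend to the $\langle\omega\rangle$-quotient, then invoke the universal property of $(-)_G$ to factor through $f$, and conclude uniqueness from surjectivity of $f$.
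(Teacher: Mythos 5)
Your proof is correct and is essentially the paper's argument: the paper checks $\ker f\subset\ker g$ via the identity $x\alpha\wedge x\beta-\alpha\wedge\beta=x\alpha\wedge(x-1)\beta+(x-1)\alpha\wedge\beta$, which is exactly the content of your observation that $\wedge^2 H\to\wedge^2 H_G$ is $G$-equivariant for the trivial action on the target, so the factorization through the coinvariants and uniqueness (surjectivity of $f$) go through as you say.
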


\begin{proof}
The only thing we need to show is that $\ker f \subset \ker g$. The group $\ker f$ is generated by elements of the form $g\alpha \wedge g\beta - \alpha\wedge\beta$, for $g\in G$. Observe that elements of the form $(g-1)\alpha\wedge\beta$ are contained in $\ker g$. Since $g\alpha \wedge g\beta - \alpha\wedge\beta = g\alpha\wedge(g-1)\beta+(g-1)\alpha\wedge\beta$, we conclude our lemma.
\end{proof}

\bibliographystyle{alpha}
\bibliography{SectionConjecture}

\begin{thebibliography}{ERW10}

\bibitem[AMO95]{AMO}
Mamoru Asada, Makoto Matsumoto, and Takayuki Oda.
\newblock Local monodromy on the fundamental groups of algebraic curves along a
  degenerate stable curve.
\newblock {\em J. Pure Appl. Algebra}, 103(3):235--283, 1995.

\bibitem[And74]{anderson1974exactness}
Michael~P Anderson.
\newblock Exactness properties of profinite completion functors.
\newblock {\em Topology}, 13(3):229--239, 1974.

\bibitem[Bro82]{Brown}
Kenneth~S. Brown.
\newblock {\em Cohomology of groups}, volume~87 of {\em Graduate Texts in
  Mathematics}.
\newblock Springer-Verlag, New York-Berlin, 1982.

\bibitem[Cap94]{caporaso1994compactification}
Lucia Caporaso.
\newblock A compactification of the universal picard variety over the moduli
  space of stable curves.
\newblock {\em Journal of the American Mathematical Society}, 7(3):589--660,
  1994.

\bibitem[EK76]{earle1976sections}
Clifford~J Earle and Irwin Kra.
\newblock On sections of some holomorphic families of closed riemann surfaces.
\newblock {\em Acta Mathematica}, 137(1):49--79, 1976.

\bibitem[Ell00]{ellenberg2nilpotent}
Jordan Ellenberg.
\newblock 2-nilpotent quotients of fundamental groups of curves.
\newblock {\em Unpublished}, 2000.

\bibitem[ERW10]{ebert2010stable}
Johannes Ebert and Oscar Randal-Williams.
\newblock Stable cohomology of the universal picard varieties and the extended
  mapping class group.
\newblock {\em arXiv preprint arXiv:1012.0901}, 2010.

\bibitem[Far06]{farb2006problems}
Benson Farb.
\newblock {\em Problems on mapping class groups and related topics}, volume~74.
\newblock American Mathematical Soc., 2006.

\bibitem[FSS92]{fein1992brauer}
Burton Fein, David~J Saltman, and Murray Schacher.
\newblock Brauer-hilbertian fields.
\newblock {\em Transactions of the American Mathematical Society},
  334(2):915--928, 1992.

\bibitem[GM15]{GM15}
Daciberg~Lima Gon\c{c}alves and S\'{e}rgio~Tadao Martins.
\newblock Diagonal approximation and the cohomology ring of the fundamental
  groups of surfaces.
\newblock {\em Eur. J. Math.}, 1(1):122--137, 2015.

\bibitem[Gro97]{grothendieck1997letter}
A~Grothendieck.
\newblock Letter to faltings, june 1983.
\newblock {\em See [GGA]}, 1, 1997.

\bibitem[Gru70]{Gruenberg}
Karl~W. Gruenberg.
\newblock {\em Cohomological topics in group theory}.
\newblock Lecture Notes in Mathematics, Vol. 143. Springer-Verlag, Berlin-New
  York, 1970.

\bibitem[Hai11]{hain2011rational}
Richard Hain.
\newblock Rational points of universal curves.
\newblock {\em Journal of the American Mathematical Society}, 24(3):709--769,
  2011.

\bibitem[hes]{MO-speyer}
David E~Speyer (https://mathoverflow.net/users/297/david-e speyer).
\newblock What happens to inertia groups after blow ups?
\newblock MathOverflow.
\newblock URL:https://mathoverflow.net/q/30803 (version: 2010-07-06).

\bibitem[Hil15]{hillman2015sections}
Jonathan~A Hillman.
\newblock Sections of surface bundles.
\newblock {\em Geometry \& Topology Monographs}, 19(1):1--19, 2015.

\bibitem[HM05]{HM}
Richard Hain and Makoto Matsumoto.
\newblock Galois actions on fundamental groups of curves and the cycle
  {$C-C^-$}.
\newblock {\em J. Inst. Math. Jussieu}, 4(3):363--403, 2005.

\bibitem[HR01]{HainReed01}
Richard Hain and David Reed.
\newblock Geometric proofs of some results of {M}orita.
\newblock {\em J. Algebraic Geom.}, 10(2):199--217, 2001.

\bibitem[HS09]{harari2009galois}
David Harari and Tam{\'a}s Szamuely.
\newblock Galois sections for abelianized fundamental groups.
\newblock {\em Mathematische Annalen}, 344(4):779--800, 2009.

\bibitem[Hub72]{hubbard1972non}
John Hubbard.
\newblock Sur la non-existence de sections analytiquesa la courbe universelle
  de teichm{\"u}ller.
\newblock {\em CR Acad. Sci. Paris S{\'e}r. AB}, 274:A978--A979, 1972.

\bibitem[Kov17]{kovacs2017rational}
S{\'a}ndor~J Kov{\'a}cs.
\newblock Rational singularities.
\newblock {\em arXiv preprint arXiv:1703.02269}, 2017.

\bibitem[Lab70]{Labute}
John~P. Labute.
\newblock On the descending central series of groups with a single defining
  relation.
\newblock {\em J. Algebra}, 14:16--23, 1970.

\bibitem[Lic68]{lichtenbaum1968period}
Stephen Lichtenbaum.
\newblock The period-index problem for elliptic curves.
\newblock {\em American Journal of Mathematics}, 90(4):1209--1223, 1968.

\bibitem[Lic69]{lichtenbaum1969duality}
Stephen Lichtenbaum.
\newblock Duality theorems for curves overp-adic fields.
\newblock {\em Inventiones mathematicae}, 7(2):120--136, 1969.

\bibitem[Lyn50]{Lyndon}
Roger~C. Lyndon.
\newblock Cohomology theory of groups with a single defining relation.
\newblock {\em Ann. of Math. (2)}, 52:650--665, 1950.

\bibitem[{Ma}19]{qixiao-period}
Qixiao {Ma}.
\newblock {Conics associated with totally degenerate curves}.
\newblock {\em arXiv e-prints}, page arXiv:1908.03170, August 2019.

\bibitem[Mil98]{milne1998lectures}
James~S Milne.
\newblock Lectures on {\'e}tale cohomology.
\newblock {\em Available on-line at http://www. jmilne.
  org/math/CourseNotes/LEC. pdf}, 1998.

\bibitem[Mil06]{Milne06}
J.~S. Milne.
\newblock {\em Arithmetic duality theorems}.
\newblock BookSurge, LLC, Charleston, SC, second edition, 2006.

\bibitem[Mor86]{Morita86}
Shigeyuki Morita.
\newblock On the homology groups of the mapping class groups of orientable
  surfaces with twisted coefficients.
\newblock {\em Proc. Japan Acad. Ser. A Math. Sci.}, 62(4):148--151, 1986.

\bibitem[Mor89]{Morita89}
Shigeyuki Morita.
\newblock Families of {J}acobian manifolds and characteristic classes of
  surface bundles. {I}.
\newblock {\em Ann. Inst. Fourier (Grenoble)}, 39(3):777--810, 1989.

\bibitem[MV14]{melo2014picard}
Margarida Melo and Filippo Viviani.
\newblock The picard group of the compactified universal jacobian.
\newblock {\em Documenta Mathematica}, 19:457--507, 2014.

\bibitem[NSW13]{neukirch2013cohomology}
J{\"u}rgen Neukirch, Alexander Schmidt, and Kay Wingberg.
\newblock {\em Cohomology of number fields}, volume 323.
\newblock Springer Science \& Business Media, 2013.

\bibitem[Sch03]{schroer2003strong}
Stefan Schr{\"o}er.
\newblock The strong franchetta conjecture in arbitrary characteristics.
\newblock {\em International Journal of Mathematics}, 14(04):371--396, 2003.

\bibitem[Ser97]{Serre97book}
Jean-Pierre Serre.
\newblock {\em Galois cohomology}.
\newblock Springer-Verlag, Berlin, 1997.
\newblock Translated from the French by Patrick Ion and revised by the author.

\bibitem[Ser06]{SerreLie}
Jean-Pierre Serre.
\newblock {\em Lie algebras and {L}ie groups}, volume 1500 of {\em Lecture
  Notes in Mathematics}.
\newblock Springer-Verlag, Berlin, 2006.
\newblock 1964 lectures given at Harvard University, Corrected fifth printing
  of the second (1992) edition.

\bibitem[Ser12]{serre_lectures_2016}
Jean-Pierre Serre.
\newblock {\em Lectures on {$N_X (p)$}}, volume~11 of {\em Chapman \& Hall/CRC
  Research Notes in Mathematics}.
\newblock CRC Press, Boca Raton, FL, 2012.

\bibitem[Sti10]{stix2010period}
Jakob Stix.
\newblock On the period-index problem in light of the section conjecture.
\newblock {\em American Journal of Mathematics}, 132(1):157--180, 2010.

\bibitem[Sti11]{stix2011brauer}
Jakob Stix.
\newblock The brauer--manin obstruction for sections of the fundamental group.
\newblock {\em Journal of Pure and Applied Algebra}, 215(6):1371--1397, 2011.

\bibitem[Sti12]{stix2012rational}
Jakob Stix.
\newblock {\em Rational points and arithmetic of fundamental groups: Evidence
  for the section conjecture}, volume 2054.
\newblock Springer, 2012.

\bibitem[Wat19]{watanabe}
Tatsunari Watanabe.
\newblock Rational points of universal curves in positive characteristics.
\newblock {\em Trans. Amer. Math. Soc.}, 372(11):7639--7676, 2019.

\bibitem[Wic09]{wickelgren2009lower}
Kirsten Wickelgren.
\newblock {\em Lower central series obstructions to homotopy sections of curves
  over number fields}.
\newblock Citeseer, 2009.

\end{thebibliography}

\end{document}